\definecolor {darkblue}  {rgb} {0.09,0.20,0.43}
\definecolor {darkgray}  {rgb} {0.39,0.39,0.40}
\definecolor {darkgreen} {rgb} {0,0.5,0}
\definecolor {purple}    {rgb} {0.55,.16,0.44}
\definecolor {linkred}   {rgb} {0.545,0.275,0.29}
\definecolor{accent}{rgb}{0.25,0.25,0.25}
\titleformat {\section} {\color{accent}\normalfont\bfseries\large}{\thesection}{1em}{}
\titleformat {\subsection} {\color{accent}\normalfont\bfseries}{\thesubsection}{1em}{}
\let\oldtitle\title
\def\title #1 {\oldtitle{\bfseries\color{accent}#1}}
\newcommand{\accentheaderfont}%
	{\normalfont\normalsize\bfseries\color{accent}}
\newcommand{\prooffont}%
	{\normalfont\normalsize\itshape}
\newtheoremstyle{accentheader}%
	{\item[\hskip\labelsep \accentheaderfont{##1 ##2}]}%
	{\item[\hskip\labelsep \accentheaderfont{##1 ##2 (##3)}]}%
\newtheoremstyle{nonumberaccentheader}%
	{\item[\hskip\labelsep \accentheaderfont{##1}]}%
	{\item[\hskip\labelsep \accentheaderfont{##1 (##2)}]}%
\newtheoremstyle{blackheader}
 	{\item[\hskip\labelsep \prooffont{##1}]}%
 	{\item[\hskip\labelsep \prooffont{##3}]}
\newtheoremstyle{nonumberblackheader}%
	{\item[\hskip\labelsep \prooffont{##1}]}%
	{\item[\hskip\labelsep \prooffont{##1 (##2)}]}%
\newtheoremstyle{blackheadernumbered}
 	{\item[\hskip\labelsep \prooffont{##1 ##2.}]}%
 	{\item[\hskip\labelsep \prooffont{##1 ##2 (##3).}]}
\numberwithin{dummy}{section}
\theoremstyle{accentheader}
\newtheorem{thm}{Theorem}[section]
\newtheorem{prop}[thm]{Proposition}
\newtheorem{lem}[thm]{Lemma}
\newtheorem{cor}[thm]{Corollary}
\theoremstyle{blackheader}
\theoremstyle{blackheadernumbered}
\newtheorem{rem}[thm]{Remark}
\theoremstyle{accentheader}
\theoremstyle{blackheader}
\let\existsorig\exists 
\renewcommand{\exists}{\ \existsorig\ } 
\let\forallorig\forall 
\renewcommand{\forall}{\ \forallorig\ } 
\newcommand{\overbar}[1]{\mkern 1.5mu\overline{\mkern-1.5mu#1\mkern-1.5mu}\mkern 1.5mu}
\newcommand{\ob}[1]{\overbar{#1}}
\renewcommand{\atop}[2]{\genfrac{}{}{0pt}{}{#1}{#2}}
 \newcommand{\defl}{\coloneqq}%
 \newcommand{\defr}{\eqqcolon}%
\renewcommand{\le}{\leqslant}
\renewcommand{\ge}{\geqslant}
\newcommand{\lex}{\preccurlyeq}
\renewcommand{\Re}{\mathfrak{R}}
\renewcommand{\qed}{{\quad~\color{accent}\text{\small\RectangleBold}}}
\newcommand{\fish}%
	{\quad\ensuremath{\color{accent}\rtimes}}
\newcommand{\map}%
	{\quad\ensuremath{\multimap}}
\DeclarePairedDelimiter{\abs}{\lvert}{\rvert}
\DeclarePairedDelimiter{\n}{\lVert}{\rVert}
\DeclarePairedDelimiter{\lin}{\langle}{\rangle}
\DeclarePairedDelimiter{\pbr}{\lbrace}{\rbrace}
\DeclarePairedDelimiterX{\setdef}[2]{\{}{\}}{#1\,:\,#2}
\DeclarePairedDelimiter{\setd}{\{}{\}}
\DeclarePairedDelimiter{\p}{(}{)}
\newcommand{\e}{\mathrm{e}}
\newcommand{\ii}{\mathrm{i}}
\newcommand{\opento}{\hookrightarrow}%
\DeclareMathOperator{\dist}{dist}
\newcommand{\Id}{I}
\renewcommand{\C}{\mathbb{C}}%
\newcommand{\N}{\mathbb{N}}%
\newcommand{\R}{\mathbb{R}}%
\newcommand{\Z}{\mathbb{Z}}%
\newcommand{\T}{\mathbb{T}}%
\newcommand{\Ic}{\mathcal{I}}
\newcommand{\Pc}{\mathcal{P}}
\newcommand{\Qc}{\mathcal{Q}}
\newcommand{\Sc}{\mathcal{S}}
\newcommand{\Tc}{\mathcal{T}}
\newcommand{\Fs}{\mathscr{F}}
\newcommand{\Ms}{\mathscr{M}}
\newcommand{\Ws}{\mathscr{W}}
\newcommand{\al}{\alpha}
\newcommand{\bt}{\beta}
\newcommand{\gm}{\gamma}	 
     \newcommand{\Dl}{\Delta}
\newcommand{\ep}{\varepsilon} 
\newcommand{\lm}{\lambda}    \newcommand{\Lm}{\Lambda}
\newcommand{\sg}{\sigma}     
\newcommand{\om}{\omega}     \newcommand{\Om}{\Omega}
\newcommand{\ph}{\varphi}    \newcommand{\Ph}{\Phi}
\newcommand{\upd}{\mathinner{\mathrm{d}\kern.04em\!}}
\newcommand\ddd{\mathrm{d}\mkern1mu}
\newcommand{\dlm}{\upd{\lm}}
\newcommand{\dt}{\upd{t}}
\newcommand{\dx}{\upd{x}}
\renewcommand{\thesection}{\arabic{section}}
\def\@biblabel#1{#1.}
\DeclareMathAlphabet{\mathpzc}{OT1}{pzc}{m}{it}
\renewcommand{\o}{\mathrm{o}}
\newcommand{\loc}{\mathrm{loc}}
\newcommand{\wto}{\rightharpoondown}
\newcommand{\Fm}{\Om}
\newcommand{\spii}[1]{\lin{#1}}
\newcommand{\w}[1]{\lin{#1}}
\newcommand{\spanx}{\mathrm{span}}
\newcommand{\wsto}{\overset{*}{\wto}}
\renewcommand{\Id}{\mathrm{Id}}
\DeclareMathOperator{\spec}{spec}
\DeclareMathOperator{\dir}{dir}
\DeclareMathOperator{\per}{per}
\DeclareMathOperator{\Null}{Null}
\DeclareMathOperator{\Iso}{Iso}
\newcommand{\Hm}{{\color{Fuchsia}H}}
\newcommand{\Wp}{\Ws}
\newcommand\newsubcommand[3]{\def#1{#2\sc@sub{#3}}}
\def\sc@sub#1{\def\sc@thesub{#1}\@ifnextchar_{\sc@mergesubs}{_{\sc@thesub}}}
\def\sc@mergesubs_#1{_{\sc@thesub#1}}
\newcommand{\FL}{\Fs\ell}
\title{\boldmath On the wellposedness of the KdV equation on the space of pseudomeasures}
\author{Thomas Kappeler\footnote{Partially supported by the Swiss National Science Foundation
}, Jan Molnar\footnote{Partially supported by the Swiss National Science Foundation
}}
\date{\today}
\newcommand{\KdV}{KdV\xspace}
\newcommand{\spi}[1]{\lin{#1}_{\Ic}}
\begin{document}

\maketitle

\begin{abstract}
In this paper we prove a wellposedness result of the KdV equation on the space of periodic pseudo-measures, also referred to as the Fourier Lebesgue space $\mathscr{F}\ell^{\infty}(\mathbb{T},\mathbb{R})$, where $\mathscr{F}\ell^{\infty}(\mathbb{T},\mathbb{R})$ is endowed with the weak* topology. Actually, it holds on any weighted Fourier Lebesgue space $\mathscr{F}\ell^{s,\infty}(\mathbb{T},\mathbb{R})$ with $-1/2 < s \le 0$ and improves on a wellposedness result of Bourgain for small Borel measures as initial data. A key ingredient of the proof is a characterization for a distribution $q$ in the Sobolev space $H^{-1}(\mathbb{T},\mathbb{R})$ to be in $\mathscr{F}\ell^{\infty}(\mathbb{T},\mathbb{R})$ in terms of asymptotic behavior of spectral quantities of the Hill operator $-\partial_{x}^{2} + q$. In addition, wellposedness results for the KdV equation on the Wiener algebra are proved.

\paragraph{Keywords.}
KdV equation, well-posedness, Birkhoff coordinates

\paragraph{2000 AMS Subject Classification.} 37K10 (primary) 35Q53, 35D05 (secondary)

\end{abstract}


\section{Introduction}

In this paper we consider the initial value problem for the Korteweg-de Vries equation on the circle $\T = \R/\Z$,
\begin{equation}
  \label{kdv}
  \partial_{t}u = -\partial_{x}^{3}u + 6u\partial_{x}u,\qquad x\in\T,\quad t\in\R.
\end{equation}
Our goal is to improve the result of \citet{Bourgain:1997gg} on global wellposedness for solutions evolving in the Fourier Lebesgue space $\FL_{0}^{\infty}$ with small Borel measures as initial data. The space $\FL_{0}^{\infty}$ consists of 1-periodic distributions $q\in S'(\T,\R)$ whose Fourier coefficients $q_{k} = \spii{q,\e^{\ii k\pi x}}$, $k\in\Z$,  satisfy $(q_{k})_{k\in\Z}\in\ell^{\infty}(\Z,\R)$ and $q_{0} = 0$. Here and in the sequel we view for convenience 1-periodic distributions as 2-periodic ones and denote by $\lin{f,g}$ the $L^{2}$-inner product $\frac{1}{2}\int_{0}^{2} f(x)\ob{g(x)}\,\dx$ extended by duality to $S'(\R/2\Z,\C)\times C^{\infty}(\R/2\Z,\C)$.
 We point out that $q_{2k+1} = 0$ for any $k\in\Z$ since $q$ is a 1-periodic distribution.
 We succeed in dropping the smallness condition on the initial data and can allow for arbitrary initial data $q\in\FL_{0}^{\infty}$. In fact, our wellposedness results hold true for any of the spaces $\FL_{0}^{s,\infty}$ with $-1/2 < s \le 0$ where
\[
  \FL_{0}^{s,\infty} = \setdef{q\in S'(\T,\R)}{q_{0} = 0 \text{ and }\n{(q_{k})_{k\in\Z}}_{s,\infty} < \infty},
\]
and
\[
  \n{(q_{k})_{k\in\Z}}_{s,\infty} \defl \sup_{k\in\Z} \w{k}^{s}\abs{q_{k}},\qquad \w{\al}\defl 1 + \abs{\al}.
\]
Informally stated, our result says that for any $-1/2 < s \le 0$, the KdV equation is globally $C^{0}$-wellposed on $\FL_{0}^{s,\infty}$. To state it more precisely, we first need to recall the wellposedness results established in~\cite{Kappeler:2006fr} on the Sobolev space $H_{0}^{-1}\equiv H_{0}^{-1}(\T,\R)$. Let $-\infty \le a < b \le \infty$ be given. A continuous curve $\gm\colon (a,b)\to H_{0}^{-1}$ with $\gm(0) = q\in H_{0}^{-1}$ is called a solution of~\eqref{kdv} with initial data $q$ if and only if for any sequence of $C^{\infty}$-potentials $(q^{(m)})_{m\ge 1}$ converging to $q$ in $H_{0}^{-1}$, the corresponding sequence $(\Sc(t,q^{(m)}))_{m\ge 1}$ of solutions of~\eqref{kdv} with initial data $q^{(m)}$ converges to $\gm(t)$ in $H_{0}^{-1}$ for any $t\in (a,b)$. In~\cite{Kappeler:2006fr} it was proved that the KdV equation is globally in time $C^{0}$-wellposed meaning that for any $q\in H_{0}^{-1}$~\eqref{kdv} admits a solution $\gm\colon \R\to H_{0}^{-1}$ with initial data in the above sense and for any $T > 0$ the solution map $\Sc\colon H_{0}^{-1}\to C([-T,T],H_{0}^{-1})$ is continuous. Note that for any $-1/2 < s\le 0$, $\FL_{0}^{s,\infty}$ continuously embeds into $H_{0}^{-1}$.
On $\FL_{0}^{s,\infty}$ we denote by $\tau_{w*}$ the weak* topology $\sg(\FL_{0}^{s,\infty},\FL_{0}^{-s,1})$. We refer to Appendix~\ref{app:weak-star} for a discussion.

\begin{thm}
\label{thm:kdv-wp}
For any $q\in \FL_{0}^{s,\infty}$ with $-1/2 < s \le 0$, the solution curve $t\mapsto \Sc(t,q)$ evolves in $\FL_{0}^{s,\infty}$. It is bounded, $\sup_{t\in \R} \n{\Sc(t,q)}_{s,\infty} < \infty$ and continuous with respect to the weak* topology $\tau_{w*}$.~\fish
\end{thm}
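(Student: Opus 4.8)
The plan is to deduce all three assertions of the theorem from two structural facts about the KdV hierarchy on $H_0^{-1}$ --- that the flow is isospectral and that isospectral sets are bounded --- together with the spectral characterization of $\FL_0^{s,\infty}$ announced in the abstract. Fix $q\in\FL_0^{s,\infty}$ with $-1/2<s\le0$. Since $\FL_0^{s,\infty}$ embeds continuously into $H_0^{-1}$, the curve $t\mapsto\Sc(t,q)$ is, by the wellposedness result of~\cite{Kappeler:2006fr} recalled above, a well-defined continuous curve in $H_0^{-1}$. The first observation is that it stays inside the isospectral set $\Iso(q)\defl\setdef{p\in H_0^{-1}}{\spec(-\partial_x^2+p)=\spec(-\partial_x^2+q)}$ of periodic spectra on $\R/2\Z$: the KdV flow on $H_0^{-1}$ is isospectral --- equivalently, in Birkhoff coordinates $(z_n)_{n\ge1}$ it is the rotation $z_n(t)=\e^{\ii\om_n t}z_n(0)$ --- so each gap length $\gm_n$ of $-\partial_x^2+q$, each action $\abs{z_n}^2$, and more generally any quantity built from the periodic spectrum, is constant along $t\mapsto\Sc(t,q)$.

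Next I would invoke the characterization of $\FL_0^{s,\infty}$ among elements of $H_0^{-1}$ by the decay of the gap lengths, in its quantitative form: for $p\in H_0^{-1}$ one has $p\in\FL_0^{s,\infty}$ iff $\sup_{n\ge1}\w{n}^s\gm_n(p)<\infty$, and in that case $\n{p}_{s,\infty}$ is bounded by a function of $\sup_{n\ge1}\w{n}^s\gm_n(p)$ and $\n{p}_{H^{-1}}$ alone. Applying this along the orbit: the gap lengths being $t$-independent, $\sup_n\w{n}^s\gm_n(\Sc(t,q))=\sup_n\w{n}^s\gm_n(q)<\infty$ for all $t$, whence $\Sc(t,q)\in\FL_0^{s,\infty}$; moreover $\n{\Sc(t,q)}_{H^{-1}}\le\sup_{p\in\Iso(q)}\n{p}_{H^{-1}}<\infty$, since $\Iso(q)$ is bounded (indeed compact) in $H_0^{-1}$, so the quantitative bound yields $\sup_{t\in\R}\n{\Sc(t,q)}_{s,\infty}<\infty$. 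In particular the whole orbit lies in a fixed ball $B\subset\FL_0^{s,\infty}$.

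For the weak* continuity I would use that a net contained in a fixed ball of $\FL_0^{s,\infty}=(\FL_0^{-s,1})^*$ converges in $\tau_{w*}$ as soon as it converges when tested against a dense subset of the predual, and that the trigonometric polynomials are dense in $\FL_0^{-s,1}$. Concretely, fix $t_0\in\R$, $\ep>0$ and $\ph\in\FL_0^{-s,1}$, and split $\ph=\ph_N+\ph^N$ into the Fourier modes $\abs{k}\le N$ and $\abs{k}>N$; choosing $N$ large, $\n{\ph^N}_{-s,1}<\ep$, so the duality estimate $\abs{\lin{p,\ph}}\le\n{p}_{s,\infty}\n{\ph}_{-s,1}$ together with the bound on $B$ gives $\abs{\lin{\Sc(t,q)-\Sc(t_0,q),\ph^N}}\le 2(\sup_{p\in B}\n{p}_{s,\infty})\ep$ for every $t$, while $\lin{\Sc(t,q)-\Sc(t_0,q),\ph_N}\to0$ as $t\to t_0$, since $\ph_N$ is a trigonometric polynomial, each Fourier coefficient $p\mapsto\lin{p,\e^{\ii k\pi x}}$ is continuous on $H_0^{-1}$, and $\Sc(t,q)\to\Sc(t_0,q)$ in $H_0^{-1}$. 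Letting $t\to t_0$ and then $\ep\to0$ yields $\tau_{w*}$-continuity at $t_0$.

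The steps involving only soft functional analysis (the $H_0^{-1}$-to-weak* passage) and the isospectrality of the flow are routine; the substance of the theorem --- and the point I expect to require the most care --- is the spectral characterization of $\FL_0^{s,\infty}$ by gap-length asymptotics, and in particular having it in a form that is \emph{uniform on $H_0^{-1}$-bounded (equivalently, isospectral) sets}. A merely qualitative equivalence ``$p\in\FL_0^{s,\infty}\iff\sup_n\w{n}^s\gm_n(p)<\infty$'' would not by itself prevent the $\FL_0^{s,\infty}$-norm from drifting to $+\infty$ along the orbit; it is the quantitative version, combined with the boundedness of $\Iso(q)$ in $H_0^{-1}$, that delivers $\sup_{t\in\R}\n{\Sc(t,q)}_{s,\infty}<\infty$.
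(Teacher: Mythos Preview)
Your proposal is correct and follows essentially the same route as the paper. The paper's proof invokes the fact from~\cite{Kappeler:2006fr} that the $H_0^{-1}$-solution curve stays in $\Iso(q)$, then uses Lemma~\ref{iso-bounded} (the quantitative form of Theorem~\ref{inv}) to conclude that $\Iso(q)$ is $\n{\cdot}_{s,\infty}$-bounded, and finishes the weak* continuity via the characterization in Lemma~\ref{weak-star}~(ii) (norm-boundedness plus componentwise convergence), which is exactly your trigonometric-polynomial density argument spelled out; you have also correctly identified that the substantive work lies in the uniform spectral characterization, which the paper packages as ``$\Iso(q)$ is bounded in $\FL_0^{s,\infty}$'' rather than your slightly different (but equivalent for this purpose) formulation ``$\n{p}_{s,\infty}$ is controlled by $\sup_n\w{n}^s\gm_n(p)$ and $\n{p}_{H^{-1}}$''.
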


\begin{rem}
\label{rem-airy}
It is easy to see that for generic initial data, the solution curve $t\mapsto \Sc_{\text{Airy}}(t,q)$ of the Airy equation, $\partial_{t}u = -\partial_{x}^{3}u$, is not continuous with respect to the norm topology of $\FL_{0}^{s,\infty}$. A similar result holds true for the KdV equation at least for small initial data -- see Section~\ref{s:main-proofs}.~\map
\end{rem}

We say that a subset $V\subset\FL_{0}^{s,\infty}$ is KdV-invariant if for any $q\in\FL_{0}^{s,\infty}$, $\Sc(t,q)\in V$ for any $t\in\R$.

\begin{thm}
\label{thm:invariant}
Let $V\subset \FL_{0}^{s,\infty}$ with $-1/2 < s \le 0$ be a KdV-invariant $\n{\cdot}_{s,\infty}$-norm bounded subset, that is
\[
  \Sc(t,q)\in V,\quad \forall t\in\R,\; q\in V;\qquad \sup_{q\in V}\n{q}_{s,\infty} < \infty.
\]
Then for any $T > 0$, the restriction of the solution map $\Sc$ to $V$ is weak* continuous,
\[
  \Sc\colon (V,\tau_{w*})\to C([-T,T],(V,\tau_{w*})).~\fish
\]
\end{thm}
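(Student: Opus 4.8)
The plan is to derive Theorem~\ref{thm:invariant} by soft arguments from Theorem~\ref{thm:kdv-wp} together with the $C^{0}$--wellposedness of \eqref{kdv} on $H_{0}^{-1}$ proved in~\cite{Kappeler:2006fr}; no further information about the KdV flow itself is needed. Write $M\defl\sup_{q\in V}\n{q}_{s,\infty}<\infty$. Since $V$ is KdV--invariant, the entire orbit $\setdef{\Sc(t,q)}{q\in V,\ t\in\R}$ lies in $V$ and is therefore contained in the $M$--ball $B_{M}$ of $\FL_{0}^{s,\infty}$. Because the predual $\FL_{0}^{-s,1}$ is separable, $\tau_{w*}$ is metrizable on $B_{M}$; fix a metric $d$ for it there (see Appendix~\ref{app:weak-star}). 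Then $C([-T,T],(V,\tau_{w*}))$ carries the complete metric $\rho(f,g)\defl\sup_{\abs{t}\le T}d(f(t),g(t))$, and it suffices to show that $q^{(n)}\to q$ in $(V,\tau_{w*})$ implies $\rho\big(\Sc(\cdot,q^{(n)}),\Sc(\cdot,q)\big)\to0$; in particular one may work throughout with sequences.

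The first step is an elementary \emph{interchange lemma} comparing the two topologies on $B_{M}$. (i) If $p^{(n)}\to p$ in $\tau_{w*}$ with all $p^{(n)}\in B_{M}$, then testing against the exponentials $\e^{\ii k\pi x}$, $k\neq0$, shows $p^{(n)}_{k}\to p_{k}$ for every $k$, while $\abs{p^{(n)}_{k}-p_{k}}\le2M\w{k}^{s}$ and $\sum_{k}\w{k}^{2s-2}<\infty$ since $s\le0$; hence $p^{(n)}\to p$ in $H_{0}^{-1}$ by dominated convergence in the weighted $\ell^{2}$ model of $H_{0}^{-1}$. (ii) Conversely, if $p^{(n)}\in B_{M}$, $p^{(n)}\to p$ in $H_{0}^{-1}$ and $p\in\FL_{0}^{s,\infty}$, then by Banach--Alaoglu every subsequence of $(p^{(n)})$ has a $\tau_{w*}$--convergent sub--subsequence, whose limit coincides with $p$ by (i) and uniqueness of limits in $H_{0}^{-1}$; therefore $p^{(n)}\to p$ in $\tau_{w*}$. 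As a first application, for each fixed $t$ the map $q\mapsto\Sc(t,q)$ is weak*--to--weak* continuous on $V$: $q^{(n)}\to q$ weak* gives $q^{(n)}\to q$ in $H_{0}^{-1}$ by (i), hence $\Sc(t,q^{(n)})\to\Sc(t,q)$ in $H_{0}^{-1}$ by continuity of the solution map on $H_{0}^{-1}$, and since $\big(\Sc(t,q^{(n)})\big)_{n}\subset B_{M}$ and $\Sc(t,q)\in\FL_{0}^{s,\infty}$ by Theorem~\ref{thm:kdv-wp}, (ii) upgrades this to $\tau_{w*}$--convergence.

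It remains to upgrade pointwise continuity in $t$ to uniform continuity on $[-T,T]$, and for this I would argue by contradiction. Assume $\rho\big(\Sc(\cdot,q^{(n)}),\Sc(\cdot,q)\big)\not\to0$ for some $q^{(n)}\to q$ in $(V,\tau_{w*})$. Then, after passing to a subsequence, there are $\ep>0$ and $t_{n}\in[-T,T]$ with $d\big(\Sc(t_{n},q^{(n)}),\Sc(t_{n},q)\big)\ge\ep$, and after a further subsequence $t_{n}\to t_{*}\in[-T,T]$. By (i), $q^{(n)}\to q$ in $H_{0}^{-1}$, so the continuity of $\Sc\colon H_{0}^{-1}\to C([-T,T],H_{0}^{-1})$ from~\cite{Kappeler:2006fr} gives $\sup_{\abs{t}\le T}\n{\Sc(t,q^{(n)})-\Sc(t,q)}_{H_{0}^{-1}}\to0$; combining this with the $H_{0}^{-1}$--continuity of the solution curve $t\mapsto\Sc(t,q)$ yields $\Sc(t_{n},q^{(n)})\to\Sc(t_{*},q)$ in $H_{0}^{-1}$. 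Since $\big(\Sc(t_{n},q^{(n)})\big)_{n}\subset B_{M}$ and $\Sc(t_{*},q)\in\FL_{0}^{s,\infty}$ by Theorem~\ref{thm:kdv-wp}, part (ii) promotes this to $d\big(\Sc(t_{n},q^{(n)}),\Sc(t_{*},q)\big)\to0$. On the other hand $t\mapsto\Sc(t,q)$ is $\tau_{w*}$--continuous by Theorem~\ref{thm:kdv-wp}, so $d\big(\Sc(t_{n},q),\Sc(t_{*},q)\big)\to0$ as well, and the triangle inequality contradicts $d\big(\Sc(t_{n},q^{(n)}),\Sc(t_{n},q)\big)\ge\ep$. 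This completes the plan.

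Once Theorem~\ref{thm:kdv-wp} and the $H_{0}^{-1}$--theory are in hand the argument is entirely formal, so there is no serious analytic obstacle; the two points that need some care are part (ii) of the interchange lemma — where one must know a priori that the relevant $H_{0}^{-1}$--limit already lies in $\FL_{0}^{s,\infty}$, which is precisely what Theorem~\ref{thm:kdv-wp} supplies — and the topological bookkeeping of Appendix~\ref{app:weak-star}, ensuring that continuity of $\Sc$ into $C([-T,T],(V,\tau_{w*}))$ genuinely reduces to uniform--in--$t$ convergence of the metric $d$, as used above.
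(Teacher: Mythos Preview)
Your argument is correct, but the route differs from the paper's. The paper's proof is a two-line reduction: by Lemma~\ref{weak-star}(iv), on any $\n{\cdot}_{s,\infty}$-bounded set the weak* topology coincides with the $\n{\cdot}_{\sg,p}$-topology for any $-1/2\le\sg<s$, $2\le p<\infty$ with $(s-\sg)p>1$; the continuity of $\Sc\colon(V,\n{\cdot}_{\sg,p})\to C([-T,T],(V,\n{\cdot}_{\sg,p}))$ is then quoted directly from the companion paper~\cite{Kappeler:CNzeErmy}, and the theorem follows. You instead bypass the $\FL_{0}^{\sg,p}$-wellposedness entirely, using only the $H_{0}^{-1}$-theory of~\cite{Kappeler:2006fr} together with Theorem~\ref{thm:kdv-wp} and a Banach--Alaoglu/subsequence contradiction to upgrade pointwise to uniform convergence. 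Your approach is more self-contained (it does not require the nontrivial $\FL_{0}^{\sg,p}$ result as a black box, and in fact the invocations of Theorem~\ref{thm:kdv-wp} could even be replaced by the KdV-invariance of $V$), at the cost of being longer; the paper's approach is shorter but leans on a heavier external input. One minor slip: in part (i) of your interchange lemma the bound should read $\abs{p^{(n)}_{k}-p_{k}}\le 2M\w{k}^{-s}$, so the dominating sequence is $\w{k}^{-2-2s}$, summable precisely because $s>-1/2$ (not merely $s\le0$); the conclusion is unaffected.
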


By the same methods we also prove that the KdV equation is globally $C^{\om}$-wellposed on the Wiener algebra $\FL_{0}^{0,1}$ -- see Section~\ref{s:wiener} where such a result is proved for the weighted Fourier Lebesgue space $\FL_{0}^{N,1}$, $N\in\Z_{\ge 0}$.

\emph{Method of proof.}
Theorem~\ref{thm:kdv-wp} and Theorem~\ref{thm:invariant} are proved by the method of normal forms. We show that the restriction of the Birkhoff map $\Phi\colon H_{0}^{-1}\to \ell_{0}^{-1/2,2}$ $q\mapsto z(q) = (z_{n}(q))_{n\in\Z}$, constructed in \cite{Kappeler:2005fb}, to $\FL_{0}^{s,\infty}$ is a map with values in $\ell_{0}^{s+1/2,\infty}(\Z,\C)$, having the following properties:

\begin{thm}
\label{thm:bhf}
For any $-1/2 < s\le 0$, $\Phi\colon \FL_{0}^{s,\infty}\to \ell_{0}^{s,\infty}$ is a bijective, bounded, real analytic map between the two Banach spaces.
Near the origin, $\Phi$ is a local diffeomorphism.
When restricted to any $\n{\cdot}_{s,\infty}$-norm bounded subset $V\subset \FL_{0}^{s,\infty}$, $\Phi\colon V\to \Phi(V)$ is a homeomorphism when $V$ and $\Phi(V)$ are endowed with the weak* topologies $\sg(\FL_{0}^{s,\infty},\FL_{0}^{-s,1})$ and $\sg(\ell_{0}^{s+1/2,\infty},\ell_{0}^{-(s+1/2),1})$, respectively.
Furthermore for any $q\in \FL_{0}^{s,\infty}$, the set $\Iso(q)$ of elements $\tilde q\in H_{0}^{-1}$ so that $-\partial_{x}^{2}+q$ and $-\partial_{x}^{2}+\tilde q$ have the same periodic spectrum is a $\n{\cdot}_{s,\infty}$-norm bounded subset of $\FL_{0}^{s,\infty}$ and hence $\Phi\colon \Iso(q)\to \Phi(\Iso(q))$ is a homeomorphism when $\Iso(q)$ and $\Phi(\Iso(q))$ are endowed with the weak* topologies.~\fish
\end{thm}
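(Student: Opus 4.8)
The plan is to reduce Theorem~\ref{thm:bhf} to three inputs: (a) the properties of the Birkhoff map $\Phi\colon H_{0}^{-1}\to\ell_{0}^{-1/2,2}$ from~\cite{Kappeler:2005fb} --- it is a real-analytic diffeomorphism with $\Phi(0)=0$; the modulus $\abs{z_{n}(q)}$ of each Birkhoff coordinate (equivalently, the $n$-th action) depends only on the periodic spectrum of $-\partial_{x}^{2}+q$; and $d_{0}\Phi$ is, in the Fourier basis, a diagonal isomorphism with $n$-th symbol of order $\w{n}^{-1/2}$, up to a smoothing remainder; (b) the continuous embeddings $\FL_{0}^{s,\infty}\hookrightarrow H_{0}^{-1}$ and $\ell_{0}^{s+1/2,\infty}\hookrightarrow\ell_{0}^{-1/2,2}$, both valid precisely because $s>-1/2$; and (c) the spectral characterization established earlier in the paper: $q\in H_{0}^{-1}$ lies in $\FL_{0}^{s,\infty}$ if and only if $z(q)\in\ell_{0}^{s+1/2,\infty}$, together with its quantitative form, namely that $\n{z(q)}_{s+1/2,\infty}$ is bounded by a function of $\n{q}_{s,\infty}$ and conversely. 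Granting (a)--(c), the first sentence of Theorem~\ref{thm:bhf} is essentially immediate: surjectivity onto $\ell_{0}^{s+1/2,\infty}$ and the fact that bounded sets are carried to bounded sets in either direction are exactly what (c) provides, while injectivity is inherited from the bijectivity of $\Phi$ on $H_{0}^{-1}$.

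For real analyticity I would invoke the standard criterion that a locally bounded map into a weighted $\ell^{\infty}$-space whose coordinate functions are all analytic is itself analytic. Fix $q_{0}\in\FL_{0}^{s,\infty}$: the (complexified) characterization furnishes a complex neighbourhood of $q_{0}$ on which $\Phi$ is bounded in $\n{\cdot}_{s+1/2,\infty}$, and each $q\mapsto\w{n}^{s+1/2}z_{n}(q)$ is analytic, being the restriction to $\FL_{0}^{s,\infty}$ of the analytic scalar function $\w{n}^{s+1/2}(\pi_{n}\circ\Phi)$ on $H_{0}^{-1}$ (with $\pi_{n}$ the $n$-th coordinate map). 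Cauchy estimates on a small polydisc then bound the $k$-homogeneous Taylor terms uniformly in $n$, so the Taylor series of $\Phi$ converges in $\n{\cdot}_{s+1/2,\infty}$ near $q_{0}$. The local-diffeomorphism property near $0$ follows from the analytic inverse function theorem once $d_{0}\Phi\colon\FL_{0}^{s,\infty}\to\ell_{0}^{s+1/2,\infty}$ is seen to be a linear isomorphism; by (a) it is a Fourier-diagonal isomorphism of these two spaces up to a smoothing, hence compact, perturbation, and, being also injective (restriction of $d_{0}\Phi$ on $H_{0}^{-1}$), it is an isomorphism. Continuity of $q\mapsto d_{q}\Phi$ extends this to a neighbourhood of $0$.

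The weak* statements hinge on one elementary observation: since $s>-1/2$, a $\n{\cdot}_{s,\infty}$-bounded sequence converging coordinatewise converges in $H_{0}^{-1}$ (dominated convergence with $\ell^{1}$-majorant $\w{k}^{-2-2s}$), and symmetrically a $\n{\cdot}_{s+1/2,\infty}$-bounded coordinatewise convergent sequence converges in $\ell_{0}^{-1/2,2}$. Because the preduals $\FL_{0}^{-s,1}$ and $\ell_{0}^{-(s+1/2),1}$ are separable, on norm-bounded sets the weak* topologies are metrizable and coincide with the topology of coordinatewise convergence (Appendix~\ref{app:weak-star}), so it suffices to check sequential continuity both ways. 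Let $V\subset\FL_{0}^{s,\infty}$ be $\n{\cdot}_{s,\infty}$-bounded; then $\Phi(V)$ is $\n{\cdot}_{s+1/2,\infty}$-bounded by (c). If $q^{(m)}\wsto q$ in $V$, then $q^{(m)}\to q$ coordinatewise, hence in $H_{0}^{-1}$, hence $z(q^{(m)})\to z(q)$ in $\ell_{0}^{-1/2,2}$ by continuity of $\Phi$ on $H_{0}^{-1}$, hence coordinatewise, and, $\Phi(V)$ being bounded, $z(q^{(m)})\wsto z(q)$. Running the same chain through $\Phi^{-1}$ (continuous on $\ell_{0}^{-1/2,2}$) gives continuity of the inverse, so $\Phi\colon(V,\tau_{w*})\to(\Phi(V),\tau_{w*})$ is a homeomorphism.

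Finally, any $\tilde q\in\Iso(q)$ has the same periodic spectrum as $q$, hence the same moduli $\abs{z_{n}(\tilde q)}=\abs{z_{n}(q)}$ for all $n$; thus $\n{z(\tilde q)}_{s+1/2,\infty}=\n{z(q)}_{s+1/2,\infty}<\infty$ whenever $q\in\FL_{0}^{s,\infty}$, and (c) gives $\Iso(q)\subset\FL_{0}^{s,\infty}$ with $\sup_{\tilde q\in\Iso(q)}\n{\tilde q}_{s,\infty}<\infty$. The previous paragraph with $V=\Iso(q)$ then finishes the proof. The genuinely hard work is entirely in ingredient (c) --- the spectral characterization and its quantitative, complexified form; everything else is an assembly of (a), the embedding $s>-1/2$, and standard facts about analytic maps and weak* topologies. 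Within that assembly the least automatic steps are the Cauchy-estimate argument for analyticity and the identification of $d_{0}\Phi$ on the Fourier--Lebesgue scale.
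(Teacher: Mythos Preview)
Your proposal is correct and follows essentially the same route as the paper, which assembles the theorem from Lemmas~\ref{Phi-ana}, \ref{Phi-onto}, \ref{iso-fl-s-infty}, \ref{Phi-bounded} and Corollary~\ref{Phi-wst-continuous}: analyticity and the local diffeomorphism at $0$ via the ``locally bounded plus coordinatewise analytic'' criterion, surjectivity and norm-boundedness via the spectral characterization (Theorems~\ref{thm:spec-fw} and~\ref{inv} together with the comparison $I_{n}\sim\gm_{n}^{2}/8n\pi$ from Theorem~\ref{bm.H-1}), and the weak* statements via metrizability on balls plus continuity of $\Phi$ on $H_{0}^{-1}$. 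The only cosmetic differences are that the paper identifies $d_{0}\Phi$ explicitly as the weighted Fourier transform (so no Fredholm argument is needed), and deduces the $\n{\cdot}_{s,\infty}$-boundedness of $\Iso(q)$ directly from the gap-length characterization (Theorem~\ref{inv}) rather than passing through the Birkhoff coordinates.
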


\begin{rem}
Note that by~\cite{Kappeler:2005fb} for any $q\in H_{0}^{-1}$, $\Phi(\Iso(q)) = \Tc_{\Phi(q)}$ where
\begin{equation}
  \label{torus}
  \Tc_{\Phi(q)}
   = \setdef{\tilde z = (\tilde z_{k})_{k\in\Z}\in \ell_{0}^{-1/2,2}}
            {\abs{\tilde z_{k}} = \abs{\Phi(q)_{k}} \forall k\in\Z}.
\end{equation}
Furthermore, since by Theorem~\ref{thm:bhf} for any $q\in\FL_{0}^{s,\infty}$, $-1/2 < s \le 0$, $\Iso(q)$ is bounded in $\FL_{0}^{s,\infty}$, the weak* topology on $\Iso(q)$ coincides with the one induced by the norm $\n{\cdot}_{\sg,p}$ for any $-1/2 < \sg < s$, $2\le p < \infty$ with $(s-\sg)p > 1$ -- cf. Lemma~\ref{weak-star} from Appendix~\ref{app:weak-star}.~\map
\end{rem}

Key ingredient for studying the restriction of the Birkhoff map to the Banach spaces $\FL_{0}^{s,\infty}$ are pertinent asymptotic estimates of spectral quantities of the Schrödinger operator $-\partial_{x}^{2}+q$, which appear in the estimates of the Birkhoff coordinates in \cite{Kappeler:2005fb,Kappeler:CNzeErmy} -- see Section~\ref{s:spectral-theory}.
The proofs of Theorem~\ref{thm:kdv-wp} and Theorem~\ref{thm:invariant} then are obtained by studying the restriction of the solution map $\Sc$, defined in \cite{Kappeler:2006fr} on $H_{0}^{-1}$, to $\FL_{0}^{s,\infty}$.
To this end, the KdV equation is expressed in Birkhoff coordinates $z = (z_{n})_{n\in\Z}$. It takes the form
\[
  \partial_{t}z_{n} = -\ii \om_{n}z_{n},\qquad
  \partial_{t}z_{-n} = \ii \om_{n}z_{-n},\qquad n\ge 1,
\]
where $\om_{n}$, $n\ge 1$, are the KdV frequencies. For $q\in H_{0}^{1}$, these frequencies are defined in terms of the KdV Hamiltonian $\Hm(q) = \int_{0}^{1} \p*{\frac{1}{2}(\partial_{x}q)^{2} + q^{3} }\,\dx$. When viewed as a function of the Birkhoff coordinates, $\Hm$ is a real analytic function of the actions $I_{n} = z_{n}z_{-n}$, $n\ge 1$, alone and $\om_{n}$ is given by
\[
  \om_{n} = \partial_{I_{n}}\Hm.
\]
For $q\in H_{0}^{-1}$, the KdV frequencies are defined by analytic extension -- see~\cite{Kappeler:2016uj} for novel formulas allowing to derive asymptotic estimates.

\emph{Related results}.
The wellposedness of the KdV equation on $\T$ has been extensively studied - c.f. e.g. \cite{Molinet:2012il} for an account on the many results obtained so far. In particular, based on \cite{Bourgain:1993cl} and \cite{Kenig:1996tq} it was proved in \cite{Colliander:2003fv} that the KdV equation is globally uniformly $C^{0}$-wellposed and $C^{\om}$-wellposed on the Sobolev spaces $H_{0}^{s}(\T,\R)$ for any $s\ge -1/2$. In \cite{Kappeler:2006fr} it was shown that the KdV equation is globally $C^{0}$-wellposed in the Sobolev spaces $H_{0}^{s}$, $-1 \le s < 1/2$ and in \cite{Kappeler:2016uj} it was proved that for $-1 < s < -1/2$ and $T > 0$, the solution map $H_{0}^{s}\to C([-T,T],H_{0}^{s})$ is nowhere locally uniformly continuous.
In \cite{Molinet:2012il}, it was shown that the KdV equation is illposed in $H_{0}^{s}$ for $s <-1$. Most closely related to Theorem~\ref{thm:kdv-wp} and Theorem~\ref{thm:invariant} are the wellposedness results of \citet{Bourgain:1997gg} for initial data given by Borel measures which we have already discussed at the beginning of the introduction and the recent wellposedness results in ~\cite{Kappeler:CNzeErmy} on the Fourier Lebesgue spaces $\FL_{0}^{s,p}$ for $-1/2 \le s \le 0$ and $2\le p < \infty$.

\emph{Notation}.
We collect a few notations used throughout the paper. For any $s\in\R$ and $1\le p \le \infty$, denote by $\ell_{0,\C}^{s,p}$ the $\C$-Banach space of complex valued sequences given by
\[
  \ell_{0,\C}^{s,p} \defl
  \setdef{z = (z_{k})_{k\in\Z}\subset\C}{z_{0} = 0;\quad \n{z}_{s,p} < \infty},
\]
where
\[
  \n{z}_{s,p} \defl \p*{\sum_{k\in\Z} \w{n}^{sp}\abs{z_{n}}^{p}}^{1/p},\quad 1\le p < \infty,\qquad
  \n{z}_{s,\infty} \defl \sup_{k\in\Z} \w{n}^{s}\abs{z_{n}},
\]
and by $\ell_{0}^{s,p}$ the real subspace
\[
  \ell_{0}^{s,p} \defl \setdef{z = (z_{k})_{k\in\Z} \in \ell_{0,\C}^{s,p}}{z_{-k} = \ob{z_{k}}\forall k\ge 1}.
\]
By $\ell_{\R}^{s,p}$ we denote the $\R$-subspace of $\ell_{0}^{s,p}$ consisting of real valued sequences $z = (z_{k})_{k\in\Z}$ in $\R$.
Further, we denote by $\FL_{0,\C}^{s,p}$ the Fourier Lebesgue space, introduced by Hörmander,
\[
  \FL_{0,\C}^{s,p} \defl \setdef{q\in S_{\C}'(\T)}{(q_{k})_{k\in\Z}\subset\ell_{0,\C}^{s,p}}
\]
where $q_{k}$, $k\in\Z$, denote the Fourier coefficients of the 1-periodic distribution $q$, $q = \spii{q,e_{k}}$, $e_{k}(x)\defl \e^{\ii k\pi x}$, and $\spii{\cdot,\cdot}$ denotes the $L^{2}$-inner product, $\spii{f,g} = \frac{1}{2}\int_{0}^{2} f(x)\ob{g(x)}\,\dx$, extended by duality to a sesquilinear form on $\Sc_{\C}'(\R/2\Z)\times C_{\C}^{\infty}(\R/2\Z)$.
Correspondingly, we denote by $\FL_{0}^{s,p}$ the real subspace of $\FL_{0,\C}^{s,p}$,
\[
  \FL_{0}^{s,p} \defl \setdef{q\in S_{\C}'(\T)}{(q_{k})_{k\in\Z}\subset\ell_{0}^{s,p}}.
\]
In case $p=2$, we also write $H_{0}^{s}$ [$H_{0,\C}^{s}$] instead of $\FL_{0}^{s,2}$ [$\FL_{0,\C}^{s,2}$] and refer to it as Sobolev space. Similarly, for the sequences spaces $\ell_{0}^{s,2}$ and $\ell_{0,\C}^{s,2}$ we sometimes write $h_{0}^{s}$ [$h_{0,\C}^{s}$].
Occasionally, we will need to consider the sequence spaces $\ell_{\C}^{s,p}(\N) \equiv \ell^{s,p}(\N,\C)$ and $\ell_{\R}^{s,p}(\N) \equiv \ell^{s,p}(\N,\R)$ defined in an obvious way.

Note that for any $z\in \ell_{0}^{s,p}$, $I_{k}\defl z_{k}z_{-k}\ge 0$ for all $k\ge 1$. We denote by $\Tc_{z}$ the torus given by
\[
  \Tc_{z} \defl \setdef{\tilde z = (\tilde z_{k})_{k\in\Z}\in \ell_{0}^{s,p}}{\tilde z_{k}\tilde z_{-k} = z_{k}z_{-k},\quad k\ge 1}.
\]
For $1 \le p < \infty$, $\Tc_{z}$ is compact in $\ell_{0}^{s,p}$ for any $z\in\ell_{0}^{s,p}$ but for $p=\infty$, it is not compact in $\ell^{s,\infty}$ for generic $z$.
For any $s\in\R$ and $1 \le p < \infty$, the dual of $\ell_{0}^{s,p}$ is given by $\ell_{0}^{-s,p'}$ where $p'$ is the conjugate of $p$, given by $1/p+1/p' = 1$. In case $p=1$ we set $p'=\infty$ and in case $p=\infty$ we set $p' = 1$.
We denote by $\tau_{w*}$ the weak* topology on $\ell_{0}^{s,\infty}$ and refer to Appendix~\ref{app:weak-star} for a discussion of the properties of $\tau_{w*}$.

\section{Spectral theory}
\label{s:spectral-theory}

In this section we consider the Schrödinger operator
\begin{equation}
  \label{Lq}
  L(q) = -\partial_{x}^{2} + q,
\end{equation}
which appears in the Lax pair formulation of the \KdV equation. Our aim is to relate the regularity of the potential $q$ to the asymptotic behavior of certain spectral data.

Let $q$ be a \emph{complex potential} in  $H^{-1}_{0,\C} \defl H_{0}^{-1}(\R/\Z,\C)$. In order to treat periodic and antiperiodic boundary conditions at the same time, we consider the differential  operator $L(q) = -\partial_{x}^{2} + q$, on $H^{-1}(\R/2\Z,\C)$ with domain of definition $H^{1}(\R/2\Z,\C)$. See Appendix~\ref{a:hill-op} for a more detailed discussion.
The spectral theory of $L(q)$, while classical for $q \in L_{0,\C}^{2}$, has been only fairly recently extended to the case  $q \in H^{-1}_{0,\C}$ -- see e.g. \cite{Djakov:2009fx,Kappeler:2001bi,Kappeler:2003vh,Korotyaev:2003gp,Savchuk:2003vl,Kappeler:CNzeErmy}
 and the references therein.
The spectrum of $L(q)$,  called the \emph{periodic spectrum of $q$} and denoted by $\spec L(q)$,  is discrete and the eigenvalues, when counted with their multiplicities and ordered lexicographically -- first by their real part and second by their imaginary part -- satisfy
\begin{equation}
  \label{lm-asymptotics-H-1}
  \lm_{0}^{+}(q) \lex \lm_{1}^{-}(q) \lex \lm_{1}^{+}(q) \lex \dotsb,
  \qquad
  \lm_{n}^{\pm}(q) = n^{2}\pi^{2} + n\ell^2_n.
\end{equation}
Furthermore, we define the \emph{gap lengths} $\gm_{n}(q)$ and the \emph{mid points} $\tau_{n}(q)$ by
\begin{equation}
  \label{gm-tau}
  \gm_{n}(q) \defl \lm_{n}^{+}(q)-\lm_{n}^{-}(q),
  \quad
  \tau_{n}(q) \defl \frac{\lm_n^+(q) + \lm_n^-(q)}{2},
  \qquad n\ge 1.
\end{equation}
For $q\in H_{0,\C}^{-1}$ we also consider the operator $L_{\dir}(q)$ defined as the operator $-\partial_{x}^{2} + q$ on $H^{-1}_{\dir}([0,1],\C)$ with domain of definition $H^{1}_{\dir}([0,1],\C)$. See Appendix~\ref{a:hill-op} as well as \cite{Djakov:2009fx,Kappeler:2001bi,Kappeler:2003vh,Korotyaev:2003gp,Savchuk:2003vl} for a more detailed discussion. The spectrum of $L_{\dir}(q)$ is called the \emph{Dirichelt spectrum of $q$}. It is also discrete and given by a sequence of eigenvalues $(\mu_{n})_{n\ge 1}$, counted with multiplicities, which when ordered lexicographically satisfies
\begin{equation}
  \label{mu-asymptotics-H-1}
  \mu_{1} \lex \mu_{2} \lex \mu_{2} \lex \dotsb,\qquad \mu_{n}  = n^{2}\pi^{2} + n\ell^2_n.
\end{equation}

For our purposes we need to characterize the regularity  of potentials $q$ in \emph{weighted Fourier Lebesgue spaces} in terms of the asymptotic behavior of certain spectral quantities.
A normalized, symmetric, monotone, and submultiplicative \emph{weight} is a function $w\colon\Z\to \R$, $n\mapsto w_{n}$, satisfying
\[
  w_{n}\ge 1,\qquad w_{-n} = w_{n},\qquad w_{\abs{n}}\le w_{\abs{n}+1},\qquad w_{n+m}\le w_{n}w_{m},
\]
for all $n,m\in\Z$. The class of all such weights is denoted by $\Ms$.
For $w\in\Ms$, $s\in\R$, and $1\le p \le \infty$, denote by $\FL_{0,\C}^{w,s,p}$ the subspace of $\FL_{0,\C}^{s,p}$ of distributions $f$ whose Fourier coefficients $(f_{n})_{n\in\Z}$ are in the space $\ell_{0,\C}^{w,s,p} = \setdef{ z=(z_n)_{n \in \Z} \in \ell_{0,\C}^{s,p} }{ \n{z}_{w,s,p} < \infty }$ where for $1 \le p < \infty$
\[
  \n{f}_{w,s,p}\defl  \n{(f_n)_{n \in \Z}}_{w,s,p} =  \p*{ \sum_{n\in\Z} w_{n}^{p}\w{n}^{sp} \abs{f_{n}}^{p} }^{1/p},\qquad
  \w{\al}\defl 1+\abs{\al},
\]
and for $p=\infty$,
\[
  \n{f}_{w,s,\infty}\defl  \n{(f_n)_{n \in \Z}}_{w,s,\infty} =  \sup_{n\in\Z}\, w_{n}\w{n}^{s} \abs{f_{n}}.
\]
To simplify notation, we denote the trivial weight $w_{n}\equiv 1$ by $\o$ and write $\FL_{0,\C}^{s,p}\equiv \FL_{0,\C}^{\o,s,p}$.

As a consequence of \eqref{lm-asymptotics-H-1}--\eqref{mu-asymptotics-H-1} it follows that for any $q \in H^{-1}_{0,\C}$,  the sequence of gap lengths $(\gm_{n}(q))_{n\ge 1}$ and the sequence $(\tau_{n}(q)-\mu_{n}(q))_{n\ge 1}$ are both in $\ell^{-1,2}_{\C}(\N)$.
For $q \in \FL^{w,s,\infty}_{0,\C}$, $-1/2<s\le 0$, the sequences have a stronger decay. More precisely, the following results hold:

\begin{thm}
\label{thm:spec-fw}
Let $w\in\Ms$ and $-1/2 < s \le 0$.
\begin{enumerate}[label=(\roman{*})]
\item
\label{fw:per}
For any $q\in\FL_{0,\C}^{w,s,\infty}$, one has $(\gm_{n}(q))_{n\ge 1}\in\ell_{\C}^{w,s,\infty}(\N)$ and the map
\[
  \FL_{0,\C}^{w,s,\infty}\to \ell_{\C}^{w,s,\infty}(\N),\qquad q \mapsto (\gm_{n}(q))_{n\ge 1},
\]
is locally bounded.

\item
\label{fw:dir}
For any $q\in\FL_{0,\C}^{w,s,\infty}$, one has $(\tau_{n}-\mu_{n}(q))_{n\ge 1}\in\ell_{\C}^{w,s,\infty}(\N)$ and the map
\[
  \FL_{0,\C}^{w,s,\infty}\to \ell_{\C}^{w,s,\infty}(\N),\qquad q \mapsto (\tau_{n}(q)-\mu_{n}(q))_{n\ge 1},
\]
is locally bounded.~\fish
\end{enumerate}
\end{thm}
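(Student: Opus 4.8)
The plan is to use the known integral/product formulas expressing the gap lengths $\gm_n(q)$ and the quantities $\tau_n(q)-\mu_n(q)$ in terms of the fundamental solutions of $L(q)y = \lm y$, and to track how these formulas behave as the Fourier coefficients of $q$ lie in the weighted space. The starting point is the classical representation: write the discriminant $\Dl(\lm,q) = y_1(1,\lm) + y_2'(1,\lm)$ and recall that for $n$ large, $\lm_n^{\pm}(q)$ are the two roots of $\Dl(\lm)^2 = 4$ near $n^2\pi^2$, while $\mu_n(q)$ is the root of $y_2(1,\lm) = 0$ near $n^2\pi^2$. The asymptotics $\lm_n^{\pm}(q) = n^2\pi^2 + n\ell^2_n$ and $\mu_n(q) = n^2\pi^2 + n\ell^2_n$ from \eqref{lm-asymptotics-H-1}--\eqref{mu-asymptotics-H-1} already localize everything; what must be sharpened is the $n\ell^2_n$ to a weighted $\ell^{w,s,\infty}$ bound. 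The route I would take is the one used in \cite{Kappeler:2005fb} and \cite{Kappeler:CNzeErmy}: expand the relevant entire functions of $\lm$ as Fourier integrals against $q$ and its iterated convolutions, then estimate term by term.

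\textbf{Step 1 (set-up and one-smoothing).} Fix a bounded neighborhood $U$ of $q_0$ in $\FL_{0,\C}^{w,s,\infty}$. Using the Volterra integral equation for the fundamental system one writes, for $\lm$ near $n^2\pi^2$, the quantities entering $\gm_n$ and $\tau_n-\mu_n$ as absolutely convergent series $\sum_{m\ge 1} c_m(n)$ where $c_m(n)$ is an $m$-fold integral involving $q$ in $m$ places and oscillatory kernels $\sim \e^{\pm\ii\sqrt{\lm}\,\xi}$. Passing to Fourier coefficients, $c_1(n)$ is (up to normalization) $q_{2n}$ (or $\widehat{q}$ evaluated at $\pm 2n$ together with the parity constraint $q_{2k+1}=0$), and $c_m(n)$ for $m\ge 2$ is a sum over $j_1+\dots+j_m = \pm 2n$ of products $q_{j_1}\cdots q_{j_m}$ weighted by small denominators $\prod (j_i^2 - \text{stuff})$.

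\textbf{Step 2 (weighted estimate of the leading term and the tail).} For the leading term, $\abs{c_1(n)} \le w_n^{-1}\w{n}^{-s}\,C$ directly from $q\in\FL_{0,\C}^{w,s,\infty}$, giving $w_n\w{n}^{s}\abs{c_1(n)}\le C$. For $m\ge 2$ the key is submultiplicativity and monotonicity of $w$: on the convolution sum with $j_1+\dots+j_m = \pm 2n$ one has $w_n \le w_{|2n|} \le w_{j_1}\cdots w_{j_m}$, so the weight distributes onto the factors. The remaining weight $\w{n}^{s}$ with $-1/2<s\le 0$ is absorbed because $s\le 0$ and $\w{n}\ge 1$; the genuine gain over the crude $n\ell^2_n$ bound comes from the small denominators $\prod_{i\ge 2}(j_i^2 - j_1^2)^{-1}$-type factors, which are summable for $s>-1/2$ exactly as in the $\FL_0^{s,p}$ estimates of \cite{Kappeler:CNzeErmy}. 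Summing the geometric-type series over $m$ on the bounded set $U$ yields $\sup_n w_n\w{n}^s\abs{\gm_n(q)} \le C(U)$ and likewise for $\tau_n-\mu_n$, which is precisely the claimed local boundedness of the two maps.

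\textbf{Main obstacle.} The technical heart — and the step I expect to be hardest — is Step 2 for $m\ge 2$: controlling the multilinear convolution sums with the oscillatory small-denominator kernels, uniformly in the weight $w\in\Ms$, and showing the series in $m$ converges with a bound depending only on $\n{q}_{w,s,\infty}$. The borderline regularity $s>-1/2$ is what makes the relevant one-dimensional sums $\sum_j \w{j}^{2s}$-type expressions (after redistributing weights) just barely convergent, so the bookkeeping of exponents must be done carefully; the parity constraint $q_{2k+1}=0$ and the fact that only even frequencies $\pm 2n$ occur should be used to avoid losing a power. I would structure this as a lemma bounding a single generic convolution-with-denominators operator on $\ell^{w,s,\infty}$, then feed it into the series, so that the weighted case reduces to the already-established unweighted estimates essentially by the inequality $w_n \le \prod w_{j_i}$.
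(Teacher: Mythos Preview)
Your route is genuinely different from the paper's. The paper does not expand the fundamental solutions via a Volterra/Picard iteration; instead it performs a Lyapunov--Schmidt reduction directly in Fourier space. For each large $n$ it splits $\FL_{\star,\C}^{s,\infty} = \Pc_n \oplus \Qc_n$ with $\Pc_n = \spanx\{e_n,e_{-n}\}$, solves the $\Qc_n$-equation via the contraction $T_n(\lm) = VA_\lm^{-1}Q_n$ (the bound $\n{T_n}_{w,s,\infty;\pm n} \le c_s n^{-(1/2-\abs{s})}\n{q}_{w,s,\infty}$ is precisely where $s>-1/2$ and the submultiplicativity of $w$ enter), and reduces the eigenvalue equation to the vanishing of a $2\times 2$ determinant $\det B_n(\lm)$ whose off-diagonal entries satisfy $w_{2n}\lin{2n}^{s}\abs{b_{\pm n}(\lm) - q_{\pm 2n}} \le 2\n{T_n}\,\n{q}_{w,s,\infty}$. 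The gap estimate then follows from a Rouch\'e argument. Your multilinear expansion is morally the Neumann series of $(\Id-T_n)^{-1}$ written out in coordinates, so the two arguments are cousins; the projection formulation has the advantage that the multiplication operator $V\colon H^{1}(\R/2\Z,\C)\to H^{-1}(\R/2\Z,\C)$ is well-defined by duality, so one never has to make sense of a Volterra integral equation with a distributional potential --- a point your Step~1 passes over but which is not automatic for $q\in H^{-1}_{0,\C}$.

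The substantive gap in your proposal is part~(ii). The phrase ``and likewise for $\tau_n-\mu_n$'' hides a real difficulty: the Dirichlet problem does not reduce to the same $2\times 2$ matrix, and for complex $q$ the periodic eigenvalue $\lm_n^+$ may have a nontrivial Jordan block. The paper's proof of (ii) is considerably longer than that of (i). It first constructs an $L^2$-orthonormal basis $\{f_n^+,\ph_n\}$ of the two-dimensional root space $E_n$ of $L(q)$ and proves $(L-\lm_n^+)\ph_n = -\gm_n\ph_n + \eta_n f_n^+$ with $\abs{\eta_n} \lesssim \abs{\gm_n} + \abs{b_n(\lm_n^+)} + \abs{b_{-n}(\lm_n^+)}$; this requires a refined estimate on $\lin{T_n f,e_{\pm n}}$ and the Riesz-projector bound $\n{P_{n,q}-P_n}_{L^2\to L^\infty}=o(1)$. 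It then builds a specific element $\tilde G_n\in E_n$ whose restriction to $[0,1]$ lies in $H_{\dir,\C}^{1}$, projects onto the one-dimensional Dirichlet Riesz space via $\Pi_{n,q}$, and controls $\mu_n-\lm_n^+$ through the resulting identity together with $\n{\Pi_{n,q}-\Pi_n}_{L^2(\Ic)\to L^\infty(\Ic)}=o(1)$. None of this is a byproduct of the discriminant/$y_2(1,\lm)$ expansion you sketch; if you pursue your route you will need a separate argument relating the zeros of $y_2(1,\lm)$ to the periodic data at the required $\ell^{w,s,\infty}$ precision, and you should expect the complex, possibly non-selfadjoint case to force something like the Jordan-block analysis the paper carries out.
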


A key ingredient for studying the restriction of the Birkhoff map of the KdV equation, defined on $H_{0}^{-1}$, to $\FL_{0}^{s,\infty}$ is the following spectral characterization for a potential $q\in H_{0}^{-1}$ to be in $\FL_{0}^{s,\infty}$.

\begin{thm}
\label{inv}
Let $q\in H_{0}^{-1}$ with gap lengths $\gm(q)\in \ell_{\R}^{s,\infty}$ for some $-1/2< s \le 0$. Then the following holds:
\begin{enumerate}[label=(\roman{*})]
\item
$q \in \FL_{0}^{s,\infty}$.
\item
$\Iso(q) \subset \FL_{0}^{s,\infty}$.
\item
$\Iso(q)$ is weak* compact.~\fish
\end{enumerate}
\end{thm}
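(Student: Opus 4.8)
The plan is to reduce everything to the spectral asymptotics collected in Theorem~\ref{thm:spec-fw}, the known characterization of $H_0^{-1}$-potentials via periodic and Dirichlet data, and a compactness argument using the torus structure \eqref{torus}. First I would address (i). Recall that the periodic spectrum $\spec L(q)$, together with the Dirichlet eigenvalues $(\mu_n)_{n\ge 1}$ and the signs/types of the $\mu_n$ (i.e.\ on which sheet of the spectral curve they sit), determine $q$ uniquely; concretely, for $q\in H_0^{-1}$ the Fourier coefficients $q_k$ can be recovered from trace-type formulas. The hypothesis is that the gap lengths satisfy $\gm(q)\in\ell_\R^{s,\infty}$ with $-1/2<s\le 0$. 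I would feed this into a bootstrap: since $\gm_n = \lm_n^+-\lm_n^-$ controls the "size" of the $n$-th spectral gap, and since $q\in H_0^{-1}$ already gives $\gm\in\ell^{-1,2}_\C(\N)$ together with $\tau_n-\mu_n\in\ell^{-1,2}_\C(\N)$, the improved decay $\gm_n = O(\w n^{-s})$ should, via the standard asymptotic formulas for $q_n$ in terms of $\gm_n$, $\tau_n-\mu_n$ and lower-order spectral quantities, upgrade $(q_n)_{n}$ to $\ell^{s,\infty}$. In other words, the map from spectral data to potential, when restricted to potentials whose gap lengths lie in $\ell^{s,\infty}$, lands in $\FL_0^{s,\infty}$. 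The point where Theorem~\ref{thm:spec-fw} enters is the converse direction and the boundedness statements, which I will use for the uniformity in (ii)--(iii).

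For (ii), observe that every $\tilde q\in\Iso(q)$ has, by definition, $\spec L(\tilde q)=\spec L(q)$, hence the same gap lengths: $\gm(\tilde q)=\gm(q)\in\ell_\R^{s,\infty}$. Moreover $\Iso(q)\subset H_0^{-1}$ is known (from \cite{Kappeler:2005fb}) to be a compact subset of $H_0^{-1}$ — it is the isospectral set, diffeomorphic to the torus $\Tc_{\Phi(q)}$. So I would apply part (i) pointwise to conclude $\tilde q\in\FL_0^{s,\infty}$ for each $\tilde q\in\Iso(q)$, and then argue that the bound is \emph{uniform} over $\Iso(q)$: the $\FL_0^{s,\infty}$-norm of $\tilde q$ is controlled by $\n{\gm(\tilde q)}_{s,\infty}=\n{\gm(q)}_{s,\infty}$ together with quantities that vary continuously (indeed real-analytically) over the compact set $\Iso(q)$, such as the Dirichlet data $\mu_n(\tilde q)$ and the associated signs. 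Concretely, the asymptotic expansion of $\tilde q_n$ has a main term built from $\gm_n(\tilde q)=\gm_n(q)$ and $\tau_n(\tilde q)-\mu_n(\tilde q)$, plus a remainder that is $\ell^{s,\infty}$-small uniformly on $H_0^{-1}$-bounded sets by Theorem~\ref{thm:spec-fw}; since $\Iso(q)$ is $H_0^{-1}$-bounded, we get $\sup_{\tilde q\in\Iso(q)}\n{\tilde q}_{s,\infty}<\infty$. This gives the $\n{\cdot}_{s,\infty}$-norm bounded inclusion claimed (and matches the assertion in Theorem~\ref{thm:bhf}).

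For (iii), I would transport the problem to sequence space via the Birkhoff map. By \cite{Kappeler:2005fb}, $\Phi\colon H_0^{-1}\to\ell_0^{-1/2,2}$ is a diffeomorphism carrying $\Iso(q)$ onto the torus $\Tc_{\Phi(q)}$ of \eqref{torus}. From (ii) and Theorem~\ref{thm:bhf}, $\Phi(\Iso(q))\subset\ell_0^{s+1/2,\infty}$ is $\n{\cdot}_{s+1/2,\infty}$-bounded, and on such a bounded set the weak* topology of $\ell_0^{s+1/2,\infty}$ coincides with the topology induced by a norm $\n{\cdot}_{\sg',p}$ for suitable $-1/2<\sg'<s+1/2$, $2\le p<\infty$ with $(s+1/2-\sg')p>1$ (Lemma~\ref{weak-star}); in that weaker norm the torus $\Tc_{\Phi(q)}$, being of the form $\{|\tilde z_k|=|\Phi(q)_k|\}$ with $(\Phi(q)_k)\in\ell^{s+1/2,\infty}$ hence tails going to zero in $\n{\cdot}_{\sg',p}$, is compact — this is the standard fact that such "rectangular" tori are compact in any strictly weaker weighted $\ell^p$ space. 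Pulling back through the homeomorphism $\Phi\colon(\Iso(q),\tau_{w*})\to(\Phi(\Iso(q)),\tau_{w*})$ from Theorem~\ref{thm:bhf} yields weak* compactness of $\Iso(q)$. The main obstacle, I expect, is (ii): getting the \emph{uniform} $\ell^{s,\infty}$-control over the whole isospectral set rather than merely pointwise finiteness — this requires that the remainder terms in the $q_n$-asymptotics be estimated uniformly on $H_0^{-1}$-bounded sets, which is exactly what the local boundedness in Theorem~\ref{thm:spec-fw} is designed to provide, but one must check that the main terms depend only on the (fixed) gap lengths and on boundedly-varying Dirichlet data.
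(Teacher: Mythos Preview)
Your argument for (i) and (ii) has a genuine gap: you repeatedly invoke ``standard asymptotic formulas for $q_n$ in terms of $\gm_n$, $\tau_n-\mu_n$ and lower-order spectral quantities'' and an ``asymptotic expansion of $\tilde q_n$'' with remainder controlled by Theorem~\ref{thm:spec-fw}, but no such formulas are available here. Theorem~\ref{thm:spec-fw} estimates $\gm_n$ and $\tau_n-\mu_n$ \emph{given} $q\in\FL_{0,\C}^{w,s,\infty}$; it says nothing about recovering $q_n$ from spectral data. The direction you need --- from $\gm(q)\in\ell^{s,\infty}$ back to $q\in\FL_0^{s,\infty}$ --- is the hard inverse step, and the paper does not handle it via trace formulas. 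Instead it builds the \emph{adapted Fourier coefficient map} $\Fm^{(m)}$ (Proposition~\ref{Phi-diffeo}): a nonlinear perturbation of the identity on $B_m^{s,\infty}$ whose large-$n$ coefficients are $r_{\pm 2n}=b_{\pm n}(\al_n(q),q)$. For real $q$ one has $|r_{2n}|=|r_{-2n}|$, so Lemma~\ref{root-est-poschel} gives $|r_{2n}|\le|\gm_n(q)|$; then Proposition~\ref{abstract-diffeo} (an inverse function theorem argument) forces $q$ back into the weighted ball. The uniformity over $\Iso(q)$ comes from first embedding into some $\FL^{\sg,p}$ with $(s-\sg)p>1$ via \cite{Kappeler:CNzeErmy}, where $\Iso(q)$ is already known to be compact, and then running this machinery with a weight $w_n=\w{n}^{s-\sg}$ truncated by $\e^{\ep|n|}$ (Lemma~\ref{iso-bounded}). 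None of this is captured by a bootstrap on Dirichlet data.

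Your argument for (iii) is circular: you invoke Theorem~\ref{thm:bhf} to transport weak* compactness through the Birkhoff map, but the proof of Theorem~\ref{thm:bhf} in Section~\ref{section5} relies on Theorem~\ref{inv} (via Lemma~\ref{Phi-onto} and Lemma~\ref{iso-fl-s-infty}). The paper's route for (iii) avoids Birkhoff coordinates entirely: once Lemma~\ref{iso-bounded} gives $\n{\cdot}_{s,\infty}$-boundedness of $\Iso(q)$, compactness of $\Iso(q)$ in $\FL_0^{\sg,p}$ (from \cite{Kappeler:CNzeErmy}) combined with Lemma~\ref{weak-star} yields weak* compactness directly.
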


\begin{rem}
For any $-1/2 < s \le 0$, there are potentials $q\in \FL^{s,\infty}$ so that $\Iso(q)$ is not compact in $\FL^{s,\infty}$ -- see item (iii) in Lemma~\ref{iso-fl-s-infty}.~\map
\end{rem}

In the remainder of this section we prove Theorem~\ref{thm:spec-fw} and Theorem~\ref{inv} by extending the methods, used in \cite{Kappeler:2001hsa,Poschel:2011iua,Djakov:2006ba} for potentials $q \in L^2$, for singular potentials. We point out that the spectral theory is only developed as far as needed.

\subsection{Setup}

We extend the $L^{2}$-inner product $\spii{f,g} = \frac{1}{2}\int_{0}^{2} f(x)\ob{g(x)}\,\dx$ on $L_{\C}^{2}(\R/2\Z)\equiv L^{2}(\R/2\Z,\C)$ by duality to $\Sc_{\C}'(\R/2\Z)\times C_{\C}^{\infty}(\R/2\Z)$. Let $e_{n}(x) = \e^{\ii \pi n x}$, $n\in\Z$, and for $w\in\Ms$, $s\in\R$, and $1\le p \le \infty$ denote by $\FL_{\star,\C}^{w,s,p}$ the space of $2$-periodic, complex valued distributions $f\in \Sc_{\C}'(\R/2\Z)$ so that the sequence of their  Fourier coefficients $f_{n} = \spii{f,e_{n}}$ is in the space $\ell^{w,s,p}_\C = \setdef{ z=(z_n)_{n \in \Z} \subset \C }{ \n{z}_{w,s,p} < \infty }$.
To simplify notation, we write $\FL_{\star,\C}^{s,p}\equiv \FL_{\star,\C}^{\o,s,p}$.

In the sequel we will identify a potential $q\in \FL_{0,\C}^{w,s,\infty}$ with the corresponding element $\sum_{n \in \Z} q_{n}e_{n}$ in $\FL_{\star,\C}^{w,s,\infty}$ where $q_{n}$ is the $n$th Fourier coefficient of the potential obtained from $q$ by viewing it as a distribution on $\R/2\Z$ instead of $\R/\Z$, i.e., $q_{2n} = \spii{q, e_{2n}}$, whereas $q_{2n+1} = \spii{q, e_{2n+1}} = 0$ and $q_0 = \spii{q,1} = 0$. We denote by $V$ the operator of multiplication by $q$ with domain $H_{\C}^{1}(\R/2\Z)$. See Appendix~\ref{a:hill-op} for a detailed discussion of this operator as well as the operator $L(q)$ introduced in~\eqref{Lq}. When expressed in its Fourier series, the image $Vf$ of $f = \sum_{n\in\Z} f_{n}e_{n}\in H_{\C}^{1}(\R/2\Z)$ is the distribution $Vf = \sum_{n\in \Z}\p*{ \sum_{m\in \Z} q_{n-m}f_{m} }e_{n}\in H_{\C}^{-1}(\R/2\Z)$. To prove the asymptotic estimates of the gap lengths stated in Theorem~\ref{thm:spec-fw} we need to study the eigenvalue equation $L(q)f = \lm f$ for sufficiently large periodic eigenvalues $\lm$. For $q\in H_{0,\C}^{-1}$, the domain of $L(q)$ is $H_{\C}^{1}(\R/2\Z)$ and hence the eigenfunction $f$ is an element of this space. It is shown in Appendix~\ref{a:hill-op} that for $q\in \FL^{s,\infty}_{0,\C}$ with $-1/2<s\le 0$ and $2\le p \le \infty$, one has $f\in \FL_{\star,\C}^{s+2,p}$ and $\partial_{x}^{2}f$, $Vf\in \FL_{\star,\C}^{s,\infty}$. Note that for $q = 0$ and any $n\ge 1$, $\lm_{n}^{+}(0) = \lm_{n}^{-}(0) = n^{2}\pi^{2}$, and the eigenspace corresponding to the double eigenvalue $\lm_{n}^{+}(0) = \lm_{n}^{-}(0)$ is spanned by $e_{n}$ and $e_{-n}$. Viewing $L(q)-\lm_{n}^{\pm}(q)$ for $n$ large as a perturbation of $L(0)-\lm_{n}^{\pm}(0)$, we are led to decompose $\FL_{\star,\C}^{s,\infty}$ into the direct sum $\FL_{\star,\C}^{s,\infty} = \Pc_{n}\oplus\Qc_{n}$,
\begin{equation}
  \label{Pn-Qn}
  \Pc_{n} = \spanx\setd{e_{n},\, e_{-n}},\qquad
  \Qc_{n} = \overline{\spanx}\setdef{e_{k}}{k\neq \pm n}.
\end{equation}
The $L^{2}$-orthogonal projections onto $\Pc_{n}$ and $\Qc_{n}$ are denoted by $P_{n}$ and $Q_{n}$, respectively. It is convenient to write the eigenvalue equation $Lf = \lm f$ in the form $A_{\lm}f = Vf$, where $A_{\lm}f = \partial_{x}^{2}f + \lm f$ and $V$ denotes the operator of multiplication with $q$. Since $A_{\lm}$ is a Fourier multiplier, we write $f = u+v$, where $u = P_{n}f$ and $v = Q_{n}f$, and decompose the equation $A_{\lm}f = V f$ into the two equations
\begin{equation}
  \label{P-Q-eqn}
  A_{\lm}u = P_{n}V(u+v),\qquad
  A_{\lm}v = Q_{n}V(u+v),
\end{equation}
referred to as $P$- and $Q$-equation. 
Since $q\in H_{0,\C}^{-1}$, it follows from~\cite{Kappeler:2003vh} that $\lm_{n}^{\pm}(q) = n^{2}\pi^{2} + n\ell_{n}^{2}$. Hence for $n$ sufficiently large, $\lm_{n}^{\pm}(q)\in S_{n}$ where $S_{n}$ denotes the closed vertical strip
\begin{equation}
  \label{Sn}
  S_{n} \defl \setdef{\lm\in \C}{\abs{\Re \lm - n^{2}\pi^{2}} \le 12 n},\qquad n\ge 1.
\end{equation}
Note that $\setdef{\lm\in \C}{\Re \lm \ge 0}\subset \bigcup_{n\ge 1} S_{n}$. Given any $n\ge 1$, $u\in \Pc_{n}$, and $\lm\in S_{n}$, we derive in a first step from the $Q$-equation an equation for $Vv$ which for $n$ sufficiently large can be solved as a function of $u$ and $\lm$.
In a second step, for $\lm$ a periodic eigenvalue in $S_{n}$, we solve the $P$ equation for $u$ after having substituted in it the expression of $Vv$.
The solution of the $Q$-equation is then easily determined.
Towards the first step note that for any $\lm\in S_{n}$, $A_{\lm}\colon \Qc_{n}\cap \FL_{\star,\C}^{s+2,p}\to \Qc_{n}$ is boundedly invertible as for any $k\neq n$,
\begin{equation}
  \label{lm-n2k2-lower-bound}
  \min_{\lm\in S_{n}}\abs{\lm-k^{2}\pi^{2}}
   \ge \min_{\lm\in S_{n}} \abs{\Re \lm - k^{2}\pi^{2}}
   \ge \abs{n^{2}-k^{2}} \ge 1.
\end{equation}
In order to derive from the $Q$-equation an equation for $Vv$, we apply to it the operator $VA_{\lm}^{-1}$ to get
\[
  Vv = VA_{\lm}^{-1}Q_{n}V(u+v) = T_{n}V(u+v),
\]
where
\[
  T_{n}\equiv T_{n}(\lm) \defl VA_{\lm}^{-1}Q_{n}
  \colon \FL_{\star,\C}^{w,s,\infty}\to \FL_{\star,\C}^{w,s,\infty}.
\]
It leads to the following equation for $\check{v} \defl Vv$
\begin{equation}
  \label{w-eqn}
  (\Id-T_{n}(\lm))\check{v} = T_{n}(\lm)Vu.
\end{equation}
To show that $\Id-T_{n}(\lm)$ is invertible, we introduce for any $s\in\R$, $w\in\Ms$, and $l\in\Z$ the shifted norm of $f\in\FL_{\star,\C}^{w,s,\infty}$,
\begin{equation*}
  \n{f}_{w,s,\infty;l} \defl \n{fe_{l}}_{w,s,\infty}
   = \n*{ \p[\big]{ w_{k+l}\lin{k+l}^{s} f_{k}}_{k\in\Z} }_{\ell^{p}},
\end{equation*}
and denote by $\n{T_{n}}_{w,s,\infty;l}$ the operator norm of $T_{n}$ viewed as an operator on $\FL_{\star,\C}^{w,s,\infty}$ with norm $\n{\cdot}_{w,s,\infty;l}$.
Furthermore, we denote by $R_{N}f$, $N\ge 1$, the tail of the Fourier series of $f\in \FL_{\star,\C}^{w,s,\infty}$,
\[
  R_{N}f = \sum_{\abs{k}\ge N} f_{k}e_{k}.
\]

\begin{lem}
\label{Tn-est}
Let $-1/2 < s \le 0$, $w\in\Ms$, and $n\ge 1$ be given. For any $q\in\FL_{0,\C}^{w,s,\infty}$ and $\lm\in S_{n}$,
\[
  T_{n}(\lm)\colon \FL_{\star,\C}^{w,s,\infty}\to \FL_{\star,\C}^{w,s,\infty}
\]
is a bounded linear operator satisfying the estimate
\begin{equation}
  \label{Tn-est-exp}
  \n{T_{n}(\lm)}_{w,s,\infty;\pm n} \le \frac{c_{s}}{n^{1/2-\abs{s}}}\n{q}_{w,s,\infty},
\end{equation}
where $c_{s}\ge1$ is a constant depending only on and decreasing monotonically in $s$.
In particular, $c_{s}$ does not depend on $q$ nor on the weight $w$.
\fish
\end{lem}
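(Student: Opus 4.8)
The plan is to realise $T_n(\lm)$ explicitly on the Fourier side and reduce the asserted operator bound to a single scalar sum. Since $A_{\lm}$ is the Fourier multiplier with symbol $\lm-k^{2}\pi^{2}$, the projection $Q_n$ annihilates the modes $e_{\pm n}$, and $V$ acts by convolution, $(Vf)_k=\sum_m q_{k-m}f_m$, one obtains for $f=\sum_m f_m e_m\in\FL_{\star,\C}^{w,s,\infty}$
\[
  \bigl(T_n(\lm)f\bigr)_k=\sum_{m\ne\pm n}\frac{q_{k-m}}{\lm-m^{2}\pi^{2}}\,f_m,\qquad k\in\Z .
\]
For $\lm\in S_n$ and $m\ne\pm n$ the denominators are bounded away from $0$ by \eqref{lm-n2k2-lower-bound}, and for each fixed $k$ the summand decays like $\abs{m}^{2\abs{s}-2}$ as $\abs{m}\to\infty$, which is summable since $2\abs{s}-2<-1$; so the series converges absolutely and it remains to bound the shifted norm $\n{T_n(\lm)f}_{w,s,\infty;\pm n}$ (equivalence of the shifted norms with $\n{\cdot}_{w,s,\infty}$, with $n$-dependent constants, then also gives boundedness on $\FL_{\star,\C}^{w,s,\infty}$). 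Fix $l\in\{n,-n\}$. Inserting $\abs{q_{k-m}}\le\n{q}_{w,s,\infty}(w_{k-m}\w{k-m}^{s})^{-1}$ and $\abs{f_m}\le\n{f}_{w,s,\infty;l}(w_{m+l}\w{m+l}^{s})^{-1}$, multiplying by $w_{k+l}\w{k+l}^{s}$, and using $(k-m)+(m+l)=k+l$ together with submultiplicativity $w_{k+l}\le w_{k-m}w_{m+l}$ and the Peetre-type inequality $\w{k-m}\le\w{k+l}\w{m+l}$ (which follows from $\w{a+b}\le\w{a}\w{b}$), one gets, because $s\le 0$,
\[
  \frac{\w{k+l}^{s}}{\w{k-m}^{s}\w{m+l}^{s}}=\w{k+l}^{-\abs{s}}\w{k-m}^{\abs{s}}\w{m+l}^{\abs{s}}\le\w{m+l}^{2\abs{s}} ,
\]
and consequently $w_{k+l}\w{k+l}^{s}\abs{(T_n(\lm)f)_k}\le\n{q}_{w,s,\infty}\n{f}_{w,s,\infty;l}\,\Sg_{n,l}$ with
\[
  \Sg_{n,l}\defl\sum_{m\ne\pm n}\frac{\w{m+l}^{2\abs{s}}}{\abs{\lm-m^{2}\pi^{2}}} .
\]
The lemma is thereby reduced to showing $\Sg_{n,l}\le c_s\,n^{-(1/2-\abs{s})}$ for $l=\pm n$, uniformly in $\lm\in S_n$ and with $c_s$ independent of $q$, $w$, and $n$.

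For this scalar bound I would use $\abs{\lm-m^{2}\pi^{2}}\ge\abs{n^{2}-m^{2}}=\abs{n-m}\,\abs{n+m}$ from \eqref{lm-n2k2-lower-bound} and substitute $p=m+l$; in the case $l=n$, since $\abs{n-m}\abs{n+m}=\abs{p}\,\abs{p-2n}$, this gives $\Sg_{n,n}\le\sum_{p\ne0,2n}\w{p}^{2\abs{s}}/(\abs{p}\,\abs{p-2n})$, and the case $l=-n$ reduces to the same sum under the reflection $p\mapsto-p$. Splitting $\Z\setminus\{0,2n\}$ into the three ranges $\abs{p}\le n$, $n<\abs{p}<3n$, and $\abs{p}\ge3n$ — on which $\abs{p-2n}\gtrsim n$, $\w{p}\asymp n$, and $\abs{p-2n}\gtrsim\abs{p}$ respectively — and evaluating the resulting elementary one-dimensional sums (the only logarithm being $\sum_{1\le j\le n}j^{-1}\asymp\log n$, from the terms near $p=2n$), each of the three pieces is $\lesssim n^{2\abs{s}-1}\log n$. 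Since $2\abs{s}-1<\abs{s}-1/2$ and $\sup_{n\ge1}n^{\abs{s}-1/2}\log n<\infty$ with a value depending only on $s$ (and tending to $\infty$ as $\abs{s}\uparrow1/2$), this yields $\Sg_{n,l}\le c_s\,n^{-(1/2-\abs{s})}$ for a constant $c_s\ge1$ that decreases in $s$; and since only the submultiplicativity constant $1$ was used and $\n{q}_{w,s,\infty}$ was factored out at the start, $c_s$ depends neither on $w$ nor on $q$ nor on $n$. Together with the first paragraph this is exactly \eqref{Tn-est-exp}.

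The \textbf{main obstacle} is this last scalar estimate: one must extract precisely the gain $n^{-(1/2-\abs{s})}$ while keeping careful track of the indices near the excluded values $p\in\{0,2n\}$, where $\w{p}$ is comparable to one of the two factors $\abs{p}$, $\abs{p-2n}$ so that no genuine algebraic or logarithmic loss occurs — but where the case distinction and the $s$-dependence of the constant are delicate — and while ensuring uniformity of the constant in $w$, $q$, and $n$ and its monotonicity in $s$. By contrast, the passage from the operator bound to the scalar sum $\Sg_{n,l}$ in the first paragraph is routine: it only uses the convolution form of $V$, the submultiplicativity of the weight in the form $w_{k+l}\le w_{k-m}w_{m+l}$, and the Peetre-type inequality $\w{k-m}\le\w{k+l}\w{m+l}$ together with $s\le0$.
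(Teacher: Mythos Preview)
Your proof is correct and follows essentially the same route as the paper: write $T_n(\lm)$ as convolution against $q$ composed with the Fourier multiplier $A_\lm^{-1}Q_n$, use submultiplicativity of $w$ and the Peetre inequality $\w{k-m}\le\w{k+l}\w{m+l}$ to strip the weights, and reduce to the scalar sum $\sum_{p\ne 0,2n}\w{p}^{2\abs{s}}/(\abs{p}\,\abs{p-2n})$, which is exactly the paper's sum $\sum_{\abs{k}\ne n}1/(\abs{n+k}^{1-2\abs{s}}\abs{n-k})$ after the substitution $p=k+n$. The only difference is that the paper simply asserts ``One checks that'' this sum is $\le c_s n^{-(1/2-\abs{s})}$, whereas you actually supply the three-range decomposition and the observation that the resulting $n^{2\abs{s}-1}\log n$ bound is absorbed into $c_s n^{\abs{s}-1/2}$ since $\abs{s}<1/2$; so your argument is in fact more complete on this point.
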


\begin{proof}
Let $s$ and $w$ be given as in the statement of the lemma.
Note that $A_{\lm}^{-1}\colon \FL_{\star,\C}^{s,\infty}\to \FL_{\star,\C}^{s+2,\infty}$ is bounded for any $\lm\in S_{n}$ and hence for any $f\in \FL_{\star,\C}^{s,\infty}$, $q\in \FL_{0,\C}^{s,\infty}$, and $\lm\in S_{n}$, the multiplication of $A_{\lm}^{-1}Q_{n}f$ with $q$, defined by
\begin{equation}
  \label{VAQ}
  VA_{\lm}^{-1}Q_{n}f = \sum_{m\in\Z}\p*{ \sum_{\abs{k}\neq n} \frac{q_{m-k} f_{k}}{\lm-k^{2}\pi^{2}} }e_{m}
\end{equation}
is a distribution in $S_{\C}'(\R/2\Z)$. Note that $T_{n}(\lm)f = VA_{\lm}^{-1}Q_{n}f$ and that its  norm $\n{T_{n}(\lm)f}_{w,s,\infty;n}$ satisfies for any $\lm\in S_{n}$,
\begin{align*}
  \n{T_{n}f}_{w,s,\infty;n} 
  &\le
  \sup_{m\in\Z} \sum_{\abs{k}\neq n} 
    \frac{w_{m+n}\w{k+n}^{\abs{s}}  \w{m-k}^{\abs{s}}}
         {\abs{n+k}\abs{n-k} \w{m+n}^{\abs{s}}}
    \frac{\abs{q_{m-k}}} {\w{m-k}^{\abs{s}}}
    \frac{\abs{f_{k}}}   {\w{k+n}^{\abs{s}}}
   ,
\end{align*}
where we have used~\eqref{lm-n2k2-lower-bound}.
Since $\w{m-k}\le \w{m+n}\w{n+k}$, $-1/2 < s\le 0$, and $\w{\nu}/\abs{\nu} \le 2$, we conclude
\[
  \frac{\w{k+n}^{\abs{s}}  \w{m-k}^{\abs{s}}}
         {\abs{n+k}\abs{n-k} \w{m+n}^{\abs{s}}}
  \le
  \frac{\w{k+n}^{2\abs{s}}}
         {\abs{n+k}\abs{n-k} }
  \le
  \frac{2}
         {\abs{n+k}^{1-2\abs{s}}\abs{n-k}}.
\]
Hölder's inequality together with the submultiplicativity of the weight $w$ then yields
\begin{align*}
  \n{T_{n}f}_{w,s,\infty;n}
 &\le 2\sup_{m\in\Z}
             \sum_{\abs{k}\neq n} 
             \frac{1}
         {\abs{n+k}^{1-2\abs{s}}\abs{n-k}}
             \frac{w_{m-k}\abs{q_{m-k}}}{\w{m-k}^{\abs{s}}}
             \frac{w_{k+n}\abs{f_{k}}}{\w{k+n}^{\abs{s}}}         
   \\
 &\le 2\p*{\sum_{\abs{k}\neq n} 
             \frac{1}
         {\abs{n+k}^{(1-2\abs{s})}\abs{n-k}}}\n{q}_{w,s,\infty}\n{f}_{w,s,\infty;n}.
\end{align*}
One checks that
\begin{align*}
  \sum_{\abs{k}\neq n} 
             \frac{1}
         {\abs{n+k}^{(1-2\abs{s})}\abs{n-k}}\le \frac{c_{s}}{n^{1/2-\abs{s}}},
\end{align*}
Going through the arguments of the proof one sees that the same kind of estimates also lead to the claimed bound for $\n{T_{n}f}_{w,s,\infty;-n}$.~\qed
\end{proof}

Lemma~\ref{Tn-est} can be used to solve, for $n$ sufficiently large, the equation \eqref{w-eqn} as well as the $Q$-equation \eqref{P-Q-eqn} in terms of any given $u\in\Pc_{n}$ and $\lm\in S_{n}$.

\begin{cor}
\label{Q-soln}
For any $q\in\FL_{0,\C}^{w,s,\infty}$ with $-1/2 < s \le 0$ and $w\in\Ms$, there exists $n_{s} = n_{s}(q) \ge 1$ so that,
\begin{equation}
  \label{cs-est}
  2c_{s}\n{q}_{w,s,\infty} \le n_{s}^{1/2-\abs{s}},
\end{equation}
with $c_{s}\ge 1$ the constant in~\eqref{Tn-est-exp} implying that for any $\lm\in S_{n}$, $T_{n}(\lm)$ is a $1/2$ contraction on $\FL_{\star,\C}^{w,s,\infty}$ with respect to the norms shifted by $\pm n$, $\n{T_{n}(\lm)}_{w,s,\infty;\pm n}\le 1/2$. The threshold $n_{s}(q)$ can be chosen uniformly in $q$ on bounded subsets of~$\FL_{0,\C}^{w,s,\infty}$. As a consequence, for $n\ge n_{s}(q)$, equation~\eqref{w-eqn} and \eqref{P-Q-eqn} can be uniquely solved for any given $u\in\Pc_{n}$, $\lm\in S_{n}$,
\begin{align}
  \label{sol-w-eqn}
  \check{v}_{u,\lm} &= K_{n}(\lm)T_{n}(\lm)Vu \in \FL_{\star,\C}^{s,\infty},
  \qquad K_{n} \equiv K_{n}(\lm) \defl (\Id - T_{n}(\lm))^{-1},\\
  \label{sol-Q-eqn}
    v_{u,\lm} &= A_{\lm}^{-1}Q_{n}Vu + A_{\lm}^{-1}Q_{n}\check{v}_{u,\lm}
    = 
  A_{\lm}^{-1}Q_{n}K_{n}Vu \in \FL_{\star,\C}^{s+2,\infty}\cap \Qc_{n}.
\end{align}
In particular, one has $\check{v}_{u,\lm} = Vv_{u,\lm}$.~\fish
\end{cor}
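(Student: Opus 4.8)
The plan is to run a standard Neumann-series argument, with all of the analytic input already packaged into Lemma~\ref{Tn-est}. First I would fix the threshold $n_s(q)$: since $-1/2<s\le 0$ the exponent $1/2-\abs{s}$ is strictly positive, so $n\mapsto n^{1/2-\abs{s}}$ is increasing to $+\infty$ and one may take $n_s(q)$ to be the least integer $\ge(2c_s\n{q}_{w,s,\infty})^{1/(1/2-\abs{s})}$, which is precisely \eqref{cs-est}. As this bound depends on $q$ only through $\n{q}_{w,s,\infty}$, the threshold is automatically uniform on $\n{\cdot}_{w,s,\infty}$-bounded sets. For $n\ge n_s(q)$ and $\lm\in S_n$, monotonicity of $n^{1/2-\abs{s}}$ together with estimate \eqref{Tn-est-exp} gives $\n{T_n(\lm)}_{w,s,\infty;\pm n}\le c_s n^{-(1/2-\abs{s})}\n{q}_{w,s,\infty}\le 1/2$, i.e. the claimed $1/2$-contraction property.

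Next I would invert $\Id-T_n(\lm)$. Endowing $\FL_{\star,\C}^{w,s,\infty}$ with the (equivalent) shifted norm $\n{\cdot}_{w,s,\infty;\pm n}$ makes it a Banach space on which $T_n(\lm)$ has operator norm $\le 1/2$, so the Neumann series $\sum_{j\ge 0}T_n(\lm)^j$ converges and defines $K_n(\lm)\defl(\Id-T_n(\lm))^{-1}$, a bounded operator with $\n{K_n(\lm)}_{w,s,\infty;\pm n}\le 2$; in particular $\Id-T_n(\lm)$ is a linear isomorphism of $\FL_{\star,\C}^{w,s,\infty}$, and, by the same estimate with the trivial weight, also of $\FL_{\star,\C}^{s,\infty}$. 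Since $u\in\Pc_n$ is a trigonometric polynomial, $Vu=qu$ is obtained from $q$ by finitely many Fourier shifts and hence lies in $\FL_{\star,\C}^{w,s,\infty}$; therefore so does $T_n(\lm)Vu$, and \eqref{w-eqn} has the unique solution $\check v_{u,\lm}=K_n(\lm)T_n(\lm)Vu\in\FL_{\star,\C}^{s,\infty}$, as in \eqref{sol-w-eqn}.

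It then remains to reconstruct $v$ from $\check v=Vv$ and to check consistency. With $\check v_{u,\lm}$ in hand the $Q$-equation in \eqref{P-Q-eqn} reads $v=A_\lm^{-1}Q_nV(u+v)=A_\lm^{-1}Q_n(Vu+\check v_{u,\lm})$, so I set $v_{u,\lm}\defl A_\lm^{-1}Q_nVu+A_\lm^{-1}Q_n\check v_{u,\lm}$; since $A_\lm^{-1}\colon\FL_{\star,\C}^{s,\infty}\to\FL_{\star,\C}^{s+2,\infty}$ is bounded on $\Qc_n$ (as recorded before Lemma~\ref{Tn-est}), $v_{u,\lm}\in\FL_{\star,\C}^{s+2,\infty}\cap\Qc_n$. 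Applying $V$ and using $T_n=VA_\lm^{-1}Q_n$ gives $Vv_{u,\lm}=T_n(Vu+\check v_{u,\lm})=T_nVu+T_n\check v_{u,\lm}=\check v_{u,\lm}$, the last step being \eqref{w-eqn} rewritten as $\check v_{u,\lm}=T_nVu+T_n\check v_{u,\lm}$; this is the asserted identity $\check v_{u,\lm}=Vv_{u,\lm}$. Substituting it back and using the algebraic relation $\Id+K_nT_n=K_n$ yields $v_{u,\lm}=A_\lm^{-1}Q_nK_nVu$, as in \eqref{sol-Q-eqn}. Uniqueness is then immediate: if $v\in\Qc_n\cap\FL_{\star,\C}^{s+2,\infty}$ solves the $Q$-equation, then $Vv$ solves \eqref{w-eqn}, hence $Vv=\check v_{u,\lm}$ by invertibility of $\Id-T_n(\lm)$, whence $v=A_\lm^{-1}Q_n(Vu+\check v_{u,\lm})=v_{u,\lm}$.

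The argument is entirely soft; the only points that need care are the regularity bookkeeping — that $Vu$ already sits in $\FL_{\star,\C}^{w,s,\infty}$, and that $A_\lm^{-1}$ gains two derivatives on $\Qc_n$ uniformly for $\lm\in S_n$ — and the self-consistency identity $\check v_{u,\lm}=Vv_{u,\lm}$, which is what legitimizes the two-step scheme (first solve for $Vv$, then recover $v$) and reduces the reconstruction to the commuting relation $\Id+K_nT_n=K_n$. Neither is a genuine obstacle, so I expect the proof to be short.
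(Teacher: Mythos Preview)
Your argument is correct and is exactly the standard Neumann-series reasoning that the paper leaves implicit (the corollary is stated without proof, as an immediate consequence of Lemma~\ref{Tn-est}). The only trivial adjustment is to take $n_s(q)=\max\{1,\lceil(2c_s\n{q}_{w,s,\infty})^{1/(1/2-\abs{s})}\rceil\}$ so that $n_s\ge 1$ even when $q$ is small.
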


\begin{rem}
\label{inhom-1}
By the same approach, one can study the inhomogeneous equation
\[
  (L-\lm)f = g,\qquad g\in \FL_{\star,\C}^{s,\infty},
\]
for $\lm\in S_{n}$ and $n\ge n_{s}$. Writing $f=u+v$ and $g = P_{n}g + Q_{n}g$, the $Q$-equation becomes
\[
  A_{\lm}v = Q_{n}V(u+v)- Q_{n}g = Q_{n}Vv + Q_{n}(Vu-g)
\]
leading for any given $u\in\Pc_{n}$ and $\lm\in S_{n}$ to the unique solution $\check{v}$ of the equation corresponding to~\eqref{w-eqn}
\[
  \check{v} = Vv = K_{n}T_{n}(Vu-g) \in \FL_{\star,\C}^{s,\infty},
\]
and, in turn, to the unique solution $v \in \FL_{\star,\C}^{s+2,\infty}\cap \Qc_{n}$ of the $Q$-equation
\[
  v = A_{\lm}^{-1}Q_{n}(Vu - g) + A_{\lm}^{-1}Q_{n}K_{n}T_{n}(Vu-g)  = A_{\lm}^{-1}Q_{n}K_{n}(Vu-g).~\map
\]
\end{rem}

\subsection{Reduction}
\label{ss:spec-reduction}

In a next step we study the $P$-equation $A_{\lm}u = P_{n}V(u+v)$ of \eqref{P-Q-eqn}. For $n\ge n_{s}(q)$, $u\in \Pc_{n}$, and $\lm\in S_{n}$, substitute in it the solution $\check{v}_{u,\lm}$ of \eqref{w-eqn}, given by~\eqref{sol-w-eqn},
\[
  A_{\lm}u = P_{n}Vu + P_{n}\check{v}_{u,\lm} = P_{n}(\Id + K_{n}T_{n})Vu.
\]
Using that $\Id + K_{n}T_{n} = K_{n}$ one then obtains $A_{\lm}u = P_{n}K_{n}Vu$ or $B_{n}u = 0$, where
\begin{equation}
  B_{n} \equiv B_{n}(\lm)\colon \Pc_{n}\to \Pc_{n},\quad u\mapsto (A_{\lm}-P_{n}K_{n}(\lm)V)u.
\end{equation}

\begin{lem}
\label{eval-char}
Assume that $q\in \FL_{0,\C}^{s,\infty}$ with $-1/2 < s \le 0$. Then for any $n\ge n_{s}$ with $n_{s}$ given by Corollary~\ref{Q-soln}, $\lm\in S_{n}$ is an eigenvalue of $L(q)$ if and only if $\det(B_{n}(\lm)) = 0$.~\fish
\end{lem}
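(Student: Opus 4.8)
The plan is to convert the eigenvalue equation $L(q)f=\lm f$ into the $P$/$Q$ system \eqref{P-Q-eqn} and then invoke the unique solvability of the $Q$-equation from Corollary~\ref{Q-soln} to collapse everything to the $2\times 2$ equation $B_n(\lm)u=0$ on $\Pc_n$. Before doing so I would record the two elementary embeddings that pin down the range $-1/2<s\le 0$: first, $\FL_{\star,\C}^{s+2,\infty}\hookrightarrow H^1(\R/2\Z,\C)$, since $\abs{f_k}\le C\w{k}^{-(s+2)}$ implies $\w{k}^{2}\abs{f_k}^{2}\le C^2\w{k}^{-2(1+s)}$, which is summable precisely when $s>-1/2$; second, $H^1(\R/2\Z,\C)\hookrightarrow \FL_{\star,\C}^{s,\infty}$, from $\w{k}\abs{f_k}\le \n{f}_{H^1}$ and $s\le 0<1$. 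Together with the regularity statement quoted just before \eqref{Pn-Qn} (any eigenfunction of $L(q)$ lies in $\FL_{\star,\C}^{s+2,\infty}$), these ensure on one hand that every eigenfunction admits the splitting $f=P_nf+Q_nf$ in the spaces on which $A_{\lm}^{-1}Q_n$, $T_n(\lm)$ and $K_n(\lm)$ act, and on the other hand that the distribution $u+v_{u,\lm}$ reconstructed from $\Pc_n$-data lies in the domain $H^1(\R/2\Z,\C)$ of $L(q)$.

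For the implication ``eigenvalue $\Rightarrow \det B_n(\lm)=0$'', take $f\neq 0$ in $H^1(\R/2\Z,\C)$ with $A_{\lm}f=Vf$ and set $u=P_nf\in\Pc_n$, $v=Q_nf\in\Qc_n$, so that $(u,v)$ solves \eqref{P-Q-eqn}. Since $n\ge n_s$, Corollary~\ref{Q-soln} forces $v=v_{u,\lm}=A_{\lm}^{-1}Q_nK_n(\lm)Vu$; substituting this into the $P$-equation and using $\Id+K_nT_n=K_n$ gives $B_n(\lm)u=0$. It remains only to check $u\neq 0$: if $u=0$, then by linearity of the reconstruction formula $v=v_{0,\lm}=0$, hence $f=0$, contradicting $f\neq 0$. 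Thus $B_n(\lm)$ has a nontrivial kernel on the two-dimensional space $\Pc_n$, i.e. $\det B_n(\lm)=0$.

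For the converse, choose $u\in\Pc_n\setminus\{0\}$ with $B_n(\lm)u=0$, put $v\defl v_{u,\lm}\in\FL_{\star,\C}^{s+2,\infty}\cap\Qc_n$, and $f\defl u+v$. Then $f$ lies in the domain of $L(q)$ by the first embedding above, and $f\neq 0$ since $P_nf=u\neq 0$. By construction $v_{u,\lm}$ solves the $Q$-equation $A_{\lm}v=Q_nV(u+v)$ (using $\check{v}_{u,\lm}=Vv_{u,\lm}$ from Corollary~\ref{Q-soln}), while $B_n(\lm)u=0$ rewrites, via $\Id+K_nT_n=K_n$, as $A_{\lm}u=P_nK_n(\lm)Vu=P_nVu+P_n\check{v}_{u,\lm}=P_nV(u+v)$, i.e. the $P$-equation. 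Adding the two equations yields $A_{\lm}f=Vf$, that is $L(q)f=\lm f$ with $f\neq 0$, so $\lm\in\spec L(q)$.

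I do not expect a genuine analytic obstacle: the hard work is already contained in Lemma~\ref{Tn-est} and Corollary~\ref{Q-soln}, and what remains is the purely formal back-and-forth between the eigenvalue equation and the $P$/$Q$ system. The one point that requires care — and the reason for the hypothesis $s>-1/2$ — is the regularity bookkeeping, namely reconciling ``$f$ lies in the domain of $L(q)$'' with ``$f$ admits the $P$/$Q$ splitting in $\FL_{\star,\C}^{s,\infty}$'', which is handled by the two embeddings above together with the eigenfunction regularity statement from Appendix~\ref{a:hill-op}.
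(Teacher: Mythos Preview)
Your argument is correct and follows essentially the same route as the paper's proof: invoke eigenfunction regularity (Lemma~\ref{regularity-inhomogeneous}) to place $f$ in $\FL_{\star,\C}^{s+2,\infty}$, split $f=u+v$, use the uniqueness from Corollary~\ref{Q-soln} to identify $v$ with $v_{u,\lm}$ and reduce to $B_n(\lm)u=0$, and run the construction in reverse for the converse. Your explicit remarks on the embeddings $\FL_{\star,\C}^{s+2,\infty}\hookrightarrow H^1(\R/2\Z,\C)\hookrightarrow \FL_{\star,\C}^{s,\infty}$ make the regularity bookkeeping more transparent than the paper does, but the substance is identical.
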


\begin{proof}
Assume that $\lm\in S_{n}$ is an eigenvalue of $L=L(q)$. By Lemma~\ref{regularity-inhomogeneous} there exists $0\neq f\in \FL_{\star,\C}^{s+2,\infty}$ so that $Lf = \lm f$. Decomposing $f = u+v\in \Pc_{n}\oplus \Qc_{n}$ it follows by the considerations above and the assumption $n\ge n_{s}$ that $u\neq 0$ and $B_{n}(\lm)u = 0$. Conversely, assume that $\det(B_{n}(\lm)) = 0$ for some $\lm\in S_{n}$. Then there exists $0\neq u\in \Pc_{n}$ so that $B_{n}(\lm)u = 0$. Since $n\ge n_{s}$, there exist $\check{v}_{u,\lm}$ and $v_{u,\lm}$ as in \eqref{sol-w-eqn} and \eqref{sol-Q-eqn}, respectively. Then $v \equiv v_{u,\lm}\in \FL_{\star,\C}^{s+2,\infty}\cap \Qc_{n}$ solves the $Q$-equation by Corollary~\ref{Q-soln}. To see that $u$ solves the $P$-equation, note that $B_{n}(\lm)u = 0$ implies that
\[
  A_{\lm}u = P_{n}K_{n}Vu = P_{n}(\Id + K_{n}T_{n})Vu.
\]
As by \eqref{sol-w-eqn}, $\check{v}_{u,\lm} = K_{n}T_{n}Vu$, and as $\check{v}_{u,\lm} = Vv$, one sees that indeed
\[
  A_{\lm}u = P_{n}Vu + P_{n}Vv.~\qed
\]
\end{proof}

\begin{rem}
\label{inhom-2}
Solutions of the inhomogeneous equation $(L-\lm)f = g$ for $g\in\FL_{\star,\C}^{s,\infty}$, $\lm\in S_{n}$, and $n\ge n_{s}$ can be obtained by substituting into the $P$-equation
\[
  A_{\lm}u = P_{n}Vu + P_{n}Vv - P_{n}g
\]
the expression for $Vv$ obtained in Remark~\ref{inhom-1}, $Vv = K_{n}T_{n}(Vu-g)$, to get
\begin{align*}
  A_{\lm}u &= P_{n}Vu + P_{n}K_{n}T_{n}Vu - P_{n}g - P_{n}K_{n}T_{n}g\\
  &= P_{n}(\Id + K_{n}T_{n})Vu - P_{n}(\Id + K_{n}T_{n})g.
\end{align*}
Using that $\Id + K_{n}T_{n} = K_{n}$ one concludes that
\begin{equation}
  \label{B-inhom}
  B_{n}(\lm)u = -P_{n}K_{n}(\lm)g.
\end{equation}
Conversely, for any solution $u$ of \eqref{B-inhom}, $f = u+v$, with $v$ being the element in $\FL_{\star,\C}^{s+2,\infty}$ given in Remark~\ref{inhom-1}, satisfies $(L-\lm)f = g$ and $f\in \FL_{\star,\C}^{s+2,\infty}$.~\map
\end{rem}

We denote the matrix representation of a linear operator $F\colon \Pc_{n}\to \Pc_{n}$ with respect to the orthonormal basis $e_{n}$, $e_{-n}$ of $\Pc_{n}$ also by $F$,
\[
  F = \begin{pmatrix}
  \spii{Fe_{n},e_{n}}  & \spii{Fe_{-n},e_{n}}\\
  \spii{Fe_{n},e_{-n}}  & \spii{Fe_{-n},e_{-n}}
  \end{pmatrix}.
\]
In particular,
\[
  A_{\lm} = 
  \begin{pmatrix}
  \lm-n^{2}\pi^{2} & 0\\
  0 & \lm-n^{2}\pi^{2}
  \end{pmatrix},
  \qquad
  P_{n}K_{n}V = 
  \begin{pmatrix}
  a_{n} & b_{n}\\
  b_{-n} & a_{-n}
  \end{pmatrix},
\]
where for any $\lm\in S_{n}$ and $n\ge n_{s}$ the coefficients of $P_{n}K_{n}V$ are given  by
\begin{align*}
  a_{n}  &\equiv a_{n}(\lm)  \defl \spii{K_{n}Ve_{n},e_{n}},&
  a_{-n} &\equiv a_{-n}(\lm) \defl \spii{K_{n}Ve_{-n},e_{-n}},\\
  b_{n}  &\equiv b_{n}(\lm)  \defl \spii{K_{n}Ve_{-n},e_{n}},&
  b_{-n} &\equiv b_{-n}(\lm) \defl \spii{K_{n}Ve_{n},e_{-n}}.
\end{align*}
Note that for any $\lm\in S_{n}$, the functions $a_{\pm n}(\lm)$ and $b_{\pm n}(\lm)$ have the following series expansion
\begin{equation}
  \label{an-bn-expansions}
  a_{\pm n}(\lm) = \sum_{l\ge 0} \lin{T_{n}(\lm)^{l}Ve_{\pm n},e_{\pm n}},\qquad
  b_{\pm n}(\lm) = \sum_{l\ge 0} \lin{T_{n}(\lm)^{l}Ve_{\mp n},e_{\pm n}}.
\end{equation}
Furthermore, by a straightforward verification it follows from the expression of $a_{n}$ in terms of the representation of $K_{n} = \sum_{k\ge 0} T_{n}(\lm)^{k}$ and $V$ in Fourier space that for any $n\ge n_{s}$
\begin{equation}
  \label{an-symmetry}
  a_{n} = \spii{K_{n}Ve_{-n},e_{-n}} = a_{-n}.
\end{equation}
Hence,
\begin{equation}
  \label{Bn-mat}
   B_{n}(\lm) = \begin{pmatrix}
            \lm-n^{2}\pi^{2} - a_{n}(\lm) & -b_{n}(\lm),\\
           -b_{-n}(\lm)                   &  \lm-n^{2}\pi^{2} - a_{n}(\lm)
           \end{pmatrix}.
\end{equation}
In addition, if $q$ is real valued, then
\begin{equation}
  \label{bn-symmetry}
  a_{n}(\ob{\lm}) = a_{n}(\lm),\qquad  b_{-n}(\ob{\lm}) = \ob{b_{n}(\lm)},\qquad \lm\in S_{n}.
\end{equation}

\begin{lem}
\label{coeff-est}
Suppose $q\in \FL_{0,\C}^{w,s,\infty}$ with $-1/2< s\le 0$ and $w\in\Ms$. Then for any $n\ge n_{s}$, with $n_{s}$ as in Corollary~\ref{Q-soln}, the coefficients $a_{n}(\lm)$ and $b_{\pm n}(\lm)$ are analytic functions on the strip $S_{n}$ and for any $\lm\in S_{n}$
\begin{align*}
  \text{(i)}\quad&\abs{a_{n}(\lm)}            \le 2\n{T_{n}(\lm)}_{w,s,\infty;\pm n}       \n{q}_{s,\infty},\\
  \text{(ii)}\quad&w_{2n}\w{2n}^{s}\abs{b_{\pm n}(\lm)  - q_{\pm 2n}}  \le 2\n{T_{n}(\lm)}_{w,s,\infty;\pm n}  \n{q}_{w,s,\infty}.~\fish
\end{align*}
\end{lem}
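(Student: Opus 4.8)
The plan is to build everything on the two series expansions in \eqref{an-bn-expansions}, which are legitimate on all of $S_{n}$ as soon as $n\ge n_{s}$: by Corollary~\ref{Q-soln} the operator $T_{n}(\lambda)$ is then a $1/2$-contraction on $\FL_{\star,\C}^{w,s,\infty}$ for the norms $\n{\cdot}_{w,s,\infty;\pm n}$, uniformly for $\lambda\in S_{n}$. \emph{Analyticity.} Each summand $\lambda\mapsto\lin{T_{n}(\lambda)^{l}Ve_{\pm n},e_{\pm n}}$ (resp. with $e_{\mp n}$) is analytic on $S_{n}$, since $A_{\lambda}^{-1}Q_{n}$ is an analytic, boundedly invertible Fourier multiplier there by \eqref{lm-n2k2-lower-bound}. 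Reading off from $T_{n}^{l}Ve_{\pm n}$ the Fourier coefficient that is wanted, and using the shift adapted to $Ve_{\pm n}$ — namely $\mp n$, for which $Ve_{\pm n}e_{\mp n}=q$, so $\n{Ve_{\pm n}}_{w,s,\infty;\mp n}=\n{q}_{w,s,\infty}$ — one obtains a bound of the form $C(n)\,2^{-l}\n{q}_{w,s,\infty}$ uniformly on $S_{n}$, with $C(n)$ the ($l$-independent, finite) weight of that coefficient in the shifted norm. By the Weierstrass $M$-test the series converge uniformly on $S_{n}$, hence $a_{\pm n}$ and $b_{\pm n}$ are analytic on $S_{n}$.

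\emph{Isolating the $l=0$ term.} Since $q_{0}=0$ one has $\lin{Ve_{n},e_{n}}=q_{0}=0$, so $a_{n}=\sum_{l\ge 1}\lin{T_{n}^{l}Ve_{n},e_{n}}=\lin{T_{n}K_{n}Ve_{n},e_{n}}$, where $K_{n}=\sum_{l\ge 0}T_{n}^{l}$ and $\sum_{l\ge 1}T_{n}^{l}=T_{n}K_{n}$; likewise $\lin{Ve_{\mp n},e_{\pm n}}=q_{\pm 2n}$, so $b_{\pm n}-q_{\pm 2n}=\lin{T_{n}K_{n}Ve_{\mp n},e_{\pm n}}$. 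For (ii) I would read off the $(\pm n)$-th Fourier coefficient in the norm shifted by $\pm n$, in which it carries precisely the weight $w_{2n}\w{2n}^{s}$; hence $w_{2n}\w{2n}^{s}\abs{b_{\pm n}-q_{\pm 2n}}\le\n{T_{n}K_{n}Ve_{\mp n}}_{w,s,\infty;\pm n}\le\n{T_{n}}_{w,s,\infty;\pm n}\,\n{K_{n}}_{w,s,\infty;\pm n}\,\n{Ve_{\mp n}}_{w,s,\infty;\pm n}$. The Neumann series together with the $1/2$-contraction property gives $\n{K_{n}}_{w,s,\infty;\pm n}\le\sum_{l\ge 0}2^{-l}=2$, and $\n{Ve_{\mp n}}_{w,s,\infty;\pm n}=\n{qe_{\mp n}e_{\pm n}}_{w,s,\infty}=\n{q}_{w,s,\infty}$ because $\pm n$ is the shift adapted to $Ve_{\mp n}$; this proves (ii).

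\emph{The estimate (i).} Applying the same scheme to $a_{n}=\lin{T_{n}K_{n}Ve_{n},e_{n}}$ with the shift $-n$ (adapted to $Ve_{n}$) would only give $2\n{T_{n}}_{w,s,\infty;-n}\n{q}_{w,s,\infty}$, since the $n$-th coefficient carries the weight $w_{0}\w{0}^{s}=1$ there. To land the smaller norm $\n{q}_{s,\infty}$ on the right I would use that $T_{n}=VA_{\lambda}^{-1}Q_{n}$ has multiplication by $q$ as its outermost operator: writing $g:=K_{n}Ve_{n}$, $a_{n}=\sum_{m\neq\pm n}\frac{q_{n-m}\,g_{m}}{\lambda-m^{2}\pi^{2}}$, estimate the bare coefficient by $\abs{q_{n-m}}\le\w{n-m}^{\abs{s}}\n{q}_{s,\infty}$, estimate $\abs{g_{m}}\le\w{m-n}^{\abs{s}}\n{g}_{w,s,\infty;-n}\le 2\w{m-n}^{\abs{s}}\n{q}_{w,s,\infty}$ (again the shift $-n$ and $\n{K_{n}}_{w,s,\infty;-n}\le 2$), use $\abs{\lambda-m^{2}\pi^{2}}\ge\abs{n-m}\abs{n+m}$ from \eqref{lm-n2k2-lower-bound} together with $\w{\nu}\le 2\abs{\nu}$, and recognize $\sum_{m\neq\pm n}\abs{n-m}^{-(1-2\abs{s})}\abs{n+m}^{-1}$ as exactly the series bounded by $c_{s}n^{-(1/2-\abs{s})}$ in the proof of Lemma~\ref{Tn-est}. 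Comparing with \eqref{Tn-est-exp} and absorbing the harmless $s$-dependent factor ($4^{\abs{s}}\le 2$ for $\abs{s}\le 1/2$) into $c_{s}$ then yields $\abs{a_{n}}\le 2\n{T_{n}}_{w,s,\infty;\pm n}\n{q}_{s,\infty}$; analyticity has already been recorded, $a_{-n}=a_{n}$ by \eqref{an-symmetry}, and the case of $b_{-n}$ in (ii) is the mirror image under the shift $-n$.

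The only genuinely delicate point is the bookkeeping of shifts: at every step the shift must be chosen so that the Fourier coefficient actually being extracted appears with the intended weight (the full $w_{2n}\w{2n}^{s}$ for the $b_{\pm n}$, the harmless $1$ for $a_{n}$) while, simultaneously, $Ve_{\mp n}$ (resp. $Ve_{n}$) collapses to $q$ — which happens only for the matching shift, and then its norm equals $\n{q}_{w,s,\infty}$. The extra twist in (i), peeling off one bare Fourier coefficient of $q$ before invoking the operator-norm estimates so that the \emph{unweighted} norm $\n{q}_{s,\infty}$ rather than $\n{q}_{w,s,\infty}$ ends up on the right, is what makes the asymmetry between (i) and (ii) possible, and it works precisely because in $a_{n}$ the two primary $q$-indices $n-m$ and $m-n$ are opposite, so that no factor $w_{2n}$ ever has to be produced.
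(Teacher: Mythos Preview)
Your treatment of analyticity and of (ii) is correct and matches the paper's argument essentially verbatim: split off the $l=0$ term via $K_{n}=\Id+T_{n}K_{n}$, read the $(\pm n)$-th Fourier coefficient in the $\pm n$-shifted norm (so it carries exactly the factor $w_{2n}\w{2n}^{s}$), and use $\n{K_{n}}_{w,s,\infty;\pm n}\le 2$ together with $\n{Ve_{\mp n}}_{w,s,\infty;\pm n}=\n{q}_{w,s,\infty}$.

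For (i) there is a genuine gap. Your refined computation correctly yields
\[
  \abs{a_{n}} \le C_{s}\,\Sigma_{n}\,\n{q}_{s,\infty}\,\n{K_{n}Ve_{n}}_{w,s,\infty;-n}
  \le 2C_{s}\,\Sigma_{n}\,\n{q}_{s,\infty}\,\n{q}_{w,s,\infty},
\]
with $\Sigma_{n}=\sum_{\abs{m}\neq n}\abs{n-m}^{-(1-2\abs{s})}\abs{n+m}^{-1}$. But the final step, ``comparing with \eqref{Tn-est-exp}'', tries to replace $\Sigma_{n}\n{q}_{w,s,\infty}$ by $\n{T_{n}}_{w,s,\infty;\pm n}$. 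That substitution goes the wrong way: \eqref{Tn-est-exp} is an \emph{upper} bound for $\n{T_{n}}_{w,s,\infty;\pm n}$, not a lower bound, so you cannot conclude $\abs{a_{n}}\le 2\n{T_{n}}_{w,s,\infty;\pm n}\n{q}_{s,\infty}$ from it. Enlarging $c_{s}$ does not help either, since that only loosens the upper bound further.

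In fact the paper's ``similar fashion'' argument, carried out exactly as you sketch for (ii) but with the shift $-n$ (so that the $n$-th coefficient carries weight $w_{0}\w{0}^{s}=1$ and $\n{Ve_{n}}_{w,s,\infty;-n}=\n{q}_{w,s,\infty}$), produces
\[
  \abs{a_{n}}\le 2\n{T_{n}}_{w,s,\infty;-n}\n{q}_{w,s,\infty},
\]
i.e.\ (i) with $\n{q}_{w,s,\infty}$ in place of $\n{q}_{s,\infty}$. The appearance of the unweighted $\n{q}_{s,\infty}$ in the stated (i) is a minor slip in the paper; every downstream use of (i) (Lemma~\ref{Sn-roots} with trivial weight, Lemma~\ref{an-bn-est} with $q\in B_{2m}^{w,s,\infty}$) is insensitive to the distinction. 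So the bound you initially dismissed is the one the paper actually proves, and it suffices.
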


\begin{proof}
Let us first prove the claimed estimate for $\abs{b_{n}(\lm)-q_{2n}}$.
Since $\n{T_{n}(\lm)}_{w,s,\infty;n}\le 1/2$ for $n\ge n_{s}$ and $\lm\in S_{n}$, the series expansion~\eqref{an-bn-expansions} of $b_{n}$ converges uniformly on $S_{n}$ to an analytic function in $\lm$. Moreover, we obtain from the identity $K_{n} = \Id + T_{n}K_{n}$
\[
  b_{n} = \spii{Ve_{-n},e_{n}} + \spii{T_{n}K_{n}Ve_{-n},e_{n}}
  = q_{2n} + \spii{T_{n}K_{n}Ve_{-n},e_{n}}.
\]
Furthermore, for any $f\in \FL_{\star,\C}^{w,s,\infty}$ we compute
\[
  w_{2n}\w{2n}^{s}\abs{\spii{f,e_{n}}}
   = w_{2n}\w{2n}^{s}\abs{\spii{fe_{n},e_{2n}}}
  \le \n{fe_{n}}_{w,s,p} = \n{f}_{w,s,p;n}.
\]
Consequently, using that $\n{T_{n}}_{w,s,p;n} \le 1/2$ and hence $\n{K_{n}}_{w,s,p;n} \le 2$, one gets
\begin{align*}
  w_{2n}\w{2n}^{s}\abs{b_{n}-q_{2n}} 
   \le \n{T_{n}K_{n}Ve_{-n}}_{w,s,p;n}
  &\le 2\n{T_{n}}_{w,s,p;n}\n{Ve_{-n}}_{w,s,p;n}\\
  &= 2\n{T_{n}}_{w,s,p;n}\n{q}_{w,s,p}.
\end{align*}
The estimates for $\abs{b_{-n}-q_{-2n}}$ and $\abs{a_{n}}$ are obtained in a similar fashion.~\qed
\end{proof}

The following refined estimate will be needed in the proof of Lemma~\ref{En-basis} in Subsection~\ref{ss:jordan}.

\begin{lem}
\label{bn-est}
Let $q\in \FL_{0,\C}^{w,s,\infty}$ with $w\in\Ms$ and $-1/2 < s\le 0$.
Then for any $f\in\FL_{\star,\C}^{s,\infty}$ and $\lm\in S_{n}$ with $n\ge n_{s}$,
\[
  w_{2n}\w{2n}^{s}\abs{\spii{T_{n}f,e_{\pm n}}}
   \le c_{s}'\ep_{s}(n)\n{q}_{w,s,p} \n{f}_{w,s,p;\pm n}
\]
where $c_{s}'\ge c_{s}\ge 1$ is independent of $q$, $n$, and $\lm$, and
\[
  \ep_{s}(n) = 
  \begin{cases}
  \frac{\log\w{n}}{n}, & s = 0,\\
  \frac{1}{n^{1-\abs{s}}}, & -1/2 < s < 0.~\fish
  \end{cases}
\]
\end{lem}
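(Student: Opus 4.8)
The plan is to write \(\spii{T_{n}f,e_{\pm n}}\) out explicitly as a Fourier series, estimate the Fourier multiplier \(A_{\lm}^{-1}\) via the spectral separation bound \eqref{lm-n2k2-lower-bound}, and reduce the whole inequality to a scalar convolution sum of size \(\ep_{s}(n)\). By \eqref{VAQ} the \(m\)-th Fourier coefficient of \(T_{n}(\lm)f=VA_{\lm}^{-1}Q_{n}f\) equals \(\sum_{\abs{k}\neq n}q_{m-k}f_{k}/(\lm-k^{2}\pi^{2})\), so, specializing to \(m=\pm n\),
\[
  \spii{T_{n}f,e_{\pm n}}=\sum_{\abs{k}\neq n}\frac{q_{\pm n-k}\,f_{k}}{\lm-k^{2}\pi^{2}}.
\]
For \(\lm\in S_{n}\) and \(\abs{k}\neq n\), \eqref{lm-n2k2-lower-bound} gives \(\abs{\lm-k^{2}\pi^{2}}\ge\abs{n-k}\,\abs{n+k}\). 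Writing \(s=-\abs{s}\) and inserting the defining bounds \(\abs{q_{m}}\le\n{q}_{w,s,\infty}\,w_{m}^{-1}\w{m}^{\abs{s}}\) and \(\abs{f_{k}}\le\n{f}_{w,s,\infty;n}\,w_{k+n}^{-1}\w{k+n}^{\abs{s}}\) (and, for \(e_{-n}\), the analogue with \(k-n\)), one obtains after factoring out the two norms
\[
  w_{2n}\w{2n}^{s}\,\abs{\spii{T_{n}f,e_{+n}}}
  \le \n{q}_{w,s,\infty}\,\n{f}_{w,s,\infty;n}\;
  w_{2n}\w{2n}^{-\abs{s}}\!\!\sum_{\abs{k}\neq n}
  \frac{\w{n-k}^{\abs{s}}\,\w{n+k}^{\abs{s}}}{w_{n-k}\,w_{n+k}\,\abs{n-k}\,\abs{n+k}},
\]
and the identical bound for \(e_{-n}\), using \(w_{k-n}=w_{n-k}\) and \(\w{k-n}=\w{n-k}\).

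Since \(2n=(n-k)+(n+k)\), submultiplicativity and symmetry of \(w\) give \(w_{2n}\le w_{n-k}\,w_{n+k}\), so every \(w\)-factor cancels and it remains to prove the scalar estimate
\[
  \Sg_{n}\defl \w{2n}^{-\abs{s}}\sum_{\abs{k}\neq n}
  \frac{\w{n-k}^{\abs{s}}\,\w{n+k}^{\abs{s}}}{\abs{n-k}\,\abs{n+k}}
  \le c_{s}'\,\ep_{s}(n).
\]
By \(\w{-m}=\w{m}\) the \(k\)-sum equals the \(k=0\) term, which is \(O(n^{\abs{s}-2})\) and negligible, plus twice the sum over \(k\ge 1\), \(k\neq n\); I would split the latter at \(k=n\) and substitute \(j=\abs{n-k}\). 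For \(1\le k<n\) one has \(n+k=2n-j\in[n+1,2n-1]\), hence \(\w{2n-j}^{\abs{s}}\w{2n}^{-\abs{s}}\asymp 1\) and \(2n-j\asymp n\), so this block is \(\asymp\frac1n\sum_{1\le j<n}\w{j}^{\abs{s}}j^{-1}\); for \(s=0\) the partial-fraction identity \(\frac1{j(2n-j)}=\frac1{2n}\bigl(\frac1j+\frac1{2n-j}\bigr)\) converts it to \(\le\frac1{2n}\sum_{j\le 2n}\frac1j\asymp\frac{\log\w{n}}n\), and for \(-1/2<s<0\) the bound \(\sum_{j\le n}j^{\abs{s}-1}\lesssim n^{\abs{s}}\) gives \(n^{\abs{s}-1}\). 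For \(k>n\), put \(j=k-n\): if \(j\le 2n\) the estimate is the same (\(\w{2n+j}\) being comparable to \(\w{2n}\)), and if \(j>2n\) the crude bound \(\w{2n+j}\le 2\w{j}\) makes the summand \(\lesssim\w{2n}^{-\abs{s}}j^{2\abs{s}-2}\); since \(2\abs{s}-2<-1\) (here \(\abs{s}<1/2\)) this tail sums to \(\lesssim\w{2n}^{-\abs{s}}n^{2\abs{s}-1}\lesssim n^{\abs{s}-1}\) (and to \(\lesssim n^{-1}\lesssim\frac{\log\w{n}}n\) when \(s=0\)). Collecting the blocks, \(\Sg_{n}\lesssim\ep_{s}(n)\), and replacing \(c_{s}'\) by \(\max(c_{s}',c_{s})\) arranges \(c_{s}'\ge c_{s}\ge 1\).

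The one non-routine point is the scalar sum \(\Sg_{n}\): one must play the prefactor \(\w{2n}^{-\abs{s}}\) against the numerator factors \(\w{n\pm k}^{\abs{s}}\) in the range where \(\abs{k}\) is much larger than \(n\) — there \(\w{n\pm k}\) is no longer comparable to \(\w{2n}\) — and still land on exactly \(\ep_{s}(n)\), in particular recovering the logarithm at \(s=0\). The rest is the bookkeeping already carried out in the proof of Lemma~\ref{Tn-est}; the only structural difference is that here we keep the single Fourier mode \(m=\pm n\) rather than taking a supremum over \(m\), and it is precisely this that upgrades the decay from the \(n^{-(1/2-\abs{s})}\) of Lemma~\ref{Tn-est} to \(\ep_{s}(n)\).
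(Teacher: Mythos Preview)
Your proof is correct and follows essentially the same route as the paper: write out the single Fourier coefficient of $T_{n}f$ at $\pm n$, use \eqref{lm-n2k2-lower-bound}, cancel the weight $w_{2n}$ via submultiplicativity, and reduce to a scalar sum. The only difference is cosmetic: the paper first uses $\w{n\pm k}^{\abs{s}}/\abs{n\pm k}\le 2/\abs{n\pm k}^{1-\abs{s}}$ to rewrite the scalar sum as $\lin{2n}^{s}\sum_{\abs{m}\neq n}\abs{n^{2}-m^{2}}^{-(1-\abs{s})}$ and then invokes Lemma~\ref{hilbert-sum}, whereas you estimate $\Sg_{n}$ directly by splitting at $j=\abs{k-n}\lessgtr 2n$; both computations yield exactly $\ep_{s}(n)$.
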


\begin{proof}
As the estimates of $\spii{T_{n}f,e_{n}}$ and $\spii{T_{n}f,e_{-n}}$ can be proved in a similar way we concentrate on $\spii{T_{n}f,e_{n}}$. Since by definition $T_{n} = VA_{\lm}^{-1}Q_{n}$,
\[
  \spii{T_{n}f,e_{n}} = \sum_{\abs{m}\neq n} \frac{q_{n-m}f_{m}}{\lm-m^{2}\pi^{2}}.
\]
Using that $\w{n+m}/\abs{n+m}$, $\w{n-m}/\abs{n-m}\le 2$ for $\abs{m}\neq n$ together with~\eqref{lm-n2k2-lower-bound}, and the submultiplicativity of the weight,  one gets for any $\lm\in S_{n}$,
\begin{align*}
  w_{2n}\lin{2n}^{s}\abs{\spii{T_{n}f,e_{n}}} 
  &\le 2\sum_{\abs{m}\neq n} 
  \frac{\lin{2n}^{s}}{\abs{n^{2}-m^{2}}^{1-\abs{s}}}
  \frac{w_{n-m}\abs{q_{n-m}}}{\w{n-m}^{\abs{s}}}
  \frac{w_{n+m}\abs{f_{m}}}{\w{n+m}^{\abs{s}}}\\
  &\le
  2\p*{\sum_{\abs{m}\neq n} 
  \frac{\lin{2n}^{s}}{\abs{n^{2}-m^{2}}^{1-\abs{s}}}}
  \n{q}_{w,s,\infty}\n{f}_{w,s,\infty;n}.
\end{align*}
Finally, by Lemma~\ref{hilbert-sum},
\[
  \sum_{\abs{m}\neq n} 
  \frac{1}{\abs{n^{2}-m^{2}}^{1-\abs{s}}}
  \le
  \begin{cases}
  \frac{\tilde c_{s}\log \w{n}}{n}, & s = 0,\\
  \frac{\tilde c_{s}}{n^{1-2\abs{s}}}, & -1/2 < s < 0.
  \end{cases}
\]
Altogether we thus have proved the claim.~\qed
\end{proof}

The preceding lemma together with~\eqref{cs-est} implies that for any $n\ge n_{s}$, the function $\det B_{n}(\lm) = (\lm-n^{2}\pi^{2}-a_{n})^{2} - b_{n}b_{-n}$ is analytic in $\lm\in S_{n}$ and can be considered a small perturbation of $(\lm-n^{2}\pi^{2})^{2}$ provided $n\ge n_{s}$ is sufficiently large.

\begin{lem}
\label{Sn-roots}
Suppose $q\in \FL_{0,\C}^{s,\infty}$ with $-1/2< s \le 0$. Choose $n_{s} = n_{s}(q) \ge 1$ as in Corollary~\ref{Q-soln}.
Then for any $n\ge n_{s}$, $\det(B_{n}(\lm))$ has exactly two roots $\xi_{n,1}$ and $\xi_{n,2}$ in $S_{n}$ counted with multiplicity. They are contained in
\[
  D_{n} \defl \setdef{\lm}{\abs{\lm-n^{2}\pi^{2}} \le 4n^{1/2}} \subset S_{n}
\]
and satisfy
\begin{equation}
  \label{xi-est}
  \abs{\xi_{n,1}-\xi_{n,2}} \le \sqrt{6}\sup_{\lm\in S_{n}} \abs{b_{n}(\lm)b_{-n}(\lm)}^{1/2}.~\fish
\end{equation}
\end{lem}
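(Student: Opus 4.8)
The plan is to regard the analytic function $\det B_n(\lm) = (\mu - a_n(\lm))^2 - b_n(\lm)b_{-n}(\lm)$, where $\mu \defl \lm - n^2\pi^2$, as a perturbation of $\mu^2$ on the strip $S_n$, to count its zeros in $S_n$ by Rouch\'e's theorem, and then to derive \eqref{xi-est} from a Cauchy estimate for $a_n'$ together with the fundamental theorem of calculus. First I would collect, for $n \ge n_s$ and $\lm \in S_n$ and with the trivial weight $w = \o$, the a priori bounds that Lemma~\ref{coeff-est} gives when combined with Corollary~\ref{Q-soln} (which provides $\n{T_n(\lm)}_{s,\infty;\pm n} \le \tfrac{1}{2}$ as well as $2c_s\n{q}_{s,\infty} \le n^{1/2-\abs{s}}$, whence $\n{q}_{s,\infty} \le \tfrac{1}{2} n^{1/2}$ and $\n{q}_{s,\infty}^2 \le \tfrac{1}{4} n^{1-2\abs{s}}$). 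From Lemma~\ref{coeff-est}(i) one gets $\abs{a_n(\lm)} \le \n{q}_{s,\infty} \le \tfrac{1}{2} n^{1/2}$; from Lemma~\ref{coeff-est}(ii) together with $\abs{q_{\pm 2n}} \le \w{2n}^{\abs{s}}\n{q}_{s,\infty}$ one gets $\abs{b_{\pm n}(\lm)} \le 2\w{2n}^{\abs{s}}\n{q}_{s,\infty}$, and hence, using $\w{2n} \le 3n$ and $2\abs{s} < 1$, the uniform bound $\sup_{\lm \in S_n}\abs{b_n(\lm)b_{-n}(\lm)} \le 3n$.

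Next I would apply Rouch\'e's theorem on the circle $\partial D_n = \setdef{\lm}{\abs{\mu} = 4n^{1/2}}$, comparing $h(\lm) \defl \det B_n(\lm)$ with $g(\lm) \defl \mu^2$. Writing $h - g = -2a_n\mu + a_n^2 - b_nb_{-n}$, the bounds above give $\abs{h - g} \le 2\abs{a_n}\abs{\mu} + \abs{a_n}^2 + \abs{b_nb_{-n}} \le 4n + \tfrac{1}{4} n + 3n < 16n = \abs{g}$ on $\partial D_n$, so $h$ has, counted with multiplicity, exactly two zeros inside $D_n$, as $g$ does. That these are all the zeros of $h$ in $S_n$ follows from the fact that for $\lm \in S_n$ with $\abs{\mu} > 4n^{1/2}$ one has $\abs{a_n} \le \tfrac{1}{2} n^{1/2} < \tfrac{1}{8}\abs{\mu}$, hence $\abs{h} \ge (\abs{\mu} - \abs{a_n})^2 - \abs{b_nb_{-n}} > \bigl(\tfrac{7}{2} n^{1/2}\bigr)^2 - 3n = \tfrac{37}{4}n > 0$. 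Together with the elementary inclusion $D_n \subset S_n$ (valid since $4n^{1/2} \le 12n$ for $n \ge 1$), this yields the existence, the count, and the localization $\xi_{n,1},\xi_{n,2} \in D_n$.

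For \eqref{xi-est}, put $\beta \defl \sup_{\lm \in S_n}\abs{b_n(\lm)b_{-n}(\lm)}^{1/2}$ and introduce the analytic function $\psi(\lm) \defl \mu - a_n(\lm)$ on $S_n$. Since $\det B_n(\xi_{n,j}) = \psi(\xi_{n,j})^2 - b_n(\xi_{n,j})b_{-n}(\xi_{n,j}) = 0$, we have $\abs{\psi(\xi_{n,j})} \le \beta$ for $j = 1,2$. A Cauchy estimate for $a_n$ — analytic on $S_n$ with $\abs{a_n} \le \tfrac{1}{2} n^{1/2}$ there, and the closed disc of radius $8n$ about any point of $D_n$ still contained in $S_n$ — gives $\abs{a_n'(\lm)} \le \tfrac{1}{16}n^{-1/2} \le \tfrac{1}{16}$ on $D_n$. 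Integrating $\psi' = 1 - a_n'$ along the segment $[\xi_{n,2},\xi_{n,1}] \subset D_n$ then gives $\psi(\xi_{n,1}) - \psi(\xi_{n,2}) = (\xi_{n,1} - \xi_{n,2})\,\theta$ with $\abs{\theta} \ge 1 - \tfrac{1}{16} = \tfrac{15}{16}$, so that $\abs{\xi_{n,1} - \xi_{n,2}} \le \tfrac{16}{15}\,\abs{\psi(\xi_{n,1}) - \psi(\xi_{n,2})} \le \tfrac{32}{15}\beta \le \sqrt{6}\,\beta$.

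The difficulty here is arithmetic bookkeeping rather than conceptual: the estimates on $a_n$ and $b_{\pm n}$ must be made uniform over the non-compact strip $S_n$, and the resulting numerical constants must genuinely beat $16$ on $\partial D_n$ and $\sqrt{6}$ in the last line. This works only because the threshold $n_s$ of Corollary~\ref{Q-soln} forces $c_s\n{q}_{s,\infty}$ to be dominated by $n^{1/2-\abs{s}}$, a power of $n$ strictly below $n^{1/2}$, which is exactly what makes $\abs{a_n} \le \tfrac{1}{2} n^{1/2}$ and $\abs{b_nb_{-n}} \le 3n$ and leaves the disc $D_n$ of radius $4n^{1/2}$ with room to spare.
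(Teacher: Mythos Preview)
Your proof is correct and follows essentially the same approach as the paper: the a priori bounds on $a_n$ and $b_{\pm n}$ from Lemma~\ref{coeff-est} and Corollary~\ref{Q-soln}, a Rouch\'e argument on $\partial D_n$, and the Cauchy estimate $\abs{a_n'}\le 1/16$ on $D_n$ leading to the factor $32/15<\sqrt{6}$. Your presentation is in fact slightly cleaner than the paper's in two places: you do the Rouch\'e comparison in one step (directly against $\mu^2$) rather than the paper's two-step version, and for \eqref{xi-est} you work with $\psi=\mu-a_n$ and the fundamental theorem of calculus, thereby avoiding the paper's somewhat informal choice of a branch of $\sqrt{b_nb_{-n}}$.
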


\begin{proof}
Since for any $n\ge n_{s}$ and $\lm\in S_{n}$, $\n{T_{n}(\lm)}_{s,\infty;\pm n} \le 1/2$, one concludes from the preceding lemma that $\abs{a_{n}(\lm)} \le \n{q}_{s,\infty}$ and, with $\abs{b_{\pm n}(\lm)} \le \abs{q_{\pm 2n}} + \abs{b_{\pm n}(\lm)-q_{\pm 2n}}$, that
\[
  \lin{2n}^{s}\abs{b_{\pm n}(\lm)} \le 2\n{q}_{s,\infty}.
\]
Furthermore, by~\eqref{cs-est},
\[
  2\n{q}_{s,\infty} \le n_{s}^{1/2-\abs{s}}.
\]
Therefore, for any $\lm,\mu\in S_{n}$,
\begin{equation}
  \label{an-cn-est}
  \begin{split}
    \abs{a_{n}(\mu)} + \abs{b_{n}(\lm)b_{-n}(\lm)}^{1/2} &\le 
  \p*{ 1 + 2\w{2n}^{\abs{s}} }\n{q}_{s,\infty}\\
  &< 6n^{\abs{s}}\n{q}_{s,\infty} \le 4n^{1/2}
   = \inf_{\lm\in \partial D_{n}} \abs{\lm - n^{2}\pi^{2}}.
  \end{split}
\end{equation}
It then follows that $\det B_{n}(\lm)$ has no root in $S_{n}\setminus D_{n}$. Indeed, assume that $\xi\in S_{n}$ is a root, then $\abs{\xi - n^{2}\pi^{2}-a_{n}(\xi)} = \abs{b_{n}(\xi)b_{-n}(\xi)}^{1/2}$ and hence
\[
  \abs{\xi-n^{2}\pi^{2}}
   \le \abs{a_{n}(\xi)} + \abs{b_{n}(\xi)b_{-n}(\xi)}^{1/2}
    < 4n^{1/2},
\]
implying that $\xi\in D_{n}$. In addition, \eqref{an-cn-est} implies that by Rouché's theorem the two analytic functions $\lm-n^{2}\pi^{2}$ and $\lm-n^{2}\pi^{2} -a_{n}(\lm)$, defined on the strip $S_{n}$ have the same number of roots in $D_{n}$ when counted with multiplicities. As a consequence $(\lm-n^{2}\pi^{2}-a_{n}(\lm))^{2}$ has a double root in $D_{n}$. Finally, \eqref{an-cn-est} also implies that
\begin{align*}
  \sup_{\lm\in S_{n}} \abs{b_{n}(\lm)b_{-n}(\lm)}^{1/2} 
  &< \inf_{\lm\in \partial D_{n}} \abs{\lm-n^{2}\pi^{2 }} - \sup_{\lm\in S_{n}} \abs{a_{n}(\lm)}\\
  &\le \inf_{\lm\in \partial D_{n}} \abs{\lm-n^{2}\pi^{2} - a_{n}(\lm)}
\end{align*}
and hence again by Rouché's theorem, the analytic functions $(\lm-n^{2}\pi^{2}-a_{n}(\lm))^{2}$ and $(\lm-n^{2}\pi^{2}-a_{n}(\lm))^{2}-b_{n}(\lm)b_{-n}(\lm)$ have the same number of roots in $D_{n}$.
Altogether we thus have established that $\det(B_{n}(\lm)) = (\lm-n^{2}\pi^{2}-a_{n}(\lm))^{2} - b_{n}(\lm)b_{-n}(\lm)$ has precisely two roots $\xi_{n,1}$, $\xi_{n,2}$ in $D_{n}$.

To estimate the distance of the roots, write $\det B_{n}(\lm)$ as a product $g_{+}(\lm)g_{-}(\lm)$ where $g_{\pm}(\lm) = \lm-n^{2}\pi^{2} - a_{n}(\lm) \mp \ph_{n}(\lm)$
and $\ph_{n}(\lm) = \sqrt{b_{n}(\lm)b_{-n}(\lm)}$ with an arbitrary choice of the sign of the root for any $\lm$. Each root $\xi$ of $\det(B_{n})$ is either a root of $g_{+}$ or $g_{-}$ and thus satisfies
\[
  \xi \in \setd{n^{2}\pi^{2} + a_{n}(\xi) \pm \ph_{n}(\xi)}.
\]
As a consequence,
\begin{equation}
  \label{xi-1-2}
  \begin{split}
  \abs{\xi_{n,1}-\xi_{n,2}} &\le \abs{a_{n}(\xi_{n,1})-a_{n}(\xi_{n,2})} + 
  \max_{\pm}\abs{\ph_{n}(\xi_{n,1}) \pm \ph_{n}(\xi_{n,2})}\\
  &\le \sup_{\lm\in D_{n}} \abs{\partial_{\lm} a_{n}(\lm)}\abs{\xi_{n,1}-\xi_{n,2}}
  + 2\sup_{\lm\in D_{n}} \abs{\ph_{n}(\lm)}.
  \end{split}
\end{equation}
Since
\[
  \dist(D_{n},\partial S_{n}) \ge 12n - 4n^{1/2} \ge 8n,
\]
one concludes from Cauchy's estimate and the estimate $2\n{q}_{s,\infty} \le n^{1/2-\abs{s}}$ following from~\eqref{cs-est} that
\[
  \sup_{\lm\in D_{n}} \abs{\partial_{\lm}a_{n}(\lm)}
   \le \frac{\sup_{\lm\in S_{n}} \abs{a_{n}(\lm)}}{\dist(D_{n},\partial S_{n})}
   \le \frac{\n{q}_{s,\infty}}{8n} \le \frac{1}{16}.
\]
Therefore, by \eqref{xi-1-2},
\[
  \abs{\xi_{n,1}-\xi_{n,2}}^{2} \le 6\sup_{\lm\in D_{n}}\abs{b_{n}(\lm)b_{-n}(\lm)}
\]
as claimed.~\qed
\end{proof}

\subsection{Proof of Theorem~\ref{thm:spec-fw} (i)}

Let $q\in \FL_{0,\C}^{w,s,\infty}$ with $-1/2< s \le 0$ and $w\in\Ms$. The eigenvalues of $L(q)$, when listed with lexicographic ordering, satisfy
\[
  \lm_{0}^{+}\lex \lm_{1}^{-}\lex \lm_{1}^{+}\lex \dotsb,\quad
  \text{and}\quad
  \lm_{n}^{\pm} = n^{2}\pi^{2} + n\ell_{n}^{2}.
\]
It follows from a standard counting argument that for $n\ge n_{s}$ with $n_{s}$ as in Corollary~\ref{Q-soln} that $\lm_{n}^{\pm}\in S_{n}$ and $\lm_{n}^{\pm} \notin S_{k}$ for any $k\neq n$. It then follows from Lemma~\ref{eval-char} and Lemma~\ref{Sn-roots} that $\setd{\xi_{n,1,},\xi_{n,2}} = \setd{\lm_{n}^{-},\lm_{n}^{+}}$ and hence $\gm_{n} = \lm_{n}^{+}-\lm_{n}^{-}$ satisfies
\[
  \abs{\gm_{n}} = \abs{\xi_{n,1}-\xi_{n,2}},\qquad \forall n\ge n_{s}.
\]

\begin{lem}
\label{gm-est}
If $q\in\FL_{0,\C}^{w,s,\infty}$ with $w\in\Ms$ and $-1/2 < s \le 0$, then for any $N\ge n_{s}$,
\[
  \n{T_{N}\gm(q)}_{w,s,\infty} \le
  4
  \n{T_{N}q}_{w,s,\infty}
   + \frac{16c_{s}}{N^{1/2-\abs{s}}}\n{q}_{w,s,\infty}^{2}.~\fish
\]
\end{lem}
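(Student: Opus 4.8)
The plan is to read off the gap length $\gm_{n}(q)$ from the two roots of $\det B_{n}(\lm)$, bound their separation by $b_{\pm n}$ via Lemma~\ref{Sn-roots}, and then estimate $b_{\pm n}$ by $q_{\pm 2n}$ plus a genuine remainder through Lemma~\ref{coeff-est} and Lemma~\ref{Tn-est}. Fix $N\ge n_{s}$. Since $\n{T_{N}\gm(q)}_{w,s,\infty}=\sup_{n\ge N}w_{n}\w{n}^{s}\abs{\gm_{n}(q)}$, it suffices to estimate $w_{n}\w{n}^{s}\abs{\gm_{n}(q)}$ for an arbitrary $n\ge N$. First I would recall, as established just before the lemma, that for $n\ge n_{s}$ one has $\setd{\xi_{n,1},\xi_{n,2}}=\setd{\lm_{n}^{-},\lm_{n}^{+}}$, hence $\abs{\gm_{n}(q)}=\abs{\xi_{n,1}-\xi_{n,2}}$, and then apply \eqref{xi-est} together with $\sup_{\lm\in S_{n}}\abs{b_{n}b_{-n}}^{1/2}\le(\sup_{\lm\in S_{n}}\abs{b_{n}})^{1/2}(\sup_{\lm\in S_{n}}\abs{b_{-n}})^{1/2}$ to get
\[
  \abs{\gm_{n}(q)}\le \sqrt{6}\,\Bigl(\sup_{\lm\in S_{n}}\abs{b_{n}(\lm)}\Bigr)^{1/2}\Bigl(\sup_{\lm\in S_{n}}\abs{b_{-n}(\lm)}\Bigr)^{1/2}.
\]

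Next I would pass from the weight at index $n$ to the weight at index $2n$, where the coefficient estimates live. Monotonicity of $w$ gives $w_{n}\le w_{2n}$, and from $\w{2n}\le 2\w{n}$ together with $-1/2<s\le 0$ one has $\w{n}^{s}\le 2^{\abs{s}}\w{2n}^{s}\le\sqrt{2}\,\w{2n}^{s}$; hence $w_{n}\w{n}^{s}\le\sqrt{2}\,w_{2n}\w{2n}^{s}$. Combining Lemma~\ref{coeff-est}(ii) with the bound $\n{T_{n}(\lm)}_{w,s,\infty;\pm n}\le c_{s}n^{-(1/2-\abs{s})}\n{q}_{w,s,\infty}$ from \eqref{Tn-est-exp}, which holds uniformly for $\lm\in S_{n}$, the triangle inequality yields
\[
  \sup_{\lm\in S_{n}}w_{2n}\w{2n}^{s}\abs{b_{\pm n}(\lm)}\le w_{2n}\w{2n}^{s}\abs{q_{\pm 2n}}+\frac{2c_{s}}{n^{1/2-\abs{s}}}\n{q}_{w,s,\infty}^{2}.
\]

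Then I would multiply the two factors and use $\sqrt{AB}\le\tfrac12(A+B)$ to obtain
\[
  w_{n}\w{n}^{s}\abs{\gm_{n}(q)}\le \sqrt{12}\,\Bigl(\tfrac12\,w_{2n}\w{2n}^{s}\bigl(\abs{q_{2n}}+\abs{q_{-2n}}\bigr)+\frac{2c_{s}}{n^{1/2-\abs{s}}}\n{q}_{w,s,\infty}^{2}\Bigr).
\]
Because $n\ge N$, both $2n$ and $-2n$ have modulus $\ge N$, so each $w_{2n}\w{2n}^{s}\abs{q_{\pm 2n}}$ is bounded by $\n{T_{N}q}_{w,s,\infty}$, and $n^{-(1/2-\abs{s})}\le N^{-(1/2-\abs{s})}$; taking the supremum over $n\ge N$ and using $\sqrt{12}<4$ and $2\sqrt{12}<16$ gives the asserted inequality.

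The step I expect to be the main (if mild) obstacle is the scale mismatch: the gap $\gm_{n}$ sits at index $n$, whereas the only Fourier datum entering its estimate is $q_{\pm 2n}$, so one must migrate the weight factor $w_{n}\w{n}^{s}$ to $w_{2n}\w{2n}^{s}$ without spoiling the inequality — this is precisely where submultiplicativity and monotonicity of $w\in\Ms$, and the strict bound $\abs{s}<1/2$ (which also makes $n^{-(1/2-\abs{s})}$ a true decaying remainder), are needed. A related point of care is that the leading term must be the tail norm $\n{T_{N}q}_{w,s,\infty}$ rather than the full norm $\n{q}_{w,s,\infty}$; this forces one to carry $N$ through the whole chain and to invoke $n\ge N$ exactly when discarding the low Fourier modes. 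Everything else — the uniformity of the bounds in $\lm\in S_{n}$ and the numerical constants — is routine.
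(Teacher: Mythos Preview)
Your proof is correct and follows essentially the same route as the paper: bound $\abs{\gm_{n}}$ via the root estimate \eqref{xi-est} of Lemma~\ref{Sn-roots}, split $b_{\pm n}$ into $q_{\pm 2n}$ plus the remainder controlled by Lemma~\ref{coeff-est}(ii) and Lemma~\ref{Tn-est}, and take the supremum over $n\ge N$. The one place where you are actually more careful than the paper is the weight transfer $w_{n}\w{n}^{s}\le\sqrt{2}\,w_{2n}\w{2n}^{s}$: the paper's argument stops at the bound for $w_{2n}\w{2n}^{s}\abs{\gm_{n}}$ and leaves this passage implicit, whereas you make it explicit and absorb the extra $\sqrt{2}$ into the constants $4$ and $16$.
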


\begin{proof}[Proof of Theorem~\ref{thm:spec-fw} (i).]
By Lemma~\ref{Sn-roots},
\begin{align*}
  \abs{\gm_{n}} &= \abs{\xi_{n,1}-\xi_{n,2}}
  \le \sqrt{3}\p[\Big]{\sup_{\lm\in S_{n}} \abs{b_{n}(\lm)} + \sup_{\lm\in S_{n}}\abs{b_{-n}(\lm)}}\\
  &\le
  \sqrt{3}
  \p*{
  \abs{q_{2n}} + \abs{q_{-2n}}
   + \sup_{\lm\in S_{n}} \abs{b_{n}(\lm) - q_{2n}}
          + \sup_{\lm\in S_{n}} \abs{b_{-n}(\lm) - q_{-2n}}
  }.
\end{align*}
It then follows from Lemma~\ref{Tn-est} and Lemma~\ref{coeff-est} that for $n\ge N$ with $N\defl n_{s}$
\[
  w_{2n}\lin{2n}^{s}\abs{\gm_{n}} \le
  \sqrt{3}
  \p*{
  w_{2n}\lin{2n}^{s}\abs{q_{2n}} + w_{2n}\lin{2n}^{s}\abs{q_{-2n}}
   + \frac{4c_{s}}{n^{1/2-\abs{s}}}\n{q}_{w,s,\infty}^{2}
  }.
\]
Thus, $(\gm_{n}(q))_{n\ge 1}\in\ell_{\C}^{w,s,\infty}(\N)$. As $n_{s}$ can be chosen locally uniformly in $q\in \FL_{0,\C}^{w,s,\infty}$, the map $\FL_{0,\C}^{w,s,\infty} \to \ell_{\C}^{w,s,\infty}(\N)$, $q\mapsto (\gm_{n}(q))_{n\ge 1}$ is locally bounded.~\qed
\end{proof}

\subsection{Jordan blocks of $L(q)$}

\label{ss:jordan}

To treat the Dirichlet problem, we develop the methods of \cite{Kappeler:CNzeErmy}, where the case $q\in \FL_{0,\C}^{s,p}$ with $-1/2\le s\le 0$ and $2\le p < \infty$ was considered, to the case with $-1/2< s \le 0$ and $p=\infty$.
If $q\in\FL_{0,\C}^{s,\infty}$ is not real valued, then the operator $L(q)$ might have complex eigenvalues and the geometric multiplicity of an eigenvalue could be less than its algebraic multiplicity. 

We choose $\check{n}_{s}\ge n_{s}$, where $n_{s}$ as in Corollary~\ref{Q-soln}, so that in addition
\begin{equation}
  \label{n-s-p}
  \begin{split}
  &\abs{\lm_{n}^{\pm}} \le (\check{n}_{s}-1)^{2}\pi^{2} + \check{n}_{s}/2,
  && \forall n < \check{n}_{s},\\
  &\abs{\lm_{n}^{\pm}-n^{2}\pi^{2}} \le n/2,
  && \forall n\ge \check{n}_{s},\\
  &\lm_{n}^{\pm}\text{ are 1-periodic [1-antiperiodic] if }n \text{ even [odd] }
  && \forall n\ge \check{n}_{s}.
  \end{split}
\end{equation}
Note that $\check{n}_{s}$ can be chosen uniformly on bounded subsets of $\FL_{0,\C}^{w,s,\infty}$ since $\FL_{0,\C}^{w,s,\infty}$ embeds compactly into $H_{0,\C}^{-1}$.
For $n\ge \check{n}_{s}$ we further let
\[
  E_{n} = \begin{cases}
  \Null(L-\lm_{n}^{+})\oplus\mathrm{Null}(L-\lm_{n}^{-}), &
  \lm_{n}^{+}\neq \lm_{n}^{-},\\
  \Null(L-\lm_{n}^{+})^{2}, & \lm_{n}^{+} = \lm_{n}^{-}.
  \end{cases}
\]
We need to estimate the coefficients of $L(q)\big|_{E_{n}}$ when represented with respect to an appropriate orthonormal basis of $E_{n}$. In the case where $\lm_{n}^{+} = \lm_{n}^{-}$ the matrix representation will be in Jordan normal form. By Lemma~\ref{regularity-En}, $E_{n}\subset\FL_{\star,\C}^{s+2,\infty}\opento L^{2} \defl L^{2}([0,2],\C)$. Denote by $f_{n}^{+}\in E_{n}$ an $L^{2}$-normalized eigenfunction corresponding to $\lm_{n}^{+}$ and by $\ph_{n}$ an $L^{2}$-normalized element in $E_{n}$ so that $\setd{f_{n}^{+},\ph_{n}}$ forms an $L^{2}$-orthonormal basis of $E_{n}$. 
Then the following lemma holds.

\begin{lem}
\label{En-basis}
Let $q\in\FL_{0,\C}^{s,\infty}$ with $-1/2< s\le 0$. Then there exists $n_{s}'\ge \check{n}_{s}$ -- with $\check{n_{s}}$ given by~\eqref{n-s-p} -- so that for any $n\ge n_{s}'$,
\[
  (L-\lm_{n}^{+})\ph_{n} = -\gm_{n}\ph_{n} + \eta_{n}f_{n}^{+},
\]
where $\eta_{n}\in \C$ satisfies the estimate
\[
  \abs{\eta_{n}} \le 16(\abs{\gm_{n}} + \abs{b_{n}(\lm_{n}^{+})} + \abs{b_{-n}(\lm_{n}^{+})}).
\]
The threshold $n_{s}'$ can be chosen locally uniformly in $q\in\FL_{0,\C}^{s,\infty}$.~\fish
\end{lem}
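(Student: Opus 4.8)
The plan is to analyze the action of $L - \lm_n^+$ on the two-dimensional space $E_n$, leveraging the reduction developed in Section~\ref{ss:spec-reduction}. The key point is that for $n \ge \check{n}_s$, both eigenvalues $\lm_n^{\pm}$ lie in $S_n$, so by Lemma~\ref{eval-char} the restricted spectral problem is governed by the $2\times 2$ matrix $B_n(\lm)$ from~\eqref{Bn-mat}. First I would decompose the $L^2$-normalized eigenfunction $f_n^+$ and the companion vector $\ph_n$ according to $\FL_{\star,\C}^{s,\infty} = \Pc_n \oplus \Qc_n$, writing $f_n^+ = u_n^+ + v_n^+$ and $\ph_n = w_n + \zt_n$ with $u_n^+, w_n \in \Pc_n$ and $v_n^+, \zt_n \in \Qc_n$. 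From Corollary~\ref{Q-soln} and Remark~\ref{inhom-2}, the $\Pc_n$-components $u_n^+, w_n$ and the scalars $\eta_n$ are what carry all the information: the $\Qc_n$-components are slaved to the $\Pc_n$-components via the bounded operators $A_\lm^{-1}Q_n K_n V$, and are correspondingly small (gaining a factor $n^{-1/2+|s|}$ by Lemma~\ref{Tn-est}). So after projecting the identity $(L-\lm_n^+)\ph_n = -\gm_n \ph_n + \eta_n f_n^+$ onto $\Pc_n$, one reduces to an identity among vectors in the fixed two-dimensional space $\Pc_n = \spanx\{e_n, e_{-n}\}$, namely $B_n(\lm_n^+) w_n = -\gm_n w_n + \eta_n u_n^+$ modulo controllably small corrections coming from the difference between $\lm_n^+$ and $\lm_n^-$ appearing in the $Q$-equation for $v_n^+$ versus $\zt_n$.

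Next I would pin down the structure of $B_n(\lm_n^+)$. By~\eqref{Bn-mat}, this matrix is $\p*{\begin{smallmatrix} \lm_n^+ - n^2\pi^2 - a_n & -b_n \\ -b_{-n} & \lm_n^+ - n^2\pi^2 - a_n\end{smallmatrix}}$ evaluated at $\lm = \lm_n^+$; since $\lm_n^+$ is a root of $\det B_n$ (Lemma~\ref{eval-char}, Lemma~\ref{Sn-roots}) we have $(\lm_n^+ - n^2\pi^2 - a_n(\lm_n^+))^2 = b_n(\lm_n^+)b_{-n}(\lm_n^+)$, so $B_n(\lm_n^+)$ has a zero eigenvalue with eigenvector $u_n^+$ (this is just the statement that $f_n^+$ is an eigenfunction). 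The other eigenvalue of $B_n(\lm_n^+)$ is $-2\ph_n(\lm_n^+)$ in the notation of the proof of Lemma~\ref{Sn-roots}, i.e. it equals (up to sign) $\xi_{n,1} - \xi_{n,2} = \pm\gm_n$ — here one uses that the trace of $B_n(\lm_n^+)$ is $2(\lm_n^+ - n^2\pi^2 - a_n(\lm_n^+))$ and that, since the roots of $\det B_n$ are $\lm_n^{\pm}$, the two eigenvalues of $B_n(\lm_n^+)$ are $0$ and $\lm_n^+ - \lm_n^- = \gm_n$ (when the off-diagonal is nonzero this is a direct eigenvalue computation; when $\gm_n = 0$, $B_n(\lm_n^+)$ is nilpotent). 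Thus on $\Pc_n$, $B_n(\lm_n^+)$ acts with spectrum $\{0, \gm_n\}$, which forces the vector $\eta_n u_n^+$ to be the image of $(B_n(\lm_n^+) + \gm_n)w_n$ and simultaneously explains the appearance of $-\gm_n\ph_n$: choosing $\ph_n$ $L^2$-orthonormal to $f_n^+$ inside $E_n$ realizes the generalized eigenvector structure. To extract the bound on $\eta_n$, project the $\Pc_n$-identity against $u_n^+$ (or rather against the biorthogonal partner of $u_n^+$): $\eta_n = \spii{(B_n(\lm_n^+)+\gm_n)w_n, \cdot}$, and then estimate $\|B_n(\lm_n^+) + \gm_n\|$ on $\Pc_n$ by $|a_n(\lm_n^+)| + |b_n(\lm_n^+)| + |b_{-n}(\lm_n^+)| + |\gm_n| + |\lm_n^+ - n^2\pi^2|$, with the diagonal contribution $\lm_n^+ - n^2\pi^2 - a_n(\lm_n^+)$ being $O(|b_n|^{1/2}|b_{-n}|^{1/2}) \le |b_n| + |b_{-n}|$ by the root equation. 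The constant $16$ comes from tracking the normalization constants: $u_n^+$ and $w_n$ are close to unit vectors (the $\Qc_n$-components contribute $o(1)$ by Lemma~\ref{Tn-est}), so the Gram matrix of $\{u_n^+, w_n\}$ is within $1/2$ of the identity for $n$ large, giving a harmless factor $\le 2$ or so at each step; similarly, the refined estimate Lemma~\ref{bn-est} is what lets us absorb the $w_{2n}\w{2n}^s$ weights when needed, though for this lemma only the unweighted size of $\eta_n$ is claimed.

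The main obstacle I anticipate is handling the mismatch between the two eigenvalues $\lm_n^+$ and $\lm_n^-$ cleanly when we pass from the abstract relation $(L-\lm_n^+)\ph_n = -\gm_n\ph_n + \eta_n f_n^+$ to the $\Pc_n$-reduction. The vector $\ph_n$ is, in the non-Jordan case $\lm_n^+ \ne \lm_n^-$, essentially the $\lm_n^-$-eigenfunction (made orthonormal to $f_n^+$), so its $\Qc_n$-component solves the $Q$-equation at $\lm = \lm_n^-$, not $\lm_n^+$; the discrepancy $A_{\lm_n^-}^{-1} - A_{\lm_n^+}^{-1}$ produces a term of relative size $\gm_n \cdot \sup_{S_n}\|A_\lm^{-2}Q_n\| = O(\gm_n/n)$, which is dominated by the right-hand side $16(|\gm_n| + |b_n| + |b_{-n}|)$ and so gets absorbed — but verifying this requires care, especially in the coincident case $\lm_n^+ = \lm_n^-$ where $\ph_n$ is a genuine generalized eigenvector in $\Null(L-\lm_n^+)^2$ and the Jordan structure must be read off from the nilpotent part of $B_n(\lm_n^+)$. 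A secondary technical point is ensuring the threshold $n_s' \ge \check{n}_s$ is large enough that the $2\times2$ Gram matrix of $\{u_n^+, w_n\}$ stays invertible and that the $\Qc_n$-corrections are below, say, $1/4$ in the relevant norm — this follows from Corollary~\ref{Q-soln} and the fact that $\check{n}_s$, hence $n_s'$, can be chosen locally uniformly in $q$ because $\FL_{0,\C}^{s,\infty}$ embeds compactly into $H_{0,\C}^{-1}$.
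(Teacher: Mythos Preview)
Your overall strategy --- reduce to the $\Pc_n$-picture and read off $\eta_n$ from the $2\times 2$ matrix $B_n(\lm_n^+)$ --- matches the paper's, but the execution has two concrete problems.

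First, the claim that the eigenvalues of $B_n(\lm_n^+)$ are $\{0,\gm_n\}$ is incorrect. The nonzero eigenvalue equals the trace $2(\lm_n^+ - n^2\pi^2 - a_n(\lm_n^+)) = \pm 2\sqrt{b_n(\lm_n^+)b_{-n}(\lm_n^+)}$, which is only \emph{approximately} $\gm_n$: the coefficients $a_n, b_{\pm n}$ depend on $\lm$, so the fact that $\lm_n^\pm$ are roots of $\lm\mapsto\det B_n(\lm)$ does not translate into the spectral decomposition of the fixed matrix $B_n(\lm_n^+)$ that you assert. This is not fatal for the estimate (the diagonal entry still satisfies $|d|\le\tfrac12(|b_n|+|b_{-n}|)$ via $d^2=b_nb_{-n}$), but it shows the heuristic you are relying on is off.

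Second, and more importantly, the ``main obstacle'' you anticipate --- the $\lm_n^+$ versus $\lm_n^-$ mismatch in the $Q$-equations for $\ph_n$ versus $f_n^+$ --- is a self-imposed difficulty. The paper avoids it entirely by treating the already-established identity $(L-\lm_n^+)\ph_n = g$, with $g=-\gm_n\ph_n+\eta_nf_n^+$, as an \emph{inhomogeneous} equation at the single value $\lm=\lm_n^+$ and invoking Remark~\ref{inhom-2}: identity~\eqref{B-inhom} gives the exact relation
\[
  B_n(\lm_n^+)P_n\ph_n = -P_nK_n(\lm_n^+)g = \gm_n P_nK_n\ph_n - \eta_n P_nK_nf_n^+,
\]
with no corrections and no second eigenvalue entering. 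One then pairs with $P_nf_n^+$, writes $K_n=\Id+T_nK_n$, and controls the three resulting terms: Lemma~\ref{bn-est} bounds $\spii{T_nK_n(\cdot),e_{\pm n}}$, Lemma~\ref{Pn-est} gives $\n{P_nf_n^+}_{L^2}\ge 1/2$ so that the coefficient of $\eta_n$ is bounded below by $1/8$, and $\n{B_n(\lm_n^+)}\le \tfrac32(|b_n|+|b_{-n}|)$ from $\det B_n(\lm_n^+)=0$. The formula $(L-\lm_n^+)\ph_n=-\gm_n\ph_n+\eta_nf_n^+$ itself is just two-dimensional linear algebra on $E_n$ (upper-triangularize $(L-\lm_n^+)|_{E_n}$ in the orthonormal basis $\{f_n^+,\ph_n\}$) and does not require the $\Pc_n$-machinery at all. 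Once you separate these two steps and use~\eqref{B-inhom}, the argument becomes short and the $A_{\lm_n^-}^{-1}-A_{\lm_n^+}^{-1}$ analysis you outline is unnecessary.
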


\begin{proof}
We begin by verifying the claimed formula for $(L-\lm_{n}^{+})\ph_{n}$ in the case where $\lm_{n}^{+}\neq \lm_{n}^{-}$. Let $f_{n}^{-}$ be an $L^{2}$-normalized eigenfunction corresponding to $\lm_{n}^{-}$. As $f_{n}^{-}\in E_{n}$ there exist $a,b\in\C$ with $\abs{a}^{2}+\abs{b}^{2} = 1$ and $b\neq0$ so that
\[
  f_{n}^{-} = a f_{n}^{+} + b\ph_{n}
  \quad\text{or}\quad
  \ph_{n} = \frac{1}{b}f_{n}^{-} - \frac{a}{b}f_{n}^{+}.
\]
Hence
\[
  L\ph_{n} = \frac{1}{b}\lm_{n}^{-}f_{n}^{-} - \frac{a}{b}\lm_{n}^{+}f_{n}^{+}.
\]
Substituting the expression for $f_{n}^{-}$ into the latter identity then leads to
\[
  (L-\lm_{n}^{+})\ph_{n} = (\lm_{n}^{-}-\lm_{n}^{+})\ph_{n} + \frac{a}{b}(\lm_{n}^{-}-\lm_{n}^{+})f_{n}^{+} = -\gm_{n}\ph_{n} + \eta_{n}f_{n}^{+}
\]
where $\eta_{n} = -\gm_{n}a/b$. In the case $\lm_{n}^{+}$ is a double eigenvalue of geometric multiplicity two, $\ph_{n}$ is an eigenfunction of $L$ and one has $\eta_{n} = 0$. Finally, in the case $\lm_{n}^{+}$  is a double eigenvalue of geometric multiplicity one, $(L-\lm_{n}^{+})\ph_{n}$ is in the eigenspace $E_{n}^{+}\subset E_{n}$ as claimed.

To prove the claimed estimate for $\eta_{n}$, we view $(L-\lm_{n}^{+})\ph_{n} = -\gm_{n}\ph_{n} + \eta_{n}f_{n}^{+}$ as a linear equation with inhomogeneous term $g = -\gm_{n}\ph_{n} + \eta_{n}f_{n}^{+}$. By identity~\eqref{B-inhom} one has
\[
  B_{n}P_{n}\ph_{n} = \gm_{n} P_{n}K_{n}\ph_{n} - \eta_{n}P_{n}K_{n}f_{n}^{+},
\]
where $K_{n} \equiv K_{n}(\lm_{n}^{+})$ and $B_{n} \equiv B_{n}(\lm_{n}^{+})$. To estimate $\eta_{n}$, take the $L^{2}$-inner product of the latter identity with $P_{n}f_{n}^{+}$ to get
\begin{equation}
  \label{eta-I-II}
  \eta_{n}\spii{P_{n}K_{n}f_{n}^{+},P_{n}f_{n}^{+}} = \gm_{n}I - II,
\end{equation}
where
\[
  I = \spii{P_{n}K_{n}\ph_{n}, P_{n}f_{n}^{+}},\qquad
  II = \spii{B_{n}P_{n}\ph_{n},P_{n}f_{n}^{+}}.
\]

We begin by estimating $\spii{P_{n}K_{n}f_{n}^{+},P_{n}f_{n}^{+}}$. Using that $K_{n} = \Id + T_{n}K_{n}$ one gets
\begin{align*}
  \spii{P_{n}K_{n}f_{n}^{+},P_{n}f_{n}^{+}}
   = \n{P_{n}f_{n}^{+}}_{L^2}^{2}
    + \spii{T_{n}K_{n}f_{n}^{+},P_{n}f_{n}^{+}},
\end{align*}
and by Cauchy-Schwarz
\[
  \abs{\spii{T_{n}K_{n}f_{n}^{+},P_{n}f_{n}^{+}}} \le
  \p[\bigg]{\sum_{m\in \setd{\pm n}} \abs{\spii{T_{n}K_{n}f_{n}^{+},e_{m}}}^{2}}^{1/2}\n{P_{n}f_{n}^{+}}_{L^{2}}.
\]
Note that $\n{P_{n}f_{n}^{+}}_{L^{2}} \le \n{f_{n}^{+}}_{L^{2}} = 1$.
Moreover, by Lemma~\ref{bn-est} one has
\[
  \abs{\spii{T_{n}K_{n}f_{n}^{+},e_{\pm n}}}
  \le 
  \begin{cases}
  \frac{\log{n}}{n}C_{s}\n{q}_{s,\infty}\n{K_{n}f_{n}^{+}}_{s,\infty;\pm n}, & s = 0,\\
  \frac{1}{n^{1-2\abs{s}}}C_{s}\n{q}_{s,\infty}\n{K_{n}f_{n}^{+}}_{s,\infty;\pm n}, & -1/2 < s < 0.
  \end{cases}
\]
By Corollary~\ref{Q-soln}, $\n{K_{n}}_{s,\infty;n} \le 2$ and as $L_{\C}^{2}[0,2] \opento \FL_{\star,\C}^{s,\infty}$, $\n{f_{n}^{+}}_{s,\infty;\pm n}\le \n{f_{n}^{+}}_{0,2;\pm n} =1$. Hence there exists $n_{s}'\ge \check{n}_{s}$ so that
\[
  \abs{\spii{T_{n}K_{n}f_{n}^{+},P_{n}f_{n}^{+}}} \le \frac{1}{8}.
\]
By increasing $n_{s}'$ if necessary, Lemma~\ref{Pn-est} below assures that $\n{P_{n}f_{n}^{+}}_{L^2} \ge 1/2$. Thus the left hand side of \eqref{eta-I-II} can be estimated as follows
\begin{equation}
  \label{eta-est-1}
  \abs*{\eta_{n}\spii{P_{n}K_{n}f_{n}^{+},P_{n}f_{n}^{+}}} \ge \abs{\eta_{n}}
  \p*{ \frac{1}{4}-\frac{1}{8} } = \frac{1}{8}\abs{\eta_{n}},
  \qquad
  \forall n\ge n_{s}'.
\end{equation}

Next let us estimate the term $I = \spii{P_{n}K_{n}\ph_{n}, P_{n}f_{n}^{+}}$ in \eqref{eta-I-II}. Using again $K_{n} = \Id + T_{n}K_{n}$ one sees that
\[
  I = \spii{P_{n}\ph_{n}, P_{n}f_{n}^{+}} + \spii{T_{n}K_{n}\ph_{n}, P_{n}f_{n}^{+}}.
\]
Clearly, $\abs{\spii{P_{n}\ph_{n}, P_{n}f_{n}^{+}}} \le \n{\ph_{n}}_{L^2}\n{f_{n}^{+}}_{L^2} \le 1$ and arguing as above for the second term, one then concludes that
\begin{equation}
  \label{eta-est-2}
  \abs{I} \le 1 + 1/8,\qquad \forall n\ge n_{s}'.
\end{equation}

Finally it remains to estimate $II = \spii{B_{n}P_{n}\ph_{n},P_{n}f_{n}^{+}}$. Using again $\n{\ph_{n}}_{L^{2}}  = \n{f_{n}^{+}}_{L^{2}} = 1$, we conclude from the matrix representation \eqref{Bn-mat} of $B_{n}$ that
\[
  \abs{\spii{B_{n}P_{n}\ph_{n},P_{n}f_{n}^{+}}} \le \n{B_{n}}\n{\ph_{n}}_{L^{2}}\n{f_{n}^{+}}_{L^{2}}
  \le \abs{\lm_{n}^{+}-n^{2}\pi^{2}-a_{n}} + \abs{b_{n}}+ \abs{b_{-n}}.
\]
Since $\det B_{n}(\lm_{n}^{+}) = 0$, one has
\[
  \abs{\lm_{n}^{+}-n^{2}\pi^{2}-a_{n}} = \abs{b_{n}b_{-n}}^{1/2} \le \frac{1}{2}(\abs{b_{n}} + \abs{b_{-n}}),
\]
and hence it follows that for all $n\ge n_{s}'$ that
\begin{equation}
  \label{eta-est-3}
  \abs{II} \le 2(\abs{b_{n}}+\abs{b_{-n}}).
\end{equation}
Combining \eqref{eta-est-1}-\eqref{eta-est-3} leads to the claimed estimate for $\eta_{n}$.~\qed
\end{proof}

It remains to prove the estimate of $P_{n}$ used in the proof of Lemma~\ref{En-basis}. To this end, we introduce for $n\ge n_{s}$ the Riesz projector $P_{n,q}\colon L^{2}\to E_{n}$ given by (see also Appendix~\ref{a:hill-op})
\[
  P_{n,q} = \frac{1}{2\pi \ii}\int_{\abs{\lm -n^{2}\pi^{2}} = n} (\lm - L(q))^{-1}\,\dlm.
\]

\begin{lem}
\label{Pn-est}
Let $q\in\FL_{0,\C}^{s,\infty}$ with $-1/2< s \le 0$. Then there exists $\tilde n_{s} \ge \check{n}_{s}$ -- with $\check{n_{s}}$ given by~\eqref{n-s-p} -- so that for any eigenfunction $f\in \FL_{\star,\C}^{s+2,p}$ of $L(q)$ corresponding to an eigenvalue $\lm\in S_{n}$ with $n\ge \tilde n_{s}$,
\[
  \n{P_{n}f}_{L^2}\ge \frac{1}{2}\n{f}_{L^2}.
\]
The threshold $\tilde n_{s}$ can be chosen locally uniformly for $q$.~\fish
\end{lem}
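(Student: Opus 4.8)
The plan is to show that, for $n$ large, an eigenfunction $f$ with eigenvalue $\lm\in S_n$ is essentially concentrated on the two Fourier modes $e_{\pm n}$ spanning $\Pc_n$. Write $f = u+v$ with $u = P_nf\in\Pc_n$ and $v = Q_nf\in\Qc_n$. Since $\Pc_n$ and $\Qc_n$ are $L^2$-orthogonal, $\n{f}_{L^2}^2 = \n{u}_{L^2}^2 + \n{v}_{L^2}^2$, so the desired inequality $\n{P_nf}_{L^2}\ge\frac12\n{f}_{L^2}$ is equivalent to $\n{v}_{L^2}\le\sqrt3\,\n{u}_{L^2}$. It therefore suffices to exhibit a threshold $\tilde n_s\ge\check n_s$, depending on $q$ only through $\n{q}_{s,\infty}$ and through $\check n_s$, so that $\n{v}_{L^2}\le\sqrt3\,\n{u}_{L^2}$ for all $n\ge\tilde n_s$.

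For $n\ge\tilde n_s\ge n_s$, Corollary~\ref{Q-soln} is available: since $(L-\lm)f=0$ and $f\in\FL_{\star,\C}^{s+2,p}$, the component $v = Q_nf\in\FL_{\star,\C}^{s+2,\infty}\cap\Qc_n$ is the unique solution of the $Q$-equation determined by $u$, namely $v = A_\lm^{-1}Q_nK_n(\lm)Vu$ by~\eqref{sol-Q-eqn}. Writing $u = u_ne_n + u_{-n}e_{-n}$, so that $\n{u}_{L^2}^2 = \abs{u_n}^2 + \abs{u_{-n}}^2$, linearity reduces the task to estimating $\n{A_\lm^{-1}Q_nK_n(\lm)Ve_{\pm n}}_{L^2}$. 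Here I would use that, by Corollary~\ref{Q-soln}, the Neumann series for $K_n(\lm)=(\Id-T_n(\lm))^{-1}$ converges in the operator norm $\n{\cdot}_{s,\infty;\mp n}$ with $\n{K_n(\lm)}_{s,\infty;\mp n}\le2$, while a direct computation from $(Ve_{\pm n})_k = q_{k\mp n}$ gives $\n{Ve_{\pm n}}_{s,\infty;\mp n} = \n{q}_{s,\infty}$. Hence $g\defl K_n(\lm)Ve_{\pm n}$ satisfies $\abs{g_k}\le\lin{k\mp n}^{\abs{s}}\n{g}_{s,\infty;\mp n}\le 2\lin{k\mp n}^{\abs{s}}\n{q}_{s,\infty}$, and, combined with the lower bound $\abs{\lm-k^2\pi^2}\ge\abs{n-k}\abs{n+k}$ for $\lm\in S_n$, $\abs{k}\neq n$, from~\eqref{lm-n2k2-lower-bound},
\[
  \n{A_\lm^{-1}Q_ng}_{L^2}^2 = \sum_{\abs{k}\neq n}\frac{\abs{g_k}^2}{\abs{\lm-k^2\pi^2}^2}
   \le 4\n{q}_{s,\infty}^2\sum_{\abs{k}\neq n}\frac{\lin{k\mp n}^{2\abs{s}}}{\abs{n-k}^2\abs{n+k}^2}.
\]
Bounding $\lin{k\mp n}\le 2\abs{k\mp n}$ and splitting the index set into the ranges where $k$ is near $\pm n$ and where $\abs{k}$ is large, the remaining sum is $\le c_s\dl_s(n)^2$ with $\dl_s(n) = n^{-(1-\abs{s})}$ for $-1/2<s<0$ and $\dl_s(n) = n^{-1}(\log\lin{n})^{1/2}$ for $s=0$; in every case $\dl_s(n)\to0$.

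Putting the pieces together gives $\n{v}_{L^2}\le(\abs{u_n}+\abs{u_{-n}})\,2c_s^{1/2}\dl_s(n)\n{q}_{s,\infty}\le 2\sqrt2\,c_s^{1/2}\dl_s(n)\n{q}_{s,\infty}\,\n{u}_{L^2}$. Choosing $\tilde n_s\ge\check n_s$ so large that $2\sqrt2\,c_s^{1/2}\dl_s(n)\n{q}_{s,\infty}\le\sqrt3$ for all $n\ge\tilde n_s$ yields $\n{v}_{L^2}\le\sqrt3\,\n{u}_{L^2}$, hence $\n{f}_{L^2}^2\le 4\n{u}_{L^2}^2$, which is the claim; since $\tilde n_s$ depends on $q$ only through $\n{q}_{s,\infty}$ and $\check n_s$, both of which are controlled on bounded subsets of $\FL_{0,\C}^{s,\infty}$, it can be taken locally uniformly in $q$. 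The only real work is the Hilbert-type sum estimate, whose one delicate point is the borderline $s\to-1/2$: there the power $2-2\abs{s}$ in the denominator tends to $1$ and the decay rate $\dl_s(n)$ degrades, but it still tends to $0$, which is all that is required; everything else is bookkeeping with the shifted norms already prepared in Lemma~\ref{Tn-est} and Corollary~\ref{Q-soln}.
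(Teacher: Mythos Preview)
Your argument is correct and takes a genuinely different route from the paper. The paper proves the lemma by invoking the Riesz projector $P_{n,q}$ onto $E_n$ and the appendix estimate $\n{P_{n,q}-P_n}_{L^2\to L^\infty}=o(1)$ (Lemma~\ref{proj-bound}, obtained via contour integrals of resolvents): since $P_{n,q}f=f$ for an eigenfunction $f$, one gets $\n{P_nf}_{L^2}\ge\n{f}_{L^2}-\n{(P_{n,q}-P_n)f}_{L^2}\ge(1-o(1))\n{f}_{L^2}$. You instead stay entirely within the Lyapunov--Schmidt framework already set up in Corollary~\ref{Q-soln}: writing $f=u+v$ and using the explicit formula $v=A_\lm^{-1}Q_nK_n(\lm)Vu$, you bound $\n{v}_{L^2}$ directly via the shifted-norm estimates $\n{K_n}_{s,\infty;\mp n}\le 2$, $\n{Ve_{\pm n}}_{s,\infty;\mp n}=\n{q}_{s,\infty}$, and a Hilbert-type sum. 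This is more elementary and self-contained, and in fact yields a slightly sharper quantitative decay ($\dl_s(n)\sim n^{-(1-\abs{s})}$). The paper's approach has the advantage that the projector comparison Lemma~\ref{proj-bound} is a single black box reused again in Lemma~\ref{gn-nu-est} for the Dirichlet problem, so the investment there pays off twice.
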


\begin{proof}
In Lemma~\ref{proj-bound} we show that as $n\to \infty$,
\[
  \n{P_{n,q}-P_{n}}_{L^{2}\to L^{\infty}} = o(1),
\]
locally uniformly in $q\in\FL_{0,\C}^{s,\infty}$.
Clearly, $P_{n,q}f = f$, hence
\[
  \n{P_{n}f}_{L^{2}} \ge \n{P_{n,q}f}_{L^{2}} - \n{(P_{n,q}-P_{n})f}_{L^{2}}
  \ge \p[\big]{1+o(1)}\n{f}_{L^{2}}.~\qed
\]
\end{proof}

\subsection{Proof of Theorem~\ref{thm:spec-fw} (ii)}

We begin with a brief outline of the proof of Theorem~\ref{thm:spec-fw} (ii). Let $q\in\FL_{0,\C}^{s,\infty}$ with $-1/2< s\le 0$. Since according to~\cite{Kappeler:2003vh} for any $q\in H_{0,\C}^{-1}$ the Dirichlet eigenvalues, when listed in lexicographical ordering and with their algebraic multiplicities, $\mu_{1}\lex \mu_{2}\lex \dotsb$, satisfy the asymptotics $\mu_{n} = n^{2}\pi^{2} + n\ell_{n}^{2}$, 
they are simple for $n\ge n_{\dir}$, where $n_{\dir}\ge 1$ can be chosen locally uniformly for $q\in H_{0,\C}^{-1}$. For any $n\ge n_{\dir}$ let $g_{n}$ be an $L^{2}$-normalized eigenfunction corresponding to $\mu_{n}$. Then
\[
  g_{n}\in H_{\dir,\C}^{1}\defl \setdef{g\in H^{1}([0,1],\C)}{g(0) = g(1) = 0}.
\]
Now let $q\in\FL_{0,\C}^{s,\infty}$ with $-1/2< s\le 0$. Increase $n_{s}'$ of Lemma~\ref{En-basis}, if necessary, so that $n_{s}'\ge n_{\dir}$ and denote by $E_{n}$ the two dimensional subspace introduced in Section~\ref{ss:jordan}.
We will choose an $L^{2}$-normalized function $\tilde G_{n}$ in $E_{n}$ so that its restriction $G_{n}$ to the interval $\Ic = [0,1]$ is in $H_{\dir,\C}^{1}$ and close to $g_{n}$. We then show that $\mu_{n}-\lm_{n}^{+}$ can be estimated in terms of $\spi{(L_{\dir}-\lm_{n}^{+})G_{n},G_{n}}$, where $\spi{f,g}$ denotes the $L^{2}$-inner product on $\Ic$, $\spi{f,g} = \int_{0}^{1} f(x)\ob{g(x)}\,\dx$. As by Lemma~\ref{En-basis}
\[
  (L-\lm_{n}^{+})\tilde G_{n} = O(\abs{\gm_{n}} + \abs{b_{n}(\lm_{n}^{+})} + \abs{b_{-n}(\lm_{n}^{+})}),
\]
the claimed estimates for $\mu_{n}-\tau_{n} = \mu_{n} - \lm_{n}^{+}+\gm_{n}/2$ then follow from the estimates of $\gm_{n}$ of Theorem~\ref{thm:spec-fw} (i) and the ones of $b_{n}-q_{2n}$, $b_{-n}-q_{-2n}$ of Lemma~\ref{coeff-est} (ii).

The function $\tilde G_{n}$ is defined as follows. Let $f_{n}^{+}$, $\ph_{n}$ be the $L^{2}$-orthonormal basis of $E_{n}$ chosen in Section~\ref{ss:jordan}. As $E_{n}\subset H_{\C}^{1}(\R/2\Z)$, its elements are continuous functions by the Sobolev embedding theorem. If $f_{n}^{+}(0) = 0$, then $f_{n}^{+}(1) = 0$ as $f_{n}^{+}$ is an eigenfunction of the $1$-periodic/antiperiodic eigenvalue $\lm_{n}^{+}$ of $L(q)$ and we set $\tilde G_{n} = f_{n}^{+}$.
If $f_{n}^{+}(0)\neq 0$, then we define $\tilde G_{n}(x) = r_{n}\bigl(\ph_{n}(0)f_{n}^{+}(x) - f_{n}^{+}(0)\ph_{n}(x)\bigr)$, where $r_{n} > 0$ is chosen in such a way that $\int_{0}^{1} \abs{\tilde G_{n}(x)}^{2}\,\dx = 1$. Then $\tilde G_{n}(0) = \tilde G_{n}(1) = 0$ and since $\tilde G_{n}$ is an element of $E_{n}$ its restriction $G_{n} \defl \tilde G_{n}\big|_{\Ic}$ is in $H_{\dir,\C}^{1}$.

Denote by $\Pi_{n,q}$ the Riesz projection, introduced in Appendix~\ref{a:hill-op},
\[
  \Pi_{n,q} \defl \frac{1}{2\pi \ii}\int_{\abs{\lm-n^{2}\pi^{2}} = n} (\lm-L_{\dir}(q))^{-1}\,\dlm.
\]
It has $\spanx(g_{n})$ as its range, hence there exists $\nu_{n}\in \C$ so that
\[
  \Pi_{n,q}G_{n} = \nu_{n}g_{n}.
\]

\begin{lem}
\label{gn-nu-est}
Let $q\in \FL_{0,\C}^{s,\infty}$ with $-1/2< s \le 0$. Then there exists $n_{s}''\ge n_{s}'$ with $n_{s}'$ as in Lemma~\ref{En-basis} so that for any $n\ge n_{s}''$
\begin{equation}
  \label{nu-eqn}
  \nu_{n}(\mu_{n}-\lm_{n}^{+})g_{n}
   = \bt_{n}\p*{ \eta_{n}\Pi_{n,q}(f_{n}^{+}\big|_{\Ic}) - \gm_{n}\Pi_{n,q}(\ph_{n}\big|_{\Ic}) },
\end{equation}
where $\bt_{n} \in \C$ with $\abs{\bt_{n}} \le 1$ and $\eta_{n}$ is the off-diagonal coefficient in the matrix representation of $(L-\lm_{n}^{+})\big|_{E_{n}}$ with respect to the basis $\setd{f_{n}^{+},\ph_{n}}$, introduced in Lemma~\ref{En-basis}, and
\begin{equation}
  \label{nu-est}
  1/2 \le \abs{\nu_{n}} \le 3/2.
\end{equation}
$n_{s}''$ can be chosen locally uniformly for $q\in\FL_{0,\C}^{s,\infty}$.~\fish
\end{lem}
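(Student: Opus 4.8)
The plan is to derive \eqref{nu-eqn} by applying $L-\lm_n^+$ to $\tilde G_n$, inserting the near-diagonal normal form of $(L-\lm_n^+)\big|_{E_n}$ furnished by Lemma~\ref{En-basis}, then restricting the resulting identity to $\Ic$ and applying the Dirichlet Riesz projection $\Pi_{n,q}$; the bound on $\bt_n$ and the estimate \eqref{nu-est} will come from the $1$-(anti)periodicity of the elements of $E_n$ and from comparing $\tilde G_n$ and $\Pi_{n,q}$ with their free analogues.

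First I record that, by \eqref{n-s-p}, for $n\ge\check{n}_{s}$ the space $E_n$ sits inside the $1$-periodic (for $n$ even), resp.\ $1$-antiperiodic (for $n$ odd), subspace of $L^2(\R/2\Z)$, so every $h\in E_n$ obeys $\abs{h(x+1)}=\abs{h(x)}$ and hence $\n{h}_{L^2(\Ic)}=\n{h}$, the normalized $L^2(\R/2\Z)$-norm. Then I split according to the construction of $\tilde G_n$. If $f_n^+(0)\ne0$, Lemma~\ref{En-basis} gives
\[
  (L-\lm_n^+)\tilde G_n
   = r_n\bigl(\ph_n(0)\,(L-\lm_n^+)f_n^+ - f_n^+(0)\,(L-\lm_n^+)\ph_n\bigr)
   = r_n f_n^+(0)\bigl(\gm_n\ph_n-\eta_n f_n^+\bigr),
\]
and, the differential expression being local while $G_n=\tilde G_n\big|_\Ic\in H_{\dir,\C}^{1}$ lies in the domain of $L_{\dir}$, this restricts to $(L_{\dir}-\lm_n^+)G_n = r_n f_n^+(0)\bigl(\gm_n\ph_n\big|_\Ic-\eta_n f_n^+\big|_\Ic\bigr)$. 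Applying $\Pi_{n,q}$ and using that it commutes with $L_{\dir}$ and acts by $\mu_n$ on its range $\spanx(g_n)$, the left side becomes $(\mu_n-\lm_n^+)\Pi_{n,q}G_n=\nu_n(\mu_n-\lm_n^+)g_n$, which is \eqref{nu-eqn} with $\bt_n=-r_n f_n^+(0)$. The bound $\abs{\bt_n}\le1$ then follows at once from
\[
  1=\n{\tilde G_n}_{L^2(\Ic)}^2=\n{\tilde G_n}^2
   =\abs{r_n}^2\bigl(\abs{\ph_n(0)}^2+\abs{f_n^+(0)}^2\bigr)\ge\abs{r_n f_n^+(0)}^2,
\]
where I used $\n{f_n^+}=\n{\ph_n}=1$ and $\spii{f_n^+,\ph_n}=0$. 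If instead $f_n^+(0)=0$, then $f_n^+(1)=0$ as well (again by $1$-(anti)periodicity), so $\tilde G_n=f_n^+$ restricts to a Dirichlet eigenfunction of $L_{\dir}(q)$ for $\lm_n^+$; since for $n$ large the only Dirichlet eigenvalue near $n^2\pi^2$ is the simple eigenvalue $\mu_n$ (use \eqref{n-s-p} and $\mu_n=n^2\pi^2+n\ell_n^2$), we get $\lm_n^+=\mu_n$, both sides of \eqref{nu-eqn} vanish, and I set $\bt_n=0$.

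It remains to prove \eqref{nu-est}. Here I would show that $\tilde G_n$ is $o(1)$-close in $L^2$ to a unit-modulus multiple of $s_n(x):=\sqrt2\sin(n\pi x)$ and that $\Pi_{n,q}$ is $o(1)$-close to the free Dirichlet projection $\Pi_n$, the orthogonal projection onto $\C\,s_n\big|_\Ic$ in $L^2(\Ic)$ -- both locally uniformly in $q$. For the first, since $E_n=\operatorname{ran}P_{n,q}$ and $\n{P_{n,q}-P_n}_{L^2\to L^\infty}=o(1)$ by Lemma~\ref{proj-bound}, any $L^2$-normalized $h\in E_n$ obeys $Q_n h=(P_{n,q}-P_n)h$ with $\n{Q_n h}_{L^\infty}=o(1)$; writing $P_n\tilde G_n=c_+e_n+c_-e_{-n}$, the condition $\tilde G_n(0)=0$ together with $e_{\pm n}(0)=1$ forces $c_++c_-=o(1)$, while $\abs{c_+}^2+\abs{c_-}^2=1+o(1)$, whence $\tilde G_n=c_+(e_n-e_{-n})+o(1)=\zt_n s_n+o(1)$ with $\abs{\zt_n}=1+o(1)$ and therefore $G_n=\zt_n s_n\big|_\Ic+o(1)$ in $L^2(\Ic)$. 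For the second I would invoke the Dirichlet counterpart of Lemma~\ref{proj-bound}, proved by the same resolvent comparison (cf.\ Appendix~\ref{a:hill-op}). Combining, $\nu_n g_n=\Pi_{n,q}G_n=\zt_n\Pi_{n,q}(s_n\big|_\Ic)+o(1)=\zt_n s_n\big|_\Ic+o(1)$; taking $L^2(\Ic)$-norms, with $\n{g_n}_{L^2(\Ic)}=\n{s_n\big|_\Ic}_{L^2(\Ic)}=1$, yields $\abs{\nu_n}=1+o(1)$, and \eqref{nu-est} follows by choosing $n_{s}''\ge n_{s}'$ large enough to absorb the error. In the case $f_n^+(0)=0$ one has directly $\abs{\nu_n}=\n{f_n^+}_{L^2(\Ic)}=1$. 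Since every threshold used is locally uniform in $q\in\FL_{0,\C}^{s,\infty}$, so is $n_{s}''$.

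The step I expect to be the main obstacle is the quantitative projection comparison $\n{\Pi_{n,q}-\Pi_n}_{L^2(\Ic)\to L^2(\Ic)}=o(1)$, locally uniformly in $q$: it requires running the $P$--$Q$ reduction (or a direct Neumann-series estimate of the resolvent) for $L_{\dir}(q)$ in the weighted Fourier--Lebesgue setting with $p=\infty$ and converting it into a bound on the Riesz projectors -- the $p=\infty$ analogue of what is carried out in \cite{Kappeler:CNzeErmy} for $2\le p<\infty$. With that in hand, everything else above is bookkeeping.
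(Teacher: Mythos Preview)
Your proposal is correct and follows essentially the same route as the paper: derive \eqref{nu-eqn} by applying $(L-\lm_n^+)$ to $\tilde G_n$ via Lemma~\ref{En-basis}, restricting to $\Ic$ (the paper invokes Lemma~\ref{V-dir-V-per} for this rather than ``locality''), and projecting with $\Pi_{n,q}$; then obtain \eqref{nu-est} from the projection comparisons in Lemma~\ref{proj-bound}. The only cosmetic differences are that the paper writes $\tilde G_n=\al_n f_n^++\bt_n\ph_n$ abstractly rather than case-splitting on $f_n^+(0)$, and for \eqref{nu-est} it expands $\nu_n=\spi{\Pi_{n,q}G_n,g_n}$ and approximates both $P_n\tilde G_n$ and $\Pi_n g_n$ by unimodular multiples of $s_n$, whereas you take norms --- the ingredients are identical.

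One remark: the step you flag as the ``main obstacle'', namely $\n{\Pi_{n,q}-\Pi_n}_{L^2(\Ic)\to L^2(\Ic)}=o(1)$ locally uniformly, is already contained in Lemma~\ref{proj-bound}~(ii), which gives the stronger $L^2(\Ic)\to L^\infty(\Ic)$ bound and only requires $q\in H_{0,\C}^{t}$ for some $-1<t<-1/2$ (satisfied since $\FL_{0,\C}^{s,\infty}\hookrightarrow H_{0,\C}^{t}$). No new $p=\infty$ analysis is needed here.
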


\begin{proof}
Write $G_{n} = \nu_{n}g_{n} + h_{n}$, where $h_{n} = (\Id - \Pi_{n,q})G_{n}$. Then
\[
  (L_{\dir}-\lm_{n}^{+})G_{n} = \nu_{n}(\mu_{n}-\lm_{n}^{+})g_{n} + (L_{\dir}-\lm_{n}^{+})h_{n}.
\]
On the other hand, $G_{n} = \tilde G_{n}\big|_{\Ic}$, where $\tilde G_{n}\in E_{n}$ is given by $\tilde G_{n} = \al_{n}f_{n}^{+} + \bt_{n}\ph_{n}$ with $\al_{n}$, $\bt_{n}\in\C$ satisfying $\abs{\al_{n}}^{2} + \abs{\bt_{n}}^{2} = 1$ and $G_{n}\in H_{\dir,\C}^{1}$. Hence by Lemma~\ref{V-dir-V-per} and Lemma~\ref{En-basis}, for $n\ge n_{s}'$,
\[
  (L_{\dir}-\lm_{n}^{+})G_{n} = (L-\lm_{n}^{+})\tilde G_{n}\big|_{\Ic}
   = \bt_{n}(\eta_{n}f_{n}^{+} - \gm_{n}\ph_{n})\big|_{\Ic}.
\]
Combining the two identities and using that $\Pi_{n,q}h_{n} = 0$ and that $\Pi_{n,q}$ commutes with $(L_{\dir}-\lm_{n}^{+})$, one obtains, after projecting onto $\spanx(g_{n})$, identity~\eqref{nu-eqn}.

It remains to prove~\eqref{nu-est}. Taking the inner product of $\Pi_{n,q}G_{n} = \nu_{n} g_{n}$ with $g_{n}$ one gets
\[
  \nu_{n} = \nu_{n}\spi{g_{n},g_{n}} = \spi{\Pi_{n,q}G_{n},g_{n}}.
\]
Let $s_{n}(x) = \sqrt{2}\sin(n\pi x)$ and denote by $\Pi_{n} = \Pi_{n,0}$ the orthogonal projection onto $\spanx\setd{s_{n}}$. Recall that $P_{n,q}\colon L^{2}\to E_{n}$ is the Riesz projection onto $E_{n}$. In Lemma~\ref{proj-bound} we show that as $n\to \infty$,
\begin{equation}
  \label{proj-est-1}
  \n{\Pi_{n,q}-\Pi_{n}}_{L^{2}(\Ic)\to L^{\infty}(\Ic)},\;
  \n{P_{n,q}-P_{n}}_{L^{2}\to L^{\infty}} = o(1),
\end{equation}
locally uniformly in $q\in\FL_{0,\C}^{s,\infty}$.
Thus using $\Pi_{n}G_{n} = \Pi_{n}(P_{n}\tilde G_{n})\big|_{\Ic}$ and recalling that $\n{G_{n}}_{L^{2}(\Ic)}^{2} = \n{g_{n}}_{L^{2}(\Ic)}^{2} = 1$ we obtain
\[
  v_{n}
   = \spi{\Pi_{n}G_{n},g_{n}} + \spi{(\Pi_{n,q}-\Pi_{n})G_{n},g_{n}}
   = \spi{P_{n}\tilde G_{n},\Pi_{n}g_{n}} + o(1).
\]
Moreover, it follows from \eqref{proj-est-1} that uniformly in $0\le x \le 1$
\[
  \Pi_{n}g_{n}(x) = \e^{\ii \phi_{n}}s_{n}(x) + o(1),\qquad n\to \infty,
\]
with some real $\phi_{n}$. Similarly, again by \eqref{proj-est-1}, uniformly in $0\le x \le 2$
\[
  P_{n}\tilde G_{n}(x) = a_{n}e_{n}(x) + b_{n}e_{-n}(x) + o(1),\qquad n\to \infty,
\]
where, since $\n{G_{n}}_{L^{2}(\Ic)} = 1$ and $G_{n}(0) = 0$, the coefficients $a_{n}$ and $b_{n}$ can be chosen so that
\[
  \abs{a_{n}}^{2} + \abs{b_{n}}^{2} = 1,\quad a_{n} + b_{n} = 0.
\]
That is $P_{n}\tilde G_{n}(x) = \e^{\ii \psi_{n}}s_{n}(x) + o(1)$ with some real $\psi_{n}$ and hence
\[
  \spi{P_{n}\tilde G_{n},\Pi_{n}g_{n}}
   = \e^{\ii\psi_{n}-\ii\phi_{n}}\spi{s_{n},s_{n}} + o(1)
   = \e^{\ii\psi_{n}-\ii\phi_{n}} + o(1),\qquad n\to \infty.
\]
From this we conclude
\[
  \abs{\nu_{n}} = 1 + o(1),\qquad n\to\infty.
\]
Therefore, $1/2 \le \abs{\nu_{n}} \le 3/2$ for all $n\ge n_{s}''$ provided $n_{s}'' \ge n_{s}'$ is sufficiently large.

Going through the arguments of the proof one verifies that $n_{s}''$ can be chosen locally uniformly in $q$.~\qed
\end{proof}

Lemma~\ref{gn-nu-est} allows to complete the proof of Theorem~\ref{thm:spec-fw} (ii).

\begin{proof}[Proof of Theorem~\ref{thm:spec-fw} (ii).]
Take the inner product of~\eqref{nu-eqn} with $g_{n}$ and use that $\abs{\nu_{n}} \ge 1/2$ by Lemma~\ref{gn-nu-est} to conclude that
\begin{equation}
  \label{mu-est-1}
  \frac{1}{2}\abs{\mu_{n}-\lm_{n}^{+}} \le \abs{\bt_{n}}
  \p*{ \abs{\eta_{n}} \spi{\Pi_{n,q}(f_{n}^{+}\big|_{\Ic}),g_{n}}
  + \abs{\gm_{n}}\abs{\spi{\Pi_{n,q}(\ph_{n}\big|_{\Ic}),g_{n}}} }.
\end{equation}
Recall that $\abs{\bt_{n}} \le 1$ and note that for any $f,g\in L_{\C}^{2}(\Ic)$
\[
  \abs{\spi{\Pi_{n,q}f,g}} \le 
  \abs{\spi{\Pi_{n}f,g}} +
  \abs{\spi{(\Pi_{n,q}-\Pi_{n})f,g}} \le 
  (1+o(1))\n{f}_{L^{2}(\Ic)}\n{g}_{L^{2}(\Ic)},
\]
where for the latter inequality we used that by Lemma~\ref{proj-bound} (ii), $\n{\Pi_{n,q}-\Pi_{n}}_{L^{2}(\Ic)\to L^{2}(\Ic)} = o(1)$ as $n\to \infty$.
Since $\n{f_{n}^{+}}_{L^{2}(\Ic)} = \n{\ph_{n}}_{L^{2}(\Ic)} = 1$ and $\n{g_{n}}_{L^{2}(\Ic)} = 1$, \eqref{mu-est-1} implies that
\[
  \abs{\mu_{n}-\lm_{n}^{+}} \le (2+o(1))(\abs{\eta_{n}}+\abs{\gm_{n}})
\]
yielding with Lemma~\ref{En-basis} the estimate
\[
  \abs{\mu_{n}-\tau_{n}} \le  (3+o(1))\abs{\gm_{n}}
  + (32+o(1))(\abs{\gm_{n}} + \abs{b_{n}(\lm_{n}^{+})}+
  \abs{b_{-n}(\lm_{n}^{+})}).
\]
By Theorem~\ref{thm:spec-fw} (i) and Lemma~\ref{coeff-est} (ii) it then follows that $(\tau_{n}-\mu_{n})_{n\ge 1}\in \ell_{\C}^{w,s,\infty}(\N)$. Going through the arguments of the proof one verifies that the map 
$\FL_{0,\C}^{w,s,\infty}\to \ell_{\C}^{w,s,\infty}(\N)$, 
$q\mapsto (\tau_{n}-\mu_{n})_{n\ge 1}$ is locally bounded.~\qed
\end{proof}

\subsection{Adapted Fourier Coefficients}

The bounds of the operator norm $\n{T_{n}}_{w,s,\infty;n}$ and the coefficients $a_{n}$ and $b_{\pm n}$ of $P_{n}K_{n}(\lm)V$, $\lm\in S_{n}$, obtained in Lemma~\ref{Tn-est} and Lemma~\ref{coeff-est}, respectively, are uniform in $\lm\in S_{n}$ and in $q$ on bounded subsets of $\FL_{0,\C}^{w,s,\infty}$. In addition, they are also uniform with respect to certain ranges of $p$ and the weight $w$. To give a precise statement we introduce the balls
\[
  B_{m}^{w,s,\infty} \defl \setdef{q\in \FL_{0,\C}^{w,s,\infty}}{\n{q}_{w,s,\infty}\le m},\qquad
  B_{m}^{s,\infty} \defl \setdef{q\in \FL_{0,\C}^{s,\infty}}{\n{q}_{s,\infty}\le m}.
\]
Then according to Lemma~\ref{Tn-est}, given $m> 0$ and $-1/2 < s \le 0$, one can choose $N_{m,s}$ so that
\begin{equation}
  \label{cs2-estimate}
  \frac{16c_{s}'m}{n^{1/2-\abs{s}}} \le 1/2,\qquad n\ge N_{m,s},
\end{equation}
where $c_{s}'\ge c_{s}\ge 1$ is chosen as in Lemma~\ref{bn-est}. This estimate implies that
\[
  \n{T_{n}(\lm)}_{w,s,\infty;n} \le 1/2,\qquad \forall \lm\in S_{n},\quad w\in\Ms,\quad q\in B_{2m}^{w,s,\infty}.
\]

\begin{lem}
\label{an-bn-est}
Let $-1/2 < s \le 0$ and $m\ge 1$. For $n\ge N_{m,s}$ with $N_{m,s}$ given as in~\eqref{cs2-estimate}, the coefficients $a_{n}$ and $b_{\pm n}$ are analytic functions on $S_{n}\times B_{m}^{s,\infty}$. Moreover, their restrictions to $S_{n}\times B_{2m}^{w,s,\infty}$ for any $w\in\Ms$ satisfy
\begin{align*}
  \text{(i)}\quad &
  \abs{a_{n}}_{S_{n}\times B_{2m}^{w,s,\infty}}
  \le \frac{8c_{s}m^{2}}{n^{1/2-\abs{s}}} \le m/4.\\
  \text{(ii)}\quad &
  w_{2n}\lin{2n}^{s}\abs{b_{\pm n}-q_{\pm 2n}}_{S_{n}\times B_{2m}^{w,s,\infty}}
  \le
  \frac{\log \w{n}}{n^{1-\abs{s}}} 8c_{s}'m^{2}
  \le 
  \frac{\log \w{n}}{n^{1/2}}m/4.~\fish
\end{align*}
\end{lem}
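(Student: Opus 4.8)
The plan is to read off both estimates by chaining the bounds of Lemma~\ref{Tn-est}, Lemma~\ref{coeff-est} and Lemma~\ref{bn-est}, keeping careful track of the constants $1\le c_s\le c_s'$ through the choice of $N_{m,s}$ fixed in~\eqref{cs2-estimate}. First I would record the elementary consequence of that choice that underlies everything: for $n\ge N_{m,s}$, every $\lm\in S_n$, every $w\in\Ms$ and every $q\in B_{2m}^{w,s,\infty}$, Lemma~\ref{Tn-est} together with~\eqref{cs2-estimate} gives $\n{T_n(\lm)}_{w,s,\infty;\pm n}\le \frac{2c_s m}{n^{1/2-\abs{s}}}\le \frac{1}{16}$ (since $c_s\le c_s'$ and $\n{q}_{w,s,\infty}\le 2m$), and in particular $\n{q}_{s,\infty}\le\n{q}_{w,s,\infty}\le 2m$, $K_n(\lm)=(\Id-T_n(\lm))^{-1}$ exists, and $\n{K_n(\lm)}_{w,s,\infty;\pm n}\le 2$. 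The same holds a fortiori with the trivial weight whenever $q\in B_m^{s,\infty}$.

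Analyticity of $a_n$ and $b_{\pm n}$ on $S_n\times B_m^{s,\infty}$ follows from the series representation~\eqref{an-bn-expansions} exactly as in the proof of Lemma~\ref{coeff-est}: each summand $\lin{T_n(\lm)^lVe_{\pm n},e_{\pm n}}$ is jointly analytic in $(\lm,q)$ --- it is affine in $q$ and, since $A_\lm^{-1}Q_n$ is the Fourier multiplier with entries $1/(\lm-k^2\pi^2)$, $k\neq\pm n$, which by~\eqref{lm-n2k2-lower-bound} are analytic and uniformly bounded on $S_n$, it is analytic in $\lm\in S_n$ --- and the geometric bound $\n{T_n}_{s,\infty;\pm n}\le 1/2$ makes the series converge uniformly on $S_n\times B_m^{s,\infty}$, so the sum is analytic as a uniform limit of analytic functions.

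For (i) I would combine Lemma~\ref{coeff-est}(i) with Lemma~\ref{Tn-est}: on $S_n\times B_{2m}^{w,s,\infty}$,
\[
  \abs{a_n}\;\le\;2\n{T_n}_{w,s,\infty;\pm n}\,\n{q}_{s,\infty}
  \;\le\;\frac{2c_s}{n^{1/2-\abs{s}}}\,\n{q}_{w,s,\infty}\,\n{q}_{s,\infty}
  \;\le\;\frac{8c_sm^2}{n^{1/2-\abs{s}}},
\]
using $\n{q}_{s,\infty}\le\n{q}_{w,s,\infty}\le 2m$; the concluding inequality $\le m/4$ is just $32c_sm\le n^{1/2-\abs{s}}$, which holds because $32c_sm\le 32c_s'm\le n^{1/2-\abs{s}}$ by~\eqref{cs2-estimate}. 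For (ii) I would take the sharper route through Lemma~\ref{bn-est}. From $K_n=\Id+T_nK_n$ one has $b_{\pm n}=q_{\pm2n}+\lin{T_nK_nVe_{\mp n},e_{\pm n}}$, and since $(Ve_{\mp n})_m=q_{m\pm n}$ one checks $\n{Ve_{\mp n}}_{w,s,\infty;\pm n}=\n{q}_{w,s,\infty}$, whence $\n{K_nVe_{\mp n}}_{w,s,\infty;\pm n}\le 2\n{q}_{w,s,\infty}$ by the bound on $K_n$ above. Feeding $f=K_nVe_{\mp n}$ into Lemma~\ref{bn-est} gives $w_{2n}\lin{2n}^s\abs{b_{\pm n}-q_{\pm2n}}\le 2c_s'\ep_s(n)\n{q}_{w,s,\infty}^2\le 8c_s'm^2\ep_s(n)$ on $S_n\times B_{2m}^{w,s,\infty}$, and $\ep_s(n)\le\log\lin{n}/n^{1-\abs{s}}$ for $n\ge N_{m,s}$ in both regimes $s=0$ and $-1/2<s<0$. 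The final inequality $\le\frac{\log\lin{n}}{n^{1/2}}\,\frac{m}{4}$ is again precisely $32c_s'm\le n^{1/2-\abs{s}}$ from~\eqref{cs2-estimate}.

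Since every analytic ingredient is already in place, I do not expect a genuine obstacle; the only point demanding care is the bookkeeping --- distinguishing $c_s$ from $c_s'$, tracking the factor $2$ coming from $B_{2m}$ versus $B_m$, and checking that the single threshold $N_{m,s}$ of~\eqref{cs2-estimate} is large enough to underwrite simultaneously the contraction estimate, the bound $\n{K_n}\le 2$, the uniform convergence behind analyticity, and both concluding inequalities.
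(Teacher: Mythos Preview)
Your proposal is correct and follows the same approach as the paper's own brief proof, which likewise derives both analyticity and the quantitative bounds by combining the series representations~\eqref{an-bn-expansions} with Lemma~\ref{Tn-est}, Lemma~\ref{coeff-est}, and Lemma~\ref{bn-est}. One small slip: the $l$-th summand $\lin{T_n(\lm)^{l}Ve_{\pm n},e_{\pm n}}$ is not affine in $q$ but homogeneous of degree $l+1$ (each factor $T_n=VA_\lm^{-1}Q_n$ carries one copy of $V$, plus the final $V$), though of course polynomial dependence is still analytic, so the conclusion stands.
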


\begin{proof}
The claimed analyticity follows from the representations~\eqref{an-bn-expansions} of $a_{n}$ and $b_{\pm n}$ and the bounds from Lemma~\ref{Tn-est}, Lemma~\ref{coeff-est}, and Lemma~\ref{bn-est}.~\qed
\end{proof}

\begin{lem}
\label{alpha-n}
Let $-1/2 < s \le 0$ and $m\ge 1$. For each $n\ge N_{m,s}$ with $N_{m,s}$ given as in~\eqref{cs2-estimate}, there exists a unique real analytic function
\[
  \al_{n}\colon B_{2m}^{s,\infty}\to \C,\qquad 
  \abs{\al_{n}-n^{2}\pi^{2}}_{B_{2m}^{s,\infty}}
   \le \frac{8c_{s}m^{2}}{n^{1/2-\abs{s}}}
   \le m/4,
\]
such that $\lm-n^{2}\pi^{2}-a_{n}(\lm,\cdot)|_{\lm=\al_{n}}\equiv 0$ identically on $B_{2m}^{s,\infty}$.~\fish
\end{lem}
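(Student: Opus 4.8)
The plan is to realize $\al_{n}$ as the unique fixed point of a contraction that depends analytically on $q$, so that existence, uniqueness, the quantitative bound and the analyticity all come out of standard machinery once the relevant estimates are assembled. Write $G_{n}(\lm,q)\defl \lm-n^{2}\pi^{2}-a_{n}(\lm,q)$; by Lemma~\ref{an-bn-est} this is analytic on $S_{n}\times B_{2m}^{s,\infty}$ and there satisfies $\abs{a_{n}(\lm,q)}\le \rho_{n}\defl 8c_{s}m^{2}/n^{1/2-\abs{s}}\le m/4$. The claim is that $G_{n}(\cdot,q)$ has, for every $q\in B_{2m}^{s,\infty}$, a unique zero $\al_{n}(q)$ in $S_{n}$, with $\abs{\al_{n}(q)-n^{2}\pi^{2}}\le \rho_{n}$, and that $q\mapsto\al_{n}(q)$ is real analytic. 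Equivalently, $\al_{n}(q)$ is the fixed point of $F_{q}\colon \lm\mapsto n^{2}\pi^{2}+a_{n}(\lm,q)$ on the closed disk $\Delta_{n}\defl\setdef{\lm\in\C}{\abs{\lm-n^{2}\pi^{2}}\le\rho_{n}}$. From the defining inequality \eqref{cs2-estimate}, $n^{1/2-\abs{s}}\ge 32c_{s}'m\ge 32m$ for $n\ge N_{m,s}$, and since $\tfrac12-\abs{s}\le\tfrac12$ this forces $n\ge (32m)^{2}$; hence $\rho_{n}\le m/4$, $m/4\le \sqrt{n}/32\le n$, and $\rho_{n}<4\sqrt{n}$, so that $\Delta_{n}\subset D_{n}\subset S_{n}$ with $D_{n}$ the disk from Lemma~\ref{Sn-roots}.

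First I would verify the two hypotheses of the contraction principle for $F_{q}$ on $\Delta_{n}$. Invariance is immediate: for $\lm\in\Delta_{n}\subset S_{n}$, Lemma~\ref{an-bn-est}(i) (trivial weight $w=\o$) gives $\abs{F_{q}(\lm)-n^{2}\pi^{2}}=\abs{a_{n}(\lm,q)}\le\rho_{n}$, i.e.\ $F_{q}(\Delta_{n})\subset\Delta_{n}$. Contractivity uses that $a_{n}(\cdot,q)$ is analytic and bounded by $\rho_{n}\le m/4$ on the \emph{whole} strip $S_{n}$, together with $\dist(\Delta_{n},\partial S_{n})\ge 12n-\rho_{n}\ge 11n$: Cauchy's estimate then gives $\sup_{\Delta_{n}}\abs{\partial_{\lm}a_{n}(\lm,q)}\le\rho_{n}/(11n)\le m/(44n)<1/2$, so $\abs{F_{q}(\lm)-F_{q}(\mu)}\le\tfrac12\abs{\lm-\mu}$ on $\Delta_{n}$. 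Consequently $F_{q}$ has a unique fixed point $\al_{n}(q)\in\Delta_{n}$, that is $G_{n}(\al_{n}(q),q)=0$, and $\abs{\al_{n}(q)-n^{2}\pi^{2}}=\abs{a_{n}(\al_{n}(q),q)}\le\rho_{n}=8c_{s}m^{2}/n^{1/2-\abs{s}}\le m/4$, which is the asserted bound.

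Next I would upgrade uniqueness from $\Delta_{n}$ to all of $S_{n}$, exactly along the lines of Lemma~\ref{Sn-roots}: any zero $\xi\in S_{n}$ of $G_{n}(\cdot,q)$ obeys $\abs{\xi-n^{2}\pi^{2}}=\abs{a_{n}(\xi,q)}\le\rho_{n}<4\sqrt{n}$ and hence lies in $D_{n}$; and since $\sup_{S_{n}}\abs{a_{n}(\cdot,q)}\le\rho_{n}<4\sqrt{n}=\inf_{\partial D_{n}}\abs{\lm-n^{2}\pi^{2}}$, Rouché's theorem applied to $\lm-n^{2}\pi^{2}$ and $G_{n}(\cdot,q)=(\lm-n^{2}\pi^{2})-a_{n}(\lm,q)$ on $\partial D_{n}$ shows that $G_{n}(\cdot,q)$ has exactly one zero in $D_{n}$. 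Thus $\al_{n}(q)$ is the only zero of $G_{n}(\cdot,q)$ in $S_{n}$, which settles uniqueness.

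Finally, for the real analyticity of $q\mapsto\al_{n}(q)$ I would apply the analytic implicit function theorem: $G_{n}$ is analytic on a neighborhood of $S_{n}\times B_{2m}^{s,\infty}$, $G_{n}(\al_{n}(q),q)=0$, and $\partial_{\lm}G_{n}(\al_{n}(q),q)=1-\partial_{\lm}a_{n}(\al_{n}(q),q)\neq 0$ since $\abs{\partial_{\lm}a_{n}}<1/2$; the resulting locally unique analytic solution then agrees with $\al_{n}$ by the uniqueness already established. (Alternatively, the Picard iterates $F_{q}^{\,\circ k}(n^{2}\pi^{2})$ are analytic in $q$ and converge uniformly on $B_{2m}^{s,\infty}$ to $\al_{n}$.) Because $N_{m,s}$ depends only on $m$ and $s$, all thresholds are uniform in $q\in B_{2m}^{s,\infty}$, as required. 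There is no substantive obstacle here; the one step calling for care is the numerical bookkeeping behind $\Delta_{n}\subset S_{n}$, the validity of the Cauchy estimate, and the applicability of the Rouché argument — namely the inequalities $\rho_{n}\le m/4$, $m/4\le n$, $\rho_{n}<4\sqrt{n}$, and the strict smallness of $\sup_{S_{n}}\abs{a_{n}}$ relative to $\dist(\Delta_{n},\partial S_{n})$ and to $\inf_{\partial D_{n}}\abs{\lm-n^{2}\pi^{2}}$ — all of which reduce, via \eqref{cs2-estimate}, to $n^{1/2-\abs{s}}\ge 32c_{s}'m$.
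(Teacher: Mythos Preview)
Your argument is correct and uses the same fixed-point idea as the paper. The one organizational difference is that the paper runs the contraction directly on the complete metric space $E$ of analytic functions $\al\colon B_{2m}^{s,\infty}\to\C$ with $\abs{\al-n^{2}\pi^{2}}\le \rho_{n}$, applying $\Lm_{n}\al\defl n^{2}\pi^{2}+a_{n}(\al,\cdot)$ there; analyticity of the fixed point then comes for free. You instead contract pointwise in $q$ and recover analyticity afterwards via the implicit function theorem (or uniform convergence of the Picard iterates), which is entirely equivalent. Your additional Rouch\'e step upgrading uniqueness from $\Delta_{n}$ to all of $S_{n}$ is correct but not needed for the stated lemma, since uniqueness is only asserted within the class of functions satisfying the bound $\abs{\al_{n}-n^{2}\pi^{2}}\le\rho_{n}$. (A trivial arithmetic slip: from $\sqrt{n}\ge 32m$ you get $m/4\le\sqrt{n}/128$, not $\sqrt{n}/32$; this has no effect on the argument.)
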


\begin{proof}
We follow the proof of \cite[Lemma~5]{Poschel:2011iua}.
Let $E$ denote the space of analytic functions $\al\colon B_{2m}^{s,\infty}\to \C$ with $\abs{\al-n^{2}\pi^{2}}_{B_{2m}^{s,\infty}} \le \frac{8c_{s}m^{2}}{n^{1/2-\abs{s}}}$ equipped with the usual metric induced by the topology of uniform convergence. This space is complete -- cf. \cite[Theorem~A.4]{Grebert:2014iq}.
Fix any $n\ge N_{m,s}$ and consider on $E$ the fixed point problem for the operator $\Lm_{n}$,
\[
  \Lm_{n}\al \defl n^{2}\pi^{2} + a_{n}(\al,\cdot).
\]
By~\eqref{cs2-estimate}, each such function satisfies
\[
  \abs{\al-n^{2}\pi^{2}}_{B_{2m}^{s,\infty}}  \le 2m < 4n^{1/2},
\]
and hence maps the ball $B_{2m}^{s,\infty}$ into the disc $D_{n} = \setd{\abs{\lm-n^{2}\pi^{2}} \le 4n^{1/2}}\subset S_{n}$. Therefore, by Lemma~\ref{an-bn-est}
\[
  \abs{\Lm_{n}\al-n^{2}\pi^{2}}_{B_{2m}^{s,\infty}}
   \le \abs{a_{n}}_{S_{n}\times B_{2m}^{s,\infty}} 
   \le \frac{8c_{s}m^{2}}{n^{1/2-\abs{s}}},
\]
meaning that $\Lm_{n}$ maps $E$ into $E$. Moreover, $\Lm_{n}$ contracts by a factor $1/4$ by Cauchy's estimate,
\[
  \abs{\partial_{\lm}a_{n}}_{D_{n}\times B_{2m}^{s,\infty}} \le \frac{\abs{a_{n}}_{S_{n}\times B_{2m}^{s,\infty}}}{\dist(D_{n},\partial S_{n})}
  \le \frac{1}{12n-4n^{1/2}}\frac{8c_{s}m^{2}}{n^{1/2-\abs{s}}} \le \frac{1}{4}.
\]
Hence, we find a unique fixed point $\al_{n} = \Lm_{n}\al_{n}$ with the properties as claimed.~\qed
\end{proof}

To simplify notation define $\al_{-n} \defl \al_{n}$ for $n\ge 1$. For any given $m\ge 1$, define the map $\Fm^{(m)}$ on $B_{2m}^{s,\infty}$ by
\[
  \Fm^{(m)}(q) = \sum_{0\neq \abs{n} < M_{m,s}}q_{2n}e_{2n} + \sum_{\abs{n} \ge M_{m,s}} b_{n}(\al_{n}(q),q)e_{2n},
\]
where $M_{m,s} \ge N_{m,s}$ is chosen such that
\begin{equation}
  \label{M-choice}
  \sup_{n\ge M_{m,s}} \frac{8c_{s}'}{n^{1/2-\abs{s}}} \le \frac{1}{16m}.
\end{equation}
Thus, for $n\ge M_{m,s}$ the Fourier coefficients of the 1-periodic function $r = \Fm^{(m)}(q)$ are $r_{2n} = b_{n}(\al_{n}(q))$, $r_{-2n} = b_{-n}(\al_{-n}(q))$, and
\[
  B_{n}(\al_{n}(q),q) = 
  \begin{pmatrix}
  0 & -r_{2n}\\
  -r_{-2n} & 0
  \end{pmatrix}.
\]
These new Fourier coefficients are adapted to the lengths of the corresponding spectral gaps, whence we call $\Fm^{(m)}$ the \emph{adapted Fourier coefficient map} on $B_{m}^{s,\infty}$.

\begin{prop}
\label{Phi-diffeo}
For $-1/2 < s \le 0$ and $m\ge 1$, $\Fm^{(m)}$ maps $B_{m}^{s,\infty}$ into $\FL_{0,\C}^{s,\infty}$. Further, for every $w\in\Ms$, its restriction to $B_{m}^{w,s,\infty}$ is a real analytic diffeomorphism
\[
  \Fm^{(m)}\big|_{B_{m}^{w,s,\infty}}\colon B_{m}^{w,s,\infty} \to \Fm^{(m)}(B_{m}^{w,s,\infty})\subset\FL_{0,\C}^{w,s,\infty}
\]
such that
\begin{equation}
  \label{dFm-est}
  \sup_{q\in B_{m}^{w,s,\infty}}\n{\ddd_{q}\Fm^{(m)}-\Id}_{w,s,\infty}\le 1/16,
\end{equation}
and $B_{m/2}^{w,s,p}\subset \Fm^{(m)}(B_{m}^{w,s,p})$. Moreover,
\begin{equation}
  \label{Om-2side-est}
  \frac{1}{2}\n{q}_{w,s,\infty} \le \n{\Fm^{(m)}(q)}_{w,s,\infty} \le 2\n{q}_{w,s,\infty},\qquad q\in B_{m}^{w,s,\infty}.~\fish
\end{equation}
\end{prop}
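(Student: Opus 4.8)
The plan is to write $\Fm^{(m)} = \Id + G$ on (a neighbourhood of) $B_{m}^{w,s,\infty}$: since $q = \sum_{n}q_{2n}e_{2n}$, the finite part of $\Fm^{(m)}$ cancels against $\Id$ and leaves
\[
  G(q) = \sum_{\abs{n}\ge M_{m,s}} h_{n}(q)\,e_{2n},
  \qquad
  h_{n}(q) \defl b_{n}(\al_{n}(q),q) - q_{2n},
\]
so the whole proposition reduces to showing that $G$ is real analytic with $\sup_{B_{m}^{w,s,\infty}}\n{\ddd_{q}G}_{w,s,\infty}\le 1/16$ and that $G(0)=0$. First I would check that each $h_{n}$ is well defined and analytic on the complex ball $B_{2m}^{w,s,\infty}$: by Lemma~\ref{alpha-n}, $\al_{n}(q)\in D_{n}\subset S_{n}$ there, by~\eqref{cs2-estimate} the Neumann series $K_{n}=\sum_{k}T_{n}(\lm)^{k}$ converges on $S_{n}\times B_{2m}^{w,s,\infty}$, so $b_{n}$ is analytic on $S_{n}\times B_{2m}^{w,s,\infty}$, and hence $h_{n}$ — the composition of the real analytic $\al_{n}$ with the analytic $b_{n}$, minus the bounded linear functional $q\mapsto q_{2n}$ — is analytic on $B_{2m}^{w,s,\infty}$. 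Lemma~\ref{an-bn-est}(ii) combined with the choice~\eqref{M-choice} of $M_{m,s}$ gives the uniform bound $\sup_{q\in B_{2m}^{w,s,\infty}} w_{2n}\w{2n}^{s}\abs{h_{n}(q)} \le \tfrac{m\log\w{n}}{16\, n^{1/2}}$ for $\abs{n}\ge M_{m,s}$, which tends to $0$; therefore the series for $G$ converges uniformly on $B_{2m}^{w,s,\infty}$ to an analytic $\FL_{0,\C}^{w,s,\infty}$-valued map, so $\Fm^{(m)} = \Id + G$ is real analytic and sends $B_{2m}^{w,s,\infty}$ into $\FL_{0,\C}^{w,s,\infty}$ (with $w\equiv\o$ this gives $\Fm^{(m)}(B_{m}^{s,\infty})\subset\FL_{0,\C}^{s,\infty}$).

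Next I would prove~\eqref{dFm-est}. Fix $q\in B_{m}^{w,s,\infty}$ and $\hat q\in\FL_{0,\C}^{w,s,\infty}$. Since $\n{q'}_{w,s,\infty}<2m$ whenever $\n{q'-q}_{w,s,\infty}<m$, Cauchy's estimate applied to each $h_{n}$ on the ball of radius $m$ about $q$ gives
\[
  w_{2n}\w{2n}^{s}\abs{\ddd_{q}h_{n}[\hat q]}
   \le \frac{1}{m}\Bigl(\sup_{B_{2m}^{w,s,\infty}} w_{2n}\w{2n}^{s}\abs{h_{n}}\Bigr)\n{\hat q}_{w,s,\infty}
   \le \frac{\log\w{n}}{16\,n^{1/2}}\,\n{\hat q}_{w,s,\infty}
   \le \frac{1}{16}\,\n{\hat q}_{w,s,\infty},
\]
using the elementary bound $\log\w{n}\le n^{1/2}$. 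Taking the supremum over $\abs{n}\ge M_{m,s}$ yields $\n{\ddd_{q}G[\hat q]}_{w,s,\infty}\le\tfrac{1}{16}\n{\hat q}_{w,s,\infty}$, which is~\eqref{dFm-est}. The same computation on the slightly larger ball $\{\n{q}_{w,s,\infty}<3m/2\}$ gives $\n{\ddd_{q}G}_{w,s,\infty}<1$, so $\Fm^{(m)}$ is in fact real analytic on an open convex neighbourhood of $B_{m}^{w,s,\infty}$ with differential of the form $\Id+(\text{contraction})$.

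With~\eqref{dFm-est} in hand, the rest is the quantitative inverse function theorem. On the convex set $B_{m}^{w,s,\infty}$ the mean value inequality gives $\n{G(q_{1})-G(q_{2})}_{w,s,\infty}\le\tfrac{1}{16}\n{q_{1}-q_{2}}_{w,s,\infty}$, so $\Fm^{(m)}=\Id+G$ is injective there; each $\ddd_{q}\Fm^{(m)}=\Id+\ddd_{q}G$ is invertible by a Neumann series, so $\Fm^{(m)}$ is a local diffeomorphism, hence open, and its inverse is real analytic by the analytic inverse function theorem; thus $\Fm^{(m)}\big|_{B_{m}^{w,s,\infty}}$ is a real analytic diffeomorphism onto its image $\Fm^{(m)}(B_{m}^{w,s,\infty})\subset\FL_{0,\C}^{w,s,\infty}$. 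Since $V=0$ forces $b_{\pm n}\equiv 0$ by~\eqref{an-bn-expansions}, one has $G(0)=0$, hence $\n{G(q)}_{w,s,\infty}\le\tfrac{1}{16}\n{q}_{w,s,\infty}$ on $B_{m}^{w,s,\infty}$, and the triangle inequality gives $\tfrac{15}{16}\n{q}_{w,s,\infty}\le\n{\Fm^{(m)}(q)}_{w,s,\infty}\le\tfrac{17}{16}\n{q}_{w,s,\infty}$, i.e.~\eqref{Om-2side-est}. Finally, for $r$ with $\n{r}_{w,s,\infty}\le m/2$ the map $\Psi\colon q\mapsto r-G(q)$ is a $\tfrac{1}{16}$-contraction of the complete set $B_{m}^{w,s,\infty}$ into $\{\n{\cdot}_{w,s,\infty}\le 9m/16\}\subset B_{m}^{w,s,\infty}$, so its unique fixed point $q^{\ast}\in B_{m}^{w,s,\infty}$ satisfies $\Fm^{(m)}(q^{\ast})=q^{\ast}+G(q^{\ast})=r$; this proves $B_{m/2}^{w,s,\infty}\subset\Fm^{(m)}(B_{m}^{w,s,\infty})$.

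The only step that is not entirely mechanical is the first one: one must make sure that $b_{n}(\al_{n}(\cdot),\cdot)$ is analytic on the \emph{uniform} complex balls $B_{2m}^{w,s,\infty}$ and that the bound of Lemma~\ref{an-bn-est}(ii) persists there, so that Cauchy's estimate applies with a radius independent of $n$. This is precisely where the spectral estimates of Lemmas~\ref{Tn-est}, \ref{coeff-est} and~\ref{bn-est} — all stated for complex potentials — are combined; it is not deep, but it is the crux, and everything past it is bookkeeping.
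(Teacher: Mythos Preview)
Your proof is correct and follows essentially the same route as the paper's: write $\Fm^{(m)}=\Id+G$, use the analyticity of $b_{n}(\al_{n}(\cdot),\cdot)$ on the doubled complex ball $B_{2m}^{w,s,\infty}$ together with Lemma~\ref{an-bn-est}(ii) and the choice~\eqref{M-choice} to bound $\n{G}_{w,s,\infty}\le m/16$ there, apply Cauchy's estimate to get~\eqref{dFm-est}, and then run the quantitative inverse function theorem (which the paper packages as Lemma~\ref{ift}) for the diffeomorphism and covering claims, finishing with the mean value inequality for~\eqref{Om-2side-est}. The only point you pass over is the check, via the symmetry~\eqref{bn-symmetry}, that $\Fm^{(m)}$ maps real potentials to real potentials, which is what makes the diffeomorphism \emph{real} analytic; the paper records this explicitly.
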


\begin{proof}
Since $\al_{n}$ maps $B_{2m}^{s,\infty}$ into $S_{n}$ for $n\ge N_{m,s}$, each coefficient $b_{n}(\al_{n}(q),q)$ is well defined for $q\in B_{2m}^{s,\infty}$, and by Lemma~\ref{an-bn-est}
\[
  w_{2n}\lin{2n}^{s}\abs{b_{n}(\al_{n})-q_{2n}}_{B_{2m}^{s,\infty}}
  \le
  w_{2n}\lin{2n}^{s}\abs{b_{n}-q_{2n}}_{S_{n}\times B_{2m}^{s,\infty}}
  \le 
  \frac{8c_{s}m^{2}}{n^{1/2-\abs{s}}}.
\]
Hence the map $\Fm^{(m)}$ is defined on $B_{2m}^{s,\infty}$ and
\begin{align*}
  \sup_{q\in B_{2m}^{w,s,\infty}}\n{\Fm^{(m)}(q)-q}_{w,s,\infty}
  &=
  \sup_{q\in B_{2m}^{w,s,\infty}}
  \sup_{\abs{n}\ge M_{m,s}}
  w_{2n}\lin{2n}^{s}\abs{b_{n}(\al_{n})-q_{2n}}_{B_{2m}^{w,s,\infty}} \\
  &\le
  8c_{s}m^{2}
  \sup_{n\ge M_{m,s}} 
  \frac{1}{n^{1/2-\abs{s}}}\\
  &\le
  \frac{m^{2}}{16m}
  \le \frac{m}{16},
\end{align*}
by our choice~\eqref{M-choice} of $M_{m,s}$.
Consequently, the inverse function theorem (Lemma~\ref{ift}) applies proving that $\Fm^{(m)}$ is a diffeomorphism onto its image which covers $B_{m/2}^{w,s,p}$. If $q$ is real valued, then $\al_{n}(q)$ is real and hence by~\eqref{bn-symmetry} $b_{-n}(\al_{n}(q)) = -\ob{b_{n}(\al_{n}(q))}$ implying that $\Fm^{(m)}(q)$ is real valued as well. Altogether we thus have proved that $\Fm^{(m)}$ is real analytic.

Finally, we note that by Cauchy's estimate
\[
  \sup_{q\in B_{m}^{w,s,\infty}}\n{\ddd_{q}\Fm^{(m)}-\Id}_{w,s,\infty} \le 
  \frac{\sup_{q\in B_{2m}^{w,s,\infty}}\n{\Fm^{(m)}(q)-q}_{w,s,\infty}}{m}
  \le \frac{1}{16},
\]
hence in view of the mean value theorem for any $q_{0},q_{1}\in B_{m}^{w,s,p}$
\[
  \Fm^{(m)}(q_{1}) - \Fm^{(m)}(q_{0})
  =
  \p*{1 + \int_{0}^{1} \p*{\ddd_{(1-t)q_{0}+tq_{1}}\Fm^{(m)}-\Id}\,\dt}(q_{1}-q_{0}),
\]
we find that
\[
  \frac{1}{2}\n{q_{1}-q_{0}}_{w,s,\infty}\le \n{\Fm^{(m)}(q_{1}) - \Fm^{(m)}(q_{0})}_{w,s,\infty}
  \le 2\n{q_{1}-q_{0}}_{w,s,\infty}.~\qed
\]
\end{proof}

\subsection{Proof of Theorem~\ref{inv}}

\begin{prop}
\label{abstract-diffeo}
Let $-1/2 < s \le 0$, $m\ge 1$, and $w\in\Ms$. If $q\in B_{m}^{s,\infty}$ and
\[
  \Fm^{(m)}(q) \in B_{m/2}^{w,s,\infty},
\]
then $q\in B_{m}^{w,s,\infty}\subset \FL_{0,\C}^{w,s,\infty}$.~\fish
\end{prop}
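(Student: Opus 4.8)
The plan is to read this off directly from the surjectivity and injectivity properties of $\Fm^{(m)}$ already recorded in Proposition~\ref{Phi-diffeo}, with essentially no further computation.

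First I would apply Proposition~\ref{Phi-diffeo} with the given weight $w$. Since $\Fm^{(m)}(q)\in B_{m/2}^{w,s,\infty}$ and $B_{m/2}^{w,s,\infty}\subset\Fm^{(m)}(B_m^{w,s,\infty})$, there is some $\tilde q\in B_m^{w,s,\infty}$ with $\Fm^{(m)}(\tilde q)=\Fm^{(m)}(q)$. The entire task then reduces to showing $\tilde q=q$, for then the regularity of $\tilde q$ (namely $\tilde q\in B_m^{w,s,\infty}\subset\FL_{0,\C}^{w,s,\infty}$) transfers to $q$.

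Next I would observe that $\tilde q$ also lies in the larger \emph{unweighted} ball: every $w\in\Ms$ satisfies $w_n\ge 1$, so $\n{\cdot}_{s,\infty}\le\n{\cdot}_{w,s,\infty}$ and hence $B_m^{w,s,\infty}\subset B_m^{s,\infty}$; thus $\tilde q\in B_m^{s,\infty}$, while $q\in B_m^{s,\infty}$ by hypothesis. Now apply Proposition~\ref{Phi-diffeo} once more, this time with the trivial weight $\o$: it tells us that $\Fm^{(m)}$ restricted to $B_m^{s,\infty}$ is a real analytic diffeomorphism onto its image — in particular injective, and indeed bi-Lipschitz with $\tfrac{1}{2}\n{q_1-q_0}_{s,\infty}\le\n{\Fm^{(m)}(q_1)-\Fm^{(m)}(q_0)}_{s,\infty}$ for $q_0,q_1\in B_m^{s,\infty}$. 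Since $q,\tilde q\in B_m^{s,\infty}$ and $\Fm^{(m)}(q)=\Fm^{(m)}(\tilde q)$, injectivity forces $q=\tilde q\in B_m^{w,s,\infty}\subset\FL_{0,\C}^{w,s,\infty}$, which is the claim.

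The only point requiring care — and the closest thing to an obstacle — is that $\Fm^{(m)}$ must be literally one and the same map whether viewed on $B_m^{s,\infty}$ or on $B_m^{w,s,\infty}$; this is immediate from its definition, since its building blocks $\al_n$ and $q\mapsto b_n(\al_n(q),q)$ are fixed functions of $q$ and the weight only restricts the domain on which the map is studied. One should likewise check that the radii are compatible, i.e.\ that membership of $\Fm^{(m)}(q)$ in $B_{m/2}^{w,s,\infty}$ is exactly the hypothesis under which Proposition~\ref{Phi-diffeo} delivers a preimage lying inside $B_m^{w,s,\infty}$ — which it is. No other difficulty arises.
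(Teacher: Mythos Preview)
Your argument is correct and is essentially the paper's own proof: both use Proposition~\ref{Phi-diffeo} with the weight $w$ to produce a preimage $\tilde q\in B_m^{w,s,\infty}$ of $\Fm^{(m)}(q)$, and then invoke injectivity of $\Fm^{(m)}$ on the unweighted ball $B_m^{s,\infty}\supset B_m^{w,s,\infty}$ (Proposition~\ref{Phi-diffeo} with the trivial weight) to conclude $q=\tilde q$. The paper phrases this as applying the inverse $(\Fm^{(m)})^{-1}\big|_{B_{m/2}^{w,s,\infty}}$, but the content is identical.
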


\begin{proof}
By Proposition~\ref{Phi-diffeo}, the map $\Fm^{(m)}$ is defined on $B_{m}^{s,\infty}$ and a real analytic diffeomorphism onto its image; for $w\in\Ms$, the restriction of $\Fm^{(m)}$ to $B_{m}^{w,s,\infty}\subset B_{m}^{s,\infty}\cap \FL_{0,\C}^{w,s,\infty}$ is again a real analytic diffemorphism onto its image and by Lemma~\ref{Phi-diffeo} this image contains $B_{m/2}^{w,s,\infty}$. Thus, if $\Fm^{(m)}$ maps $q\in B_{m}^{s,\infty}$ to
\[
  r = \Fm^{(m)}(q)\in B_{m/2}^{w,s,\infty},
\] 
then we must have
\[
  q = (\Fm^{(m)})^{-1}\big|_{B_{m/2}^{w,s,\infty}}\,(r)\in B_{m}^{w,s,\infty}\subset \FL_{0,\C}^{w,s,\infty}.~\qed
\]
\end{proof}

To proceed, we want to bound the Fourier coefficients of $r = \Fm^{(m)}(q)$ in terms of the gap lengths of $q$. 

\begin{lem}
\label{root-est-poschel}
Let $-1/2 < s \le 0$, $m\ge 1$, and suppose that $q\in B_{m}^{s,\infty}$, $r = \Fm^{(m)}(q)$, and $n\ge M_{m,s}$ with $M_{m,s}$ given as in~\eqref{M-choice}. If
\[
  r_{-2n}\neq 0,\quad\text{and}\quad \frac{1}{9} \le \abs*{\frac{r_{2n}}{r_{-2n}}} \le 9,
\]
then
\[
  \abs{r_{2n}r_{-2n}} \le \abs{\gm_{n}(q)}^{2} \le 9\abs{r_{2n}r_{-2n}}.~\fish
\]
\end{lem}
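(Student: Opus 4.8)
I would prove this by localizing the two periodic eigenvalues $\lm_n^\pm(q)$ near the adapted centre $\al_n(q)$ and reading off $\gm_n^2$ from the quadratic part of $\det B_n$ at that point, in the spirit of the corresponding estimate in \cite{Poschel:2011iua}. Since $n\ge M_{m,s}$, Lemma~\ref{eval-char} and Lemma~\ref{Sn-roots} apply, so the two roots $\xi_{n,1},\xi_{n,2}$ of $\det B_n(\lm)=\p{\lm-n^2\pi^2-a_n(\lm)}^2-b_n(\lm)b_{-n}(\lm)$ in $D_n$ are $\lm_n^-$ and $\lm_n^+$, and $\gm_n(q)^2=(\lm_n^+-\lm_n^-)^2$. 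Writing $h(\lm):=\lm-n^2\pi^2-a_n(\lm)$ and using $\al_n-n^2\pi^2-a_n(\al_n)=0$ (Lemma~\ref{alpha-n}) we get $h(\lm)=(\lm-\al_n)k(\lm)$ with $k(\lm):=\int_0^1\p{1-a_n'\p{\al_n+t(\lm-\al_n)}}\dt$. With $g:=b_nb_{-n}$ one has $\det B_n=h^2-g$, and by the definition of $\Fm^{(m)}$, $g(\al_n)=b_n(\al_n)b_{-n}(\al_n)=r_{2n}r_{-2n}$; expanding at $\al_n$,
\[
  \det B_n(\al_n+\zt)=c_n\zt^2-g'(\al_n)\zt-r_{2n}r_{-2n}+R_n(\zt),\qquad c_n:=k(\al_n)^2-\tfrac12 g''(\al_n),
\]
with $R_n$ analytic and $R_n(0)=R_n'(0)=R_n''(0)=0$. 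Put $\mu:=\abs{r_{2n}r_{-2n}}^{1/2}$; the hypotheses force $\mu>0$ and $\abs{r_{2n}},\abs{r_{-2n}}\le 3\mu$, and, as in the proof of Lemma~\ref{Sn-roots}, $\abs{r_{\pm 2n}}\le 2\w{2n}^{\abs s}\n q_{s,\infty}\le 2\w{2n}^{\abs s}m$, so $\mu\ll n^{1/2}$.

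Next I would collect smallness estimates and localize the roots. Cauchy's inequality applied on $S_n$ to the bounds of Lemma~\ref{Tn-est}, Lemma~\ref{coeff-est}, Lemma~\ref{bn-est} and Lemma~\ref{alpha-n} gives, uniformly for $q\in B_m^{s,\infty}$ and $n\ge M_{m,s}$ (enlarging $M_{m,s}$ by a factor depending only on $m,s$ if necessary): $\abs{a_n}_{D_n},\abs{a_n'}_{D_n},\abs{a_n''}_{D_n},\abs{b_{\pm n}'}_{D_n},\abs{b_{\pm n}''}_{D_n}=\o(1)$, $\sup_{D_n}\abs{\det B_n'''}\le C n^{-1/2}$, hence $k(\al_n)=1+\o(1)$ and $\abs{k'}_{D_n}=\o(1)$. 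The ratio hypothesis enters precisely here:
\[
  \abs{g'(\al_n)}=\abs{b_n'(\al_n)r_{-2n}+r_{2n}b_{-n}'(\al_n)}\le\p{\abs{r_{2n}}+\abs{r_{-2n}}}\max_{\pm}\abs{b_{\pm n}'}_{D_n}\le 6\mu\,\o(1)=\o(\mu),
\]
and likewise $g''(\al_n)=\o(\mu)+\o(1)$, so $c_n=1+\o(1)$. On the circle $\abs\zt=3\mu$ one then has $\abs{c_n\zt^2}\ge\tfrac34\,9\mu^2$, while $\abs{R_n(\zt)}\le\tfrac{(3\mu)^3}{6}\,Cn^{-1/2}=\o(\mu^2)$ because $\mu\ll n^{1/2}$; thus $\abs{R_n(\zt)}<\abs{c_n\zt^2-g'(\al_n)\zt-r_{2n}r_{-2n}}$ on $\abs\zt=3\mu$, and Rouché's theorem gives that $\det B_n(\al_n+\cdot)$ has in $\abs\zt<3\mu$ the same number of zeros as the quadratic $c_n\zt^2-g'(\al_n)\zt-r_{2n}r_{-2n}$, namely two. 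Since $\det B_n$ has exactly two roots in $D_n$, it follows that $\abs{\lm_n^\pm-\al_n}\le 3\mu$.

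Now I would conclude. On $D(\al_n,3\mu)$ one has $\abs{b_{\pm n}}\le\abs{r_{\pm 2n}}+3\mu\,\abs{b_{\pm n}'}_{D_n}\le 4\mu$, hence $\abs{g-r_{2n}r_{-2n}}_{D(\al_n,3\mu)}\le 3\mu\cdot 8\mu\,\o(1)=\o(\mu^2)$; thus $g$ does not vanish on $D(\al_n,3\mu)$ and admits there a holomorphic square root $\ps_n$ with $\ps_n(\al_n)^2=r_{2n}r_{-2n}$, $\abs{\ps_n}\ge\mu/2$ and $\abs{\ps_n'}=\abs{g'}/(2\abs{\ps_n})=\o(1)$. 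On $D(0,3\mu)$ we then factor
\[
  \det B_n(\al_n+\zt)=\p{\zt k(\al_n+\zt)-\ps_n(\al_n+\zt)}\p{\zt k(\al_n+\zt)+\ps_n(\al_n+\zt)}.
\]
A short Rouché argument (comparing each factor with $\zt\mp\ps_n(\al_n)$ on $\abs\zt=3\mu$, where $\abs{\zt\mp\ps_n(\al_n)}\ge 2\mu$) shows that the first factor has a single zero and the second factor a single zero in $D(0,3\mu)$; these two zeros are therefore $\lm_n^+-\al_n$ and $\lm_n^--\al_n$. Solving $\zt_j k(\al_n+\zt_j)=\pm\ps_n(\al_n+\zt_j)$ for the corresponding $\zt_j$ and using $\abs{\ps_n(\al_n+\zt_j)-\ps_n(\al_n)}\le 3\mu\,\o(1)=\o(\abs{\ps_n(\al_n)})$ together with $k(\al_n+\zt_j)=1+\o(1)$, one gets $\zt_j=\pm\ps_n(\al_n)(1+\o(1))$, whence
\[
  \gm_n(q)^2=(\zt_1-\zt_2)^2=4\,\ps_n(\al_n)^2\,(1+\o(1))=4\,r_{2n}r_{-2n}\,(1+\o(1)),
\]
with the $\o(1)$ uniform over $q\in B_m^{s,\infty}$. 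Hence $\abs{\gm_n(q)}^2=4\abs{r_{2n}r_{-2n}}(1+\o(1))$, which for $n\ge M_{m,s}$ lies between $\abs{r_{2n}r_{-2n}}$ and $9\abs{r_{2n}r_{-2n}}$.

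The main obstacle is the uniformity of the two Rouché/root-finding steps: one has to verify that the explicit, $q$-independent Cauchy bounds on $a_n^{(j)}$, $b_{\pm n}^{(j)}$ and $\det B_n'''$ genuinely dominate on $\abs\zt=3\mu$ for \emph{every} $q\in B_m^{s,\infty}$, once $n\ge M_{m,s}$. The subtle point is that, although $g=b_nb_{-n}$ can vary by much more than $\mu^2$ over the whole strip $S_n$ when $\mu$ is small, the ratio hypothesis pins $b_{\pm n}(\al_n)=r_{\pm 2n}$ at size $O(\mu)$, and this — together with the small derivative bounds of Section~\ref{s:spectral-theory} — is exactly what keeps $g$ within relative error $\o(1)$ of $r_{2n}r_{-2n}$ on the \emph{small} disc $D(\al_n,3\mu)$, which already contains both eigenvalues. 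This is the mechanism of the analogous lemma in \cite{Poschel:2011iua}, and the estimates established earlier in this section are tailored so that it carries over verbatim to $\FL_{0,\C}^{s,\infty}$.
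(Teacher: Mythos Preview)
Your approach is essentially the paper's: both take a holomorphic square root $\psi_n$ of $g=b_nb_{-n}$ near $\al_n$ (the ratio hypothesis is precisely what makes this possible), factor $\det B_n = (h-\psi_n)(h+\psi_n)$, and use Rouché to pin one root of each factor near $\al_n\pm\psi_n(\al_n)$, giving $\rho_n\le\abs{\gm_n}\le 3\rho_n$ with $\rho_n=\abs{r_{2n}r_{-2n}}^{1/2}$. Your preliminary Taylor-expansion step localising both roots in $\abs\zt\le 3\mu$ before factoring is redundant; the paper omits it and works directly with $g_\pm=h\mp\ph_n$ on the disc $D_n^\o$ of radius $2\rho_n$ about $\al_n$.

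There is one gap. The lemma is stated for the \emph{fixed} threshold $M_{m,s}$ of~\eqref{M-choice}, but your argument explicitly concedes ``enlarging $M_{m,s}$ \dots\ if necessary'' and then relies on unquantified $\o(1)$ terms to land between $1$ and $9$. The paper avoids this by tracking numerical constants: from~\eqref{M-choice} one extracts $m/4\le n^{1/2-\abs{s}}/128$, which via Cauchy's estimate yields $\abs{\partial_\lm a_n}_{D_n^\o},\,\abs{\partial_\lm b_{\pm n}}_{D_n^\o}\le 1/88$; the ratio hypothesis then bounds $\abs{b_{\pm n}/b_{\mp n}}\le 10$ on $D_n^\o$, giving $\abs{\partial_\lm\ph_n}_{D_n^\o}\le 4/88$, and with these explicit bounds the Rouché comparisons on discs of radius $\rho_n/2$ about $\al_n\pm\ph_n(\al_n)$ go through with margin to spare for every $n\ge M_{m,s}$. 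To match the statement as written you would have to replace your $\o(1)$ bookkeeping by this explicit accounting.
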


\begin{proof}
We follow the proof of \cite[Lemma~10]{Poschel:2011iua}. To begin, we write $\det B_n(\lm) =
g_+(\lm)g_-(\lm)$ with
\[
  g_\pm(\lm) \defl \lm - n^{2}\pi^{2} - a_n(\lm) \mp \ph_n(\lm),\qquad 
  \ph_n(\lm) = \sqrt{b_n(\lm) b_{-n}(\lm)}.
\]
The assumption on $r_{\pm 2n}$ implies that $g_\pm$ are continuous, even analytic, functions of $\lm$. Indeed, recall that $r_{\pm 2n} =b_{\pm n}(\al_n)$, thus
\[
  \ph_n(\al_n) = \sqrt{b_n(\al_n)b_{-n}(\al_n)} \neq 0,\qquad 
  \rho_n \defl \abs{\ph_n(\al_{n})} > 0,
\]
so we may choose $\ph_n(\lm)$ as a fixed branch of the square root locally around $\lm=\al_n$. To obtain an estimate of the domain of analyticity, we consider the disc $D_n^\o\defl\setdef{\lm}{\abs{\lm-\al_n}\le 2\rho_n}$.
Since by assumption $n\ge M_{m,s}$ it follows from~\eqref{M-choice} together with $c_{s}\ge 1$ that
\[
  \frac{m}{4} \le \frac{n^{1/2-\abs{s}}}{128},\qquad \forall n\ge M_{m,s}.
\]
Lemma~\ref{an-bn-est} then yields
\begin{equation}
  \label{an-temp-est}
  \abs{a_{n}}_{S_{n}} \le \frac{m}{4} \le \frac{n^{1/2-\abs{s}}}{128}.
\end{equation}
To estimate $\abs{b_{n}}_{S_{n}}$ note that
\[
  \abs{b_{n}}_{S_{n}} \le \w{2n}^{\abs{s}}(\w{2n}^{s}\abs{b_{n}-q_{2n}}_{S_{n}} + \w{2n}^{s}\abs{q_{2n}}).
\]
Since $q\in B_{m}^{s,\infty}$, $\w{2n}^{s}\abs{q_{2n}} \le m$ and hence again by Lemma~\ref{an-bn-est} one has
\begin{equation}
  \label{bn-temp-est}
  \abs{b_{n}}_{S_{n}} \le 2n^{\abs{s}}(m/2+m) \le 4n^{\abs{s}}m \le \frac{n^{1/2}}{8}.
\end{equation}
Cauchy's estimate and definition~\eqref{Sn} of $S_{n}$ then gives
\begin{equation}
  \label{dbn-est}
  \abs{\partial_\lm b_{\pm n}}_{D_n^\o}
  \le \frac{\abs{b_{\pm n}}_{S_n}}{\dist(D_n^\o,\partial S_n)}
  \le \frac{n^{1/2}/8}{12n - \abs{n^{2}\pi^{2}-\al_{n}} - 2\rho_n}
  \le \frac{1}{88},
\end{equation}
where we used that by Lemma~\ref{alpha-n}, $\abs{\al_n-n^{2}\pi^{2}}\le m/4 \le n^{1/2-\abs{s}}/128$ and by~\eqref{bn-temp-est}, $\rho_{n} \le n^{1/2}/8$. Note that by~\eqref{an-temp-est}, the same estimate holds for $\partial_{\lm}a_{n}$,
\begin{equation}
  \label{dan-est}
  \abs{\partial_\lm a_n}_{D_{n}^{\o}} \le 1/88
\end{equation}
Thus by the mean value theorem, for any $\lm\in D_{n}^{\o}$,
\begin{equation}
  \label{bn-lm-bn-aln}
  \abs{b_{\pm n}(\lm)-b_{\pm n}(\al_n)}_{D_n^\o}
   \le
  \abs{\partial_\lm b_{\pm n}}_{D_n^\o}2\rho_n \le \frac{1}{44}\rho_n,
\end{equation}
implying that $\ph_{n}(\lm)^{2}$ is bounded away from zero for $\lm \in D_{n}^{\o}$. Hence $\ph_{n}(\lm)$ is analytic for $\lm\in D_{n}^{\o}$.

By Lemma~\ref{Sn-roots}, $\det B_{n}(\lm) = g_{+}(\lm)g_{-}(\lm)$ has precisely two roots in $S_{n}$ which both are contained in $D_{n}\subset S_{n}$. To estimate the location of these roots, we approximate $g_{\pm}(\lm)$ by $h_{\pm}(\lm)$ defined by
\begin{align*}
  h_{+}(\lm) &= \lm-n^{2}\pi^{2}-a_{n}(\al_{n}) - \ph_{n}(\al_{n}),\\
  h_{-}(\lm) &= \lm-n^{2}\pi^{2}-a_{n}(\al_{n}) + \ph_{n}(\al_{n}).
\end{align*}
Since $\al_{n}-n^{2}\pi^{2}-a_{n}(\al_{n}) = 0$, one has
\[
  h_{+}(\lm) = \lm-\al_{n}-\ph_{n}(\al_{n}),\qquad
  h_{-}(\lm) = \lm-\al_{n}+\ph_{n}(\al_{n}).
\]
Clearly, $h_{+}(\lm)$ and $h_{-}(\lm)$ each have precisely one zero $\lm_{+} = \al_{n}+\ph_{n}(\al_{n})$ and $\lm_{-} = \al_{n} - \ph_{n}(\al_{n})$, respectively.

We want to compare $h_+$ and $g_+$ on the disc
\[
  D_n^+\defl \setdef{\lm}{\abs{\lm-(\al_n + \ph_{n}(\al_{n}))}< \rho_n/2} \subset D_n^\o.
\]
Since $h_+(\al_n + \ph_{n}(\al_{n})) = 0$, we have
\[
  \abs{h_+}_{\partial D_n^+}
   = \abs{h_+(\lm)-h_+(\al_n + \ph_{n}(\al_{n}))}_{\partial D_n^+}
   = \frac{\rho_n}{2}.
\]
In the sequel we show that
\begin{equation}
  \label{sgn-estimate}
  \abs{\partial_\lm \ph_n}_{D_n+} \le \frac{4}{88},
\end{equation}
yielding together with~\eqref{dan-est}
\begin{align*}
 \abs{h_+-g_+}_{D_n^+}
 &\le \abs{a_n(\al_n) - a_n(\lm)}_{D_n^+} + \abs{\ph_n(\al_n)-\ph_n(\lm)}_{D_n^+}\\
 &\le \p*{\abs{\partial_\lm a_n}_{D_n^+} + \abs{\partial_\lm \ph_n}_{D_n^+}} 2\rho_n
    < \frac{\rho_n}{2} = \abs{h_+}_{\partial D_n^+}.
\end{align*}
Thus, it follows from Rouche's theorem that $g_+$ has a single root contained in $D_n^+$. In a similar fashion, we find that $g_{-}$ has a single root contained in $D_n^-\defl\setdef{\lm}{\abs{\lm-(\al_n-\ph_{n}(\al_{n}))}<\rho_n/2}$. Since the roots of $g_\pm(\lm)$ are roots of $\det B_n(\lm)$, they have to coincide with $\lm_n^\pm$ and hence
\[
  \rho_n \le \abs{\lm_n^+-\lm_n^-} \le 3\rho_n,
\]
which is the claim.

It remains to show the estimate~\eqref{sgn-estimate} for $\partial_\lm \ph_n$ on $D_n^+$.
Note that $\ob{D_{n}^{+}}\subset D_{n}^{\o}$ and write 
\[
  \abs{b_n(\lm)} = \abs{b_n(\lm)b_{-n}(\lm)}^{1/2}\abs*{\frac{b_n(\lm)}{b_{-n}(\lm)}}^{1/2}.
\]
By the assumption of this lemma, $\frac{1}{9} \le \frac{\abs{b_{n}(\al_{n})}}{\abs{b_{-n}(\al_{n})}}\le 9$ and by the estimate~\eqref{bn-lm-bn-aln},
\[
  \frac{1}{3}\rho_n \le \abs{b_n(\al_n)} \le 3\rho_n,
  \qquad
  \abs{b_n(\lm)-b_n(\al_n)}_{D_n^\o} \le \frac{\rho_{n}}{44}.
\]
Thus, by the triangle inequality we obtain
\[
  \frac{41}{132}\rho_n
    = \p*{\frac{1}{3}-\frac{1}{44}}\rho_n
  \le \abs{b_n^+}_{D_n^\o} 
  \le \p*{3+\frac{1}{44}}\rho_n
    = \frac{133}{44}\rho_n.
\]
Treating $b_{-n}$ in an analogous way, we arrive at
\[
  \abs*{\frac{b_n^+}{b_n^-}}_{D_n^\o},
  \abs*{\frac{b_n^-}{b_n^+}}_{D_n^\o}
  \le 10,
\]
which in view of~\eqref{dbn-est} finally yields the desired estimate~\eqref{sgn-estimate},
\begin{align*}
  \abs{\partial_\lm \ph_n}_{D_n^\o}
  \le
  \frac{\abs{\partial_\lm b_n^+}_{D_n^\o}}{2}
  \abs*{\frac{b_n^-}{b_n^+}}_{D_n^\o}^{1/2}
  +
  \frac{\abs{\partial_\lm b_n^-}_{D_n^\o}}{2}
  \abs*{\frac{b_n^+}{b_n^-}}_{D_n^\o}^{1/2}
  \le \frac{4}{88}.~\qed
\end{align*}

\end{proof}

\begin{lem}
\label{iso-bounded}
If $q_{0}\in H_{0}^{-1}$ with gap lengths $\gm(q_{0}) \in \ell^{s,\infty}$, $-1/2 < s\le 0$, then $\Iso(q_{0})$ is a $\n{\cdot}_{s,\infty}$-norm bounded subset of $\FL_{0}^{s,\infty}$.
In particular, $q_{0}$ is in $\FL_{0}^{s,\infty}$.~\fish
\end{lem}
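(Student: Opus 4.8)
The plan is to deduce the statement from an asymptotic comparison of the Fourier coefficients of the potentials in $\Iso(q_0)$ with their \emph{common} gap lengths, running the reduction of Subsections~\ref{ss:spec-reduction}--\ref{ss:jordan} and the root estimate Lemma~\ref{root-est-poschel} at the level of $H_0^{-1}$. First I would fix the preliminaries. The set $\Iso(q_0)$ is bounded in $H_0^{-1}$ -- in fact compact, being the preimage under the Birkhoff map of the compact torus $\Tc_{\Phi(q_0)}\subset\ell_0^{-1/2,2}$, cf.\ \cite{Kappeler:2005fb} -- so $m_0\defl\sup_{\tilde q\in\Iso(q_0)}\n{\tilde q}_{H^{-1}}<\infty$; moreover, because $\FL_{0,\C}^{s,\infty}$ embeds compactly into $H_{0,\C}^{-1}$, the large-$n$ thresholds appearing below (the analogues of $n_s$ from Corollary~\ref{Q-soln} and of $\check n_s$ from~\eqref{n-s-p}) can be chosen uniformly over $\Iso(q_0)$, depending only on $m_0$. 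Also, each $\tilde q\in\Iso(q_0)$ is real valued and, by definition, has the same periodic spectrum as $q_0$, so that $\gm_n(\tilde q)=\gm_n(q_0)$ for every $n\ge1$, and $(\gm_n(q_0))_{n\ge1}\in\ell^{s,\infty}$ by hypothesis. It therefore suffices to bound $\n{\tilde q}_{s,\infty}$ by a constant depending only on $q_0$ and $s$, uniformly in $\tilde q\in\Iso(q_0)$.

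Fix $\tilde q\in\Iso(q_0)$. For $n$ beyond the uniform threshold, the reduction supplies analytic coefficients $a_n,b_{\pm n}$ on $S_n$, the point $\al_n=\al_n(\tilde q)\in S_n$ with $\al_n-n^2\pi^2-a_n(\al_n)=0$ (Lemma~\ref{alpha-n}), and the adapted Fourier coefficients $r_{\pm2n}\defl b_{\pm n}(\al_n)$, the two periodic eigenvalues $\lm_n^{\pm}(\tilde q)=\lm_n^{\pm}(q_0)$ in $S_n$ being the roots of $\det B_n$. Since $\tilde q$ and $\al_n$ are real, $r_{-2n}=\ob{r_{2n}}$; hence the ratio condition in Lemma~\ref{root-est-poschel} holds automatically (ratio $1$ when $r_{\pm2n}\neq0$, and the alternative $r_{2n}=r_{-2n}=0$ -- the closed-gap, i.e.\ Jordan-block, case $\gm_n(\tilde q)=0$ -- making the next bound trivial), and it gives $\abs{r_{2n}}^2=\abs{r_{2n}r_{-2n}}\le\abs{\gm_n(\tilde q)}^2=\abs{\gm_n(q_0)}^2$. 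Using $\w{2n}^{s}\le\w{n}^{s}$ for $s\le0$, this yields $\sup_{n\ge N}\w{2n}^{s}\abs{r_{2n}}\le\n{\gm(q_0)}_{s,\infty}$. Next I would invoke the one-smoothing estimate for the correction $b_{\pm n}(\al_n)-q_{\pm2n}(\tilde q)=\lin{T_nK_nVe_{\mp n},e_{\pm n}}$: the $H_0^{-1}$-counterpart of Lemma~\ref{coeff-est}(ii)/Lemma~\ref{bn-est} (cf.\ \cite{Kappeler:2003vh,Kappeler:CNzeErmy}) puts $(b_{\pm n}(\al_n)-q_{\pm2n}(\tilde q))_{n\ge N}$ into $\ell^{s,\infty}(\N)$ with a bound $C(m_0)$ depending only on $m_0$. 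Adding the two estimates, $\sup_{n\ge N}\w{2n}^{s}\abs{q_{2n}(\tilde q)}\le\n{\gm(q_0)}_{s,\infty}+C(m_0)$; the finitely many low modes obey $\abs{q_{2k}(\tilde q)}\le\w{2k}\n{\tilde q}_{H^{-1}}\le\w{2N}m_0$ for $\abs{k}<N$. Hence $\n{\tilde q}_{s,\infty}\le\max\bigl(\w{2N}^{1+s}m_0,\ \n{\gm(q_0)}_{s,\infty}+C(m_0)\bigr)$, uniformly in $\tilde q$; equivalently, $r\defl\Fm^{(m_0)}(\tilde q)$ lies in $\FL_0^{s,\infty}$ with uniformly controlled norm, which is precisely the kind of information that Proposition~\ref{abstract-diffeo}, in its $H_0^{-1}$-version, is built to transfer back to $\tilde q$. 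This proves $\Iso(q_0)\subset\FL_0^{s,\infty}$ is $\n{\cdot}_{s,\infty}$-bounded, and $q_0\in\Iso(q_0)$ then gives $q_0\in\FL_0^{s,\infty}$.

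I expect the substantive obstacle to be exactly what legitimises this reduction for $\tilde q$: a priori $\tilde q$ is only known to lie in $H_0^{-1}$, whereas Lemma~\ref{Tn-est}, Lemma~\ref{coeff-est}, Lemma~\ref{bn-est} and Lemma~\ref{root-est-poschel} are formulated for potentials already sitting in a weighted Fourier--Lebesgue ball. One breaks this apparent circularity by using the coarser $H_0^{-1}$-level incarnations of these estimates (as in \cite{Kappeler:2003vh,Kappeler:CNzeErmy}): the argument only ever uses $\abs{b_n(\al_n)}\le\abs{\gm_n}$ for real potentials and the $\ell^{s,\infty}$-smallness of $b_n(\al_n)-q_{2n}$, and both come with constants controlled by $\n{\tilde q}_{H^{-1}}\le m_0$; alternatively one can run the argument first for $C^{\infty}$-potentials whose periodic spectra approximate that of $q_0$ and pass to the limit, using $H^{-1}$-continuity of the spectral data together with the uniform bounds obtained. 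Verifying that the one-smoothing bound for $b_n(\al_n)-q_{2n}$ indeed holds in $\ell^{s,\infty}$ (and not merely in the weaker $\ell^{-1,2}$ that $H^{-1}$-regularity alone gives for $(q_{2n})$ itself) is the one point that needs care; everything else -- real-valuedness supplying the ratio condition, the closed-gap case, and the low-mode bookkeeping -- is routine.
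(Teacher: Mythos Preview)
Your approach differs from the paper's and has a genuine gap at precisely the point you flag. Every object you use for $\tilde q$ --- the operator $T_n$, the coefficients $a_n,b_{\pm n}$, the root $\al_n$, the adapted map $\Fm^{(m)}$, and the root comparison Lemma~\ref{root-est-poschel} --- is constructed in Section~\ref{s:spectral-theory} only for potentials already in some $\FL_{0,\C}^{w,s,\infty}$ with $-1/2<s\le 0$; none of it is set up for bare $H_0^{-1}$ potentials. Your ``$H_0^{-1}$-level incarnations'' with output in $\ell^{s,\infty}$ are not in the cited references: \cite{Kappeler:2003vh} yields $\ell^{-1,2}$-type asymptotics, and \cite{Kappeler:CNzeErmy} treats $\FL^{s,p}$ for $2\le p<\infty$, $-1/2\le s\le 0$, neither of which gives the uniform $\ell^{s,\infty}$ bound on $b_n(\al_n)-q_{2n}$ in terms of $\n{q}_{H^{-1}}$ alone. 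The approximation alternative fails for the same reason: the available bounds for the smooth approximants depend on their $\FL^{s,\infty}$-norms, which are not controlled a priori.

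The paper avoids this circularity by an intermediate bootstrap and a weight trick. First, since $\gm(q_0)\in\ell^{s,\infty}\hookrightarrow\ell^{\sg,p}$ for suitable $-1/2<\sg<s$, $2\le p<\infty$, the results of \cite{Kappeler:CNzeErmy} place $\Iso(q_0)$ in a ball $B_R^{\sg,p}\subset B_m^{\sg,\infty}$; now every $\tilde q\in\Iso(q_0)$ is known to sit in $\FL_0^{\sg,\infty}$, so the entire machinery of Section~\ref{s:spectral-theory} applies at the $(\sg,\infty)$-level. The $s$-information is encoded via the weight $w_n=\w{n}^{s-\sg}\in\Ms$, for which $\FL^{w,\sg,\infty}=\FL^{s,\infty}$. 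Your bound $\abs{r_{2n}}\le\abs{\gm_n(q_0)}$ (from Lemma~\ref{root-est-poschel}, now legitimately applied) controls the tail of $r=\Fm^{(m)}(\tilde q)$ in $\ell^{w,\sg,\infty}$; to force $\n{r}_{w^\ep,\sg,\infty}\le m/2$ one replaces $w$ by $w^\ep_n=\min(w_n,e^{\ep\abs{n}})\in\Ms$, which tames the finitely many low modes while agreeing with $w$ eventually. Proposition~\ref{abstract-diffeo} then gives $\tilde q\in B_m^{w^\ep,\sg,\infty}$, and since $w^\ep_n=w_n$ for all but finitely many $n$, a uniform $\FL^{s,\infty}$-bound on $\Iso(q_0)$ follows. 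The essential idea you are missing is this two-scale argument: one must first land in \emph{some} $\FL^{\sg,\infty}$ by outside input before the adapted Fourier coefficient map can be inverted with a weight.
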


\begin{proof}
Suppose $q_{0}$ is a real valued potential in $H_{0}^{-1}$ with gap lengths $\gm(q_{0}) \in \ell^{s,\infty}$ for some $-1/2 < s \le 0$.
We can choose $-1/2 < \sg < s$ and $2\le p < \infty$ so that $(s-\sg)p > 1$ and hence $\ell^{s,\infty}\opento \ell^{\sg,p}$. Consequently, by \cite[Corollary~3]{Kappeler:CNzeErmy} we have $q_{0}\in \FL^{\sg,p}$. Moreover, by \cite[Corollary~4]{Kappeler:CNzeErmy} the isospectral set $\Iso(q_{0})$ is compact  in $\FL^{\sg,p}$, hence there exists $R > 0$ so that $\Iso(q_{0})$ is contained in the ball $B_{R}^{\sg,p}$.
To prove that $\Iso(q_{0})$ is a bounded subset of $\FL^{s,\infty}$, we choose
\begin{equation}
  \label{m-choice}
  m = 4(R + \n{\gm(q_{0})}_{s,\infty}).
\end{equation}
Further, let $w_{n} = \w{n}^{-\sg+s}$, $n\in\Z$. Then $w\in\Ms$, $\ell^{w,\sg,\infty} = \ell^{s,\infty}$, and $\gm(q_{0}) \in \ell^{w,\sg,\infty}$, while for any $q\in \Iso(q_{0})$ we have
\[
  q\in B_{m}^{\sg,\infty}.
\]
The map $\Fm^{(m)}$ is well defined on $B_{m}^{\sg,\infty}$ and
\[
  r\equiv r(q) = \Fm^{(m)}(q) \in \FL_{0}^{\sg,\infty}.
\]
Since $r$ is real valued, we have $r_{-n} = \ob{r_{n}}$ for all $n\in\Z$. Suppose $\abs{n}\ge M_{m,s}$, then it follows from Lemma~\ref{root-est-poschel} that for any $q\in \Iso(q_{0})$ with $\abs{r_{n}}\neq 0$ that
\[
  \abs{r_{n}} = \abs{r_{n}r_{-n}}^{1/2} \le \abs{\gm_{n}(q)} = \abs{\gm_{n}(q_{0})}.
\]
The same estimate holds true when $\abs{r_{n}} = 0$. In particular, it follows that $r\in \FL_{0}^{w,\sg,\infty}$.
To satisfy the smallness assumption of Proposition~\ref{abstract-diffeo} for $\n{r}_{w,\sg,\infty}$, we modify the weight $w$: let $w^{\ep}$ be the weight defined by $w^{\ep}_{n} = \min(w_{n},\e^{\ep \abs{n}})$, $n\in\Z$. Note that $w_{-n}^{\ep}=w_{n}^{\ep}$, $w_{n}^{\ep}\ge 1$, and $w_{\abs{n}} \le w_{\abs{n}+1}$ for any $n\in\Z$. For $\ep > 0$ sufficiently small, one verifies that $\log w_{n+m}^{\ep} \le \log w_{n}^{ep} + \log w_{m}^{\ep}$ for any $n,m\in\Z$. Thus for $\ep > 0$ sufficiently small, $w^{\ep}$ is submultiplicative and therefore $w^{\ep}\in\Ms$ -- see \cite[Lemma~9]{Poschel:2011iua} for details. Moreover,
\begin{align*}
  \n{r(q)}_{w^{\ep},\sg,\infty} 
  &\le \sup_{\abs{n} < M_{m,\sg}} \e^{\ep 2n}\lin{2n}^{\sg}\abs{q_{2n}} + 
  \sup_{\abs{n} \ge M_{m,\sg}} w_{2n}\w{2n}^{\sg}\abs{r_{n}}\\
  &\le \e^{2\ep M_{m,\sg}} \n{q}_{\sg,\infty} + \n{\gm(q_{0})}_{w,\sg,\infty}. 
\end{align*}
Choosing $\ep > 0$ sufficiently small, we conclude from~\eqref{m-choice} that
\[
  \n{r(q)}_{w^{\ep},\sg,\infty} \le 2\n{q}_{\sg,\infty} + \n{\gm(q_{0})}_{s,\infty} \le m/2.
\]
Thus Proposition~\ref{abstract-diffeo} applies yielding $q\in B_{m}^{w^{\ep},\sg,\infty}$.
By the definition of $w^{\ep}$, $w_{n} \neq w_{n}^{\ep}$ holds for at most finitely many $n$, hence
\[
  \n{q}_{w^{\ep},\sg,\infty} \le C_{\ep}\n{q}_{w,\sg,\infty},
\]
where the constant $C_{\ep}\ge 1$ depends only on $\ep$ and $M_{m,\sg}$, but is independent of $q$.
Since $\n{q}_{w,\sg,\infty} = \n{q}_{s,\infty}$, it thus follows that $\n{q}_{s,\infty} \le C_{\ep}m$ for all $q\in \Iso(q_{0})$.~\qed
\end{proof}

\begin{proof}[Proof of Theorem~\ref{inv}.]
Suppose $q$ is a real valued potential in $H_{0}^{-1}$ with gap lengths $\gm(q) \in \ell^{s,\infty}$ for some $-1/2 < s \le 0$. By the preceding lemma, $\Iso(q)$ is bounded in $\FL_{0}^{s,\infty}$. Moreover, by~\cite{Kappeler:CNzeErmy}, $\Iso(q)$ is compact in $\FL_{0}^{\sg,p}$ for any $2\le p <\infty$ and $-1/2\le \sg \le 0$ with $(s-\sg)p > 1$. Consequently, $\Iso(q)$ is weak* compact in $\FL_{0}^{s,\infty}$ by Lemma~\ref{weak-star}.~\qed
\end{proof}


\section{Birkhoff coordinates on $\FL_{0}^{s,\infty}$}
\label{section5}

The aim of this section is to prove Theorem~\ref{thm:bhf}. First let us recall the results on Birkhoff coordinates on $H_{0}^{-1}$ obtained in~\cite{Kappeler:2005fb}.

\begin{thm}[\cite{Kappeler:2005fb,Kappeler:2008fl}] 
\label{thm:bhf-H-1}
There exists a complex neighborhood $\Ws$ of $H_{0}^{-1}$ within $H_{0,\C}^{-1}$ and an analytic map $\Phi\colon \Ws\to h_{0,\C}^{-1/2}$, $q\mapsto (z_{n}(q))_{n\in\Z}$ with the following properties:
\begin{enumerate}[label=(\roman{*})]
  \item $\Phi$ is canonical in the sense that $\pbr{z_{n},z_{-n}} = \int_{0}^{1}\partial_{u}z_{n}\partial_{x}\partial_{u}z_{-n}\,\dx = \ii$ for all $n\ge 1$, whereas all other brackets between coordinate functions vanish.

  \item For any $s\ge-1$, the restriction $\Phi|_{H^s_0}$ is a map 
  $\Phi|_{H^s_0} \colon H^s_0\to h^{s+1/2}_0$ which is a bianalytic diffeomorphism.

  \item The \KdV Hamiltonian $\Hm\circ\Phi^{-1}$, expressed in the new variables, is defined on $h_{0}^{3/2}$ and depends on the action variables alone. In fact, it is a real analytic function of the actions on the positive quadrant $\ell_{+}^{3,1}(\N)$,
  \[
    \ell_{+}^{3,1}(\N) \defl \setdef{(I_{n})_{n\ge 1}}{I_{n}\ge 0\forall n\ge 1,\quad \sum_{n\ge 1} n^{3}I_{n} < \infty}.~\fish
  \]
  
\end{enumerate}
\end{thm}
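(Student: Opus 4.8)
The plan is to assemble the Birkhoff coordinates out of inverse spectral data of the Hill operator $L(q) = -\partial_{x}^{2}+q$, first on $L_{0}^{2}$ where this is classical (see e.g.\ \cite{Poschel:2011iua} and the references therein), and then to extend the construction to a complex neighbourhood $\Ws$ of $H_{0}^{-1}$ by analytic continuation, using the extended spectral theory and sharp asymptotic estimates of the type of Theorem~\ref{thm:spec-fw}. For $q\in H_{0,\C}^{s}$ with $s\ge -1$, the discriminant $\Delta(\lm,q) = \trace M(\lm,q)$ — with the monodromy $M$ read off the form associated with $L(q)$ when $s<0$ — is entire in $\lm$ and analytic in $q$; the periodic/antiperiodic eigenvalues $\lm_{n}^{\pm}$ are the roots of $\Delta^{2}-4$, the Dirichlet eigenvalues $\mu_{n}$ are the roots of the $(1,2)$-entry of $M$, and the collection $(\lm_{n}^{\pm},\mu_{n})_{n\ge 1}$ determines $q$.

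I would then introduce the action variables
\[
  I_{n}(q) = \frac{1}{\pi}\oint_{\Gamma_{n}} \lm\,\frac{\dot\Delta(\lm,q)}{\sqrt[c]{\Delta(\lm,q)^{2}-4}}\,\dlm,\qquad n\ge 1,
\]
where $\Gamma_{n}$ is a small circuit around the band $[\lm_{n}^{-},\lm_{n}^{+}]$ and $\sqrt[c]{\,\cdot\,}$ the canonical branch of the root. First check $I_{n}\ge 0$ for real $q$, with $I_{n}=0$ iff $\gm_{n}(q)=0$, then establish the asymptotics which for $q\in H_{0}^{-1}$ give $\sum_{n\ge 1}\lin{n}^{-1}I_{n}<\infty$, so that the would-be coordinate sequence lies in $h_{0,\C}^{-1/2}$, together with local boundedness and analyticity of $q\mapsto I_{n}(q)$ on $\Ws$. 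The angle variables $\theta_{n}$ are built from abelian integrals on the (regularised) hyperelliptic curve associated with $\Delta(\lm)^{2}-4$ anchored at the Dirichlet divisor $(\mu_{n},\pm)$, and one sets $z_{n} = \sqrt{2I_{n}}\,\e^{-\ii\theta_{n}}$, $z_{-n} = \sqrt{2I_{n}}\,\e^{\ii\theta_{n}}$ for $n\ge 1$. The delicate point is real-analyticity at closed gaps, where $\theta_{n}$ is not defined: one proves instead that the Cartesian components $x_{n} = \sqrt{2I_{n}}\cos\theta_{n}$, $y_{n} = \sqrt{2I_{n}}\sin\theta_{n}$ extend analytically — for instance via product formulas for $x_{n}\pm\ii y_{n}$ combined with the asymptotics above — so that $\Phi$ extends analytically to all of $\Ws$.

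It then remains to verify (i)--(iii). For (i), I would evaluate the Gardner (Poisson) brackets $\pbr{\lm_{n}^{\pm},\cdot}$ and $\pbr{\mu_{n},\cdot}$ from the standard bracket identities for the periodic and Dirichlet spectra and deduce $\pbr{z_{n},z_{-n}}=\ii$, all other brackets vanishing. For (ii), the core is to show that $\ddd_{q}\Phi\colon H_{0}^{s}\to h_{0}^{s+1/2}$ is a bounded linear isomorphism for every $q$: at $q=0$ it is, up to normalisation, the Fourier transform, and in general it is Fredholm of index $0$ and injective because the differentials of the spectral data are linearly independent; combined with global injectivity — the isospectral set of $q$ being the torus $\Tc_{\Phi(q)}$, which is mapped bijectively — and with surjectivity, obtained by solving the inverse spectral problem for arbitrary prescribed actions and angles, the inverse function theorem together with a connectedness argument shows that $\Phi|_{H_{0}^{s}}$ is a bianalytic diffeomorphism onto $h_{0}^{s+1/2}$; complexification then yields the analytic map $\Phi$ on $\Ws$. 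For (iii), the one-parameter flows $\theta_{n}\mapsto\theta_{n}+t$ are generated by $I_{n}$, and since the actions are prime integrals of \KdV, $\Hm\circ\Phi^{-1}$ is invariant under these rotations, hence a function of $(I_{n})_{n\ge 1}$ alone; its real-analyticity on $\ell_{+}^{3,1}(\N)$ follows from the trace formula expressing $\Hm$ through the periodic eigenvalues, together with their asymptotics on $H_{0}^{1}$.

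I expect the main obstacle to be the descent to $s=-1$. For $q\in H^{-1}$ the multiplication operator is unbounded, $L(q)$ is defined only via a form, the monodromy matrix is no longer classical, and eigenfunctions lie merely in $H^{1}$; preserving analyticity and, above all, the \emph{sharp} decay needed to land exactly in $h_{0}^{-1/2}$ — and in $h_{0}^{s+1/2}$ for every admissible $s$ — is the real work, and it is also where the invertibility of $\ddd_{q}\Phi$ must be re-established. The most economical route is to carry the whole construction out first on $H_{0}^{s}$ with $s\ge 0$, where it is classical, and then to extend by analytic continuation on $\Ws$ together with a limiting argument, the sharp asymptotic estimates playing the role they play in Section~\ref{s:spectral-theory}.
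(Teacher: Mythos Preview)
The paper does not prove this statement; Theorem~\ref{thm:bhf-H-1} is quoted from \cite{Kappeler:2005fb,Kappeler:2008fl} and used as a black box. So there is no ``paper's own proof'' to compare your proposal against --- the present paper only builds on this result to study the restriction of $\Phi$ to the Fourier--Lebesgue spaces $\FL_{0}^{s,\infty}$.

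That said, your outline is a fair summary of the strategy carried out in the cited references: actions as period integrals of $\lm\,\dot\Delta/\sqrt{\Delta^{2}-4}$, angles via abelian integrals anchored at the Dirichlet divisor, analyticity at closed gaps handled through the Cartesian components, canonicity from the bracket identities for periodic and Dirichlet spectra, and the diffeomorphism property via Fredholm index zero plus injectivity of $\ddd_{q}\Phi$ together with the inverse spectral characterisation of $\Iso(q)$. Your diagnosis of the main difficulty --- the descent to $s=-1$, where $L(q)$ is only defined as a form and the sharp decay into $h_{0}^{-1/2}$ must be established --- is exactly the point of \cite{Kappeler:2005fb}. One correction: the normalisation in your definition of $z_{\pm n}$ should be $\sqrt{I_{n}}$ rather than $\sqrt{2I_{n}}$ to match the bracket $\pbr{z_{n},z_{-n}}=\ii$ used here (cf.\ the formula for $\ddd_{0}\Phi$ in Lemma~\ref{Phi-ana}). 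If you want to actually write out a proof, you should consult \cite{Kappeler:2005fb} directly; the present paper is not the place for it.
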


We will also need the following result (cf. \cite[\S\,3]{Kappeler:2005fb}).

\begin{thm}[\cite{Kappeler:2005fb}]
\label{bm.H-1}
After  shrinking, if necessary,  the  complex neighborhood $\Wp$ of $H_0^{-1}$ in $H_{0,\C}^{-1}$  of Theorem~\ref{thm:bhf-H-1} the following holds:
\begin{enumerate}[label=(\roman{*})]
\item
Let $Z_{n} = \setdef{q\in H_{0}^{-1}}{\gm_{n}^{2}(q)\neq 0}$ for $n\ge 1$. The quotient $I_{n}/\gm_n^2$, defined on $H_0^{-1} \setminus Z_n$, extends analytically to $\Wp$ for any $n\ge 1$. Moreover, for any $\ep >0$ and any $q \in \Wp$ there exists $n_0 \ge 1$ and an open neighborhood $\Wp_{q}$ of $q$ in $\Wp$ so that
\begin{equation}
  \label{In-gmn}
  \abs*{ 8 n\pi  \frac{I_n}{\gm_n^2} - 1 } \le \ep,\qquad 
  \forall n\ge n_{0} \forall p\in \Wp_{q}.
\end{equation}

\item
The Birkhoff coordinates $(z_n)_{n \in \Z}$ are analytic as maps from $\Wp$ into $\C$ and fulfill
locally uniformly in $\Wp$ and uniformly for $n \ge 1$, the estimate
\[
  \abs{z_{\pm n}} = O \p*{ \frac{|\gm_n| + |\mu_n - \tau_n|}{\sqrt{n}} }.
\]

\item
For any $q\in \Wp$ and $n \ge 1$ one has $I_{n}(q) = 0$ if and only if $\gm_{n}(q) = 0$. In particular, $\Phi(0) = 0$.~\fish
\end{enumerate} 
\end{thm}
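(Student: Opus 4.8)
The plan is to extract all three items from the construction of the Birkhoff map in \cite{Kappeler:2005fb} (together with \cite{Kappeler:2008fl}) and the spectral asymptotics $\lm_n^\pm = n^2\pi^2 + n\ell_n^2$, $\mu_n = n^2\pi^2 + n\ell_n^2$ recalled in Section~\ref{s:spectral-theory}. These statements are essentially those of \cite[\S 3]{Kappeler:2005fb}, so the task is to reassemble the relevant estimates and to check that they persist on a (possibly shrunk) complex neighborhood $\Wp$ of $H_0^{-1}$ in $H_{0,\C}^{-1}$.

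I would begin with item (iii), which is the most structural. On the real space $H_0^{-1}$ one has $I_n = z_n z_{-n} = \abs{z_n}^2 \ge 0$, and the $n$-th action of the Hill operator detects the $n$-th periodic gap: $I_n(q) = 0$ if and only if $\gm_n(q) = 0$. This rests on the period representation of $I_n$ used below, whose integrand over the $n$-th band cannot vanish identically when $\gm_n(q)\neq 0$. Since by Theorem~\ref{thm:bhf-H-1} both $q\mapsto z_n(q)$ and $q\mapsto \gm_n^2(q)$ are analytic on $\Wp$, their zero loci coincide after extension to $\Wp$ (shrinking $\Wp$ if necessary so the expansions below converge). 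Evaluating at $q=0$, where every gap is closed, gives $z_n(0) = 0$ for all $n$, i.e. $\Phi(0) = 0$.

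For item (i), the key input is the representation of $I_n$ as a period integral over the $n$-th gap, of the schematic form $I_n = \frac{1}{\pi}\int_{\lm_n^-}^{\lm_n^+} (\lm - \text{(midpoint-type term)})\,\omega_n(\lm)\,\dlm$ with a density $\omega_n$ built from the discriminant $\Dl(\lm,q)$ and $\dot\Dl(\lm,q)$. Rescaling $\lm = \tau_n + \tfrac{\gm_n}{2}t$, $t\in[-1,1]$, exhibits $I_n$ as $\gm_n^2$ times an integral whose integrand, by the asymptotics of $\lm_n^\pm$ and the behavior of $\Dl$ near the double point $n^2\pi^2$, converges (locally uniformly on a neighborhood $\Wp_q$) to a universal profile with integral $\tfrac{1}{8n\pi}(1+o(1))$; this gives $8n\pi\,I_n/\gm_n^2\to 1$ with the stated uniformity. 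The analytic extension of $I_n/\gm_n^2$ across $\{\gm_n = 0\}$ is built into this computation: after the rescaling the factor $\gm_n^2 = (\lm_n^+ - \lm_n^-)^2$ is explicit and, being a symmetric function of the two branch points, is analytic on $\Wp$, so the remaining factor is an analytic function of $q$. The delicate point of the whole theorem lies here: pinning down the constant $8n\pi$ and the uniformity of the limit requires controlling $\Dl(\lm,q)$ and $\dot\Dl(\lm,q)$ uniformly for $\lm$ near $n^2\pi^2$ and $q$ in a neighborhood, and checking that the branch-point expansions survive complexification.

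Finally, for item (ii) I would combine (i) with the explicit formulas for $z_{\pm n}$ from \cite{Kappeler:2005fb}, in which $z_{\pm n}$ is, up to an $O(1/\sqrt n)$ normalization, a contour integral around the $n$-th gap of a density whose size is controlled by the two quantities $\gm_n$ and $\mu_n - \tau_n$. Bounding that density by $O(\abs{\gm_n}+\abs{\mu_n-\tau_n})$ and the rescaled contour length by $O(1)$ yields $\abs{z_{\pm n}} = O\bigl((\abs{\gm_n}+\abs{\mu_n-\tau_n})/\sqrt n\bigr)$, locally uniformly in $\Wp$ and uniformly in $n\ge 1$. On the real axis this also follows from $\abs{z_{\pm n}}^2 = I_n$ and item (i), but the stated form is the one needed for complex potentials, where $z_n\neq\ob{z_{-n}}$ and the individual sizes of $z_n$, $z_{-n}$ are governed by the asymmetry measured by $\mu_n - \tau_n$. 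Once item (i) and the formulas of \cite{Kappeler:2005fb} are in place, items (ii) and (iii) are comparatively routine.
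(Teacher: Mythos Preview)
The paper does not prove this theorem: it is stated as a result quoted from \cite{Kappeler:2005fb} (cf.\ the remark ``We will also need the following result (cf.\ \cite[\S\,3]{Kappeler:2005fb})'' preceding it), and no proof is given in the text. Your proposal correctly identifies this and sketches how the three items are extracted from the construction in \cite{Kappeler:2005fb,Kappeler:2008fl}; as a summary of that source your outline is broadly accurate, though necessarily informal since the actual work (the period-integral representation of $I_n$, the discriminant asymptotics near $n^2\pi^2$, and the contour formulas for $z_{\pm n}$) lives in the cited reference rather than here.
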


\subsection{Birkhoff coordinates}

In~\cite{Kappeler:CNzeErmy} based on the results of~\cite{Kappeler:2008fl}, the restrictions of the Birkhoff map
\[
  \Phi\colon H_{0}^{-1}\to h_{0}^{-1/2},\qquad q\mapsto (z_{n}(q))_{n\in\Z},\quad z_{0}(q) = 0,
\]
to the Fourier Lebesgue spaces $\FL_{0}^{s,p}$, $-1/2\le s\le 0$, $2\le p <\infty$, are studied. It turns out that the arguments developed in the papers~\cite{Kappeler:CNzeErmy,Kappeler:2009uk} can be adapted to prove Theorem~\ref{thm:bhf}. As a first step we extend the results in \cite{Kappeler:CNzeErmy} for $\FL_{0}^{s,p}$, $-1/2\le s\le 0$, $2\le p <\infty$, to the case $p=\infty$. More precisely, we prove

\begin{lem}
\label{Phi-ana}
For any $-1/2< s \le 0$
\[
  \Phi_{s,\infty} \equiv \Phi\bigg|_{\FL_{0}^{s,\infty}}\colon \FL_{0}^{s,\infty}\to \ell_{0}^{s+1/2,\infty},
  \qquad q \mapsto (z_{n}(q))_{n\in\Z},
\]
is real analytic and extends analytically to an open neighborhood $\Wp_{s,\infty}$ of $\FL_{0}^{s,\infty}$ in $\FL_{0,\C}^{s,\infty}$. Its Jacobian $\ddd_{0}\Phi_{s,\infty}$ at $q = 0$ is the weighted Fourier transform 
\[
  \ddd_{0}\Phi_{s,\infty} \colon \FL_{0}^{s,\infty}\to \ell_{0}^{s+1/2,\infty},\qquad
  f\mapsto \left(\frac{1}{\sqrt{2\pi\max(\abs{n},1)}} \spii{f,e_{2n}} \right)_{n \in \Z}
\]
with inverse given by
\[
  (\ddd_{0}\Phi_{s,\infty})^{-1} \colon \ell_{0}^{s+1/2,\infty} \to \FL_{0}^{s,\infty},\quad
  (z_{n})_{n\in\Z}\mapsto \sum_{n\in\Z} \sqrt{2\pi \abs{n}}z_{n}e_{2n}.
\]
In particular, $\Phi_{s,\infty}$ is a local diffeomorphism at $q = 0$.~\fish
\end{lem}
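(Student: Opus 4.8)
\emph{Overview and set-up.} The plan is to derive Lemma~\ref{Phi-ana} from the properties of the Birkhoff map on $H_0^{-1}$ recorded in Theorem~\ref{thm:bhf-H-1} and Theorem~\ref{bm.H-1}, together with the gap and Dirichlet estimates of Theorem~\ref{thm:spec-fw}, adapting the strategy of \cite{Kappeler:CNzeErmy,Kappeler:2009uk} from the range $2\le p<\infty$ to $p=\infty$. Since $\FL_{0,\C}^{s,\infty}$ embeds continuously into $H_{0,\C}^{-1}$, the preimage $\Wp_{s,\infty}$ of the complex neighborhood $\Wp$ of $H_0^{-1}$ from Theorem~\ref{bm.H-1} under this inclusion is open in $\FL_{0,\C}^{s,\infty}$ and contains $\FL_0^{s,\infty}$; on $\Wp_{s,\infty}$ the map $\Phi$ is analytic with values in $h_{0,\C}^{-1/2}$, being the composition of this inclusion with the analytic map of Theorem~\ref{thm:bhf-H-1}. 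To locate its range, I would apply Theorem~\ref{thm:spec-fw} with the trivial weight $w=\o$: the maps $q\mapsto(\gm_n(q))_{n\ge1}$ and $q\mapsto(\tau_n-\mu_n(q))_{n\ge1}$ are locally bounded from $\Wp_{s,\infty}$ into $\ell_\C^{s,\infty}(\N)$. Combining this with the asymptotics $\abs{z_{\pm n}}=O\bigl((\abs{\gm_n}+\abs{\mu_n-\tau_n})/\sqrt n\bigr)$ of Theorem~\ref{bm.H-1}(ii), which holds locally uniformly on $\Wp$, and with $\w n^{s+1/2}/\sqrt n\lesssim\w n^{s}$, shows that $\Phi$ maps $\Wp_{s,\infty}$ into $\ell_{0,\C}^{s+1/2,\infty}$ (recall $z_0\equiv0$ by Theorem~\ref{bm.H-1}(iii)) and is locally bounded there.

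\emph{From $h^{-1/2}$- to $\ell^{s+1/2,\infty}$-analyticity.} Because $s>-1/2$, there is a continuous injection $\ell_{0,\C}^{s+1/2,\infty}\opento h_{0,\C}^{-1/2}$, and for every $n$ the coordinate functional $z\mapsto z_n$ is bounded, with norm $\w n^{-(s+1/2)}$, on $\ell_{0,\C}^{s+1/2,\infty}$. Fix $q_0\in\Wp_{s,\infty}$ and a ball $B$ of radius $R$ around $q_0$ inside $\Wp_{s,\infty}$ with $\sup_{B}\n{\Phi(\cdot)}_{s+1/2,\infty}\le M$, and consider the (norm-convergent) $h_{0,\C}^{-1/2}$-valued Taylor expansion $\Phi(q_0+h)=\sum_{k\ge0}P_k(h)$, where the $k$-homogeneous term $P_k$ is given by a Cauchy integral over $\zeta\mapsto q_0+\zeta h$. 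Reading off this Cauchy integral componentwise and using the bound on the coordinate functionals yields $\n{P_k(h)}_{s+1/2,\infty}\le M(\n h_{s,\infty}/R)^k$; hence $\sum_k P_k(h)$ converges in $\ell_{0,\C}^{s+1/2,\infty}$ for $\n h_{s,\infty}<R$, and since it already converges to $\Phi(q_0+h)$ in $h_{0,\C}^{-1/2}$, and therefore componentwise, it represents $\Phi_{s,\infty}$ near $q_0$ in $\ell_{0,\C}^{s+1/2,\infty}$. Thus $\Phi_{s,\infty}$ is analytic on $\Wp_{s,\infty}$, and real analytic on $\FL_0^{s,\infty}$ since it takes real values there.

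\emph{Jacobian at the origin.} By Theorem~\ref{bm.H-1}(iii) we have $\Phi(0)=0$, so by the chain rule $\ddd_0\Phi_{s,\infty}$ is the restriction to $\FL_0^{s,\infty}$ of $\ddd_0\Phi\colon H_0^{-1}\to h_0^{-1/2}$, which by \cite{Kappeler:2005fb} is the weighted Fourier transform $f\mapsto\bigl(\tfrac1{\sqrt{2\pi\max(\abs n,1)}}\spii{f,e_{2n}}\bigr)_{n\in\Z}$. Using $\w{2n}\asymp\w n$, $\max(\abs n,1)^{1/2}\asymp\w n^{1/2}$ and $f_{2n+1}=0$ one checks directly that this map restricts to a bounded linear isomorphism $\FL_0^{s,\infty}\to\ell_0^{s+1/2,\infty}$ with inverse $(z_n)_{n\in\Z}\mapsto\sum_n\sqrt{2\pi\abs n}\,z_n e_{2n}$; hence $\ddd_0\Phi_{s,\infty}$ is a linear isomorphism and $\Phi_{s,\infty}$ is a local diffeomorphism at $0$ by the inverse function theorem.

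The main obstacle is the analyticity step: $\ell_{0,\C}^{s+1/2,\infty}$ is a nonseparable dual space in which the coordinate functionals do not span a norm-dense part of the dual, so analyticity cannot be deduced merely from weak analyticity against coordinates; the componentwise Cauchy estimate above — which is precisely where the hypothesis $s>-1/2$ enters, through $\ell_{0,\C}^{s+1/2,\infty}\opento h_{0,\C}^{-1/2}$ — is what closes the gap, and establishing the underlying local boundedness relies on the full strength of Theorem~\ref{thm:spec-fw}.
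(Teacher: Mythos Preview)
Your proof is correct and follows essentially the same route as the paper: both use the embedding $\FL_{0,\C}^{s,\infty}\opento H_{0,\C}^{-1}$, the asymptotics of Theorem~\ref{bm.H-1}(ii), and the gap/Dirichlet estimates of Theorem~\ref{thm:spec-fw} to get local boundedness into $\ell_{0,\C}^{s+1/2,\infty}$, then conclude analyticity and identify the Jacobian at $0$. The only difference is that the paper invokes \cite[Theorem~A.3]{Kappeler:2003up} for the analyticity step and \cite[Theorem~9.7]{Kappeler:2003up} for the Jacobian formula, whereas you unpack the former via explicit componentwise Cauchy estimates --- which is exactly what that black-box result amounts to here, and has the merit of making clear why the nonseparability of $\ell_{0,\C}^{s+1/2,\infty}$ does not obstruct the argument.
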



\begin{proof}
The coordinate functions $z_{n}(q)$ are analytic functions on the complex neighborhood $\Wp\subset H_{0,\C}^{-1}$ of $H_{0}^{-1}$ of Theorem~\ref{bm.H-1}.
Since for any $-1/2 < s \le 0$, $\FL_{0,\C}^{s,\infty} \opento H_{0,\C}^{-1}$, it follows that their restrictions to $\Ws_{s,\infty} = \Ws\cap \FL_{0,\C}^{s,\infty}$ are analytic as well. Furthermore,
\[
  z_{\pm n}(q) = O\p*{\frac{\abs{\gm_{n}(q)} + \abs{\mu_{n}(q)-\tau_{n}(q)}}{\sqrt{n}}}
\]
locally uniformly on $\Ws$ and uniformly in $n\ge 1$. By the asymptotics of the periodic and Dirichlet eigenvalues of Theorems~\ref{thm:spec-fw}, $\Phi_{s,\infty}$ maps the complex neighborhood $\Ws_{s,\infty} \defl \Ws\cap \FL_{0,\C}^{s,\infty}$ of $\FL_{0}^{s,\infty}$ into the space $\ell_{0,\C}^{s+1/2,\infty}$ and is locally bounded. Using \cite[Theorem~A.3]{Kappeler:2003up}, one sees that $\Phi_{s,\infty}$ is analytic. The formulas for $\ddd_{0}\Phi_{s,\infty}$ and its inverse follow from \cite[Theorem 9.7]{Kappeler:2003up} by continuity.~\qed
\end{proof}

In a second step, following arguments used in \cite{Kappeler:2009uk}, we prove that $\Phi_{s,\infty}$ is onto.

\begin{lem}
\label{Phi-onto}
For any $-1/2< s \le 0$, the map $\Phi_{s,\infty}\colon \FL_{0}^{s,\infty}\to \ell_{0}^{s+1/2,\infty}$ is onto.~\fish
\end{lem}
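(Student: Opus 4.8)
The plan is to combine the bijectivity of the Birkhoff map on $H_0^{-1}$ with the spectral characterization of $\FL_0^{s,\infty}$ supplied by Theorem~\ref{inv}. First I would record the elementary continuous embedding $\ell_0^{s+1/2,\infty}\opento h_0^{-1/2}$: if $\n{z}_{s+1/2,\infty}<\infty$, then, using $-1/2<s\le0$,
\[
  \sum_{n\in\Z}\w{n}^{-1}\abs{z_n}^2 \le \n{z}_{s+1/2,\infty}^2\sum_{n\in\Z}\w{n}^{-2(1+s)} < \infty .
\]
Hence every $z\in\ell_0^{s+1/2,\infty}$ already lies in $h_0^{-1/2}$, and by Theorem~\ref{thm:bhf-H-1}~(ii) (applied with $s=-1$) there is a unique $q=\Phi^{-1}(z)\in H_0^{-1}$ with $\Phi(q)=z$. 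The task then reduces to showing $q\in\FL_0^{s,\infty}$: once this is established, $\Phi_{s,\infty}(q)=\Phi(q)=z$, so $z$ lies in the range of $\Phi_{s,\infty}$, and $z$ was arbitrary.

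To see that $q\in\FL_0^{s,\infty}$ I would verify the hypothesis of Theorem~\ref{inv}~(i), i.e. $\gm(q)\in\ell_\R^{s,\infty}$. Since $q$ is real valued, $I_n(q)=z_nz_{-n}=\abs{z_n}^2$ for $n\ge1$. Fix $\ep\in(0,1)$; by Theorem~\ref{bm.H-1}~(i) there is $n_0=n_0(q)\ge1$ so that $\abs{8n\pi I_n/\gm_n^2-1}\le\ep$ for all $n\ge n_0$ with $\gm_n(q)\ne0$, whence
\[
  \abs{\gm_n(q)} \le \sqrt{\tfrac{8\pi n}{1-\ep}}\;\abs{z_n},\qquad n\ge n_0 .
\]
The same bound holds trivially when $\gm_n(q)=0$, since then $I_n=0$ by Theorem~\ref{bm.H-1}~(iii), hence $z_n=0$. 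Multiplying by $\w{n}^s$ and using $\abs{z_n}\le\w{n}^{-(s+1/2)}\n{z}_{s+1/2,\infty}$ gives $\w{n}^s\abs{\gm_n(q)}\le C\n{z}_{s+1/2,\infty}$ for all $n\ge n_0$, while the finitely many remaining indices contribute only a finite amount; thus $\gm(q)\in\ell_\R^{s,\infty}$. Theorem~\ref{inv}~(i) then yields $q\in\FL_0^{s,\infty}$, which finishes the proof.

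I do not expect a genuine obstacle here: the real content has been front-loaded into the asymptotics of Theorem~\ref{thm:spec-fw} (via Theorem~\ref{inv}) and into the quotient estimate of Theorem~\ref{bm.H-1}~(i), so what remains is bookkeeping. The only two points that need a moment's attention are that the embedding $\ell_0^{s+1/2,\infty}\opento h_0^{-1/2}$ uses $s>-1/2$ in an essential way, and that in Theorem~\ref{bm.H-1}~(i) the threshold $n_0$ and the neighbourhood depend on $q$ — which is harmless, because the estimate is needed only at the single fixed potential $q=\Phi^{-1}(z)$ and only in the asymptotic regime $n\to\infty$.
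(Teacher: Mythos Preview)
Your proposal is correct and follows essentially the same route as the paper's own proof: embed $\ell_0^{s+1/2,\infty}$ into $h_0^{-1/2}$, pull $z$ back via the $H_0^{-1}$ Birkhoff map, use the quotient estimate~\eqref{In-gmn} to deduce $\gm(q)\in\ell^{s,\infty}$, and then invoke Theorem~\ref{inv}. You are in fact slightly more explicit than the paper, spelling out the embedding computation and the degenerate case $\gm_n=0$ via Theorem~\ref{bm.H-1}~(iii).
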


\begin{proof}
Given any $z\in \ell_{0}^{s+1/2,\infty} \subset h_{0}^{-1/2}$, there exists $q\in H_{0}^{-1}$ so that $\Phi(q) = z$. Moreover, by Theorem~\ref{bm.H-1} (i) we have for all $n$ sufficiently large
\[
  \abs*{\frac{8n\pi I_{n}}{\gm_{n}^{2}}} \ge \frac{1}{2}.
\]
Since $I_{n} = z_{n}z_{-n}$ and $z\in \ell_{0}^{s+1/2,\infty}$, this implies $\gm(q)\in \ell^{s,\infty}(\N)$.  Using Theorem~\ref{inv}, we conclude that $q\in \FL_{0}^{s,\infty}$. Since by definition $\Phi_{s,\infty}$ is the restriction of the Birkhoff map $\Phi$ to $\FL_{0}^{s,\infty}$, we conclude that
\[
  \Phi_{s,\infty}(q) = z.
\]
This completes the proof.~\qed
\end{proof}

\subsection{Isospectral sets}

Recall that for any $z\in h_{0}^{-1/2}$, the torus $\Tc_{z}\subset h_{0}^{-1/2}$ was introduced in~\eqref{torus}.

\begin{lem}
\label{iso-fl-s-infty}
Suppose $q\in\FL_{0}^{s,\infty}$ with $-1/2 < s \le 0$.
\begin{enumerate}[label=(\roman{*})]
\item $\Iso(q)$ is bounded  in $\FL_{0}^{s,\infty}$.
\item
$\Phi_{s,\infty}(\Iso(q)) = \Tc_{\Phi(q)}$.
\item
If $\Phi(q)\notin c_{0}^{s} = \setdef{z\in \ell_{0}^{s,\infty}}{\w{n}^{s}z_{n}\to 0}$, then $\Phi(\Iso(q))$ is not compact in $\FL_{0}^{s,\infty}$.~\fish
\end{enumerate}
\end{lem}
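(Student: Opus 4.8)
All three assertions follow from machinery already in place, only~(iii) calling for a short new construction. \emph{Part (i).} Since $q\in\FL_0^{s,\infty}$, Theorem~\ref{thm:spec-fw}(i) with the trivial weight $w=\o$ gives $\gm(q)\in\ell^{s,\infty}$. Every $\tilde q\in\Iso(q)$ has the same periodic spectrum as $q$, hence the same gap lengths, so $\gm(\tilde q)=\gm(q)\in\ell^{s,\infty}$. Thus Lemma~\ref{iso-bounded}, applied to $q_0=q$, says precisely that $\Iso(q)$ is a $\n{\cdot}_{s,\infty}$-norm bounded subset of $\FL_0^{s,\infty}$. In particular $\Phi_{s,\infty}$ is defined on all of $\Iso(q)$ and there coincides with the Birkhoff map $\Phi$ of Theorem~\ref{bm.H-1}.

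\emph{Part (ii).} Recall from \cite{Kappeler:2005fb} (cf.\ the remark after Theorem~\ref{thm:bhf}) the set identity $\Phi(\Iso(q))=\Tc_{\Phi(q)}$ inside $h_0^{-1/2}$, with $\Tc_{\Phi(q)}=\setdef{\tilde z\in h_0^{-1/2}}{\tilde z_k\tilde z_{-k}=\Phi(q)_k\Phi(q)_{-k}\;\forall k\ge1}$. Since $-1/2<s\le0$ there is the continuous embedding $\ell_0^{s+1/2,\infty}\opento h_0^{-1/2}$, and the defining torus relation is the same in both spaces, so the torus $\Tc_{\Phi(q)}$ computed in $\ell_0^{s+1/2,\infty}$ equals $\Tc_{\Phi(q)}\cap\ell_0^{s+1/2,\infty}$. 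Both inclusions now follow from~(i): if $\tilde q\in\Iso(q)$, then $\tilde q\in\FL_0^{s,\infty}$ by~(i), hence $\Phi_{s,\infty}(\tilde q)=\Phi(\tilde q)\in\ell_0^{s+1/2,\infty}$ by Lemma~\ref{Phi-ana}, while $\Phi(\tilde q)\in\Tc_{\Phi(q)}$ by the $H_0^{-1}$ identity; conversely any $z$ in the $\ell_0^{s+1/2,\infty}$-torus lies, through the embedding, in the $h_0^{-1/2}$-torus, so $z=\Phi(\tilde q)$ for some $\tilde q\in\Iso(q)$, and~(i) forces $\tilde q\in\FL_0^{s,\infty}$, i.e.\ $z=\Phi_{s,\infty}(\tilde q)$. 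Hence $\Phi_{s,\infty}(\Iso(q))=\Tc_{\Phi(q)}$.

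\emph{Part (iii).} Write $\Phi(q)=(z_n)_{n\in\Z}\in\ell_0^{s+1/2,\infty}$, so $z_{-n}=\ob{z_n}$ and $I_n=z_nz_{-n}=\abs{z_n}^2$. The hypothesis amounts to saying that the weighted coordinates $\w n^{s+1/2}z_n$ do not tend to $0$, so fix $\dl>0$ and integers $n_1<n_2<\dotsb$ with $\w{n_j}^{s+1/2}\abs{z_{n_j}}\ge\dl$ for all $j$. For each $m\ge1$ define $w^{(m)}$ by flipping the sign of the conjugate pair at $\pm n_m$, i.e.\ $w^{(m)}_k=z_k$ for $\abs k\neq n_m$ and $w^{(m)}_{\pm n_m}=-z_{\pm n_m}$. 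Then $w^{(m)}_0=0$, $w^{(m)}_{-k}=\ob{w^{(m)}_k}$, $\n{w^{(m)}}_{s+1/2,\infty}=\n{\Phi(q)}_{s+1/2,\infty}$, and $w^{(m)}_kw^{(m)}_{-k}=z_kz_{-k}$ for all $k\ge1$, so $w^{(m)}\in\Tc_{\Phi(q)}$. For $m\neq m'$ the two sequences differ at the index $n_m$ by $\abs{w^{(m)}_{n_m}-w^{(m')}_{n_m}}=2\abs{z_{n_m}}$, whence $\n{w^{(m)}-w^{(m')}}_{s+1/2,\infty}\ge2\w{n_m}^{s+1/2}\abs{z_{n_m}}\ge2\dl$. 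So $(w^{(m)})_{m\ge1}$ is a $2\dl$-separated sequence in $\Tc_{\Phi(q)}$, hence $\Tc_{\Phi(q)}$ is not totally bounded and therefore not compact in $\ell_0^{s+1/2,\infty}$. By~(ii) this set equals $\Phi_{s,\infty}(\Iso(q))=\Phi(\Iso(q))$; moreover $\Phi_{s,\infty}$ is continuous by Lemma~\ref{Phi-ana}, so compactness of $\Iso(q)$ in $\FL_0^{s,\infty}$ would make $\Phi(\Iso(q))$ compact, a contradiction.

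I expect no serious obstacle: the proof is essentially bookkeeping once Theorem~\ref{thm:spec-fw}, Lemma~\ref{iso-bounded}, and the $H_0^{-1}$ Birkhoff theory are granted. The two points that need care are, in~(ii), that transferring the torus identity from $h_0^{-1/2}$ down to $\ell_0^{s+1/2,\infty}$ uses exactly the embedding $\ell_0^{s+1/2,\infty}\opento h_0^{-1/2}$, available only because $s>-1/2$; and, in~(iii), that a perturbation of a torus point must preserve the reality constraint $\tilde z_{-k}=\ob{\tilde z_k}$, which is why one flips the whole conjugate pair at $\pm n_m$ rather than a single coordinate. The latter is the step one is most likely to fumble, though it is harmless once noticed.
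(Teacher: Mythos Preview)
Your proof is correct and follows the same route as the paper: (i) via Theorem~\ref{inv}/Lemma~\ref{iso-bounded}, (ii) via the $H_0^{-1}$ identity $\Phi(\Iso(q))=\Tc_{\Phi(q)}$ from~\cite{Kappeler:2005fb}, and (iii) via the same sign-flip construction of a separated sequence on the torus. Your version is more detailed---in particular you spell out the embedding argument in~(ii), you correctly read the hypothesis in~(iii) with weight $s+1/2$ (the paper's $c_0^s$ is a slip, since $\Phi(q)\in\ell_0^{s+1/2,\infty}$ would make the literal condition vacuous), and you add the continuity step transferring non-compactness back to $\Iso(q)$---but none of this departs from the paper's argument.
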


\begin{proof}
(i) follows from Theorem~\ref{inv}.
According to~\cite{Kappeler:2005fb} the identity $\Phi(\Iso(q)) = \Tc_{\Phi(q)}$ holds for any $q\in H_{0}^{-1}$ and thus implies (ii). Suppose $q\in \FL_{0}^{s,\infty}$ is such that $z=\Phi(q) \notin c_{0}^{s}$. Then there exists $\ep > 0$ and a subsequence $(\nu_{n})_{n\ge 1}\subset \N$ with $\nu_{n}\to \infty$ so that
\[
  \lin{\nu_{n}}^{s}\abs{z_{\nu_{n}}}\ge \ep,\qquad \forall n\ge 1.
\]
For every $m\in\N$ define $z^{(m)}\in \Tc_{z}$ by setting $z_{0}^{(m)} = 0$ and for any $k\ge 1$, $z_{-k} = z_{k}^{(m)}$ and
\[
  z_{k}^{(m)} = \begin{cases}
  -z_{k}, & k = \nu_{m},\\
  z_{k}, & \text{otherwise}.
  \end{cases}
\]
It follows that $\n{z^{(m_{1})}-z^{(m_{2})}}_{s,\infty} \ge 2\ep$ for all $m_{1}\neq m_{2}$, hence $\Tc_{z}$ is not compact.~\qed
\end{proof}

\subsection{Weak* topology}

In this subsection we establish various properties of $\Phi_{s,\infty}$ related to the weak* topology.

\begin{lem}
\label{Phi-bounded}
For any $-1/2 < s \le 0$, the map $\Phi_{s,\infty}\colon \FL_{0}^{s,\infty}\to \ell_{0}^{s+1/2,\infty}$ is $\n{\cdot}_{s,\infty}$-norm bounded.~\fish
\end{lem}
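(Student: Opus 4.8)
The plan is to fix $m\ge 1$ and bound $\n{\Phi_{s,\infty}(q)}_{s+1/2,\infty}$ uniformly over the ball $\setdef{q\in\FL_{0}^{s,\infty}}{\n{q}_{s,\infty}\le m}$, combining the asymptotics of the Birkhoff coordinates from Theorem~\ref{bm.H-1}(ii) with the spectral estimates of Theorem~\ref{thm:spec-fw}. The one delicate point is that the $O$-constant in Theorem~\ref{bm.H-1}(ii) is only \emph{locally} uniform on the complex neighbourhood $\Wp\subset H_{0,\C}^{-1}$, so it must first be promoted to a constant that is uniform on the $\n{\cdot}_{s,\infty}$-ball.

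First I would note that the ball $B=\setdef{q\in\FL_{0}^{s,\infty}}{\n{q}_{s,\infty}\le m}$ is bounded in $\FL_{0,\C}^{s,\infty}$, which (for $s>-1/2$) embeds compactly into $H_{0,\C}^{-1}$; hence the closure $K$ of $B$ in $H_{0}^{-1}$ is a compact subset of $H_{0}^{-1}\subset\Wp$. Covering $K$ by finitely many of the open sets on which the estimate of Theorem~\ref{bm.H-1}(ii) holds with a fixed constant and taking the maximum yields $C_{1}=C_{1}(m)$ with
\[
  \abs{z_{\pm n}(q)}\le C_{1}\,\frac{\abs{\gm_{n}(q)}+\abs{\tau_{n}(q)-\mu_{n}(q)}}{\sqrt n},
  \qquad \forall\,q\in B,\ n\ge 1 .
\]
Next, applying Theorem~\ref{thm:spec-fw} with the trivial weight $w=\o$ — using that the thresholds of Corollary~\ref{Q-soln} can be taken uniform on $\n{\cdot}_{s,\infty}$-bounded sets, while the finitely many low-frequency quantities $\gm_{n}$ and $\tau_{n}-\mu_{n}$, being continuous on the $H_{0}^{-1}$-compact set $K$, are bounded there — I obtain $C_{2}=C_{2}(m)$ with $\sup_{n\ge 1}\w{n}^{s}\bigl(\abs{\gm_{n}(q)}+\abs{\tau_{n}(q)-\mu_{n}(q)}\bigr)\le C_{2}$ for all $q\in B$.

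Finally I would combine the two bounds: since $z_{0}(q)=0$ and $\w{n}^{1/2}/\sqrt n=\sqrt{1+1/n}\le\sqrt 2$ for $n\ge 1$,
\[
  \w{n}^{s+1/2}\abs{z_{\pm n}(q)}
  \le \sqrt 2\,C_{1}\,\w{n}^{s}\bigl(\abs{\gm_{n}(q)}+\abs{\tau_{n}(q)-\mu_{n}(q)}\bigr)
  \le \sqrt 2\,C_{1}C_{2},
\]
so $\n{\Phi_{s,\infty}(q)}_{s+1/2,\infty}\le\sqrt 2\,C_{1}C_{2}$ for every $q\in B$, which is the claim. The main obstacle is exactly the uniformization of the constant in Theorem~\ref{bm.H-1}(ii) described in the second paragraph; the rest is a routine assembly of estimates already in place, the crucial structural input being the compact embedding $\FL_{0,\C}^{s,\infty}\opento H_{0,\C}^{-1}$ for $-1/2<s\le 0$.
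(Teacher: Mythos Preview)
Your proof is correct, and the overall strategy matches the paper's: exploit the compact embedding $\FL_{0}^{s,\infty}\opento H_{0}^{-1}$ to upgrade locally uniform constants to constants uniform on the ball, bound the high modes via spectral quantities, and handle the finitely many low modes by continuity on a compact set.

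The one genuine difference is which item of Theorem~\ref{bm.H-1} you invoke. You use part~(ii), the pointwise estimate $\abs{z_{\pm n}}\lesssim(\abs{\gm_n}+\abs{\mu_n-\tau_n})/\sqrt{n}$, and therefore need both halves of Theorem~\ref{thm:spec-fw}. The paper instead uses part~(i), the action--gap relation $8n\pi I_n/\gm_n^2\le 2$ for $n$ large, which gives $\abs{z_n}\le\abs{\gm_n}/\sqrt{4n\pi}$ directly and so only requires the gap-length estimate of Lemma~\ref{gm-est}; the Dirichlet asymptotics never enter. Your route is exactly the mechanism already used in the proof of Lemma~\ref{Phi-ana}, promoted from local to ball-uniform via compactness in $H_0^{-1}$; the paper's route is slightly more economical in that it bypasses Theorem~\ref{thm:spec-fw}(ii), but both arguments rest on the same compactness principle and are equally valid.
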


\begin{proof}
It suffices to consider the case of the ball $B_{m}^{s,\infty}\subset \FL_{0}^{s,\infty}$ of radius $m\ge 1$. Since $B_{m}^{s,\infty}$ embeds compactly into $H_{0}^{-1}$, \eqref{In-gmn} implies that one can choose $N\ge N_{m,s}$ such that for all $q\in B_{m}^{s,\infty}$,
\[
  \frac{8n\pi I_{n}}{\gm_{n}^{2}} \le 2,\qquad n\ge N.
\]
Since $\abs{z_{n}(q)}^{2} = I_{n}$, we conclude with Lemma~\ref{gm-est} that
\begin{align*}
  \n{T_{N}\Phi(q)}_{s+1/2,\infty}
  &=
  \sup_{\abs{n}\ge N} \lin{n}^{s+1/2}\abs{z_{n}}\\
  &\le
  \sup_{\abs{n}\ge N} \lin{n}^{s}\abs{\gm_{n}}\\
  &\le
   4\n{T_{N}q}_{s,\infty} + \frac{16c_{s}}{N^{1/2-\abs{s}}}\n{q}_{s,\infty}^{2},
  \qquad q\in B_{m}^{s,\infty}.
\end{align*}
Moreover, each of the finitely many remaining coordinate functions $z_{n}(q)$, $\abs{n} < N$, is real analytic on $H_{0}^{-1}$ and hence bounded on the compact set $B_{m}^{s,\infty}$, which proves the claim.~\qed
\end{proof}

\begin{lem}
\label{Phi-wst-sequences}
For any $-1/2 < s \le 0$, the map $\Phi_{s,\infty}\colon \FL_{0}^{s,\infty}\to \ell_{0}^{s+1/2,\infty}$ maps weak* convergent sequences to weak* convergent sequences.~\fish
\end{lem}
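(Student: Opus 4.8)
The plan is to route the argument through the Sobolev space $H_0^{-1}$: weak* convergence in $\FL_0^{s,\infty}$ will be shown to force norm convergence in $H_0^{-1}$, where the Birkhoff map is already known to be continuous, and then the boundedness statement of Lemma~\ref{Phi-bounded} will be used to push the resulting coordinatewise convergence of the image sequence back up to weak* convergence in $\ell_0^{s+1/2,\infty}$.

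First I would take a sequence $(q^{(k)})_{k\ge1}\subset\FL_0^{s,\infty}$ with $q^{(k)}\to q$ in the topology $\tau_{w*}$. By the Banach--Steinhaus theorem the sequence is $\n{\cdot}_{s,\infty}$-bounded, say $q^{(k)}\in B_m^{s,\infty}$ for all $k$. Testing the weak* convergence against the predual elements $e_{2n}\in\FL_0^{-s,1}$, $n\in\Z$, yields $q^{(k)}_{2n}\to q_{2n}$ for every $n$, that is, coordinatewise convergence of the Fourier coefficients. Since $\abs{q^{(k)}_n-q_n}\le 2m\w{n}^{\abs{s}}$ for all $k,n$ and $2\abs{s}-2<-1$ because $s>-1/2$, one splits $\n{q^{(k)}-q}_{H^{-1}}^{2}=\sum_{n\in\Z}\w{n}^{-2}\abs{q^{(k)}_n-q_n}^{2}$ into a finite head, which tends to $0$ by coordinatewise convergence, and a tail $\sum_{\abs{n}\ge N}\w{n}^{-2}\abs{q^{(k)}_n-q_n}^{2}\le 4m^{2}\sum_{\abs{n}\ge N}\w{n}^{2\abs{s}-2}$, which is small uniformly in $k$ once $N$ is large; this gives $q^{(k)}\to q$ in $H_0^{-1}$. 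Alternatively, one may invoke Lemma~\ref{weak-star} to identify $\tau_{w*}$ on $B_m^{s,\infty}$ with the norm topology of $\FL_0^{\sg,p}$ for some $-1/2<\sg<s$ and $2\le p<\infty$ with $(s-\sg)p>1$, which embeds continuously into $H_0^{-1}$.

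Next I would invoke Theorem~\ref{thm:bhf-H-1}: the Birkhoff map $\Phi\colon H_0^{-1}\to h_0^{-1/2}$ is a bianalytic, in particular continuous, diffeomorphism, and $\Phi_{s,\infty}$ is by definition its restriction to $\FL_0^{s,\infty}$. Hence $\Phi(q^{(k)})\to\Phi(q)$ in $h_0^{-1/2}$, and in particular $z_n(q^{(k)})\to z_n(q)$ for every $n\in\Z$. On the other hand, Lemma~\ref{Phi-bounded} guarantees that $\Phi(B_m^{s,\infty})$ is a bounded subset of $\ell_0^{s+1/2,\infty}$, so $\abs{z_n(q^{(k)})-z_n(q)}\le 2M\w{n}^{-(s+1/2)}$ for all $k,n$ and some $M>0$. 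Pairing with an arbitrary $w=(w_n)_{n\in\Z}\in\ell_0^{-(s+1/2),1}$ and running the same head/tail estimate — the head vanishing by coordinatewise convergence, the tail bounded by $2M\sum_{\abs{n}\ge N}\w{n}^{-(s+1/2)}\abs{w_n}$, which is small uniformly in $k$ since $w\in\ell_0^{-(s+1/2),1}$ — yields $\Phi(q^{(k)})\to\Phi(q)$ in $\sg(\ell_0^{s+1/2,\infty},\ell_0^{-(s+1/2),1})$, which is the assertion.

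The only delicate point is the two-sided translation between weak* convergence and ``norm-bounded plus coordinatewise convergent'': in $\FL_0^{s,\infty}$ the a priori bound is what tames the high Fourier modes and lets one land in $H_0^{-1}$, while in $\ell_0^{s+1/2,\infty}$ it is exactly Lemma~\ref{Phi-bounded} that controls the high modes of the image sequence. Both are routine dominated-convergence arguments on a bounded set and present no real obstacle; the genuine ingredients are the continuity of $\Phi$ on $H_0^{-1}$ (Theorem~\ref{thm:bhf-H-1}) and its norm-boundedness on $\FL_0^{s,\infty}$ (Lemma~\ref{Phi-bounded}).
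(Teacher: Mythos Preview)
Your proposal is correct and follows essentially the same route as the paper's proof: pass from weak* convergence in $\FL_0^{s,\infty}$ to norm convergence in $H_0^{-1}$ via boundedness plus coordinatewise convergence, use continuity of $\Phi$ on $H_0^{-1}$ to get coordinatewise convergence of the images, and then invoke Lemma~\ref{Phi-bounded} to obtain weak* convergence in $\ell_0^{s+1/2,\infty}$. The only difference is that the paper compresses both head/tail arguments into a tacit appeal to the characterization of weak* convergence in Lemma~\ref{weak-star}, whereas you spell them out explicitly (and indeed mention that alternative yourself).
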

\begin{proof}
Given $q^{(k)}\wsto q$ in $\FL_{0}^{s,\infty}$, there exists $m\ge 1$ so that $(q^{(k)})_{k\ge 1}\subset B_{m}^{s,\infty}$. Since $q^{(k)}\to q$ in $H_{0}^{-1}$ and $\Phi\colon H_{0}^{-1}\to h_{0}^{-1/2}$ is continuous, it follows that $z^{(k)}\defl\Phi(q^{(k)})\to \Phi(q)\defr z$ in $h_{0}^{-1/2}$. In particular, $z_{n}^{(k)}\to z_{n}$ for all $n\in\Z$. By the previous lemma it follows that $(z^{(k)})_{k\ge 1}$ is bounded in $\ell_{0}^{s+1/2,\infty}$ and hence $z^{(k)}\wsto z$ in $\ell_{0}^{s+1/2,\infty}$.~\qed
\end{proof}

\begin{cor}
\label{Phi-wst-continuous}
For any $-1/2 < s \le 0$ and $m\ge 1$, the map
\[
  \Phi_{s,\infty}\colon (B_{m}^{s,\infty},\sg(\FL_{0}^{s,\infty},\FL_{0}^{-s,1}))\to (\ell_{0}^{s+1/2,\infty},\sg(\ell_{0}^{s+1/2,\infty},\ell_{0}^{-(s+1/2),1}))
\]
is a homeomorphism onto its image.~\fish
\end{cor}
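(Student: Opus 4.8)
The plan is to obtain the statement from Lemma~\ref{Phi-bounded} and Lemma~\ref{Phi-wst-sequences} by a soft compactness argument, exploiting that the weak* topology is well behaved on norm bounded sets. \emph{Step 1 (metrizability and compactness).} Since $\FL_{0}^{s,\infty}$ is the dual of the separable Banach space $\FL_{0}^{-s,1}$, the ball $B_{m}^{s,\infty}$ is metrizable in the weak* topology $\sg(\FL_{0}^{s,\infty},\FL_{0}^{-s,1})$ and, by the Banach--Alaoglu theorem, weak* compact; see Appendix~\ref{app:weak-star}, in particular Lemma~\ref{weak-star}. By Lemma~\ref{Phi-bounded} there is $m'\ge 1$ with $\Phi_{s,\infty}(B_{m}^{s,\infty})\subset B_{m'}^{s+1/2,\infty}$, and by the same reasoning $B_{m'}^{s+1/2,\infty}$ is weak* metrizable; hence on $\Phi_{s,\infty}(B_{m}^{s,\infty})$ the weak* topology $\sg(\ell_{0}^{s+1/2,\infty},\ell_{0}^{-(s+1/2),1})$ is again determined by sequences, and in particular it is Hausdorff.

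\emph{Step 2 (continuity).} By Lemma~\ref{Phi-wst-sequences}, $\Phi_{s,\infty}$ sends weak* convergent sequences in $\FL_{0}^{s,\infty}$ --- hence a fortiori in $B_{m}^{s,\infty}$ --- to weak* convergent sequences in $\ell_{0}^{s+1/2,\infty}$. Since $B_{m}^{s,\infty}$ with its weak* topology is metrizable, sequential continuity implies continuity, so $\Phi_{s,\infty}\colon (B_{m}^{s,\infty},\tau_{w*})\to (\ell_{0}^{s+1/2,\infty},\tau_{w*})$ is continuous.

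\emph{Step 3 (conclusion).} The map $\Phi_{s,\infty}$ is injective, being the restriction of the Birkhoff map $\Phi\colon H_{0}^{-1}\to h_{0}^{-1/2}$, which is a bianalytic diffeomorphism on $H_{0}^{-1}$ by Theorem~\ref{thm:bhf-H-1}\,(ii) and hence injective; alternatively injectivity is already contained in Lemma~\ref{Phi-ana}. Thus $\Phi_{s,\infty}$ is a continuous injection of the compact space $(B_{m}^{s,\infty},\tau_{w*})$ into the Hausdorff space $(\ell_{0}^{s+1/2,\infty},\tau_{w*})$, and such a map is automatically a homeomorphism onto its image: it is a closed map, since the image of any weak* closed --- hence weak* compact --- subset of $B_{m}^{s,\infty}$ is weak* compact, and therefore weak* closed in the Hausdorff image. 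This proves the corollary.

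\emph{Main obstacle.} There is essentially no serious difficulty left at this stage: all the analytic content sits in Lemma~\ref{Phi-bounded} (norm boundedness of $\Phi_{s,\infty}$ on balls) and Lemma~\ref{Phi-wst-sequences} (sequential weak* continuity). The only points requiring care are the soft functional-analytic inputs --- weak* metrizability of bounded subsets, which rests on separability of the preduals $\FL_{0}^{-s,1}$ and $\ell_{0}^{-(s+1/2),1}$, and weak* compactness of $B_{m}^{s,\infty}$ via Banach--Alaoglu --- all of which are supplied by Appendix~\ref{app:weak-star}. One could equally avoid the compact-to-Hausdorff packaging by checking directly, through a subsequence argument combined with injectivity and Lemma~\ref{Phi-wst-sequences}, that the inverse of $\Phi_{s,\infty}$ on its image is also sequentially weak* continuous; the metrizable-and-compact framework simply makes this automatic.
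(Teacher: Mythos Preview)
Your proof is correct and follows essentially the same route as the paper: metrizability of $(B_{m}^{s,\infty},\tau_{w*})$ from Lemma~\ref{weak-star}, continuity via Lemma~\ref{Phi-bounded} and Lemma~\ref{Phi-wst-sequences}, and then the standard compact-to-Hausdorff argument using injectivity of $\Phi$. One small remark: your parenthetical claim that injectivity is ``already contained in Lemma~\ref{Phi-ana}'' is not quite right---that lemma only gives a local diffeomorphism at $0$---but your primary citation of Theorem~\ref{thm:bhf-H-1}\,(ii) is correct and suffices.
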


\begin{proof}
By Lemma~\ref{weak-star}, $(B_{m}^{s,\infty},\sg(\FL_{0}^{s,\infty},\FL_{0}^{-s,1}))$ is metrizable. Hence by Lemma~\ref{Phi-bounded} and Lemma~\ref{Phi-wst-sequences}, the map $\Phi\colon (B_{m}^{s,\infty},\sg(\FL_{0}^{s,\infty},\FL_{0}^{-s,1}))\to (\ell_{0}^{s+1/2,\infty},\sg(\ell_{0}^{s+1/2,\infty},\ell_{0}^{-(s+1/2),1}))$ is continuous. Since $\Phi_{s,\infty}\colon \FL_{0}^{s,\infty}\to \ell_{0}^{s+1/2,\infty}$ is bijective and $B_{m}^{s,\infty}$ is compact with respect to the weak* topology, the claim follows.~\qed
\end{proof}

\subsection{Proof of Theorem~\ref{thm:bhf} and asymptotics of the KdV frequencies}
\label{ss:kdv-freq}

\begin{proof}[Proof of Theorem~\ref{thm:bhf}.]
The claim follows from Lemma~\ref{Phi-ana}, Lemma~\ref{Phi-onto},  Lemma~\ref{iso-fl-s-infty}, Lemma~\ref{Phi-bounded}, and Corollary~\ref{Phi-wst-continuous}.~\qed
\end{proof}

Recall that in \cite{Kappeler:2006fr} the KdV frequencies $\om_{n} = \partial_{I_{n}}\Hm$ have been proved to extend real analytically to $H_{0}^{-1}$ -- see also \cite{Kappeler:2016uj} for more recent results.

\begin{lem}
\label{kdv-freq-pseudo}
Uniformly on $\n{\cdot}_{s,\infty}$-norm bounded subsets of $\FL_{0}^{s,\infty}$, $-1/2 < s \le 0$,
\[
  \om_{n} = (2n\pi)^{3} - 6I_{n} + o(1).~\fish
\]
\end{lem}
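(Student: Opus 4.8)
The plan is to derive the asymptotics of $\om_n = \partial_{I_n}\Hm$ directly from the known asymptotics on $H_0^{-1}$ together with the uniform spectral control provided by Theorem~\ref{thm:spec-fw}. The key point is that all the error terms appearing in the $H_0^{-1}$-formula for $\om_n$ — see \cite{Kappeler:2016uj} — are governed by spectral quantities (gap lengths $\gm_n$, and the combination $\tau_n-\mu_n$) whose decay on $\FL_0^{s,\infty}$ is, by Theorem~\ref{thm:spec-fw}, \emph{locally uniform in $q$}. So the strategy is: (i) recall from \cite{Kappeler:2016uj} the expansion $\om_n = (2n\pi)^3 - 6I_n + r_n(q)$, valid on $H_0^{-1}$, where the remainder $r_n(q)$ is $o(1)$ as $n\to\infty$ locally uniformly on $H_0^{-1}$, with an explicit bound of $r_n$ in terms of $\sum_{k}$-type expressions in the $\gm_k$'s and $\tau_k-\mu_k$'s together with tails of the Fourier series of $q$; (ii) restrict to a $\n{\cdot}_{s,\infty}$-bounded subset $V\subset\FL_0^{s,\infty}$, note $V$ embeds compactly (hence boundedly) into $H_0^{-1}$, and invoke the locally uniform bounds of Theorem~\ref{thm:spec-fw}\ref{fw:per}--\ref{fw:dir} so that $\n{(\gm_n(q))}_{s,\infty}$ and $\n{(\tau_n-\mu_n)(q)}_{s,\infty}$ are bounded uniformly for $q\in V$; (iii) conclude that the bound on $r_n(q)$ becomes a bound that tends to $0$ as $n\to\infty$ \emph{uniformly in $q\in V$}, using $s>-1/2$ to make the relevant sums summable.

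First I would make the remainder structure explicit. Writing $\Hm\circ\Phi^{-1}$ as a real analytic function of the actions $(I_k)_{k\ge1}$ on $\ell_+^{3,1}(\N)$, its partial derivative $\om_n=\partial_{I_n}\Hm$ has, by \cite{Kappeler:2016uj}, the leading term $(2n\pi)^3$ coming from the linear (Airy) part and the next term $-6I_n$ coming from the quadratic part of $\Hm$; everything else is collected in $r_n$. The formulas of \cite{Kappeler:2016uj} express $r_n$ through products and sums of the $z_k$'s, equivalently through $\gm_k$, $\tau_k-\mu_k$ via Theorem~\ref{bm.H-1}(ii), $z_{\pm k}=O((|\gm_k|+|\mu_k-\tau_k|)/\sqrt k)$. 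Thus there is a constant $C=C(V)$ and an exponent gain so that
\[
  |r_n(q)| \le C\Big( \sup_{k\ge n/2} \w{k}^{s}(|\gm_k(q)|+|\tau_k(q)-\mu_k(q)|) + \text{(summable tail)}\Big),
\]
and by Theorem~\ref{thm:spec-fw} the supremum on the right is finite uniformly in $q\in V$ and the whole bracket tends to $0$ as $n\to\infty$, uniformly in $q\in V$; here the constraint $-1/2<s\le 0$ enters precisely to guarantee that the auxiliary sums (of Hilbert type, as in Lemma~\ref{hilbert-sum}) converge and contribute an $n\to\infty$ decaying factor.

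I expect the main obstacle to be bookkeeping rather than conceptual: one must verify that every error term in the \cite{Kappeler:2016uj} frequency formula is indeed controlled by the $\ell^{s,\infty}$-norms of $(\gm_k)$ and $(\tau_k-\mu_k)$ with a quantitative gain in $n$, and that the passage from the locally uniform estimates on $H_0^{-1}$ to uniform estimates on a $\n{\cdot}_{s,\infty}$-bounded set $V$ is legitimate — this uses that such $V$ has compact closure in $H_0^{-1}$, so any property holding locally uniformly on $H_0^{-1}$ holds uniformly on $V$. Once these two points are in place, taking $n\to\infty$ gives $\om_n=(2n\pi)^3-6I_n+o(1)$ uniformly on $V$, which is the claim. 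A remark worth adding is that the identity $\Phi(\Iso(q))=\Tc_{\Phi(q)}$ together with boundedness of $\Iso(q)$ in $\FL_0^{s,\infty}$ (Theorem~\ref{thm:bhf}) shows the asymptotics are in fact uniform on each isospectral set as well.
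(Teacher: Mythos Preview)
Your plan is considerably more elaborate than what is needed, and the paper's proof is a one-liner. The paper simply invokes \cite[Theorem~3.6]{Kappeler:2016uj}, which already gives $\om_n=(2n\pi)^3-6I_n+o(1)$ locally uniformly on the spaces $\FL_0^{-1/2,p}$ for $2\le p<\infty$, and then observes that any $\n{\cdot}_{s,\infty}$-bounded set in $\FL_0^{s,\infty}$ embeds \emph{compactly} into $\FL_0^{-1/2,p}$ once $(s+1/2)p>1$; local uniformity on the larger space then becomes uniformity on the bounded set. No unpacking of the remainder $r_n$ in terms of $\gm_k$ and $\tau_k-\mu_k$, and no appeal to Theorem~\ref{thm:spec-fw}, is required.

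Your compactness idea is the right mechanism, but note the target space: you embed into $H_0^{-1}$, while the paper embeds into $\FL_0^{-1/2,p}$. This matters because the cited asymptotic with the $-6I_n$ correction and an $o(1)$ remainder is stated in \cite{Kappeler:2016uj} at the $\FL^{-1/2,p}$ level, not on all of $H_0^{-1}$; your assertion that $r_n=o(1)$ locally uniformly on $H_0^{-1}$ is not what the reference provides as used here. If you keep the compact-embedding argument but route it through $\FL_0^{-1/2,p}$ (via Lemma~\ref{ell-embedding}), the proof collapses to the paper's two lines. Your alternative route through explicit spectral bounds could be made to work, but it duplicates work already packaged in \cite{Kappeler:2016uj} and, as you note yourself, the bookkeeping is nontrivial.
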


\begin{proof}
The claim follows immediately from \cite[Theorem~3.6]{Kappeler:2016uj} and the fact that $\FL_{0}^{s,\infty}$ embeds compactly into $\FL^{-1/2,p}$ if $(s+1/2)p > 1$.~\qed
\end{proof}

\section{Proofs of Theorems~\ref{thm:kdv-wp} and ~\ref{thm:invariant}}
\label{s:main-proofs}

\begin{proof}[Proof of Theorem~\ref{thm:kdv-wp}.]
According to \cite{Kappeler:2006fr}, for any $q\in \FL_{0}^{s,\infty}\opento H_{0}^{-1}$, the solution curve $t\mapsto S(q)(t)\in H_{0}^{-1}$ exists globally in time and is contained in $\Iso(q)$. Since the latter is $\n{\cdot}_{s,\infty}$-norm bounded by Lemma~\ref{iso-bounded}, the solution curve is uniformly $\n{\cdot}_{s,\infty}$-norm bounded in time,
\[
  \sup_{t\in\R}\n{S(q)(t)}_{s,\infty} \le \sup_{\tilde q\in \Iso(q)}\n{\tilde q} < \infty.
\]
By \cite{Kappeler:2006fr}, any coordinate function $t\mapsto (S(q))_{n}(t)$, $n\in\Z$, is continuous and hence $\R\mapsto (\FL_{0}^{s,\infty},\tau_{w*})$, $t\mapsto S(q)(t)$ is a continuous map.~\qed
\end{proof}

\begin{proof}[Proof of Theorem~\ref{thm:invariant}.]
Suppose $V\subset\FL_{0}^{s,\infty}$ is a $\n{\cdot}_{s,\infty}$-norm bounded subset. Then there exists $m \ge 1$ so that $V\subset B_{m}^{s,\infty}$ and the weak* topology induced on $B_{m}^{s,\infty}$ coincides with the norm topology induced from $\FL_{0}^{\sg,p}$ provided $(s-\sg)p > 1$ -- see Lemma~\ref{weak-star}. Since by \cite{Kappeler:CNzeErmy}, for any $-1/2\le \sg \le 0$, $2\le p < \infty$, the map
\[
  \Sc\colon (V,\n{\cdot}_{\sg,p})\to C([-T,T],(V,\n{\cdot}_{\sg,p}))
\]
is continuous, it follows that
\[
  \Sc\colon (V,\tau_{w*})\to C([-T,T],(V,\tau_{w*}))
\]
is continuous as well.~\qed
\end{proof}

\begin{proof}[Proof of Remark~\ref{rem-airy}.]
Since by Lemma~\ref{Phi-ana}, the Birkhoff map $\Phi$ is a local diffeomorphism near $0$, it suffices to show for generic small initial data $q$ in $\FL_{0}^{s,\infty}$ that the solution curve $t\mapsto \Sc(q)(t)$, expressed in Birkhoff coordinates, is not continuous. But this latter claim follows in a straightforward way from the asymptotics of the KdV frequencies of Lemma~\ref{kdv-freq-pseudo}.~\qed
\end{proof}

\section{Wiener Algebra}
\label{s:wiener}

It turns out that by our methods we can also prove that the KdV equation is globally in time $C^{\om}$-wellposed on $\FL_{0}^{0,1}$, referred to as Wiener algebra. Actually, we prove such a result for any Fourier Lebesgue space $\FL_{0}^{N,1}$ with $N\in\Z_{\ge 0}$.

\subsection{Birkhoff coordinates}

In a first step we prove that $\FL_{0}^{N,1}$ admits global Birkhoff coordinates. More precisely, we show

\begin{thm}
\label{thm:bhf-wiener} 
For any $N\in\Z_{\ge 0}$, the restriction $\Phi_{N,1}$ of the Birkhoff map $\Phi$ to $\FL_{0}^{N,1}$ takes values in $\ell_{0}^{N + 1/2,1}$ and $\Phi_{N,1}\colon \FL_{0}^{N,1} \to \ell_{0}^{N+1/2,1}$ is a real analytic diffeomorphism, and therefore provides global Birkhoff coordinates on $\FL_{0}^{N,1}$.~\fish
\end{thm}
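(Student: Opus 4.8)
The plan is to mirror the three-step proof of Theorem~\ref{thm:bhf}: (a)~spectral estimates for $\gm_n$ and $\tau_n-\mu_n$ adapted to $\ell^{N,1}$; (b)~analyticity and local boundedness of $\Phi_{N,1}$; (c)~surjectivity and analyticity of the inverse, via a characterization of the potentials lying in $\FL_0^{N,1}$ through their gap lengths. For (a) I would prove the $\ell^{N,1}$-analogue of Theorem~\ref{thm:spec-fw}: for $q\in\FL_{0,\C}^{N,1}$ the sequences $(\gm_n(q))_{n\ge1}$ and $(\tau_n(q)-\mu_n(q))_{n\ge1}$ lie in $\ell_{\C}^{N,1}(\N)$, with the associated maps locally bounded. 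The reduction of Section~\ref{s:spectral-theory} — splitting $\FL_{\star,\C}^{w,0,1}$ with weight $w_n=\w{n}^N$ into $\Pc_n\oplus\Qc_n$, solving the $Q$-equation, and reducing to the $2\times2$ determinant $B_n(\lm)=(\lm-n^2\pi^2-a_n)^2-b_nb_{-n}$ — carries over with only the constants changing; indeed the key operator bound improves, since $\min_{|k|\ne n}|\lm-k^2\pi^2|\ge 2n-1$ for $\lm\in S_n$ and convolution with $q$ is bounded on $\ell^{N,1}$ by submultiplicativity of $\w{\cdot}^N$, whence $\n{T_n(\lm)}_{w,0,1;\pm n}\le \n{q}_{N,1}/(2n-1)$ uniformly in $\lm\in S_n$. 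The one genuinely new point is that the naive estimate only gives $\w{2n}^N|b_{\pm n}-q_{\pm 2n}|=O(\n{q}_{N,1}^2/n)$, whose tail is \emph{not} summable; here the Wiener--algebra (convolution--algebra) structure of $\ell^{N,1}$ is decisive. Expanding $b_n-q_{2n}=\sum_{l\ge1}\lin{T_n(\lm)^lVe_{-n},e_n}$, the leading $l=1$ term equals a weighted version of $(q*q)_{2n}$, namely $\sum_{|k|\ne n}q_{n-k}q_{n+k}/(\lm-k^2\pi^2)$; bounding $|\lm-k^2\pi^2|^{-1}\le(2n-1)^{-1}$ and using $\w{2n}^N\le\w{n-k}^N\w{n+k}^N$ gives $\w{2n}^N|\lin{T_nVe_{-n},e_n}|\le (2n-1)^{-1}(a*a)_{2n}$ with $a_k=\w{k}^N|q_k|$, hence $\sum_n\w{2n}^N|\lin{T_nVe_{-n},e_n}|\le\sum_n(a*a)_{2n}\le\n{a}_{\ell^1}^2=\n{q}_{N,1}^2$; the terms with $l\ge2$ carry an extra factor $\n{T_n}_{w,0,1;n}^{l-1}=O((2n-1)^{-(l-1)})$ and so contribute an $O(n^{-2})$-summable tail. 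Together with $\sum_n\w{2n}^N(|q_{2n}|+|q_{-2n}|)=\n{q}_{N,1}$ and $|\gm_n|\le C(\sup_{S_n}|b_n|+\sup_{S_n}|b_{-n}|)$ from the $\ell^{N,1}$-analogue of Lemma~\ref{Sn-roots}, this yields $(\gm_n)\in\ell_{\C}^{N,1}(\N)$ with norm controlled by $\n{q}_{N,1}+C\n{q}_{N,1}^2+C\n{q}_{N,1}^3$; the Dirichlet statement follows the same way from the Jordan-block analysis of Subsection~\ref{ss:jordan}, since $|\tau_n-\mu_n|$ is controlled by $|\gm_n|+|b_n(\lm_n^+)|+|b_{-n}(\lm_n^+)|$.

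\emph{Step~(b): analyticity of $\Phi_{N,1}$.} By Theorem~\ref{bm.H-1}(ii), $|z_{\pm n}|=O\bigl((|\gm_n|+|\mu_n-\tau_n|)/\sqrt n\bigr)$ locally uniformly and uniformly in $n$, so $\w{n}^{N+1/2}|z_{\pm n}|\lesssim\w{n}^N(|\gm_n|+|\mu_n-\tau_n|)$ and Step~(a) gives $\Phi_{N,1}(q)\in\ell_0^{N+1/2,1}$, with $\Phi_{N,1}$ locally bounded. Since $\FL_{0,\C}^{N,1}\opento H_{0,\C}^{-1}$, the restriction of the Birkhoff map to $\Ws\cap\FL_{0,\C}^{N,1}$ is analytic, and being locally bounded into $\ell_{0,\C}^{N+1/2,1}$ it is analytic as a map into that space by \cite[Theorem~A.3]{Kappeler:2003up}, exactly as in Lemma~\ref{Phi-ana}.

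\emph{Step~(c): surjectivity, inverse, conclusion.} The crucial ingredient is the $\ell^{N,1}$-analogue of Theorem~\ref{inv}: if $q\in H_0^{-1}$ has $\gm(q)\in\ell^{N,1}$, then $q\in\FL_0^{N,1}$, with $\n{q}_{N,1}$ bounded in terms of $\n{\gm(q)}_{N,1}$ and of the $\FL^{0,\infty}$-bound on $\Iso(q)$ provided by Lemma~\ref{iso-bounded} (applied with $s=0$, legitimate since $\ell^{N,1}\subset\ell^{0,\infty}$). This is obtained by re-running the adapted Fourier coefficient machinery (Lemma~\ref{alpha-n}, Proposition~\ref{Phi-diffeo}, Proposition~\ref{abstract-diffeo}, Lemma~\ref{root-est-poschel}): $\Fm^{(m)}$ is built from the same $a_n,b_n$, now estimated in the $\ell^{N,1}$-setting, and the analogue of Proposition~\ref{Phi-diffeo} holds because $\n{\Fm^{(m)}(q)-q}_{w,N,1}=\sum_{|n|\ge M_{m,N}}w_{2n}\w{2n}^N|b_n(\al_n(q))-q_{2n}|$ is a tail of the convergent series of Step~(a) and hence is $\le1/16$ uniformly on a ball once $M_{m,N}$ is large; Proposition~\ref{abstract-diffeo} and Lemma~\ref{root-est-poschel} port with essentially unchanged proofs, and following the proof of Lemma~\ref{iso-bounded} — with a weight $w_n$ interpolating between $\FL^{0,\infty}$ and $\FL^{N,1}$ and the exponential cutoff $w^\ep$ to meet the smallness hypothesis — one concludes $q\in\FL_0^{N,1}$. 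Granting this, surjectivity follows as in Lemma~\ref{Phi-onto}: given $z\in\ell_0^{N+1/2,1}\subset h_0^{-1/2}$, pick $q$ with $\Phi(q)=z$; Theorem~\ref{bm.H-1}(i) gives $|\gm_n|\le 2\sqrt{n\pi}(|z_n|+|z_{-n}|)$ for $n$ large, so $\gm(q)\in\ell^{N,1}$ and hence $q\in\FL_0^{N,1}$, i.e. $\Phi_{N,1}(q)=z$. Finally $\Phi_{N,1}^{-1}=\Phi^{-1}\big|_{\ell_0^{N+1/2,1}}$ is the restriction of the analytic $\Phi^{-1}$ on $h_0^{-1/2}$; using (as in the proof of Lemma~\ref{iso-bounded}, via the compact embeddings into $H_0^{-1}$) that the relevant thresholds are uniform on bounded subsets of $\ell_0^{N+1/2,1}$ — their $\Phi^{-1}$-images being relatively compact in $H_0^{-1}$ — one sees from the $\ell^{N,1}$-analogue of Theorem~\ref{inv} that $\Phi_{N,1}^{-1}$ is locally bounded into $\FL_0^{N,1}$, hence analytic by \cite[Theorem~A.3]{Kappeler:2003up}. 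Thus $\Phi_{N,1}$ is an analytic bijection with analytic inverse, i.e.\ a real analytic diffeomorphism.

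\emph{Main obstacle.} I expect the crux to be Step~(a): upgrading the $O(1/n)$ bounds for the correction terms $b_{\pm n}-q_{\pm 2n}$ (and for $\mu_n-\tau_n$) to \emph{summable} bounds. This is exactly where the convolution-algebra structure of $\ell^{N,1}$ must take the place of the Hilbert-type sum estimates (Lemma~\ref{hilbert-sum}) used in the $p=\infty$ analysis, and it is also what makes the $\Fm^{(m)}$-machinery and the $\ell^{N,1}$-analogue of Theorem~\ref{inv} go through; the remaining steps are routine adaptations of Section~\ref{s:spectral-theory} and of the arguments in Section~\ref{section5}.
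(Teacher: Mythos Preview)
Your proposal is correct in outline, but the paper takes a substantially shorter route that differs from yours in two places.

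For Step~(a) --- the spectral theory --- the paper does not reprove anything: it simply cites P\"oschel~\cite{Poschel:2011iua}, who had already established precisely the $\ell^{N,1}$-versions of Theorem~\ref{thm:spec-fw} and of Theorem~\ref{inv} (recorded in the paper as Theorems~\ref{thm:spec-fw-wiener} and~\ref{thm:spec-inv-wiener}). Your sketch of how to obtain the summable bound on $b_{\pm n}-q_{\pm 2n}$ via the convolution-algebra structure of $\ell^{N,1}$ is correct and is essentially P\"oschel's argument; in the paper this is black-boxed.

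The more substantive difference is in Step~(c). You argue that $\Phi_{N,1}^{-1}$ is analytic by showing it is locally bounded into $\FL_0^{N,1}$ (via the inverse spectral characterization with uniform control on bounded sets) and then invoking the weakly-analytic-plus-locally-bounded criterion. The paper instead shows directly that $\ddd_q\Phi_{N,1}$ is a linear isomorphism at every $q\in\FL_0^{N,1}$ and concludes by the inverse function theorem. Injectivity of $\ddd_q\Phi_{N,1}$ is inherited from $\ddd_q\Phi\colon H_0^{-1}\to h_0^{-1/2}$; for surjectivity the paper uses a Fredholm argument: by the $1$-smoothing result of~\cite{Kappeler:2013bt}, $\ddd_q\Phi-\ddd_0\Phi\colon H_0^N\to h_0^{N+3/2}$ is bounded, and the embeddings $\FL_0^{N,1}\hookrightarrow H_0^N$ (bounded) and $h_0^{N+3/2}\hookrightarrow \ell_0^{N+1/2,1}$ (compact) make $\ddd_q\Phi_{N,1}-\ddd_0\Phi_{N,1}$ compact, so $\ddd_q\Phi_{N,1}$ has index zero. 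This sidesteps any need for quantitative uniform bounds in the inverse spectral step. Your route is more self-contained; the paper's is quicker but imports an external regularity result on the differential of the Birkhoff map.
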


Before we prove Theorem~\ref{thm:bhf-wiener}, we need to review the spectral theory of the Schrödinger operator $-\partial_{x}^{2}+q$ for $q\in \FL_{0}^{N,1}$.

\subsection{Spectral Theory}

The spectral theory of the operator $L(q) = -\partial_{x}^{2} + q$ for $q\in\FL_{0}^{N,1}$ was considered in \cite{Poschel:2011iua} where the following results were shown.

\begin{thm}[{\cite[Theorem~1 \& 4]{Poschel:2011iua}}]
\label{thm:spec-fw-wiener}
Let $N\in\Z_{\ge0}$.
\begin{enumerate}[label=(\roman{*})]
\item
For any $q\in\FL_{0,\C}^{N,1}$, the sequence of gap lengths $(\gm_{n}(q))_{n\ge 1}$, defined in~\eqref{gm-tau} is in $\ell_{\C}^{N,1}(\N)$ and the map
\[
  \FL_{0,\C}^{N,1}\to \ell_{\C}^{N,1}(\N),\qquad q \mapsto (\gm_{n}(q))_{n\ge 1},
\]
is uniformly bounded on bounded subsets.

\item
For any $q\in\FL_{0,\C}^{N,1}$, the sequence $(\tau_{n}-\mu_{n}(q))_{n\ge 1}$ -- cf. \eqref{gm-tau}-\eqref{mu-asymptotics-H-1} -- is in $\ell_{\C}^{N,1}(\N)$ and the map
\[
  \FL_{0,\C}^{N,1}\to \ell_{\C}^{N,1}(\N),\qquad q \mapsto (\tau_{n}(q)-\mu_{n}(q))_{n\ge 1},
\]
is uniformly bounded on bounded subsets.~\fish
\end{enumerate}
\end{thm}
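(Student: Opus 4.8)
The statement is quoted from \cite[Theorem~1 \& 4]{Poschel:2011iua}, so strictly speaking there is nothing new to prove here; the plan is to indicate how it can be recovered from the reduction of Section~\ref{s:spectral-theory}, now carried out in the Wiener‑type spaces $\FL_{\star,\C}^{N,1}$ rather than in the weighted sup‑norm spaces $\FL_{\star,\C}^{w,s,\infty}$. The starting point is that the weight $(\w{n}^{N})_{n\in\Z}$ is normalized, symmetric, monotone and submultiplicative, hence lies in $\Ms$, and that the space of sequences $(c_{k})_{k\in\Z}$ with $\sum_{k}\w{k}^{N}\abs{c_{k}}<\infty$ is a Banach algebra under convolution. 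Because of this, every algebraic step of Subsection~\ref{ss:spec-reduction}---the equivalence $L(q)f=\lm f\iff A_{\lm}f=Vf$, the splitting $\FL_{\star,\C}^{N,1}=\Pc_{n}\oplus\Qc_{n}$ of \eqref{Pn-Qn} into the $P$‑ and $Q$‑equations, and the resolvent bound \eqref{lm-n2k2-lower-bound}---carries over without change.

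First I would establish the $\ell^{1}$‑analogue of Lemma~\ref{Tn-est}: for $q\in\FL_{0,\C}^{N,1}$ and $\lm\in S_{n}$, the operator $T_{n}(\lm)=VA_{\lm}^{-1}Q_{n}$ maps $\FL_{\star,\C}^{N,1}$ into itself with $\n{T_{n}(\lm)}_{N,1;\pm n}\le c_{N}n^{-1}\n{q}_{N,1}$. This is in fact simpler than in the sup‑norm case: $T_{n}(\lm)$ is a convolution‑type operator with kernel $\bigl(q_{m-k}/(\lm-k^{2}\pi^{2})\bigr)_{\abs{k}\neq n}$, and since $\abs{\lm-k^{2}\pi^{2}}\ge\abs{n^{2}-k^{2}}\gtrsim n$ for $\abs{k}\neq n$ and $\lm\in S_{n}$ by \eqref{lm-n2k2-lower-bound}, the Banach‑algebra property of the weighted $\ell^{1}$‑space together with submultiplicativity of the weight yields the stated bound directly. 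Consequently there is a threshold $n_{N}$, choosable uniformly on bounded subsets of $\FL_{0,\C}^{N,1}$ (which embeds compactly into $H_{0,\C}^{-1}$), beyond which $T_{n}(\lm)$ is a $1/2$‑contraction, $K_{n}=(\Id-T_{n})^{-1}$ is well defined, and, exactly as in Lemma~\ref{eval-char}, $\lm\in S_{n}$ is a periodic eigenvalue of $L(q)$ if and only if $\det B_{n}(\lm)=0$, with $B_{n}$ the $2\times2$ matrix \eqref{Bn-mat} whose entries $a_{n}$, $b_{\pm n}$ are given by the series \eqref{an-bn-expansions}.

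Next I would run the analogues of Lemma~\ref{coeff-est}, Lemma~\ref{bn-est} and Lemma~\ref{Sn-roots}: $\det B_{n}$ has exactly two roots $\lm_{n}^{\pm}$, located in a disc $D_{n}\subset S_{n}$ of radius $O(n^{1/2})$, so that $\abs{\gm_{n}}=\abs{\lm_{n}^{+}-\lm_{n}^{-}}\le\sqrt{6}\,\sup_{\lm\in S_{n}}\abs{b_{n}(\lm)b_{-n}(\lm)}^{1/2}$, while $b_{\pm n}(\lm)=q_{\pm2n}+r_{\pm n}(\lm)$ with $r_{\pm n}=\lin{T_{n}K_{n}Ve_{\mp n},e_{\pm n}}$. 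This is the one step where the argument genuinely departs from Section~\ref{s:spectral-theory}, and where I expect the main difficulty to lie: the pointwise estimate obtained as in Lemma~\ref{coeff-est} only gives $\w{n}^{N}\abs{r_{\pm n}(\lm)}=O(n^{-1}\n{q}_{N,1}^{2})$, uniformly in $\lm\in S_{n}$, and since $\sum_{n}n^{-1}=\infty$ this does \emph{not} yield $\ell^{1}$‑summability. Instead one must estimate the whole sequence $\bigl(\sup_{\lm\in S_{n}}\abs{r_{\pm n}(\lm)}\bigr)_{n\ge1}$ directly in $\ell_{\C}^{N,1}(\N)$, keeping track of the fact that $r_{\pm n}$ is controlled by $\bigl(\abs{q}*\abs{q}\bigr)_{2n}$ and that the latter, being a convolution of two sequences from the weighted $\ell^{1}$‑algebra, already decays; this is precisely where submultiplicativity of $\w{\cdot}^{N}$ and the convolution‑algebra property are used, and it yields $\n{(r_{\pm n})_{n\ge1}}_{N,1}\lesssim\n{q}_{N,1}^{2}$, uniformly on bounded subsets. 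Combined with $\abs{\gm_{n}}\lesssim\abs{b_{n}}+\abs{b_{-n}}\le\abs{q_{2n}}+\abs{q_{-2n}}+\abs{r_{n}}+\abs{r_{-n}}$ for $n\ge n_{N}$, and with the classical control of the finitely many small eigenvalues (available since $\FL_{0,\C}^{N,1}\opento L_{0,\C}^{2}$), this gives statement (i).

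For statement (ii) I would repeat, in the $\ell^{1}$‑weighted setting, the Dirichlet analysis of Subsection~\ref{ss:jordan} and of the proof of Theorem~\ref{thm:spec-fw}(ii). Since $\FL_{0,\C}^{N,1}\opento L_{0,\C}^{2}$, all but finitely many periodic and Dirichlet eigenvalues of $L(q)$ are simple, so the two‑dimensional subspaces $E_{n}$, the off‑diagonal coefficient $\eta_{n}$ of $(L-\lm_{n}^{+})\big|_{E_{n}}$ with $\abs{\eta_{n}}\lesssim\abs{\gm_{n}}+\abs{b_{n}}+\abs{b_{-n}}$, the auxiliary functions $\tilde G_{n}\in E_{n}$ that restrict to Dirichlet eigenfunctions, and the Riesz projectors $P_{n,q}$, $\Pi_{n,q}$ are all available; one then obtains $\abs{\mu_{n}-\tau_{n}}\lesssim\abs{\gm_{n}}+\abs{b_{n}}+\abs{b_{-n}}$ for $n$ large, whose $\ell_{\C}^{N,1}(\N)$‑summability has just been established for (i). The finitely many remaining modes contribute nothing to the $\ell^{1}$‑tail and are handled by continuity of the eigenvalues on $H_{0,\C}^{-1}$; the uniformity on bounded subsets follows, as in Section~\ref{s:spectral-theory}, from the uniform choice of $n_{N}$. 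This spectral input is exactly what feeds, together with the estimates of \cite{Kappeler:2005fb,Kappeler:2008fl} on the Birkhoff coordinates, into the proof of Theorem~\ref{thm:bhf-wiener}.
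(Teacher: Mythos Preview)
The paper does not prove this theorem at all: it is stated as a direct quotation of \cite[Theorems~1 \& 4]{Poschel:2011iua} and used as a black box in Section~\ref{s:wiener}. You recognize this at the outset, so there is no discrepancy to flag.

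Your sketch of how the result could be recovered from the machinery of Section~\ref{s:spectral-theory} is sound and in fact mirrors P\"oschel's own argument. You correctly isolate the one genuine new point: the pointwise bound $\w{2n}^{N}\abs{b_{\pm n}(\lm)-q_{\pm 2n}}=O(n^{-1})$ coming from the analogue of Lemma~\ref{coeff-est} is not $\ell^{1}$-summable, and one must instead exploit the convolution structure $r_{\pm n}\sim(\tilde q*\tilde q)_{2n}$ with $\tilde q_{m}=\w{m}^{N}\abs{q_{m}}$, together with the Banach-algebra property of weighted $\ell^{1}$, to get $\sum_{n}\w{2n}^{N}\abs{r_{\pm n}}\lesssim\n{q}_{N,1}^{2}$. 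One small inaccuracy: the bound $\abs{\lm-k^{2}\pi^{2}}\gtrsim n$ for $\abs{k}\neq n$ and $\lm\in S_{n}$ does not follow from \eqref{lm-n2k2-lower-bound} as stated (that inequality only records $\ge 1$); it requires the sharper observation $\abs{n^{2}-k^{2}}\ge 2n-1$, which is elementary but should be mentioned. Otherwise the outline for both (i) and (ii) is accurate.
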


In addition, the following spectral characterization for a potential $q\in L^{2}$ to be in $\FL_{0}^{N,1}$ holds.

\begin{thm}[{\cite[Theorem~3]{Poschel:2011iua}}]
\label{thm:spec-inv-wiener}
Let $q\in L_{0}^{2}$ and assume that $(\gm_{n}(q))_{n\ge 1}\in \ell_{\R}^{N,1}$ for some $N\in\Z_{\ge 0}$. Then $q \in \FL_{0}^{N,1}$ and $\Iso(q) \subset \FL_{0}^{N,1}$.~\fish
\end{thm}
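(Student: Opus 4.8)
The plan is to mirror, in the $p=1$ setting, the chain of reductions developed in Section~\ref{s:spectral-theory} for $\FL_{0}^{s,\infty}$, using the results of \cite{Poschel:2011iua} as the $p=1$ analogues of Lemma~\ref{Tn-est}, Corollary~\ref{Q-soln}, Lemma~\ref{coeff-est}, Lemma~\ref{Sn-roots}, Lemma~\ref{root-est-poschel}, Proposition~\ref{Phi-diffeo}, and Proposition~\ref{abstract-diffeo}. Since \cite{Poschel:2011iua} already builds the adapted Fourier coefficient map $\Fm^{(m)}$ on balls $B_{m}^{N,1}$ and proves it is a real analytic diffeomorphism onto a neighbourhood of the origin, and that the analogue of Proposition~\ref{abstract-diffeo} holds (namely: if $q\in B_{m}^{\o,N,1}$ and $\Fm^{(m)}(q)\in B_{m/2}^{w,N,1}$ then $q\in B_{m}^{w,N,1}$), the argument is essentially the $p=1$ transcription of the proof of Lemma~\ref{iso-bounded} and Theorem~\ref{inv}.

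First I would record that $q\in L_0^2$ with $(\gm_n(q))_{n\ge1}\in\ell_\R^{N,1}$ is in particular in $H_0^{-1}$, and that by the classical spectral theory for $L^2$ potentials the periodic eigenvalues satisfy the standard asymptotics, so $q\in B_m^{\o,0,1}$ for $m$ large; in fact since $q\in L^2\subset \FL_0^{0,1}$ one may take $m=4(\n{q}_{0,1}+\n{\gm(q)}_{N,1})$, playing the role of \eqref{m-choice}. Next, choose the weight $w_n=\w{n}^{N}$, so that $\ell^{w,0,1}=\ell^{N,1}$ and $(\gm_n(q))_{n\ge1}\in\ell^{w,0,1}(\N)$. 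Set $r=\Fm^{(m)}(q)\in\FL_{0}^{0,1}$. For $|n|\ge M_{m}$ (the $p=1$ threshold from \cite{Poschel:2011iua}), the analogue of Lemma~\ref{root-est-poschel} gives, whenever $r_{-2n}\ne0$ and $1/9\le|r_{2n}/r_{-2n}|\le 9$ (which holds since $r$ is real valued, so $r_{-n}=\ob{r_n}$ and $|r_{2n}/r_{-2n}|=1$),
\[
  |r_{2n}|=|r_{2n}r_{-2n}|^{1/2}\le |\gm_n(q)|,
\]
and the same bound holds trivially when $r_{2n}=0$; the finitely many low modes are bounded because $r_{2n}=q_{2n}$ there. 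Hence $r\in\FL_0^{w,0,1}$ with $\n{r}_{w,0,1}\le \e^{2M_m}\n{q}_{0,1}+\n{\gm(q)}_{N,1}$ up to the obvious finite-mode contribution. As in the proof of Lemma~\ref{iso-bounded}, I would then damp the weight, replacing $w$ by $w^{\ep}_n=\min(w_n,\e^{\ep|n|})$ — which lies in $\Ms$ for $\ep$ small by \cite[Lemma~9]{Poschel:2011iua} — so that, choosing $\ep$ small depending on $m$ and $M_m$, one gets $\n{r}_{w^{\ep},0,1}\le m/2$, i.e. $r\in B_{m/2}^{w^{\ep},0,1}$.

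Now the $p=1$ version of Proposition~\ref{abstract-diffeo} applies and yields $q\in B_m^{w^{\ep},0,1}\subset\FL_0^{w^{\ep},0,1}$; since $w^{\ep}_n=w_n$ for all but finitely many $n$, this upgrades to $q\in\FL_0^{w,0,1}=\FL_0^{N,1}$ with a norm bound $\n{q}_{N,1}\le C_{\ep}m$ independent of the particular choice within $\Iso(q)$. Applying the same argument to an arbitrary $\tilde q\in\Iso(q)$ — which has the same periodic spectrum hence the same gap lengths, $\gm_n(\tilde q)=\gm_n(q)$, and the same $m$ — gives $\Iso(q)\subset\FL_0^{N,1}$ with $\sup_{\tilde q\in\Iso(q)}\n{\tilde q}_{N,1}<\infty$. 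The main obstacle is purely bookkeeping: one must check that all the auxiliary estimates of \cite{Poschel:2011iua} (the contraction bound on $T_n$, the analyticity and estimates on $a_n,b_{\pm n}$, the Rouché argument locating $\lm_n^{\pm}$, and the weight-damping lemma) are available with the stated uniformity in the weight $w$ and in $q$ on $B_m$; once that is granted, the deduction above is routine. Note there is no Dirichlet-spectrum input needed here, since $\gm_n\in\ell^{N,1}$ already forces $q\in\FL_0^{N,1}$, which in turn (via Theorem~\ref{thm:spec-fw-wiener}) controls $\tau_n-\mu_n$; so unlike the $\FL^{s,\infty}$ case no extra weak* compactness statement is claimed.
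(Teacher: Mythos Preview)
The paper does not give its own proof of this statement; it is quoted directly from \cite[Theorem~3]{Poschel:2011iua} and used as a black box in Section~\ref{s:wiener}. Your proposal is thus a reconstruction of P\"oschel's argument, and the overall strategy---adapted Fourier coefficients, the two-sided comparison $|r_{2n}r_{-2n}|\le|\gm_{n}|^{2}\le 9|r_{2n}r_{-2n}|$, the weight-damping trick $w^{\ep}_{n}=\min(w_{n},\e^{\ep|n|})$, and the abstract-diffeomorphism bootstrap---is indeed the right one; it is precisely the machinery that the present paper transplants to the $p=\infty$ setting in Section~\ref{s:spectral-theory}.

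There is, however, a genuine error in your setup. You write ``since $q\in L^{2}\subset \FL_{0}^{0,1}$ one may take $m=4(\n{q}_{0,1}+\n{\gm(q)}_{N,1})$'', but the inclusion goes the other way: $\FL_{0}^{0,1}\hookrightarrow L^{\infty}\hookrightarrow L^{2}$, whereas a generic $L^{2}$ potential has Fourier coefficients that are not absolutely summable. Hence $\n{q}_{0,1}$ need not be finite, you cannot place $q$ (let alone an arbitrary $\tilde q\in\Iso(q)$) in a ball $B_{m}^{0,1}$ a~priori, and the map $\Fm^{(m)}$ as you have set it up is not even defined at $q$. This breaks the first step of the bootstrap.

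The repair, which is what P\"oschel actually does, is to take $L^{2}=\FL_{0}^{0,2}$ as the \emph{base} space rather than $\FL_{0}^{0,1}$. The adapted Fourier coefficient map is built on balls $B_{m}^{0,2}\subset L^{2}$, and the contraction estimate for $T_{n}$, the bounds on $a_{n}$ and $b_{\pm n}$, and the near-identity estimate for $\Fm^{(m)}$ are proved uniformly across the admissible weighted norms, including the $\ell^{1}$-based norm $\n{\cdot}_{N,1}$. Since $\Iso(q)$ is compact in $L^{2}$ (classical), there is a uniform $L^{2}$ radius $R$ and one sets $m=4(R+\n{\gm(q)}_{N,1})$, exactly as in \eqref{m-choice} but with the $L^{2}$ norm in place of your (undefined) $\n{q}_{0,1}$. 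From that point your argument runs correctly: the high modes of $r=\Fm^{(m)}(\tilde q)$ are dominated by $|\gm_{n}(q)|$, the finitely many low modes are harmless, the damped weight $w^{\ep}$ forces $r\in B_{m/2}^{w^{\ep}}$, and the abstract-diffeomorphism lemma pulls $\tilde q$ back into $\FL_{0}^{w^{\ep},0}$, hence into $\FL_{0}^{N,1}$ since $w^{\ep}$ and $\langle\cdot\rangle^{N}$ differ only at finitely many modes.
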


\subsection{Proof of Theorem~\ref{thm:bhf-wiener}}

Theorem~\ref{thm:bhf-wiener} will follow from the following lemmas.

\begin{lem}
\label{Phi-ana-wiener}
For any $N\in\Z_{\ge 0}$
\[
  \Phi_{N,1} \equiv \Phi\bigg|_{\FL_{0}^{N,1}}\colon \FL_{0}^{N,1}\to \ell_{0}^{N+1/2,1},
  \qquad q \mapsto (z_{n}(q))_{n\in\Z},
\]
is real analytic and extends analytically to an open neighborhood $\Wp_{N,1}$ of $\FL_{0}^{N,1}$ in $\FL_{0,\C}^{N,1}$.~\fish
\end{lem}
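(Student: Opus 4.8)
The plan is to mimic the proof of Lemma~\ref{Phi-ana} line by line, replacing $\FL_0^{s,\infty}$ by $\FL_0^{N,1}$ and the spectral asymptotics of Theorem~\ref{thm:spec-fw} by those of Theorem~\ref{thm:spec-fw-wiener}. The only structural input needed is the continuous embedding $\FL_{0,\C}^{N,1}\opento H_{0,\C}^{-1}$, valid for every $N\in\Z_{\ge 0}$ because $\ell^{N,1}\opento\ell^{0,1}\opento\ell^{0,2}\opento\ell^{-1,2}$. Granting this, I would set $\Wp_{N,1}\defl\Wp\cap\FL_{0,\C}^{N,1}$, where $\Wp\subset H_{0,\C}^{-1}$ is the complex neighbourhood of $H_0^{-1}$ from Theorem~\ref{bm.H-1}; then $\Wp_{N,1}$ is an open neighbourhood of $\FL_0^{N,1}$ in $\FL_{0,\C}^{N,1}$, and the restriction to $\Wp_{N,1}$ of each analytic coordinate function $z_n\colon\Wp\to\C$ is again analytic.

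Next I would check that $\Phi_{N,1}$ has values in $\ell_{0,\C}^{N+1/2,1}$ and is locally bounded there. By Theorem~\ref{bm.H-1}(ii),
\[
  \abs{z_{\pm n}(q)}=O\!\left(\frac{\abs{\gm_n(q)}+\abs{\mu_n(q)-\tau_n(q)}}{\sqrt n}\right)
\]
locally uniformly on $\Wp$ and uniformly in $n\ge 1$; since $\lin{n}^{1/2}/\sqrt n$ is bounded for $n\ge1$ this gives $\lin{n}^{N+1/2}\abs{z_{\pm n}(q)}\le C\lin{n}^{N}\bigl(\abs{\gm_n(q)}+\abs{\mu_n(q)-\tau_n(q)}\bigr)$ with $C$ locally uniform on $\Wp_{N,1}$. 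By Theorem~\ref{thm:spec-fw-wiener} the sequences $(\gm_n(q))_{n\ge1}$ and $(\tau_n(q)-\mu_n(q))_{n\ge1}$ lie in $\ell_\C^{N,1}(\N)$ with norms bounded uniformly on bounded subsets of $\FL_{0,\C}^{N,1}$; summing over $n$ (and using $z_0(q)=0$) then yields $\Phi_{N,1}(q)=(z_n(q))_{n\in\Z}\in\ell_{0,\C}^{N+1/2,1}$ together with a bound on $\n{\Phi_{N,1}(q)}_{N+1/2,1}$ uniform on bounded subsets of $\Wp_{N,1}$.

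Finally, since each $z_n$ is analytic on $\Wp_{N,1}$ and $\Phi_{N,1}\colon\Wp_{N,1}\to\ell_{0,\C}^{N+1/2,1}$ is locally bounded, the abstract analyticity criterion \cite[Theorem~A.3]{Kappeler:2003up} (weak analyticity plus local boundedness) shows that $\Phi_{N,1}$ is analytic; and since $\Phi$ maps $H_0^{-1}$ into $h_0^{-1/2}$, its restriction maps $\FL_0^{N,1}$ into $\ell_0^{N+1/2,1}$, so $\Phi_{N,1}$ is real analytic. I do not expect a genuine obstacle: the content is already packaged in Theorems~\ref{bm.H-1} and~\ref{thm:spec-fw-wiener}, the latter conveniently stated for complex potentials so that no separate analytic continuation of the spectral bounds is required. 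The one point worth doing carefully is the weight bookkeeping above, namely that the $1/\sqrt n$ gain in the $z_n$-asymptotics exactly absorbs the passage from the $\lin{n}^{N}$-weighted estimates of Theorem~\ref{thm:spec-fw-wiener} to the $\lin{n}^{N+1/2}$-weighted target $\ell_{0,\C}^{N+1/2,1}$.
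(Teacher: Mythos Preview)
Your proposal is correct and follows essentially the same route as the paper's proof: restrict the coordinate functions to $\Ws_{N,1}=\Ws\cap\FL_{0,\C}^{N,1}$, feed the asymptotic bound of Theorem~\ref{bm.H-1}(ii) together with the spectral estimates of Theorem~\ref{thm:spec-fw-wiener} to get local boundedness into $\ell_{0,\C}^{N+1/2,1}$, and then invoke the weak-analyticity-plus-local-boundedness criterion. The only cosmetic difference is that the paper cites \cite[Theorems~A.3--A.4]{Grebert:2014iq} rather than \cite[Theorem~A.3]{Kappeler:2003up} for the last step, spelling out the intermediate weak analyticity of $q\mapsto\lin{\xi,\Phi(q)}$ for $\xi\in\ell_{0,\C}^{-(N+1/2),\infty}$.
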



\begin{proof}
The coordinate functions $z_{n}(q) = (\Phi(q))_{n}$, $n\in\Z$, are analytic functions on the complex neighborhood $\Wp\subset H_{0,\C}^{-1}$ of $H_{0}^{-1}$ of Theorem~\ref{bm.H-1}. Furthermore,
\[
  z_{\pm n}(q) = O\p*{\frac{\abs{\gm_{n}(q)} + \abs{\mu_{n}(q)-\tau_{n}(q)}}{\sqrt{n}}}
\]
locally uniformly on $\Ws$ and uniformly in $n\ge 1$. By the asymptotics of the periodic and Dirichlet eigenvalues of Theorem~\ref{thm:spec-fw-wiener}, $\Phi_{N,1}$ maps the complex neighborhood $\Ws_{N,1} \defl \Ws\cap \FL_{0,\C}^{N,1}$ of $\FL_{0}^{N,1}$ into the space $\ell_{0,\C}^{N+1/2,1}$ and is locally bounded. It then follows from~\cite[Theorem~A.4]{Grebert:2014iq} that for any $\xi\in \ell_{0,\C}^{-(N+1/2),\infty}$, the map $q\mapsto \lin{\xi,\Phi(q)}$ is analytic on $\Ws\cap \ell_{0,\C}^{N,1}$ implying that $\Phi\colon \Ws_{N,1}\to \ell_{0,\C}^{N+1/2,1}$ is weakly analytic. Hence by~\cite[Theorem~A.3]{Grebert:2014iq}, $\Phi_{N,1}$ is analytic.~\qed
\end{proof}

Next, following arguments used in \cite{Kappeler:2009uk}, we prove that $\Phi_{N,1}$ is onto.

\begin{lem}
\label{Phi-onto-wiener}
For any $N\in\Z_{\ge 0}$, the map $\Phi_{N,1}\colon \FL_{0}^{N,1}\to \ell_{0}^{N+1/2,1}$ is onto.~\fish
\end{lem}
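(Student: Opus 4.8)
The plan is to follow the structure of the proof of Lemma~\ref{Phi-onto}, but with two substitutions: the surjectivity of $\Phi$ on $H_0^{-1}$ is replaced by its surjectivity on $L_0^2$, and Theorem~\ref{inv} is replaced by the spectral characterization of $\FL_0^{N,1}$-potentials recorded in Theorem~\ref{thm:spec-inv-wiener}. The only genuinely new point is that Theorem~\ref{thm:spec-inv-wiener} requires a potential in $L_0^2$ rather than merely in $H_0^{-1}$, so the first step must produce a preimage of the given sequence that already lies in $L_0^2$.

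Let $z\in\ell_0^{N+1/2,1}$ be given. First I would note the chain of embeddings $\ell_0^{N+1/2,1}\hookrightarrow\ell_0^{N+1/2,2}\hookrightarrow\ell_0^{1/2,2}=h_0^{1/2}$, where the first inclusion is the estimate $\n{a}_{\ell^2}\le\n{a}_{\ell^1}$ applied to $a_n=\w{n}^{N+1/2}\abs{z_n}$, and the second holds because $N\ge0$. Hence $z\in h_0^{1/2}$, and by Theorem~\ref{thm:bhf-H-1}(ii) applied with $s=0$ the Birkhoff map restricts to a bianalytic diffeomorphism $\Phi|_{L_0^2}\colon L_0^2\to h_0^{1/2}$; in particular there is a real valued potential $q\in L_0^2$ with $\Phi(q)=z$, so that $I_n(q)=z_nz_{-n}=\abs{z_n}^2$ for every $n\ge1$.

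Next I would transfer the decay of $z$ to the gap lengths of $q$. Since $L_0^2\subset H_0^{-1}\subset\Wp$, Theorem~\ref{bm.H-1}(i) applies with, say, $\ep=1/2$: there is $n_0$ so that $\gm_n^2(q)\le16\pi n\,I_n(q)$ for all $n\ge n_0$ with $\gm_n(q)\neq0$, while the bound $\abs{\gm_n(q)}\le c\,\w{n}^{1/2}\abs{z_n}$ is trivial when $\gm_n(q)=0$; thus it holds for all $n\ge n_0$. As the finitely many remaining gap lengths $\gm_n(q)$, $1\le n<n_0$, are finite, we obtain
\[
  \sum_{n\ge1}\w{n}^N\abs{\gm_n(q)}
  \le c\sum_{n\ge n_0}\w{n}^{N+1/2}\abs{z_n}+c'
  \le c\,\n{z}_{N+1/2,1}+c'<\infty,
\]
that is, $(\gm_n(q))_{n\ge1}\in\ell_\R^{N,1}(\N)$.

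Finally, having $q\in L_0^2$ with $\gm(q)\in\ell_\R^{N,1}$, Theorem~\ref{thm:spec-inv-wiener} gives $q\in\FL_0^{N,1}$; and since $\Phi_{N,1}$ is by definition the restriction of $\Phi$ to $\FL_0^{N,1}$, we conclude $\Phi_{N,1}(q)=\Phi(q)=z$, so $\Phi_{N,1}$ is onto. I do not expect a serious obstacle: the conceptual content is carried entirely by the surjectivity of $\Phi|_{L_0^2}$ (Theorem~\ref{thm:bhf-H-1}) and by Theorem~\ref{thm:spec-inv-wiener}. The only care needed is the bookkeeping around the quotient $I_n/\gm_n^2$ in Theorem~\ref{bm.H-1}(i) — one uses Theorem~\ref{bm.H-1}(iii) to cover vanishing gaps and treats the finitely many small indices separately — but this is routine.
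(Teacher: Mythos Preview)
Your proof is correct and follows essentially the same approach as the paper: embed $\ell_0^{N+1/2,1}$ into $h_0^{1/2}$, use the surjectivity of $\Phi|_{L_0^2}$ from Theorem~\ref{thm:bhf-H-1}(ii) to find a preimage $q\in L_0^2$, invoke Theorem~\ref{bm.H-1}(i) to transfer the decay of $z$ to the gap lengths, and then apply Theorem~\ref{thm:spec-inv-wiener}. The paper's version is terser (it does not spell out the embedding chain or the handling of vanishing gaps), but the logic is identical.
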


\begin{proof}
For any $z\in \ell_{0}^{N+1/2,1} \subset h_{0}^{1/2}$, there exists $q\in L_{0}^{2}$ so that $\Phi(q) = z$. Moreover, by Theorem~\ref{bm.H-1} (i) we have for all $n$ sufficiently large
\[
  \abs*{\frac{8n\pi I_{n}}{\gm_{n}^{2}}} \ge \frac{1}{2}.
\]
Since $I_{n} = z_{n}z_{-n}$ and $z\in \ell_{0}^{N+1/2,1}$, this implies $\gm(q)\in \ell^{N,1}(\N)$. Using Theorem~\ref{thm:spec-inv-wiener}, we conclude that $q\in \FL_{0}^{N,1}$. Since by definition $\Phi_{N,1}$ is the restriction of the Birkhoff map $\Phi$ to $\FL_{0}^{N,1}$, we conclude
\[
  \Phi_{N,1}(q) = z.
\]
This completes the proof.~\qed
\end{proof}

\begin{lem}
\label{dPhi-iso-wiener}
For any $q\in \FL_{0}^{N,1}$ with $N\in\Z_{\ge 0}$,
\[
  \ddd_{q}\Phi_{N,1}\colon \FL_{0}^{N,1}\to \ell_{0}^{N+1/2,1}
\]
is a linear isomorphism.~\fish
\end{lem}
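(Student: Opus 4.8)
The plan is to reduce the statement to the corresponding fact one level up in the Sobolev scale, together with the surjectivity already established in Lemma~\ref{Phi-onto-wiener}. Since $\n{z}_{N,2}\le\n{z}_{N,1}$, there is a continuous inclusion $\FL_{0}^{N,1}\hookrightarrow H_{0}^{N}$, so $q\in H_{0}^{N}$ and Theorem~\ref{thm:bhf-H-1}(ii), applied with $s=N\ge-1$, tells us that $\ddd_{q}\Phi\colon H_{0}^{N}\to h_{0}^{N+1/2}$ is a linear isomorphism; write $B\defl(\ddd_{q}\Phi)^{-1}$. On the other hand, $\ddd_{q}\Phi_{N,1}\colon\FL_{0}^{N,1}\to\ell_{0}^{N+1/2,1}$ is bounded, being the differential of the real analytic map $\Phi_{N,1}$ of Lemma~\ref{Phi-ana-wiener}, and it is nothing but the restriction of $\ddd_{q}\Phi$ to $\FL_{0}^{N,1}$; in particular it is injective. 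Hence it suffices to show that $B$ maps $\ell_{0}^{N+1/2,1}$ into $\FL_{0}^{N,1}$ with bounded restriction: then, for $\hat z\in\ell_{0}^{N+1/2,1}$, the element $\hat q\defl B\hat z$ lies in $\FL_{0}^{N,1}$ and satisfies $\ddd_{q}\Phi_{N,1}\hat q=\ddd_{q}\Phi\,\hat q=\hat z$, which gives surjectivity, and $B|_{\ell_{0}^{N+1/2,1}}$ is then the (bounded) two-sided inverse of $\ddd_{q}\Phi_{N,1}$.

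To obtain this mapping property of $B$ I would identify it with the differential of the inverse Birkhoff map. Put $\Psi\defl\Phi^{-1}\colon h_{0}^{N+1/2}\to H_{0}^{N}$, which is analytic by Theorem~\ref{thm:bhf-H-1}(ii), so $B=\ddd_{\Phi(q)}\Psi$. By Lemma~\ref{Phi-onto-wiener}, $\Psi$ maps $\ell_{0}^{N+1/2,1}$ into $\FL_{0}^{N,1}$ and hence restricts to a map $\Psi_{N,1}\colon\ell_{0}^{N+1/2,1}\to\FL_{0}^{N,1}$; differentiating the identity $\iota_{1}\circ\Psi_{N,1}=\Psi\circ\iota_{2}$, with $\iota_{1}\colon\FL_{0}^{N,1}\hookrightarrow H_{0}^{N}$ and $\iota_{2}\colon\ell_{0}^{N+1/2,1}\hookrightarrow h_{0}^{N+1/2}$ the inclusions, then shows that $B$ maps $\ell_{0}^{N+1/2,1}$ into $\FL_{0}^{N,1}$ with restriction $\ddd_{\Phi(q)}\Psi_{N,1}$, which is bounded — provided $\Psi_{N,1}$ is real analytic. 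For the latter I would invoke the standard criterion that a locally bounded, weakly analytic map between Banach spaces is analytic (cf.\ \cite[Theorems~A.3 and~A.4]{Grebert:2014iq}). Weak analyticity is routine: for $z\in\ell_{0,\C}^{N+1/2,1}$ near a real point, a direction $\hat z$, and $\lm$ in a small disc, the curve $\lm\mapsto\Psi(z+\lm\hat z)$ is analytic into $H_{0,\C}^{-1}$, so each of its Fourier coefficients is analytic in $\lm$; pairing with an arbitrary $\xi\in\ell_{0,\C}^{-(N+1/2),\infty}=(\ell_{0,\C}^{N+1/2,1})^{*}$ and using local boundedness to justify the dominated interchange of sum and limit exhibits $\lm\mapsto\lin{\xi,\Psi_{N,1}(z+\lm\hat z)}$ as a locally uniform limit of analytic functions.

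The real work — and the main obstacle — is the local boundedness of $\Psi_{N,1}$, i.e.\ that $\setdef{\Psi(z)}{\n{z}_{N+1/2,1}\le\rho}$ is bounded in $\FL_{0}^{N,1}$ for every $\rho>0$. First, since $N+1/2>-1/2$, a bounded subset of $\ell_{0}^{N+1/2,1}$ has uniformly small tails in $h_{0}^{-1/2}$ and is therefore precompact there; as $\Psi\colon h_{0}^{-1/2}\to H_{0}^{-1}$ is continuous, the set above is precompact, in particular bounded, in $H_{0}^{-1}$. Consequently Theorem~\ref{bm.H-1}(i) applies with a threshold $n_{0}$ that can be chosen uniformly over the set, giving $\abs{8n\pi I_{n}/\gm_{n}^{2}-1}\le1/2$ for all $n\ge n_{0}$; since $I_{n}=z_{n}z_{-n}$, so $\abs{I_{n}}\le\tfrac12(\abs{z_{n}}^{2}+\abs{z_{-n}}^{2})$, this yields $\lin{n}^{N}\abs{\gm_{n}(q)}\lesssim\lin{n}^{N+1/2}(\abs{z_{n}}+\abs{z_{-n}})$ for $n\ge n_{0}$, while the finitely many $\gm_{n}$ with $n<n_{0}$ are bounded by continuity on the compact closure; altogether $\n{\gm(q)}_{N,1}\le C(\rho)$ uniformly. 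It then remains to pass from a bound on $\n{\gm(q)}_{N,1}$ to a bound on $\n{q}_{N,1}$ — the quantitative form of Theorem~\ref{thm:spec-inv-wiener}, which is exactly what the adapted Fourier coefficient map of \cite{Poschel:2011iua} provides (the $p=1$, weighted-$N$ analogue of Lemma~\ref{iso-bounded}), its complex version covering the potentials near real ones needed above. Once this quantitative spectral bound is in hand, the preceding steps assemble into the assertion that $\ddd_{q}\Phi_{N,1}$ is a linear isomorphism. As an alternative to the inverse-map route, if one has at one's disposal a one-smoothing estimate for $\ddd_{q}\Phi-\ddd_{0}\Phi$ on this scale, namely boundedness $\FL_{0}^{N,1}\to\ell_{0}^{N+3/2,1}$, then $\ddd_{q}\Phi_{N,1}=\ddd_{0}\Phi_{N,1}+(\text{compact})$ with $\ddd_{0}\Phi_{N,1}$ the explicit weighted Fourier transform, so $\ddd_{q}\Phi_{N,1}$ is Fredholm of index zero and the conclusion again follows from its injectivity.
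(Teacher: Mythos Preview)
Your main route has a real gap. To apply the locally-bounded-plus-weakly-analytic criterion you need $\Psi_{N,1}$ to be defined and locally bounded on a \emph{complex} neighborhood in $\ell_{0,\C}^{N+1/2,1}$, i.e.\ you need the complex version of the quantitative spectral characterization. But Lemma~\ref{Phi-onto-wiener} and Theorem~\ref{thm:spec-inv-wiener} are real statements, and the step that passes from $\gm$ to $q$ (the analogue of Lemma~\ref{root-est-poschel}) hinges on $\abs{r_{2n}/r_{-2n}}$ being close to~$1$. For real $q$ this is automatic since $r_{-2n}=\ob{r_{2n}}$; for complex $q$ near a real point it is not, and the argument in \cite{Poschel:2011iua} does not give it without additional work. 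So ``its complex version covering the potentials near real ones'' is precisely the missing ingredient, not a formality. (Incidentally, in your weak-analyticity paragraph the test functional $\xi$ should lie in $(\FL_{0,\C}^{N,1})^{*}=\FL_{0,\C}^{-N,\infty}$, the dual of the codomain, not of the domain.)

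Your closing alternative is exactly the paper's proof, and the hypothesized smoothing input is already available --- not on the $\FL$-scale directly, but on the Sobolev scale: by \cite[Theorem~1.4]{Kappeler:2013bt}, $\ddd_{q}\Phi-\ddd_{0}\Phi\colon H_{0}^{N}\to h_{0}^{N+3/2}$ is bounded for $q\in H_{0}^{N}$. Since $\FL_{0}^{N,1}\hookrightarrow H_{0}^{N}$ is bounded and $h_{0}^{N+3/2}\hookrightarrow\ell_{0}^{N+1/2,1}$ is compact, $\ddd_{q}\Phi_{N,1}-\ddd_{0}\Phi_{N,1}$ is compact; combined with the fact that $\ddd_{0}\Phi_{N,1}$ is the explicit weighted Fourier isomorphism, $\ddd_{q}\Phi_{N,1}$ is Fredholm of index zero and hence, being injective, an isomorphism. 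This route is both shorter and sidesteps the complex spectral characterization entirely.
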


\begin{proof}
By Theorem~\ref{thm:bhf-H-1}, $\ddd_{q}\Phi\colon H_{0}^{-1}\to h_{0}^{-1/2}$ is a linear isomorphism for any $q\in H_{0}^{-1}$. 
Since $\ddd_{q}\Phi_{N,1} = \ddd_{q}\Phi\big|_{\FL_{0}^{N,1}}$ for any $q\in \FL_{0}^{N,1}$, it follows from Lemma~\ref{Phi-ana-wiener} that $\ddd_{q}\Phi_{N,1}\colon \FL_{0}^{N,1}\to \ell_{0}^{N+1/2,1}$ is one-to one.
To show that $\ddd_{q}\Phi_{N,1}$ is onto, note that by Theorem~\ref{thm:bhf-H-1}, $\ddd_{0}\Phi_{N,1}\colon \FL_{0}^{N,1}\to \ell_{0}^{N+1/2,1}$ is a weighted Fourier transform and hence a linear isomorphism. It therefore suffices to show that $\ddd_{q}\Phi_{N,1}-\ddd_{0}\Phi_{N,1}\colon \FL_{0}^{N,1}\to \ell_{0}^{N+1/2,1}$ is a compact operator implying that $\ddd_{q}\Phi_{N,1}$ is a Fredholm operator of index zero and thus a linear isomorphism.
To show that $\ddd_{q}\Phi_{N,1}-\ddd_{0}\Phi_{N,1}\colon \FL_{0}^{N,1}\to \ell_{0}^{N+1/2,1}$ is compact we use that by \cite[Theorem~1.4]{Kappeler:2013bt}, for any $q\in H_{0}^{N}$, the restriction of $\ddd_{q}\Phi$ to $H_{0}^{N}$ has the property that
$
  \ddd_{q}\Phi-\ddd_{0}\Phi\colon H_{0}^{N}\to h_{0}^{N+3/2}
$
is a bounded linear operator.
In view of the fact that $\FL_{0}^{N,1}\opento H_{0}^{N}$ is bounded and $h_{0}^{N+3/2} \opento_{c} \ell_{0}^{N+1/2,1}$ is compact, it follows that
\[
  \ddd_{q}\Phi_{N,1}-\ddd_{0}\Phi_{N,1}\colon \FL_{0}^{N,1}\to \ell_{0}^{N+1/2,1}
\]
is a compact operator.~\qed
\end{proof}

\subsection{Frequencies}

Finally, we need to consider the KdV frequencies introduced in Subsection~\ref{ss:kdv-freq}. They are viewed either as functions on $\FL_{0}^{N,1}$ or as functions of the Birkhoff coordinates on $\ell_{0}^{N+1/2,1}$.

\begin{lem}
\label{frequencies-wiener}
The KdV frequencies $\om_{n}$, $n\ge1$, admit a real analytic extension to a common complex neighborhood $\Ws^{N,1}$ of $\FL_{0}^{N,1}$, $N\in\Z_{\ge 0}$, and for any $r > 1$ have the asymptotic behavior
\[
  \om_{n} - 8n^{3}\pi^{3} = \ell_{n}^{r},
\]
locally uniformly on $\Ws^{N,1}$.~\fish
\end{lem}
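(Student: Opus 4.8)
\emph{Real analytic extension.} The plan is to obtain the extension by restriction and the asymptotics from an explicit frequency formula. By \cite{Kappeler:2006fr} (see also \cite{Kappeler:2016uj}) the KdV frequencies $\om_{n} = \partial_{I_{n}}\Hm$, $n\ge 1$, extend from $H_{0}^{1}$ to a common real analytic function on $H_{0}^{-1}$, hence -- by the very definition of real analyticity on a real Banach space -- to a holomorphic function on a common complex neighborhood $\Ws^{-1}$ of $H_{0}^{-1}$ in $H_{0,\C}^{-1}$; we may take $\Ws^{-1}$ inside the neighborhood $\Wp$ of Theorem~\ref{bm.H-1}. For $N\in\Z_{\ge 0}$ the inclusions $\FL_{0,\C}^{N,1}\opento H_{0,\C}^{N}\opento H_{0,\C}^{-1}$ are bounded, so $\Ws^{N,1}\defl \Ws^{-1}\cap \FL_{0,\C}^{N,1}$ is an open complex neighborhood of $\FL_{0}^{N,1}$ on which every $\om_{n}$ is real analytic. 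Shrinking $\Ws^{N,1}$ if necessary, we may assume it is bounded in $\FL_{0,\C}^{N,1}$, so that below ``locally uniformly on $\Ws^{N,1}$'' may be read as ``uniformly on $\Ws^{N,1}$''.

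\emph{Asymptotics.} Next I would invoke the explicit representation of $\om_{n}$ from \cite[Theorem~3.6]{Kappeler:2016uj} -- the formula underlying the expansion $\om_{n} = (2n\pi)^{3} - 6I_{n} + o(1)$ of Lemma~\ref{kdv-freq-pseudo}. It writes $\om_{n}-8n^{3}\pi^{3}$ as a finite sum of terms built from the Fourier coefficients $q_{2n}$ of $q$, the action $I_{n}$, the gap lengths $\gm_{n}$ and the differences $\tau_{n}-\mu_{n}$ (the spectral quantities possibly entering through convolution-type products), together with one singular sum of discrete Hilbert-transform type whose coefficient sequence is again assembled from such data. On $\Ws^{N,1}$ these ingredients are controlled uniformly: $(q_{2n})_{n\ge 1}\in\ell_{\C}^{N,1}(\N)$ by definition of $\FL_{0}^{N,1}$; $(\gm_{n})_{n\ge 1}$ and $(\tau_{n}-\mu_{n})_{n\ge 1}$ lie in $\ell_{\C}^{N,1}(\N)$, uniformly on bounded subsets, by Theorem~\ref{thm:spec-fw-wiener}; and $(I_{n})_{n\ge 1}\in\ell_{\C}^{N,1}(\N)$ as well -- either because $I_{n} = z_{n}z_{-n}$ with $(z_{n})_{n\in\Z}\in\ell_{0,\C}^{N+1/2,1}$ (Lemma~\ref{Phi-ana-wiener}), or directly because $I_{n} = O(\gm_{n}^{2}/n)$ by~\eqref{In-gmn}. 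Since $\ell_{\C}^{N,1}(\N)$ is closed under convolution -- equivalently $\FL_{0}^{N,1}$ is a Banach algebra -- every non-singular term lies in $\ell_{\C}^{N,1}(\N)\subset\ell^{r}(\N)$ for all $r\ge 1$, with norm bounded uniformly on $\Ws^{N,1}$. The singular term is the image of an $\ell^{N,1}\subset\ell^{r}$ sequence under a discrete singular-integral operator, which is bounded on $\ell^{r}(\N)$ for every $1 < r < \infty$ but not on $\ell^{1}(\N)$; it thus contributes a sequence in $\ell^{r}(\N)$, uniformly on $\Ws^{N,1}$, for every $r > 1$. Finally, the formula applies only for $n$ beyond a threshold that is uniform on bounded subsets, and the finitely many remaining values $\om_{n}-8n^{3}\pi^{3}$ form a finite, hence harmless, sequence. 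Adding the terms, $(\om_{n}-8n^{3}\pi^{3})_{n\ge 1}\in\ell^{r}(\N)$ for every $r > 1$, locally uniformly on $\Ws^{N,1}$, which is the assertion.

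\emph{Main obstacle.} The delicate point is the singular term: one must extract from \cite{Kappeler:2016uj} the frequency formula in a form concrete enough to identify the kernel of that sum and the decay of its coefficient sequence, and then establish the $\ell^{r}$-bound uniformly in $n$ and in $q\in\Ws^{N,1}$. This is precisely where the summability exponent drops from $1$ to an arbitrary $r > 1$, and it is why the statement cannot be sharpened to $r = 1$. The rest is routine bookkeeping, given the Banach algebra structure of $\FL_{0}^{N,1}$ and the spectral estimates of Theorem~\ref{thm:spec-fw-wiener}.
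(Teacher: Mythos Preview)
Your overall direction is sound --- both you and the paper rely on \cite[Theorem~3.6]{Kappeler:2016uj} --- but you are working much harder than necessary, and the hard part of your argument is left speculative.

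The paper's proof is a single sentence: since $\FL_{0,\C}^{N,1}\opento L_{0,\C}^{2}$, the result is an immediate consequence of \cite[Theorem~3.6]{Kappeler:2016uj}. The point is that this theorem already delivers the \emph{conclusion} $\om_{n}-8n^{3}\pi^{3}=\ell_{n}^{r}$ (for all $r>1$, locally uniformly) on a complex neighborhood of $L_{0}^{2}$; one does not need to unpack the formula and re-estimate its pieces. Restricting to the smaller space $\FL_{0}^{N,1}$ via the continuous embedding gives both the analytic extension and the asymptotics at once.

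By contrast, you invoke Theorem~3.6 only for the underlying \emph{formula} and then attempt to redo the estimate term by term on $\Ws^{N,1}$, bringing in Theorem~\ref{thm:spec-fw-wiener}, Lemma~\ref{Phi-ana-wiener}, the Banach-algebra property of $\ell^{N,1}$, and a discrete Hilbert-transform bound. This is not wrong in principle, but (i) it duplicates work already contained in the cited theorem, and (ii) your description of the formula and especially of the ``singular term'' is schematic --- you yourself flag that extracting the kernel and proving the uniform $\ell^{r}$-bound is the ``main obstacle'', and you do not actually carry it out. So as written your argument has a gap precisely at the step that carries all the weight, whereas the paper sidesteps the issue entirely by citing the finished asymptotic estimate rather than the raw formula.
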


\begin{proof}
Since $\FL_{0,\C}^{N,1}\opento L_{0,\C}^{2}$ this is an immediate consequence of \cite[Theorem~3.6]{Kappeler:2016uj}.~\qed
\end{proof}

\subsection{Wellposedness}

We are now in position to prove that the KdV equation is globally in time $C^{\om}$-wellposed on $\FL_{0}^{N,1}$ for any $N\in\Z_{\ge 0}$. First we consider the KdV equation in Birkhoff coordinates.
Let $\Sc_{\Ph}\colon (t,z) \mapsto (\ph_{n}^{t}(z))_{n\in\Z}$ denote the flow in Birkhoff coordinates  with coordinate functions
\[
  \ph_{n}^{t}(z) = \e^{\ii \om_{n}(z)t}z_{n},\qquad n\in\Z.
\]

\begin{lem}
\label{kdv-wp-wiener-bhf}
For any $N\in\Z_{\ge 0}$ and $T > 0$, the map
\[
  \Sc_{\Phi}\colon \ell_{0}^{N+1/2,1}\to C([-T,T],\ell_{0}^{N+1/2,1}),
  \qquad z\mapsto (t\mapsto \Sc_{\Ph}(t,z)),
\]
is real-analytic.~\fish
\end{lem}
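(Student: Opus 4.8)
The plan is to show that $\Sc_\Phi$ is \emph{weakly analytic} and \emph{locally bounded} as a map into $C([-T,T],\ell_0^{N+1/2,1})$, and then invoke the standard criterion (cf.\ \cite[Theorem~A.3]{Grebert:2014iq}) that weak analyticity plus local boundedness yields analyticity. The coordinate functions are $\ph_n^t(z) = \e^{\ii\om_n(z)t}z_n$, so the whole construction rests on the asymptotics $\om_n - 8n^3\pi^3 = \ell_n^r$ from Lemma~\ref{frequencies-wiener}, valid locally uniformly on a complex neighborhood $\Ws^{N,1}$ of $\FL_0^{N,1}$, together with the analytic extension of each $\om_n$.

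First I would fix $z^{(0)}\in\ell_0^{N+1/2,1}$ and a small polydisc around it on which, by Lemma~\ref{frequencies-wiener} (transported to Birkhoff coordinates via the diffeomorphism of Theorem~\ref{thm:bhf-wiener}), the frequencies satisfy $\sup_n |\om_n(z) - 8n^3\pi^3| \le C$ and $|\om_n(z)-8n^3\pi^3| = \ell_n^r$ with $r>1$ chosen so that the tail is summable. Then for $|t|\le T$ and $z$ in this polydisc,
\[
  \sum_{n\in\Z} \w{n}^{N+1/2}\,|\ph_n^t(z)| = \sum_{n\in\Z}\w{n}^{N+1/2}\,\e^{-t\,\Im\om_n(z)}\,|z_n|
  \le \e^{T C}\sum_{n\in\Z}\w{n}^{N+1/2}|z_n| = \e^{TC}\,\n{z}_{N+1/2,1},
\]
which gives the required local boundedness of $\Sc_\Phi$ into $C([-T,T],\ell_0^{N+1/2,1})$ (continuity in $t$ for fixed $z$ is immediate since the series converges uniformly in $t$ and each term is continuous; actually the curve is smooth in $t$).

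For weak analyticity, pick any $\xi = (\xi_n)_{n\in\Z}\in\ell_0^{-(N+1/2),\infty}$ and any bounded linear functional $\Lm$ on $C([-T,T],\ell_0^{N+1/2,1})$; by duality it suffices to treat evaluation-type functionals, or more simply to check that for every fixed $t$ the scalar function $z\mapsto \lin{\xi,\Sc_\Phi(t,z)} = \sum_n \xi_n \e^{\ii\om_n(z)t} z_n$ is analytic on the polydisc, with the estimate above providing a uniform-in-$t$ majorant so that $z\mapsto(t\mapsto\lin{\xi,\Sc_\Phi(t,z)})$ is analytic into $C([-T,T],\C)$. Each summand is analytic in $z$ because $\om_n$ extends analytically (Lemma~\ref{frequencies-wiener}) and $z_n$ is linear; the partial sums are analytic and converge uniformly on the polydisc by the geometric bound $\w{n}^{N+1/2}\e^{-t\Im\om_n}|z_n|\le \e^{TC}\w{n}^{N+1/2}|z_n|$ together with $\xi\in\ell^{-(N+1/2),\infty}$, so by the Weierstrass theorem the limit is analytic. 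Hence $\Sc_\Phi$ is weakly analytic and locally bounded, therefore analytic.

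The main obstacle is purely bookkeeping: one must make sure the frequency asymptotics of Lemma~\ref{frequencies-wiener} are genuinely \emph{uniform in $t\in[-T,T]$} after the exponential is taken — i.e.\ that $\sup_{|t|\le T}\e^{-t\Im\om_n(z)}$ stays bounded locally uniformly in $z$, which follows from $|\Im\om_n(z)|\le C$ on the polydisc — and that the neighborhood $\Ws^{N,1}$ pulls back to an open neighborhood in $\ell_{0,\C}^{N+1/2,1}$ under $\Phi_{N,1}$, which is exactly Theorem~\ref{thm:bhf-wiener}. No genuinely hard estimate is needed beyond what Lemma~\ref{frequencies-wiener} already supplies; the content is the abstract analyticity criterion applied to an explicitly given, absolutely convergent Fourier-type series.
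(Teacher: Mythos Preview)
Your argument is correct and is essentially the content behind the paper's one-line proof, which simply invokes \cite[Theorem~E.1]{Kappeler:2016uj} together with the asymptotics $\om_n - 8n^3\pi^3 = o(1)$ from Lemma~\ref{frequencies-wiener}. You have unpacked that black-box citation: local boundedness comes from $|\Im\om_n| \le |\om_n - 8n^3\pi^3| \le C$ on a complex polydisc, and analyticity follows from the weak-analyticity criterion applied to the absolutely convergent series $\sum_n \xi_n \e^{\ii\om_n(z)t}z_n$.

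One point worth tightening: the dual of $C([-T,T],\ell_0^{N+1/2,1})$ is not spanned by evaluation-type functionals $g\mapsto \xi(g(t))$, so your reduction ``by duality it suffices to treat evaluation-type functionals'' needs the version of the analyticity criterion that only requires a \emph{norming} (or point-separating) family of functionals, not the full dual. The family $\{g\mapsto \xi(g(t)) : t\in[-T,T],\ \xi\in\ell_0^{-(N+1/2),\infty}\}$ is indeed norming, and together with local boundedness this suffices (this is standard, and is part of what \cite[Theorem~E.1]{Kappeler:2016uj} packages). Alternatively, you can argue directly that $z\mapsto \Sc_\Phi(\cdot,z)$ is G\^ateaux-analytic on each complex line with locally bounded derivative, which combined with local boundedness gives Fr\'echet analyticity without invoking the dual at all.
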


\begin{proof}
Since $\om_{n} - 8n^{3}\pi^{3} = o(1)$ locally uniformly, this is an immediate consequence of \cite[Theorem~E.1]{Kappeler:2016uj}.~\qed
\end{proof}

\begin{thm}
\label{thm:kdv-wp-wiener}
For any $N\in\Z_{\ge 0}$, the KdV equation is globally in time $C^{\om}$-wellposed on $\FL_{0}^{N,1}$. More precisely, for any $T > 0$, the map
\[
  \Sc\colon \FL_{0}^{N,1}\to C([-T,T],\FL_{0}^{N,1}),\qquad q\mapsto (t\mapsto\Sc(t,q)),
\]
is real analytic.~\fish
\end{thm}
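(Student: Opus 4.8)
The plan is to conjugate the flow by the Birkhoff map and thereby reduce the statement to the two substantive results already established: Theorem~\ref{thm:bhf-wiener}, which asserts that $\Phi_{N,1}\colon\FL_0^{N,1}\to\ell_0^{N+1/2,1}$ is a real analytic diffeomorphism, and Lemma~\ref{kdv-wp-wiener-bhf}, which asserts that the flow $\Sc_\Phi$ in Birkhoff coordinates is real analytic as a map into $C([-T,T],\ell_0^{N+1/2,1})$. First I would record that $\Phi_{N,1}^{-1}\colon\ell_0^{N+1/2,1}\to\FL_0^{N,1}$ is itself real analytic: by Lemma~\ref{dPhi-iso-wiener} the differential $\ddd_q\Phi_{N,1}$ is a linear isomorphism at every $q\in\FL_0^{N,1}$, so the inverse function theorem for real analytic maps between Banach spaces applies locally, and since $\Phi_{N,1}$ is a bijection by Theorem~\ref{thm:bhf-wiener}, these local inverses patch together to the global real analytic inverse.

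Next I would identify the solution map with the conjugated linear flow. Since $\FL_0^{N,1}\opento L_0^2\opento H_0^{-1}$ and $\Phi_{N,1}$ is the restriction to $\FL_0^{N,1}$ of the Birkhoff map $\Phi$ on $H_0^{-1}$, and since by \cite{Kappeler:2006fr} the KdV flow on $H_0^{-1}$, written in Birkhoff coordinates, is precisely the explicit linear flow $\Sc_\Ph$ of Lemma~\ref{kdv-wp-wiener-bhf} (i.e. $\partial_t z_{\pm n}=\mp\ii\om_n z_{\pm n}$, $n\ge1$), it follows that for every $q\in\FL_0^{N,1}$
\[
  \Sc(t,q) = \Phi_{N,1}^{-1}\bigl(\Sc_\Ph(t,\Phi_{N,1}(q))\bigr),\qquad t\in\R.
\]
By Lemma~\ref{kdv-wp-wiener-bhf} the inner curve $t\mapsto\Sc_\Ph(t,\Phi_{N,1}(q))$ lies in $C([-T,T],\ell_0^{N+1/2,1})$, and applying $\Phi_{N,1}^{-1}$ pointwise yields a curve in $C([-T,T],\FL_0^{N,1})$; in particular $\Sc$ maps $\FL_0^{N,1}$ into $C([-T,T],\FL_0^{N,1})$ and the solution curve stays in $\FL_0^{N,1}$ for all time.

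For the real analyticity of $\Sc$ I would factor it as the composition
\[
  \FL_0^{N,1}\xrightarrow{\ \Phi_{N,1}\ }\ell_0^{N+1/2,1}\xrightarrow{\ \Sc_\Phi\ }C([-T,T],\ell_0^{N+1/2,1})\xrightarrow{\ (\Phi_{N,1}^{-1})_\ast\ }C([-T,T],\FL_0^{N,1}),
\]
where $(\Phi_{N,1}^{-1})_\ast$ denotes post-composition, $u\mapsto\Phi_{N,1}^{-1}\circ u$. The first arrow is real analytic by Theorem~\ref{thm:bhf-wiener}, the second by Lemma~\ref{kdv-wp-wiener-bhf}. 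For the third arrow I would invoke the standard fact that, given a real analytic map $F$ between Banach spaces, the induced superposition operator on spaces of continuous curves over a compact interval is again real analytic: one transports the local power series expansion of $F=\Phi_{N,1}^{-1}$, with its locally uniform bounds on the multilinear coefficients, to $u\mapsto F\circ u$ and checks that it converges in the sup norm on a neighborhood of any given curve. Since a composition of real analytic maps is real analytic, the claim follows; the case $N=0$ gives $C^\om$-wellposedness on the Wiener algebra $\FL_0^{0,1}$.

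The main obstacle has, in truth, already been cleared in the preceding subsections: all the analytic content sits in Theorem~\ref{thm:bhf-wiener} (global bianalytic Birkhoff coordinates on $\FL_0^{N,1}$) together with the frequency asymptotics of Lemma~\ref{frequencies-wiener} that feed Lemma~\ref{kdv-wp-wiener-bhf}. The only point in the present argument that calls for a modicum of care is the analyticity of the superposition operator $(\Phi_{N,1}^{-1})_\ast$; alternatively one may bypass it by verifying directly that $q\mapsto\Sc(\cdot,q)$ is locally bounded and weakly analytic, which by the standard analyticity criterion for Banach-space-valued maps again yields the asserted real analyticity.
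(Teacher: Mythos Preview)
Your proposal is correct and follows exactly the approach of the paper: conjugate the KdV flow by the Birkhoff map, invoking Theorem~\ref{thm:bhf-wiener} for the bianalytic diffeomorphism $\Phi_{N,1}$ and Lemma~\ref{kdv-wp-wiener-bhf} for the analyticity of the flow in Birkhoff coordinates. The paper's own proof is a one-line reference to these two ingredients; you have simply spelled out the conjugation formula and the factorization in more detail, including the analyticity of the superposition operator $(\Phi_{N,1}^{-1})_\ast$, which the paper leaves implicit.
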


\begin{proof}
The claim follows immediately from the Lemma~\ref{kdv-wp-wiener-bhf} and the fact established in Theorem~\ref{thm:bhf-wiener} that the Birkhoff map is a real analytic diffeomorphism $\Ph\colon \FL_{0}^{N,1}\to \ell_{0}^{N+1/2,1}$.~\qed
\end{proof}

\appendix

\section{Auxiliaries}
\label{app:aux}

\begin{lem}
\label{hilbert-sum}
For any $1/2< \sg < \infty$ there exists a constant $C_{\sg} > 0$ so that for any $n\ge 1$,
$\sum_{\abs{m}\neq n} \frac{1}{\abs{m^{2}-n^{2}}^{\sg}}$ is bounded by $C_{\sg}/n^{2\sg-1}$ if $1/2 < \sg < 1$, $C_{\sg} \frac{\log \w{n}}{n}$ if $\sg = 1$, and $C_{\sg}/n^{\sg}$ if $\sg > 1$.~\fish
\end{lem}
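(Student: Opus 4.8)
The plan is to exploit the factorization $\abs{m^2-n^2}=\abs{m-n}\abs{m+n}$ and to split the series according to the size of $m$ relative to $n$. First I would symmetrize: applying the substitution $m\mapsto -m$ and isolating the $m=0$ term,
\[
  \sum_{\abs{m}\neq n}\frac{1}{\abs{m^2-n^2}^{\sigma}}
   = \frac{1}{n^{2\sigma}} + 2\sum_{\substack{m\ge 1\\ m\neq n}}\frac{1}{\abs{m-n}^{\sigma}(m+n)^{\sigma}},
\]
and note that the isolated term $n^{-2\sigma}$ is negligible in every regime, since $\sigma>0$ forces $n^{-2\sigma}\le n^{-\sigma}$ and $n^{-2\sigma}\le n^{1-2\sigma}$. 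It therefore remains to bound the sum over $m\ge1$, $m\neq n$.

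Next I would split this sum at $m=2n$. On the near range $1\le m\le 2n$, $m\neq n$, one has $n\le m+n\le 3n$, so this piece is at most $n^{-\sigma}$ times $2\sum_{k=1}^{n}k^{-\sigma}$; comparing $\sum_{k=1}^{n}k^{-\sigma}$ with $\int_1^n x^{-\sigma}\,\dx$ yields, up to a constant depending only on $\sigma$, the bound $n^{1-\sigma}$ if $1/2<\sigma<1$, the bound $\log\w{n}$ if $\sigma=1$, and the bound $1$ if $\sigma>1$. On the far range $m>2n$ one has $\abs{m-n}\ge m/2$ and $m+n\ge m$, so this piece is at most $2^{\sigma}\sum_{m>2n}m^{-2\sigma}$; since $2\sigma>1$, a comparison with $\int_{2n}^{\infty}x^{-2\sigma}\,\dx$ bounds it by $C_{\sigma}n^{1-2\sigma}$.

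Adding the near term, the far term, and the $n^{-2\sigma}$ term then gives in each regime exactly the asserted order: for $1/2<\sigma<1$ the near contribution $n^{1-\sigma}\cdot n^{-\sigma}=n^{-(2\sigma-1)}$ is of the same order as the far one and dominates; for $\sigma=1$ the near contribution $n^{-1}\log\w{n}$ dominates the far one $n^{-1}$; and for $\sigma>1$ the near contribution $n^{-\sigma}$ dominates the far one $n^{1-2\sigma}$ because $2\sigma-1\ge\sigma$. I do not expect a genuine obstacle here; the only point requiring mild care is the bookkeeping of which power wins once the near and far pieces are combined in the three regimes, together with checking that the constants can be chosen uniformly in $n\ge1$ — which is harmless for small $n$, e.g. $\log\w{1}=\log 2>0$.
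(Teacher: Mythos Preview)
Your argument is correct. The paper does not actually prove this lemma but merely cites \cite[Lemma~A.1]{Kappeler:CNzeErmy}, so there is no in-paper proof to compare against; your direct splitting at $m=2n$ together with the integral comparison is the standard elementary route and all three regimes are handled cleanly.
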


\begin{proof}
\cite[Lemma~A.1]{Kappeler:CNzeErmy}.\qed
\end{proof}

For any $s\in\R$ and $1\le p \le \infty$ denote by $\ell_{\C}^{s,p} \equiv \ell^{s,p}(\Z,\C)$ the sequence space
\[
  \ell_{\C}^{s,p} = \setdef{z = (z_{k})_{k\in\Z}\subset \C}{\n{z}_{s,p} < \infty}.
\]

\begin{lem}
\label{ell-embedding}
Suppose $-1/2 < s \le 0$. For any $-1 \le \sg < s$ and $2\le p < \infty$ with $(s-\sg)p > 1$  one has $\ell_{\C}^{s,\infty} \opento \ell_{\C}^{\sg,p}$ and the embedding is compact. In particular, for any $\ep > 0$, $\ell_{\C}^{s,\infty}\opento h_{\C}^{-1/2+s-\ep}$.~\fish
\end{lem}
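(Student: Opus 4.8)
The plan is to deduce both the continuity and the compactness of the inclusion from a single pointwise estimate coming from the $\ell^{s,\infty}$-norm, combined with summability of a $p$-series; there is no genuinely hard step here.

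First I would fix $z=(z_{k})_{k\in\Z}\in\ell_{\C}^{s,\infty}$ and put $M\defl\n{z}_{s,\infty}$, so that $\abs{z_{k}}\le M\w{k}^{-s}$ for every $k\in\Z$. Since $\sg<s$, raising to the $p$-th power and multiplying by $\w{k}^{\sg p}$ gives $\w{k}^{\sg p}\abs{z_{k}}^{p}\le M^{p}\w{k}^{-(s-\sg)p}$, and summing over $k\in\Z$ yields
\[
  \n{z}_{\sg,p}^{p}\le M^{p}\sum_{k\in\Z}\w{k}^{-(s-\sg)p}\defr C^{p}M^{p},
\]
where $C<\infty$ because the exponent $(s-\sg)p$ exceeds $1$ (the $p$-series test; a closely related sum is estimated in Lemma~\ref{hilbert-sum}). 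This shows that $\ell_{\C}^{s,\infty}\opento\ell_{\C}^{\sg,p}$ is a bounded inclusion, which I denote by $\iota$.

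For compactness I would approximate $\iota$ by the finite-rank truncations $P_{N}z\defl(z_{k})_{\abs{k}<N}$ (extended by zero), whose ranges lie in $\spanx\setd{e_{k}:\abs{k}<N}$ and which are bounded $\ell_{\C}^{s,\infty}\to\ell_{\C}^{\sg,p}$ by the previous paragraph. Applying the same computation to the tail $(\iota-P_{N})z=(z_{k})_{\abs{k}\ge N}$ gives
\[
  \n{(\iota-P_{N})z}_{\sg,p}\le\Bigl(\sum_{\abs{k}\ge N}\w{k}^{-(s-\sg)p}\Bigr)^{1/p}\n{z}_{s,\infty},
\]
and the prefactor tends to $0$ as $N\to\infty$ since the full series converges. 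Hence $\iota$ is the operator-norm limit of finite-rank operators and is therefore compact.

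Finally, for the displayed special case I would take $p=2$ and $\sg=-1/2+s-\ep$, so that $\sg<s$ and $(s-\sg)p=1+2\ep>1$; as long as $\ep\le s+1/2$ (possible since $s>-1/2$) one also has $\sg\ge-1$, the first two parts apply directly, and $\ell_{\C}^{\sg,2}=h_{\C}^{-1/2+s-\ep}$, giving the claim. For $\ep>s+1/2$ I would compose this with the trivial continuous inclusion $h_{\C}^{-1/2+s-\ep_{0}}\opento h_{\C}^{-1/2+s-\ep}$ for a fixed small $\ep_{0}\in(0,s+1/2]$. The only point requiring attention is exactly the index constraint $\sg\ge-1$ in the hypothesis, which forces this two-case split in the last step; everything else is the $p$-series test.
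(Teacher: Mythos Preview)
Your argument is correct and follows essentially the same route as the paper: the paper writes the boundedness as a one-line ``H\"older'' estimate (which amounts to your pointwise bound plus the $p$-series test) and then just invokes the standard characterization of relatively compact subsets of $\ell^{p}$, whereas you make the equivalent finite-rank approximation explicit. Your careful two-case split for the ``in particular'' clause is not strictly necessary---the constraint $\sg\ge-1$ in the hypothesis is never used in the actual estimate, so one could simply take $\sg=-1/2+s-\ep$ for every $\ep>0$---but it is harmless and shows you are respecting the stated assumptions.
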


\begin{proof}
By Hölder's inequality
\[
  \p*{\sum_{m\in\Z} \lin{m}^{\sg p}\abs{a_{m}}^{p}}^{1/p}
  \le
  \p*{\sup_{m\in\Z} \lin{m}^{s}\abs{a_{m}}}
  \p*{\sum_{m\in\Z} \lin{m}^{-(s-\sg)p}}^{1/2},
\]
provided $(s-\sg)p > 1$.
Hence $\ell_{\C}^{s,\infty}\opento \ell_{\C}^{\sg,p}$. The compactness follows from the well known characterization of compact subsets in $\ell^{p}$.~\qed
\end{proof}

The following result is well known -- cf. \cite[Lemma~20]{Kappeler:CNzeErmy}.

\begin{lem}
\label{convolution}
\begin{enumerate}[label=(\roman{*})]
\item Let $-1 \le t < -1/2$. For $a = (a_{m})_{m\in \Z}\in h_{\C}^{t}$ and $b=(b_{m})_{m\in\Z}\in h_{\C}^{1}$, the convolution $a*b = (\sum_{m\in\Z} a_{n-m}b_{m})_{n\in\Z}$ is well defined and
\[
  \n{a*b}_{t,2} \le C_{t}\n{a}_{t,2}\n{b}_{1,2}.
\]
\item Let $-1/2 \le s \le 0$ and $-s-3/2 < t < 0$. For any $a = (a_{m})_{m\in \Z}\in \ell_{\C}^{s,\infty}$ and $b=(b_{m})_{m\in\Z}\in h_{\C}^{t+2}$,
\[
  \n{a*b}_{s,\infty} \le C_{s,t}\n{a}_{s,\infty}\n{b}_{t+2,2}.~\fish
\]
\end{enumerate}
\end{lem}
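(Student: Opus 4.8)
The plan is to prove the two parts separately. Part (i) is an $L^{2}$-based Sobolev module estimate which I would obtain by duality; part (ii) is its Fourier--Lebesgue analogue which I would prove directly from Peetre's inequality $\w{n}^{\sg}\le c_{\sg}\w{n-m}^{\sg}\w{m}^{\abs{\sg}}$, valid for every $\sg\in\R$ and all $n,m\in\Z$ by submultiplicativity of the weight $\w{\cdot}$.

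For part (i) I would identify $a$ and $b$ with the Fourier coefficients of periodic distributions $u\in H^{t}$ and $v\in H^{1}$, so that $\n{a*b}_{t,2}$ is, up to a fixed constant, the $H^{t}$-norm of the product $uv$. Since by hypothesis $-t>1/2$, the Sobolev space $H^{-t}$ is a multiplication algebra; hence $\bar v\ph\in H^{-t}$ for every $\ph\in H^{-t}$, and pairing with $u$ (recall $H^{t}=(H^{-t})^{*}$ with respect to the $L^{2}$-pairing) defines an element $uv\in H^{t}$ whose Fourier coefficients are $a*b$, which also settles well-definedness. Then
\[
  \n{a*b}_{t,2}=\n{uv}_{H^{t}}=\sup_{\n{\ph}_{H^{-t}}\le 1}\abs{\lin{u,\bar v\ph}}\le\n{u}_{H^{t}}\,\sup_{\n{\ph}_{H^{-t}}\le 1}\n{\bar v\ph}_{H^{-t}}.
\]
Combining the continuous embedding $H^{1}\hookrightarrow H^{-t}$ (valid since $-t\le 1$) with the algebra estimate $\n{\bar v\ph}_{H^{-t}}\le c_{t}\n{v}_{H^{-t}}\n{\ph}_{H^{-t}}\le c_{t}\n{v}_{H^{1}}\n{\ph}_{H^{-t}}$ then yields $\n{a*b}_{t,2}\le c_{t}\n{a}_{t,2}\n{b}_{1,2}$.

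For part (ii) I would estimate the sequences directly; here the convolution is already absolutely convergent. Fix $n\in\Z$. Applying Peetre's inequality with $\sg=s$ and using $\w{n-m}^{s}\abs{a_{n-m}}\le\n{a}_{s,\infty}$ (recall $s\le 0$),
\[
  \w{n}^{s}\abs{(a*b)_{n}}\le\sum_{m\in\Z}\w{n}^{s}\abs{a_{n-m}}\abs{b_{m}}\le c_{s}\sum_{m\in\Z}\bigl(\w{n-m}^{s}\abs{a_{n-m}}\bigr)\,\w{m}^{\abs{s}}\abs{b_{m}}\le c_{s}\n{a}_{s,\infty}\sum_{m\in\Z}\w{m}^{\abs{s}}\abs{b_{m}},
\]
and the Cauchy--Schwarz inequality bounds the remaining sum by $\bigl(\sum_{m\in\Z}\w{m}^{2(\abs{s}-t-2)}\bigr)^{1/2}\n{b}_{t+2,2}$. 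The last series converges exactly when $2(\abs{s}-t-2)<-1$, i.e. when $t>\abs{s}-3/2=-s-3/2$, which is precisely the standing hypothesis; taking the supremum over $n$ gives the claim with $C_{s,t}=c_{s}\bigl(\sum_{m\in\Z}\w{m}^{2(\abs{s}-t-2)}\bigr)^{1/2}$.

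The step I expect to need the most care is the endpoint in part (i): spending all of the weight on a single factor as in part (ii) would force the condition $\abs{t}<1/2$ and hence fail on the whole range $-1\le t<-1/2$, and it is exactly the fact that $-t$ lies strictly above the Sobolev algebra threshold $1/2$ that makes the duality argument go through --- which is also why the hypothesis demands $t<-1/2$ with strict inequality. Apart from this point the arguments are routine and coincide in substance with those of \cite[Lemma~20]{Kappeler:CNzeErmy}.
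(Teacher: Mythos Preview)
Your proof is correct. The paper itself does not prove this lemma at all; it simply records the statement as ``well known'' and cites \cite[Lemma~20]{Kappeler:CNzeErmy}, so there is nothing to compare against beyond the reference you already acknowledge. Your duality argument for part (i), exploiting that $H^{-t}$ is a multiplication algebra for $-t>1/2$ and that $H^{1}\hookrightarrow H^{-t}$ when $-t\le 1$, is exactly the standard route, and your direct estimate via Peetre's inequality and Cauchy--Schwarz for part (ii) is equally standard and cleanly tracks the convergence condition $t>-s-3/2$.
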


The following result is a version of the inverse function theorem.

\begin{lem}
\label{ift}
Let $E$ be a complex Banach space and denote for $r > 0$, $B_{r} = \setdef{x\in E}{\n{x} \le r}$.
If $f\colon B_{m}\to E$ is analytic for some $m\ge 1$, and
\[
  \sup_{x\in B_{m}}\abs{f(x)-x} \le m/8,
\]
then $f$ is an analytic diffeomorphism onto its image, and this image covers $B_{m/2}$.~\fish
\end{lem}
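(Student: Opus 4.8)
Plan of proof for Lemma~\ref{ift}.

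The plan is to write $f = \Id + g$ with $g \defl f - \Id$, which is analytic on $B_m$ with $\sup_{x \in B_m}\n{g(x)} \le m/8$ by hypothesis, and to invert $f$ by a Banach fixed point argument on a slightly shrunk ball. The first step would be a Cauchy estimate for the derivative of $g$: if $\n{x} < m$ and $\n{v} \le 1$, then $\zeta \mapsto g(x + \zeta v)$ is analytic and bounded by $m/8$ on the disc $\abs{\zeta} < m - \n{x}$, whence $\n{\D g(x)} \le \frac{m/8}{m-\n{x}}$; in particular $\n{\D g(x)} \le \tfrac12$ for $\n{x} \le \tfrac{3m}{4}$, and by convexity of $B_{3m/4}$ and the mean value inequality, $g$ is $\tfrac12$-Lipschitz on $B_{3m/4}$. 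Consequently $\D f(x) = \Id + \D g(x)$ is boundedly invertible there, with $\n{(\D f(x))^{-1}} \le 2$, by the Neumann series.

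Next, to show the image covers $B_{m/2}$: given $\n{y} \le \tfrac m2$, put $T_y(x) \defl y - g(x)$. For $x \in B_{3m/4}$ one has $\n{T_y(x)} \le \n{y} + \n{g(x)} \le \tfrac m2 + \tfrac m8 = \tfrac{5m}{8} < \tfrac{3m}{4}$, so $T_y$ maps the complete metric space $B_{3m/4}$ into $B_{5m/8}$, and it is a $\tfrac12$-contraction by the first step; Banach's theorem gives a unique fixed point $x(y) \in B_{5m/8}$, i.e.\ $f(x(y)) = y$, so $B_{m/2} \subset f(B_{5m/8})$. Injectivity on $B_{3m/4}$ is immediate: $f(x_1) = f(x_2)$ gives $x_1 - x_2 = g(x_2) - g(x_1)$, so $\n{x_1-x_2} \le \tfrac12\n{x_1-x_2}$ and $x_1 = x_2$. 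Hence $y \mapsto x(y)$ is a well-defined inverse branch on the image; it is $2$-Lipschitz since $\n{x(y_1)-x(y_2)} \le \n{y_1-y_2} + \tfrac12\n{x(y_1)-x(y_2)}$, and because $\D f$ is everywhere invertible on $B_{3m/4}$, the analytic inverse function theorem in Banach spaces upgrades this inverse to an analytic map on the open image $f(\{\n{x} < \tfrac{3m}{4}\})$, which contains $B_{m/2}$. Thus $f$ restricts to an analytic diffeomorphism of $B_{3m/4}$ onto an open set covering $B_{m/2}$, which is the content of the lemma and all that its applications use, since there $\n{f-\Id}$ is controlled on a ball strictly larger than the one on which the conclusion is invoked.

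I expect the only point requiring care to be the bookkeeping of radii: the bound $\n{\D g(x)} \le \frac{m/8}{m-\n{x}}$ degenerates as $\n{x} \to m$, so the contraction scheme runs cleanly only on $B_{(1-\theta)m}$ for $\theta \ge \tfrac18$, and this is exactly what determines the radius $\tfrac m2$ of the covered ball and the constant $\tfrac18$ in the hypothesis; in a Banach space that is not strictly convex one cannot in general push injectivity all the way to $\partial B_m$, which is why the conclusion is naturally phrased on the slightly smaller ball. The remaining ingredients — the two uses of the contraction mapping principle, the Lipschitz bound for $f^{-1}$, and the passage from continuity to analyticity of $f^{-1}$ via $\D f$ being a $\tfrac12$-perturbation of $\Id$ — are routine and require no new idea.
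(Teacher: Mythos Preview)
The paper states Lemma~\ref{ift} without proof (it sits in the appendix as a standard auxiliary fact), so there is nothing to compare against. Your argument---write $f=\Id+g$, bound $\n{\D g}$ by a Cauchy estimate on a shrunk ball, and run a Banach contraction for $T_y(x)=y-g(x)$---is the standard one and is carried out correctly: on $B_{3m/4}$ you get $\n{\D g}\le\tfrac12$, injectivity of $f$, surjectivity onto $B_{m/2}$, the $2$-Lipschitz bound for the inverse, and analyticity of $f^{-1}$ from invertibility of $\D f$.

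Your caveat about the boundary is well taken and in fact sharper than you state. The lemma as literally written (diffeomorphism on the full closed ball $B_m$) is too strong: with $E=\C$, $m=1$, and $g(z)=-z^{8}/8$ one has $\sup_{\abs{z}\le 1}\abs{g(z)}=1/8$, yet $f'(z)=1-z^{7}$ vanishes at every seventh root of unity on $\partial B_1$, so $f$ is not a local diffeomorphism there. What your proof delivers---an analytic diffeomorphism on $B_{3m/4}$ whose image covers $B_{m/2}$---is the correct statement, and it is exactly what the paper uses: in the proof of Proposition~\ref{Phi-diffeo} the map $\Fm^{(m)}$ is controlled on $B_{2m}$ with $\n{\Fm^{(m)}-\Id}\le m/16$, so one applies the lemma with room to spare and only needs the conclusion on $B_m$ together with $B_{m/2}\subset\Fm^{(m)}(B_m)$. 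Your remark attributing the boundary issue to lack of strict convexity is slightly off the mark, though; the obstruction is simply that the Cauchy bound $\n{\D g(x)}\le\frac{m/8}{m-\n{x}}$ degenerates as $\n{x}\to m$, and the one-dimensional example above already shows $\D f$ can vanish on $\partial B_m$.
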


\section{Facts on the weak * topology}
\label{app:weak-star}

In this subsection we collect various properties of the weak* topology $\tau_{w*}=\sg(\ell_{0}^{s,\infty},\ell_{0}^{-s,1})$ on $\ell_{0}^{s,\infty}$ needed in the course of the paper.

\begin{lem}
\label{weak-star}
Let $s\in\R$.
\begin{enumerate}[label=(\roman{*})]
\item The closed unit ball of $\ell_{0}^{s,\infty}$ is weak* compact, weak* sequentially compact, and the topology induced by the weak* topology on this ball is metrizable.
\item 
For any sequence $(x^{(m)})_{m\ge 1}\subset\ell_{0}^{s,\infty}$ and $x\in\ell_{0}^{s,\infty}$ the following statements are equivalent:
\begin{enumerate}
\item $(x^{(m)})$ is weak* convergent to $x$.
\item $(x^{(m)})$ is $\n{\cdot}_{s,\infty}$-norm bounded and componentwise convergent, i.e.
\[
  \sup_{m\ge 1}\n{x^{(m)}}_{s,\infty} < \infty,\qquad \lim\limits_{m\to\infty}x_{n}^{(m)} = x_{n},\quad \forall n\in\Z.
\]
\item $(x^{(m)})$ is $\n{\cdot}_{s,\infty}$-norm bounded and $x^{(m)}\to x$ in $\ell_{0}^{\sg,p}$ for some $\sg < s$ and $1\le p < \infty$ with $(s-\sg)p > 1$.
\end{enumerate}

\item For any subset $A\subset \ell_{0}^{s,\infty}$ the following statements are equivalent:
\begin{enumerate}
\item $A$ is weak* compact.
\item $A$ is weak* sequentially compact.
\item $A$ is $\n{\cdot}_{s,\infty}$-norm bounded and weak* closed.
\item $A$ is $\n{\cdot}_{s,\infty}$-norm bounded and $A$ is a compact subset of $\ell_{0}^{\sg,p}$ for some $\sg < s$ and $1\le p < \infty$ with $(s-\sg)p > 1$.
\end{enumerate}
\item On any $\n{\cdot}_{s,\infty}$-norm bounded subset $A\subset\ell_{0}^{s,\infty}$, the topology induced by the $\n{\cdot}_{\sg,p}$-norm, provided $(s-\sg)p > 1$, coincides with the topology induced by the weak* topology of $\ell_{0}^{s,\infty}$.~\fish
\end{enumerate}
\end{lem}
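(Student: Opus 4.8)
The plan is to prove Lemma~\ref{weak-star}, the collection of standard facts about the weak* topology $\tau_{w*} = \sg(\ell_0^{s,\infty},\ell_0^{-s,1})$. All four items rely on two structural observations. First, for any $s\in\R$ the multiplier $\w{n}^{s}$ furnishes an isometric isomorphism $\ell_0^{s,\infty}\to\ell_0^{0,\infty}$ which is also a homeomorphism for the respective weak* topologies (its adjoint is the isometry $\ell_0^{-s,1}\to\ell_0^{0,1}$ given by $\w{n}^{-s}$), so it suffices to treat the case $s=0$, i.e. the pair $(\ell^\infty,\ell^1)$ with the vanishing zeroth coordinate; this reduces everything to classical functional analysis. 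Second, $\ell_0^{-s,1}$ is a separable Banach space whose dual is $\ell_0^{s,\infty}$, so the general Banach space theory of weak* topologies on duals of separable spaces applies directly.

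\medskip\noindent\textbf{Proof of (i).} The weak* compactness of the closed unit ball $\Bc$ of $\ell_0^{s,\infty}$ is the Banach--Alaoglu theorem, using that $\ell_0^{s,\infty} = (\ell_0^{-s,1})'$. Since $\ell_0^{-s,1}$ is separable, the weak* topology restricted to $\Bc$ is metrizable; one explicit metric is $d(x,y) = \sum_{n\in\Z} 2^{-|n|}\min(1,\w{n}^{-s}|x_n-y_n|)$ — here one checks that on $\Bc$ this metric induces exactly the weak* topology, which follows because a net in $\Bc$ converges weak* iff it converges coordinatewise (testing against the unit vectors $\ws{e}_n^* \in \ell_0^{-s,1}$ and using density of their span together with the uniform bound on $\Bc$). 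A metrizable compact space is sequentially compact, giving weak* sequential compactness of $\Bc$.

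\medskip\noindent\textbf{Proof of (ii) and (iv).} For (ii), the implication (a)$\Rightarrow$(b): a weak* convergent sequence is weak* bounded, hence $\n{\cdot}_{s,\infty}$-bounded by the uniform boundedness principle, and componentwise convergence follows by testing against the basis vectors of $\ell_0^{-s,1}$. For (b)$\Rightarrow$(c): given a uniform bound $M$ on $\n{x^{(m)}}_{s,\infty}$ and some $-1 \le \sg < s$, $1\le p<\infty$ with $(s-\sg)p>1$, split the sum $\sum_n \w{n}^{\sg p}|x_n^{(m)}-x_n|^p$ into $|n|\le N$ and $|n|>N$; the tail is bounded by $(2M)^p \sum_{|n|>N}\w{n}^{(\sg-s)p}$, which is small for $N$ large by Lemma~\ref{ell-embedding}, and the head tends to zero by coordinatewise convergence — this is exactly the standard argument behind the compact embedding $\ell^{s,\infty}\compact \ell^{\sg,p}$. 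For (c)$\Rightarrow$(a): norm convergence in $\ell_0^{\sg,p}$ implies coordinatewise convergence, and together with the $\n{\cdot}_{s,\infty}$-bound and the density in $\ell_0^{-s,1}$ of finitely supported sequences, a standard $3\varepsilon$-argument gives $\lin{\xi,x^{(m)}}\to\lin{\xi,x}$ for every $\xi\in\ell_0^{-s,1}$. Item (iv) is then essentially a restatement of the equivalence (a)$\iff$(c) in (ii) phrased topologically: on a $\n{\cdot}_{s,\infty}$-bounded set both topologies are metrizable (the weak* one by (i) after rescaling, the $\n{\cdot}_{\sg,p}$ one by its norm) and they have the same convergent sequences, hence coincide.

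\medskip\noindent\textbf{Proof of (iii).} This follows by combining (i), (ii), (iv). (a)$\Rightarrow$(c): a weak* compact set is weak* closed and, being weak* bounded, is $\n{\cdot}_{s,\infty}$-bounded by uniform boundedness. (c)$\Rightarrow$(d): if $A$ is $\n{\cdot}_{s,\infty}$-bounded and weak* closed, then $A$ is a weak* closed subset of a (rescaled) multiple of $\Bc$, hence weak* compact by (i); by (iv) the weak* topology on $A$ equals the $\n{\cdot}_{\sg,p}$-topology, so $A$ is $\n{\cdot}_{\sg,p}$-compact. (d)$\Rightarrow$(b): a $\n{\cdot}_{\sg,p}$-compact set is $\n{\cdot}_{\sg,p}$-sequentially compact, and again by (iv) this is weak* sequential compactness. (b)$\Rightarrow$(a): a weak* sequentially compact subset of a dual of a separable space is weak* compact — concretely, $A$ is $\n{\cdot}_{s,\infty}$-bounded (else one extracts a sequence with no weak* convergent subsequence), hence sits in a metrizable weak* compact ball by (i), and a sequentially compact subset of a metrizable space is compact, and is in particular closed, so $A$ is weak* compact.

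\medskip\noindent I do not anticipate a genuine obstacle; the only point requiring care is making the reduction to $s=0$ rigorous and being explicit that the multiplier $\w{n}^{s}$ intertwines the weak* topologies (so that Banach--Alaoglu and metrizability of the separable predual transfer), and in items (ii)--(iii) keeping track of the permissible ranges of $\sg$ and $p$ so that Lemma~\ref{ell-embedding} applies to control the tails.
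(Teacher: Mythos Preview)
The paper does not actually prove this lemma: Appendix~\ref{app:weak-star} merely states it as a collection of standard facts about the weak* topology, with no argument given. Your proposal supplies exactly the routine functional-analytic verification the authors chose to omit, and the logic is sound throughout: Banach--Alaoglu plus separability of the predual for (i), the uniform boundedness principle and a tail-splitting argument for (ii), and the bootstrapping of (i), (ii), (iv) for (iii).

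Two minor remarks. First, your appeal to Lemma~\ref{ell-embedding} in the proof of (b)$\Rightarrow$(c) is slightly off: that lemma is stated under the hypothesis $-1/2<s\le 0$ and $-1\le\sg<s$, $2\le p<\infty$, whereas Lemma~\ref{weak-star} is for arbitrary $s\in\R$ and $1\le p<\infty$. This does not matter, since all you actually use is the convergence of $\sum_{|n|>N}\w{n}^{(\sg-s)p}$, which holds precisely when $(s-\sg)p>1$ with no further restriction. Second, the reduction to $s=0$ via the multiplier $\w{n}^{s}$ is a clean organizing device, but you do not strictly need it: every step already works directly for general $s$.
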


\section{Schrödinger Operators}
\label{a:hill-op}

In this appendix we review definitions and properties of Schrödinger operators $-\partial_{x}^{2}+q$ with a singular potential $q$ used in Section~\ref{s:spectral-theory} -- see e.g. \cite{Kappeler:2001bi} and \cite{Savchuk:2005wb}.

\emph{Boundary conditions.} Denote by $H^{1}_{\C}[0,1] = H^{1}([0,1],\C)$ the Sobolev space of functions $f\colon [0,1]\to \C$ which together with their distributional derivative $\partial_{x}f$ are in $L^{2}_{\C}[0,1]$. On $H^{1}_{\C}[0,1]$ we define the following three boundary conditions (bc),
\[
  (\per+)\quad f(1) = f(0);\quad
  (\per-)\quad  f(1) = -f(0);\quad
  (\dir)\quad   f(1) = f(0) = 0.
\]
The corresponding subspaces of $H^{1}_{\C}[0,1]$ are defined by
\[
  H_{bc}^{1} = \setdef{f\in H^{1}_{\C}[0,1]}{f\text{ satisfies (bc)}},
\]
and their duals are denoted by $H_{bc}^{-1} \defl (H_{bc}^{1})'$. Note that $H_{\per+}^{1}$ can be canonically identified with the Sobolev space $H^{1}(\R/\Z,\C)$ of 1-periodic functions $f\colon \R\to \C$ which together with their distributional derivative are in $L^{2}_{\loc}(\R,\C)$. Analogously, $H_{\per-}^{1}$ can be identified with the subspace of $H^{1}(\R/2\Z,\C)$ consisting of functions $f\colon \R\to \C$ with $f,\partial_{x}f\in L_{\loc}^{2}(\R,\C)$ satisfying $f(x+1) = -f(x)$ for all $x\in \R$. In the sequel we will not distinguish these pairs of spaces.
Furthermore, note that $H_{\dir}^{1}$ is a subspace of $H^{1}_{\per+}$ as well as of $H^{1}_{\per-}$. Denote by $\lin{\cdot,\cdot}_{bc}$ the extension of the $L^{2}$-inner product $\spi{f,g} = \int_{0}^{1} f(x)\ob{g(x)}\,\dx$ to a sesquilinear pairing of $H_{bc}^{-1}$ and $H_{bc}^{1}$. Finally, we record that the multiplication
\begin{equation}
  \label{H1-mul}
  H^{1}_{bc}\times H^{1}_{bc}\to H^{1}_{\per+},\qquad (f,g) \mapsto fg,
\end{equation}
and the complex conjugation $H_{bc}^{1}\to H_{bc}^{1}$, $f\mapsto \ob{f}$ are bounded operators.

\emph{Multiplication operators.}
For $q\in H_{\per+}^{-1}$ define the operator $V_{bc}$ of multiplication by $q$, $V_{bc}\colon H^{1}_{bc}\to H^{-1}_{bc}$ as follows: for any $f\in H_{bc}^{1}$, $V_{bc}f$ is the element in $H_{bc}^{-1}$ given by
\[
  \lin{V_{bc}f,g}_{bc} \defl \lin{q,\ob{f}g}_{\per+},\qquad g\in H_{bc}^{1}.
\]
In view of~\eqref{H1-mul}, $V_{bc}$ is a well defined bounded linear operator.

\begin{lem}
\label{V-dir-V-per}
  Let $q\in H^{-1}_{\per+}$. For any $g\in H_{\dir}^{1}$, the restriction 
  $(V_{\per\pm }g)\big|_{H_{\dir}^{1}}\colon H_{\dir}^{1}\to \C$ coincides with $V_{\dir}g\colon H_{\dir}^{1}\to \C$.~\fish
\end{lem}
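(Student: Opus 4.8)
The plan is to unwind the definitions of the multiplication operators $V_{\per\pm}$ and $V_{\dir}$ from the preceding paragraph and check that both sides act the same way as linear functionals on $H^1_{\dir}$. First I would fix $q\in H^{-1}_{\per+}$ and $g\in H^1_{\dir}$, and take an arbitrary test function $h\in H^1_{\dir}$. By definition, the restriction $(V_{\per\pm}g)\big|_{H^1_{\dir}}$ evaluated at $h$ equals $\lin{V_{\per\pm}g,h}_{\per\pm}$, which by the defining formula for $V_{bc}$ is $\lin{q,\ob{g}h}_{\per+}$. On the other side, $V_{\dir}g$ evaluated at $h$ is, again by definition, $\lin{q,\ob{g}h}_{\per+}$ as well. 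So the two functionals agree on every $h\in H^1_{\dir}$, which is precisely the claim.

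The one point that needs a word of justification is that $\lin{q,\ob{g}h}_{\per+}$ makes sense and is computed the same way in both instances: here $\ob{g}h$ must be viewed as an element of $H^1_{\per+}$ so that the pairing $\lin{\cdot,\cdot}_{\per+}$ with $q\in H^{-1}_{\per+}$ applies. This is exactly what~\eqref{H1-mul} provides, together with the fact that $H^1_{\dir}$ is a subspace of both $H^1_{\per+}$ and $H^1_{\per-}$ (stated in the ``Boundary conditions'' paragraph): for $g,h\in H^1_{\dir}$ the product $\ob{g}h$ is the same element of $H^1_{\per+}$ regardless of whether one first regards $g,h$ as living in $H^1_{\per+}$ or in $H^1_{\per-}$, since the inclusion maps are compatible. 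Hence the right-hand side $\lin{q,\ob{g}h}_{\per+}$ is literally the same complex number in the definition of $\lin{V_{\per+}g,h}_{\per+}$, of $\lin{V_{\per-}g,h}_{\per-}$, and of $\lin{V_{\dir}g,h}_{\dir}$.

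There is essentially no obstacle here; the lemma is a bookkeeping statement whose entire content is the observation that all three multiplication operators are defined by restricting one and the same bilinear recipe $\lin{q,\ob{\cdot}\,\cdot}_{\per+}$ to successively smaller spaces of boundary data. The only mild care required is notational: to make sure that when we write $\lin{q,\ob{g}h}_{\per+}$ for $g\in H^1_{\dir}$ we are using the canonical inclusion $H^1_{\dir}\opento H^1_{\per+}$ and that the multiplication map~\eqref{H1-mul} restricts consistently under the inclusions $H^1_{\dir}\opento H^1_{\per\pm}$. Once that is made explicit, the identity of the two functionals on $H^1_{\dir}$ is immediate, and since a bounded linear functional on $H^1_{\dir}$ is determined by its values, we conclude $(V_{\per\pm}g)\big|_{H^1_{\dir}} = V_{\dir}g$ in $H^{-1}_{\dir}$.
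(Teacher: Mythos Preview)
Your argument is correct and is essentially the same as the paper's: both proofs simply observe that for $g,h\in H_{\dir}^{1}$ one has $\lin{V_{\per\pm}g,h}_{\per\pm}=\lin{q,\ob{g}h}_{\per+}=\lin{V_{\dir}g,h}_{\dir}$ by the definition of $V_{bc}$ and the inclusion $H^1_{\dir}\subset H^1_{\per\pm}$. The paper writes this in two lines; your additional remarks about the compatibility of the inclusions and the multiplication map~\eqref{H1-mul} make the same point with more commentary.
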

\begin{proof}
Since any $h\in H_{\dir}^{1}$ is also in $H^{1}_{\per+}$, the definitions of $V_{\per+}$ and $V_{\dir}$ imply
\[
  \lin{V_{\per+}g,h}_{\per+} = \lin{q,\ob{g}h}_{\per+} = \lin{V_{\dir}g,h}_{\dir},
\]
which gives $(V_{\per+}g)\big|_{H_{\dir}^{1}} = V_{\dir}g$. Similarly,  one sees that $V_{\per-}g\big|_{H_{\dir}^{1}} = V_{\dir}g$.~\qed
\end{proof}

It is convenient to introduce also the space $H_{\per+}^{1}\oplus H_{\per-}^{1}$ and define the multiplication operator $V$ of multiplication by $q$
\[
  V\colon H_{\per+}^{1}\oplus H_{\per-}^{1}\to H_{\per+}^{-1}\oplus H_{\per-}^{-1},\qquad
  (f,g) \mapsto (V_{\per+}f,V_{\per-}g).
\]
We note that $H_{\per+}^{1}\oplus H_{\per-}^{1}$ can be canonically identified with $H^{1}(\R/2\Z,\C)$,
\[
  H^{1}(\R/2\Z,\C) \to H_{\per+}^{1}\oplus H_{\per-}^{1},\qquad f\mapsto (f^{+},f^{-}),
\]
where $f^{+}(x) = \frac{1}{2}(f(x) + f(x+1))$ and $f^{-}(x) = \frac{1}{2}(f(x) - f(x+1))$.
Its dual is denoted by $H^{-1}(\R/2\Z,\C)$.

\emph{Fourier basis.} The spaces $H_{\per\pm}^{1}$, $H^{1}(\R/2\Z,\C)$ and $H_{\dir}^{1}$ and their duals admit the following standard Fourier basis.
Recall from Appendix~\ref{app:aux} that for any $s\in\R$ and $1\le p \le \infty$, we denote by $\ell_{\C}^{s,p} \equiv \ell^{s,p}(\Z,\C)$ the sequence space
\[
  \ell_{\C}^{s,p} = \setdef{z = (z_{k})_{k\in\Z}\subset \C}{\n{z}_{s,p} < \infty}.
\]

\underline{Basis for $H_{\per+}^{1}$, $H_{\per+}^{-1}$.} Any element $f\in H^{1}_{\per+}$ [$H^{-1}_{\per+}$] can be represented as $f = \sum_{m\in\Z} f_{m}e_{m}$, $e_{m}(x)\defl\e^{\ii m\pi x}$, where $(f_{m})_{m\in\Z}\in h_{\C}^{1}$ [$h_{\C}^{-1}$] and
\[
  f_{2m} = \lin{f,e_{2m}}_{\per+},\qquad f_{2m+1} = 0,\qquad \forall m\in \Z.
\]
Furthermore, for any $q = \sum_{m\in\Z} q_{m}e_{m}\in H^{-1}_{\per+}$,
\[
  V_{\per+}f = \sum_{n\in\Z}\p*{ \sum_{m\in \Z} q_{n-m}f_{m} }e_{n} \in H_{\per+}^{-1}.
\]
Note that by Lemma~\ref{convolution}, $(\sum_{m\in \Z} q_{n-m}f_{m} )_{n\in\Z}$ is in $h_{\C}^{-1}$.

\underline{Basis for $H_{\per-}^{1}$, $H_{\per-}^{-1}$.} Any element $f\in H^{1}_{\per-}$ [$H^{-1}_{\per-}$] can be represented as $f = \sum_{m\in\Z} f_{m}e_{m}$ where $(f_{m})_{m\in\Z}\in h_{\C}^{1}$ [$h_{\C}^{-1}$] and
\[
  f_{2m+1} = \lin{f,e_{2m+1}}_{\per-},\qquad f_{2m} = 0,\qquad \forall m\in \Z.
\]
Similarly, for any $q = \sum_{m\in\Z} q_{m}e_{m}\in H^{-1}_{\per+}$,
\[
  V_{\per-}f = \sum_{n\in\Z}\p*{ \sum_{m\in \Z} q_{n-m}f_{m} }e_{n} \in H_{\per-}^{-1}.
\]

\underline{Basis for $H^{1}(\R/2\Z,\C)$, $H^{-1}(\R/2\Z,\C)$.} Any element $f\in H^{1}(\R/2\Z,\C)$ [$H^{-1}(\R/2\Z,\C)$] can be represented as $f = \sum_{m\in\Z} f_{m}e_{m}$ where $f_{m} = \spii{f,e_{m}}$. Here $\spii{f,g} \defl \frac{1}{2}\int_{0}^{2}f(x)\ob{g(x)}\,\dx$ denotes the normalized $L^{2}$-inner product on $[0,2]$ extended to a sesquilinear pairing between $H^{1}(\R/2Z,\R)$ and its dual. In particular, for $f\in H^{1}(\R/2\Z,\C)$, and $m\in\Z$,
\[
  \spii{f,e_{m}} = \frac{1}{2}\int_{0}^{2} f(x)\e^{-\ii m\pi x}\,\dx.
\]
For any $q = \sum_{m\in\Z} q_{m}e_{m}\in H_{\per+}^{-1}\opento H^{1}(\R/2\Z,\C)$
\[
  Vf = \sum_{n\in\Z} \p*{ \sum_{m\in\Z} q_{n-m} f_{m} }e_{n}\in H^{-1}(\R/2\Z,\C).
\]

\underline{Basis for $H_{\dir}^{1}$, $H_{\dir}^{-1}$:} Note that $(\sqrt{2}\sin(m\pi x))_{m\ge 1}$ is an $L^{2}$-orthonormal basis of $L_{0}^{2}([0,1],\C)$. Hence any element $f\in H_{\dir}^{1}$ can be represented as
\[
  f(x) = \sum_{m\ge 1} \spi{f,s_{m}}s_{m}(x) = 
  \frac{1}{2}\sum_{m\in \Z} f_{m}^{\sin}s_{m}(x),\qquad s_{m}(x) = \sqrt{2}\sin(m\pi x),
\]
where $f_{m}^{\sin} = \int_{0}^{1} f(x)s_{m}(x)\,\dx$, $m\in\Z$. For any element $g\in H_{\dir}^{-1}$ one gets by duality
\[
  g = \frac{1}{2}\sum_{m\in\Z} g_{m}^{\sin} s_{m},\qquad g_{m}^{\sin} = \lin{g,s_{m}}_{\dir}.
\]
One verifies that $g_{-m}^{\sin} = -g_{m}^{\sin}$ for all $m\in\Z$ and $\sum_{m\in \Z} \w{m}^{-2}\abs{g_{m}^{\sin}}^{2} < \infty$.
For any $q\in H_{0,\C}^{-1}$ with $\n{q}_{t,2} < \infty$ and $-1 < t < -1/2$, we need to expand for a given $f\in H_{\dir}^{1}$, $V_{\dir}f\in H_{\dir}^{-1}$ in its sine series $\frac{1}{2}\sum_{m\in \Z} (V_{\dir}f)_{m}^{\sin} s_{m}$ where by the definition of $V_{\dir}$
\[
  (V_{\dir}f)_{m}^{\sin} = \lin{V_{\dir}f,s_{m}}_{\dir} = 
  \lin{q,\ob{f}s_{m}}_{\per+} = \frac{1}{2}\sum_{n\in\Z} f_{n}^{\sin}\lin{q,s_{n}s_{m}}_{\per+}.
\]
Using that $f_{-n}^{\sin} = -f_{n}^{\sin}$ for any $n\in\Z$ and
\[
  s_{m}(x)s_{n}(x) = \cos((m-n)\pi x) - \cos((m+n)\pi x)
\]
it follows that for any $m\in\Z$
\[
  \frac{1}{2}\sum_{\atop{m-n\text{ even}}{n\in\Z}} f_{n}^{\sin} \lin{q,s_{n}s_{m}}_{\per+} = 
  \sum_{\atop{m-n\text{ even}}{n\in\Z}} f_{n}^{\sin} \lin{q,\cos((m-n)\pi x)}_{\per+}.
\]
Note that $\lin{q,\cos((m-n)\pi x)}_{\per+}$ is well defined as $\cos((m-n)\pi x)\in H_{\per+}^{1}$ if $m-n$ is even. If $m-n$ is odd, we decompose the difference of the cosines in $H_{\per+}^{1}$ as follows
\begin{align*}
  &\cos((m-n)\pi x) - \cos((m+n)\pi x) \\
  &\qquad = 
  \p*{\cos((m-n)\pi x) - \cos(\pi x)} - \p*{\cos((m+n)\pi x) - \cos(\pi x)}
\end{align*}
and then obtain, using again that $f_{-n}^{\sin} = -f_{n}^{\sin}$ for all $n\in\Z$,
\[
  \frac{1}{2}\sum_{\atop{m-n\text{ odd}}{n\in\Z}} f_{n}^{\sin} \lin{q,s_{n}s_{m}}_{\per+} = 
  \sum_{\atop{m-n\text{ odd}}{n\in\Z}} f_{n}^{\sin} \lin{q,\cos((m-n)\pi x) - \cos(\pi x)}_{\per+}.
\]
Altogether we have shown that
\[
  V_{\dir}f = \frac{1}{2}\sum_{m\in \Z} \p*{\sum_{n\in\Z} q_{m-n}^{\cos}f_{n}^{\sin}} s_{m},
\]
where
\begin{equation}
  \label{q-cos-coeff}
  q_{k}^{\cos} = 
  \begin{cases}
  \lin{q,\cos(k\pi x)}_{\per+}, & \text{if }k\in\Z\text{ even},\\
  \lin{q,\cos(k\pi x)-\cos(\pi x)}_{\per+}, & \text{if }k\in\Z\text{ odd}.
  \end{cases}
\end{equation}
Since by assumption $\n{q}_{t,2} < \infty$ with $-1 < t < -1/2$, one argues as in \cite[Proposition 3.4]{Kappeler:2001bi}, using duality and interpolation, that
\begin{equation}
  \label{H-dir-per-est}
  \p*{ \sum_{m\in\Z} \w{m}^{2t}\abs{q_{m}^{\cos}}^{2} }^{1/2} \le C_{t}\n{q}_{t,2}.
\end{equation}

\emph{Schrödinger operators with singular potentials.}
For any $q\in H_{0,\C}^{-1}$ denote by $L(q)$ the unbounded operator $-\partial_{x}^{2} + V$ acting on $H^{-1}(\R/2\Z,\C)$ with domain $H^{1}(\R/2\Z,\C)$. As $H^{1}(\R/2\Z,\C) = H^{1}_{\per+}\oplus H^{1}_{\per-}$ and $V = V_{\per+}\oplus V_{\per-}$ the operator $L(q)$ leaves the spaces $H_{\per\pm}^{1}$ invariant and $L(q) = L_{\per+}(q)\oplus L_{\per-}(q)$ with $L_{\per\pm}(q) = -\partial_{x}^{2} + V_{\per\pm}$. Hence the spectrum $\spec(L(q))$ of $L(q)$, also referred to as spectrum of $q$, is the union $\spec(L_{\per+}(q))\cup \spec(L_{\per-}(q))$ of the spectra $\spec(L_{\per\pm}(q))$ of $L_{\per\pm}(q)$. 
The spectrum $\spec(L(q))$ is known to be discrete and to consist of complex eigenvalues which, when counted with multiplicities and ordered lexicographically, satisfy
\[
  \lm_{0}^{+} \lex \lm_{1}^{-} \lex \lm_{1}^{+} \lex \dotsb,\qquad
  \lm_{n}^{\pm} = n^{2}\pi^{2} + n\ell_{n}^{2},
\]
-- see e.g. \cite{Kappeler:2003vh}. For any $q\in H_{0,\C}^{-1}$ there exists $N\ge 1$ so that
\begin{equation}
  \label{per-rough-estimate}
  \abs{\lm_{n}^{\pm}-n^{2}\pi^{2}} \le n/2,\quad n\ge N,\qquad
  \abs{\lm_{n}^{\pm}} \le (N-1)^{2}\pi^{2}+N/2,\quad n < N,
\end{equation}
where $N$ can be chosen locally uniformly in $q$ on $H_{0,\C}^{-1}$. Since for $q=0$ and $n\ge 0$, $\Dl(\lm_{2n}^{+}(0),0) = 2$ and $\Dl(\lm_{2n+1}^{+}(0),0) = -2$, all $\lm_{2n}^{+}(0)$ are 1-periodic and all $\lm_{2n+1}^{+}(0)$ are 1-antiperiodic eigenvalues of $q = 0$. By considering the compact interval $[0,q] = \setdef{tq}{0\le t\le 1}\subset H_{0,\C}^{-1}$ it then follows after increasing $N$, if necessary, that for any $n\ge N$
\begin{equation}
  \label{ev-per-antiper}
  \lm_{n}^{+}(q),\lm_{n}^{-}(q) \in \spec(L_{\per+}(q)),\;[\spec(L_{\per-}(q))]
  \quad \text{if }n\text{ even [odd]}.
\end{equation}

For any $q\in H_{0,\C}^{-1}$ and $n\ge N$ the following Riesz projectors are thus well defined on $H^{-1}(\R/2\Z,\C)$
\[
  P_{n,q} \defl \frac{1}{2\pi\ii}\int_{\abs{\lm - n^{2}\pi^{2 }} = n} (\lm-L(q))^{-1}\,\dlm.
\]
For $n\ge N$ even [odd], the range of $P_{n,q}$ is contained in $H_{\per+}^{1}$ [$H_{\per-}^{1}$]. For $q = 0$, $P_{n,0}$ coincides with the projector $P_{n}$ introduced in~\eqref{Pn-Qn}.

Similarly, for any $q\in H_{0,\C}^{-1}$ denote by $L_{\dir}(q)$ the unbounded operator $-\partial_{x}^{2}+ V_{\dir}$ acting on $H_{\dir}^{-1}$ with domain $H_{\dir}^{1}$. Its spectrum $\spec(L_{\dir}(q))$ is known to be discrete and to consist of complex eigenvalues which, when counted with multiplicities and ordered lexicographically, satisfy
\[
  \mu_{1}\lex \mu_{2}\lex \dotsb,\qquad \mu_{n} = n^{2}\pi^{2} + n\ell_{n}^{2},
\]
-- see e.g. \cite{Kappeler:2003vh}. By increasing the number $N$ chosen above, if necessary, we can thus assume that
\begin{equation}
  \label{dir-rough-estimate}
  \abs{\mu_{n}-n^{2}\pi^{2}} < n/2,\quad n\ge N,\qquad
  \abs{\mu_{n}} \le (N-1)^{2}\pi^{2} + N/2,\quad n\le N.
\end{equation}
In particular, for any $n\ge N$, $\mu_{n}$ is simple and the corresponding Riesz projector
\[
  \Pi_{n,q} \defl \frac{1}{2\pi\ii} \int_{\abs{\lm-n^{2}\pi^{2}}=n} (\lm-L_{\dir}(q))^{-1}\,\dlm
\]
is well defined on $H_{\dir}^{-1}$. If $q=0$, we write $\Pi_{n}$ for $\Pi_{n,0}$.

\smallskip

\emph{Regularity of solutions.}
In Section~\ref{s:spectral-theory} we consider solutions $f$ of the equation $(L(q)-\lm)f = g$ in $\FL_{\star,\C}^{s,\infty}$ and need to know their regularity.

\begin{lem}
\label{regularity-inhomogeneous}
For any $q\in \FL_{0,\C}^{s,\infty}$ with $-1/2 < s \le 0$, the following holds: For any $g\in \FL_{\star,\C}^{s,\infty}$ and any $\lm\in\C$, a solution $f\in H^{1}(\R/2\Z,\C)$ of the inhomogeneous equation $(L(q)-\lm)f = g$ is an element in $\FL_{\star,\C}^{s+2,\infty}$.~\fish
\end{lem}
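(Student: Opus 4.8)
The plan is to move the potential term to the right-hand side, treat the resulting identity as $A_{\lm}f = Vf - g$ with $A_{\lm}$ the Fourier multiplier of symbol $\lm - n^{2}\pi^{2}$, use the convolution estimate of Lemma~\ref{convolution} to show that $Vf$ is already as regular as $g$, and finally gain the two missing derivatives by dividing by the symbol. This is a one-step bootstrap, not an iteration, because $f$ is assumed to lie in $H^{1}(\R/2\Z,\C)$ rather than in a space of negative regularity.

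First I would record that, with $A_{\lm}f = \partial_{x}^{2}f + \lm f$, the hypothesis $(L(q)-\lm)f = g$ reads $A_{\lm}f = Vf - g$ as an identity in $H^{-1}(\R/2\Z,\C)$, where $V$ denotes multiplication by $q$. Recalling from Appendix~\ref{a:hill-op} that $Vf = \sum_{n\in\Z}\bigl(\sum_{m\in\Z}q_{n-m}f_{m}\bigr)e_{n}$, the Fourier coefficients of $Vf$ are the convolution $(q_{n})_{n\in\Z}*(f_{n})_{n\in\Z}$. Since $f\in H^{1}(\R/2\Z,\C)$ we have $(f_{n})_{n\in\Z}\in h_{\C}^{1}$, and $(q_{n})_{n\in\Z}\in\ell_{\C}^{s,\infty}$ by assumption. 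Because $-1/2 < s \le 0$, the exponent $t \defl -1$ satisfies $-s-3/2 < t < 0$, so Lemma~\ref{convolution}~(ii) applies with this $t$ (noting $h_{\C}^{1} = h_{\C}^{t+2}$) and yields $Vf\in\FL_{\star,\C}^{s,\infty}$ together with a bound $\n{Vf}_{s,\infty}\le C_{s}\n{q}_{s,\infty}\n{f}_{1,2}$. Hence $A_{\lm}f = Vf - g\in\FL_{\star,\C}^{s,\infty}$.

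Next I would invert $A_{\lm}$ on the high Fourier modes. Since $(A_{\lm}f)_{n} = (\lm - n^{2}\pi^{2})f_{n}$ and $|\lm - n^{2}\pi^{2}|\ge \tfrac12 n^{2}\pi^{2}\ge c^{-1}\w{n}^{2}$ for all $|n|$ larger than some $N$ depending only on $\lm$, one gets for such $n$
\[
  \w{n}^{s+2}|f_{n}| = \w{n}^{s+2}\frac{|(A_{\lm}f)_{n}|}{|\lm - n^{2}\pi^{2}|} \le c\,\w{n}^{s}\,|(A_{\lm}f)_{n}| \le c\,\n{A_{\lm}f}_{s,\infty},
\]
while the finitely many remaining coefficients $f_{n}$ with $|n|\le N$ are bounded because $(f_{n})\in h_{\C}^{1}$. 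Therefore $\sup_{n}\w{n}^{s+2}|f_{n}|<\infty$, i.e. $f\in\FL_{\star,\C}^{s+2,\infty}$, which is the assertion.

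I expect the only delicate point to be the application of Lemma~\ref{convolution}~(ii): one must verify that the regularity $f\in h_{\C}^{1}$ already available suffices as an input, and this is exactly where the strict inequality $s > -1/2$ is used, since it is precisely what leaves room for a choice of $t$ with $-s-3/2 < t \le -1$. If one only knew $f\in H^{-1}$, a genuine iteration would be needed; under the stated hypothesis a single pass closes the argument, and the symbol inversion in the last step is routine.
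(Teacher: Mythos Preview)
Your proof is correct and in fact more direct than the paper's. Both arguments start from $A_{\lm}f = Vf - g$ and finish by inverting the symbol, but the paper takes a two-step bootstrap: it first embeds $q$ and $g$ into $\FL_{\star,\C}^{r,2}$ with $r = s - 1/2 - \ep \in (-1,-1/2)$, applies Lemma~\ref{convolution}~(i) to obtain $Vf\in\FL_{\star,\C}^{r,2}$ and hence $f\in\FL_{\star,\C}^{r+2,2}$, and only then invokes Lemma~\ref{convolution}~(ii) with $t=r$ to land in $\FL_{\star,\C}^{s,\infty}$. You observe that the hypothesis $f\in H^{1}$ already puts the Fourier coefficients in $h_{\C}^{1}=h_{\C}^{t+2}$ for $t=-1$, and since the strict inequality $s>-1/2$ gives $-s-3/2<-1$, Lemma~\ref{convolution}~(ii) applies in one shot. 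The paper's detour through $L^{2}$-based spaces is presumably inherited from the companion paper treating the endpoint $s=-1/2$, where your choice $t=-1$ would just miss the admissible range and a genuine bootstrap is needed; under the present hypothesis your shortcut is cleaner.
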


\begin{proof}
Let $g\in \FL_{\star,\C}^{s,\infty}$ and assume that $f\in H^{1}(\R/2\Z,\C)$ solves $(L(q)-\lm)f = g$. Write $(L-\lm)f = g$ as $A_{\lm}f = Vf - g$ where $A_{\lm} = \partial_{x}^{2} + \lm$. Since $q\in \FL_{0,\C}^{s,\infty}$, Lemma~\ref{ell-embedding} implies that $q$ and $g$ are in $\FL_{\star,\C}^{r,2}$ with $r = s-1/2-\ep$ where $\ep > 0$ is chosen such that $r > -1$. By Lemma~\ref{convolution} (i), $Vf\in \FL_{\star,\C}^{r,2}$ and hence $A_{\lm}f = Vf-g\in\FL_{\star,\C}^{r,2}$ implying that $f\in \FL_{\star,\C}^{r+2,2}$. Since $-s-3/2 \le -1 < r \le 0$, Lemma~\ref{convolution} (ii) applies. Therefore, $Vf\in \FL_{\star,\C}^{s,\infty}$ and using the equation $A_{\lm}f = Vf - g$ once more one gets $f\in \FL_{\star,\C}^{s+2,\infty}$ as claimed.~\qed
\end{proof}

For any $q\in \FL_{0,\C}^{s,\infty}$ with $-1/2 < s \le 0$, and $n\ge n_{s}$ as in Corollary~\ref{Q-soln}, introduce
\[
  E_{n} \equiv E_{n}(q) \defl 
  \begin{cases}
  \Null(L(q)-\lm_{n}^{+})\oplus \Null(L(q)-\lm_{n}^{-}), & \lm_{n}^{+}\neq \lm_{n}^{-},\\
  \Null(L(q)-\lm_{n}^{+})^{2}, & \lm_{n}^{+} = \lm_{n}^{-}.
  \end{cases}
\]
Then $E_{n}$ is a two-dimensional subspace of $H_{\per+}^{1}$ [$H_{\per-}^{1}$] if $n$ is even [odd]. The following result shows that elements in $E_{n}$ are more regular.

\begin{lem}
\label{regularity-En}
For any $q\in \FL_{0,\C}^{s,\infty}$ with $-1/2 < s \le 0$ and for any $n\ge n_{s}$ $E_{n}(q)\subset\FL_{\star,\C}^{s+2,\infty}\cap H_{\per+}^{1}$ [$\FL_{\star,\C}^{s+2,\infty}\cap H_{\per-}^{1}$] if $n$ is even [odd].~\fish
\end{lem}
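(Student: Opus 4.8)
The plan is to separate the claim into its two components — that the elements of $E_n(q)$ have the stated $1$-periodicity or $1$-antiperiodicity, and that they gain two derivatives in the Fourier--Lebesgue scale — and to obtain the first from the structural facts already assembled in Appendix~\ref{a:hill-op} and the second by a short elliptic bootstrap resting on Lemma~\ref{regularity-inhomogeneous}.

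For the periodicity, I would recall that on $H^{1}(\R/2\Z,\C) = H_{\per+}^{1}\oplus H_{\per-}^{1}$ one has $L(q) = L_{\per+}(q)\oplus L_{\per-}(q)$, and that by~\eqref{ev-per-antiper} the eigenvalues $\lm_n^{+}$, $\lm_n^{-}$ belong to $\spec(L_{\per+}(q))$ when $n$ is even and to $\spec(L_{\per-}(q))$ when $n$ is odd, provided $n$ is large enough; since all thresholds in question can be taken locally uniformly in $q$, one may enlarge $n_s$ so that this holds for all $n\ge n_s$. Consequently each $\Null(L(q)-\lm_n^{\pm})$ and each $\Null(L(q)-\lm_n^{+})^{2}$ lies in $H_{\per+}^{1}$ [resp. $H_{\per-}^{1}$] for $n$ even [resp. odd], which also records that $E_n(q)$ is two-dimensional — essentially the statement already made in Appendix~\ref{a:hill-op}.

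For the regularity I would argue in two cases. If $f\in\Null(L(q)-\lm_n^{\pm})$, then $f\in H^{1}(\R/2\Z,\C)$ solves the inhomogeneous equation $(L(q)-\lm_n^{\pm})f = 0$ with right-hand side $0\in\FL_{\star,\C}^{s,\infty}$, so Lemma~\ref{regularity-inhomogeneous} gives $f\in\FL_{\star,\C}^{s+2,\infty}$; when $\lm_n^{+}\neq\lm_n^{-}$ this already covers $E_n(q) = \Null(L(q)-\lm_n^{+})\oplus\Null(L(q)-\lm_n^{-})$. When $\lm_n^{+} = \lm_n^{-}$, so that $E_n(q) = \Null(L(q)-\lm_n^{+})^{2}$, I would take $f\in E_n(q)$, put $g\defl (L(q)-\lm_n^{+})f\in\Null(L(q)-\lm_n^{+})$, observe that $g\in\FL_{\star,\C}^{s+2,\infty}\subset\FL_{\star,\C}^{s,\infty}$ by the first case, and apply Lemma~\ref{regularity-inhomogeneous} a second time to $(L(q)-\lm_n^{+})f = g$ to conclude $f\in\FL_{\star,\C}^{s+2,\infty}$.

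I do not expect a genuine obstacle: the analytic substance is entirely carried by Lemma~\ref{regularity-inhomogeneous}, and what remains is a one- or two-step bootstrap together with the splitting of the spectrum into periodic and antiperiodic parts. The only points demanding a little care are bookkeeping ones — checking that the threshold $n_s$, possibly slightly enlarged, is simultaneously compatible with Corollary~\ref{Q-soln} and with the periodic/antiperiodic separation used above, and noting in the degenerate case that $(L(q)-\lm_n^{+})^{2}$ annihilates $E_n(q)$, so that exactly one extra application of the inhomogeneous regularity result does the job.
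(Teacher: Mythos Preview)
Your proposal is correct and follows essentially the same approach as the paper: apply Lemma~\ref{regularity-inhomogeneous} once with $g=0$ for eigenfunctions, and a second time in the degenerate case by feeding the already-regular $(L-\lm_n^{+})f$ back in as the right-hand side, then invoke~\eqref{ev-per-antiper} for the periodic/antiperiodic part. The only cosmetic difference is that the paper splits the double-eigenvalue case into geometric multiplicity two versus one, whereas your formulation handles both at once since $g=(L-\lm_n^{+})f$ is either zero or a genuine eigenfunction.
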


\begin{proof}
By Lemma~\ref{regularity-inhomogeneous} with $g=0$, any eigenfunction $f$ of an eigenvalue $\lm$ of $L(q)$ is in $\FL_{\star,\C}^{s+2,\infty}$. Hence if $\lm_{n}^{+} \neq \lm_{n}^{-}$ or if $\lm_{n}^{+} = \lm_{n}^{-}$ and has geometric multiplicity two, then $E_{n}\subset\FL_{\star,\C}^{s+2,\infty}$. Finally, if $\lm_{n}^{+} = \lm_{n}^{-}$ is a double eigenvalue of geometric multiplicity $1$ and $g$ is an eigenfunction corresponding to $\lm_{n}^{+}$, there exists an element $f\in H^{1}(\R/2\Z,\C)$ so that $(L-\lm_{n}^{+})f = g$. Since $g$ is an eigenfunction it is in $\FL_{\star,\C}^{s+2,\infty}$ by Lemma~\ref{regularity-inhomogeneous} and by applying this lemma once more, it follows that $f\in \FL_{\star,\C}^{s+2,\infty}$. Clearly, $E_{n} = \mathrm{span}(g,f)$ and hence $E_{n}\subset \FL_{\star,\C}^{s+2,\infty}$ also in this case. By~\eqref{ev-per-antiper}, $\lm_{n}^{\pm}$ are 1-periodic [1-antiperiodic] eigenvalues of $q$ if $n$ is even [odd]. Hence $E_{n}\subset \FL_{\star,\C}^{s+2,\infty}\cap H_{\per+}^{1}$ [$\FL_{\star,\C}^{s+2,\infty}\cap H_{\per-}^{1}$] if $n$ is even [odd] as claimed.~\qed
\end{proof}

\emph{Estimates for projectors.}
The projectors $P_{n,q}$ [$\Pi_{n,q}$] with $n\ge N$ and $N$ given by \eqref{per-rough-estimate}-\eqref{dir-rough-estimate} are defined on $H^{-1}(\R/2\Z,\C)$ [$H_{\dir}^{-1}$] and have range in $H^{1}(\R/2\Z,\C)$ [$H^{1}_{\dir}$]. The following result concerns estimates for the restriction of $P_{n,q}$ [$\Pi_{n,q}$] to $L^{2} = L^{2}([0,2],\C)$ [$L^{2}(\Ic) = L^{2}([0,1],\C)$] needed in Section~\ref{s:spectral-theory} for getting the asymptotics of $\mu_{n}-\tau_{n}$ stated in Theorem~\ref{thm:spec-fw} (ii).

\begin{lem}
\label{proj-bound}
Assume that $q\in H_{0,\C}^{-1}$ with $\n{q}_{t,2} < \infty$ and $-1 < t < -1/2$. Then there exist constants $C_{t} > 0$ (only depending on $t$) and $N'\ge N$ (with $N$ as above) so that for any $n\ge N$ the following holds
\begin{enumerate}[label=(\roman{*})]
\item
\[
  \n{P_{n,q}-P_{n}}_{L^{2}\to L^{\infty}}
  \le C_{t} \frac{(\log n)^{2}}{n^{1-\abs{t}}}\n{q}_{t,2}
\]
\item
\[
  \n{\Pi_{n,q}-\Pi_{n}}_{L^{2}(\Ic)\to L^{\infty}(\Ic)}
  \le C_{t} \frac{(\log n)^{2}}{n^{1-\abs{t}}}\n{q}_{t,2}
\]
\end{enumerate}
The constant $N'$ can be chosen locally uniformly in $q$.~\fish
\end{lem}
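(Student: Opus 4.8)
We want an estimate on the difference of two Riesz projectors $P_{n,q}-P_n$ acting $L^2\to L^\infty$ (and the analogous Dirichlet statement). The standard route is to write the difference of resolvents and integrate around the circle $|\lambda-n^2\pi^2|=n$. Concretely, on the circle $\Gamma_n=\{|\lambda-n^2\pi^2|=n\}$ one has the second resolvent identity
\[
  (\lambda-L(q))^{-1}-(\lambda-L(0))^{-1}
  = (\lambda-L(q))^{-1}\,V\,(\lambda-L(0))^{-1},
\]
and, using that $\dist(\spec L(q),\Gamma_n)\ge n/2$ for $n\ge N$ (by~\eqref{per-rough-estimate}) together with the analogous lower bound for $L(0)$, one can iterate this once more to isolate the main term $(\lambda-L(0))^{-1}V(\lambda-L(0))^{-1}$ plus a remainder carrying an extra factor of $V(\lambda-L(0))^{-1}$. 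Integrating over $\Gamma_n$, the main term and the remainder are estimated separately. First I would record the elementary mapping bounds for the free resolvent $A_\lambda^{-1}=(\lambda-L(0))^{-1}=(\lambda+\partial_x^2)^{-1}$ on Fourier side: for $\lambda\in\Gamma_n$, $A_\lambda^{-1}$ maps $h^{r}_\C\to h^{r+2}_\C$ with norm $O(1)$ for $|r|\le 1$, and more precisely each Fourier mode $k$ gets divided by $\lambda-k^2\pi^2$, whose modulus is $\ge |n^2-k^2|\pi^2$ for $k\ne n$ on $\Gamma_n$ and is exactly of size $n$ for $k=\pm n$.

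**Key steps.** (1) Using the Fourier expansion of $V$ from Appendix~\ref{a:hill-op}, $(Vf)_n=\sum_m q_{n-m}f_m$, and the bound~\eqref{H-dir-per-est}/Lemma~\ref{convolution}, show that for $\lambda\in\Gamma_n$ and $g\in L^2$ the main term $A_\lambda^{-1}V A_\lambda^{-1}g$ has $L^\infty$-norm controlled by $\|q\|_{t,2}\|g\|_{L^2}$ times a factor coming from the sum $\sum_{k}\frac{\langle k\rangle^{\text{stuff}}}{|\lambda-k^2\pi^2|\,|\lambda-m^2\pi^2|}$; after integrating $\frac{1}{2\pi i}\oint_{\Gamma_n}$, one extracts a gain. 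Here the two small denominators, one from each free resolvent, together with $|\Gamma_n|=2\pi n$, combine with the Cauchy–Schwarz/convolution bound on $q$ to produce the factor $n^{-(1-|t|)}$; one logarithm enters from $\sum_{|k|\ne n}|n^2-k^2|^{-1}\asymp \frac{\log\langle n\rangle}{n}$ (Lemma~\ref{hilbert-sum} with $\sigma=1$). (2) The remainder term $A_\lambda^{-1}(VA_\lambda^{-1})^2 g$ — or rather $(\lambda-L(q))^{-1}(VA_\lambda^{-1})^2g$, bounding $(\lambda-L(q))^{-1}$ in operator norm on $H^{-1}(\R/2\Z)$ by $O(1/n)$ via the distance to the spectrum — carries an extra $\|q\|_{t,2}$, hence is genuinely lower order for $n$ large; this is where the threshold $N'\ge N$ and the "locally uniformly in $q$" qualifier come from. (3) For part (ii), repeat the argument verbatim for $L_{\dir}(q)$ on $[0,1]$ using the sine basis and the coefficients $q^{\cos}_k$ from~\eqref{q-cos-coeff} with the estimate~\eqref{H-dir-per-est}; the combinatorics of $\sum_{|k|\ne n}|n^2-k^2|^{-(1-|t|)}$-type sums (Lemma~\ref{hilbert-sum} again, now with $\sigma=1-|t|$ if $|t|>0$, or $\sigma=1$ if we want the extra log) are identical, so the same bound results. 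The second power of $\log n$ in the statement arises from combining the $\frac{\log\langle n\rangle}{n}$ from the free-resolvent convolution sum with a further $\log n$ lost when passing from the $L^2$ bound to the $L^\infty$ bound (summing $|f_k|$ over $|k|\sim n$ against $\langle k\rangle^{-1}$, by Cauchy–Schwarz over a dyadic block).

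**Main obstacle.** The delicate point is the passage from $L^2$ to $L^\infty$ and the precise bookkeeping of powers of $n$ and $\log n$: one must be careful that $A_\lambda^{-1}$ alone does not gain from $L^2$ into $L^\infty$ on $\Gamma_n$ uniformly (it gains two derivatives, so $H^2\hookrightarrow L^\infty$ on the circle gives an $O(1)$ bound only after using the $-2$ exponent), and the genuine smallness must be harvested entirely from the contour length $n$ matched against the two resolvent denominators and the decay of $\hat q$ with $\|q\|_{t,2}$, $-1<t<-1/2$. In other words, the inequality $1-|t|>1/2$ is exactly what makes the relevant sum $\sum_{|k|\ne n}|n^2-k^2|^{-(1-|t|)}$ summable with decay, and one needs to verify that all error terms from the second-resolvent iteration are dominated by the main term. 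Once the free-resolvent sums are estimated with Lemma~\ref{hilbert-sum} and the convolution inequalities of Lemma~\ref{convolution} and~\eqref{H-dir-per-est}, the rest is routine; I would present the periodic case in full and remark that the Dirichlet case is identical after substituting the sine-series formulas from Appendix~\ref{a:hill-op}.
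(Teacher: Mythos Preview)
The paper does not actually prove this lemma: its entire proof is the citation ``\cite[Lemma~25]{Kappeler:CNzeErmy}''. So there is nothing in the paper to compare your argument against beyond noting that the cited reference presumably proceeds along the same lines you sketch, since the second resolvent identity plus free-resolvent Fourier estimates and the summation Lemma~\ref{hilbert-sum} is the standard route to such bounds.

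Your outline is essentially correct, but one step needs adjustment. You propose to bound $(\lambda-L(q))^{-1}$ on $\Gamma_n$ by $O(1/n)$ ``via the distance to the spectrum''. For complex $q$ the operator $L(q)$ is not self-adjoint, so distance to the spectrum does not control the resolvent norm. What actually works is the Neumann series: write $(\lambda-L(q))^{-1}=A_\lambda^{-1}(\Id-VA_\lambda^{-1})^{-1}$ and show that $\|VA_\lambda^{-1}\|$ is small (say $\le 1/2$) in the relevant operator norms for $\lambda\in\Gamma_n$ once $n\ge N'$, using the convolution estimate of Lemma~\ref{convolution} together with the $\ell^2$-type bound on the free-resolvent multiplier. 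This is precisely where the threshold $N'$ depending locally uniformly on $q$ enters, and it simultaneously handles the remainder term in your iteration. With that correction in place, your plan --- contour integrate the main term $A_\lambda^{-1}VA_\lambda^{-1}$ by residues, use Lemma~\ref{hilbert-sum} for the resulting $\sum_{|k|\ne n}|n^2-k^2|^{-\sigma}$ sums, pass to $L^\infty$ by summing Fourier coefficients, and repeat for the Dirichlet case with the sine basis and \eqref{H-dir-per-est} --- is the right one.
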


\begin{proof}
\cite[Lemma~25]{Kappeler:CNzeErmy}.~\qed
\end{proof}

\begin{rem}
We will apply Lemma~\ref{proj-bound} for potentials $q\in\FL_{0,\C}^{s,\infty}$ with $-1/2 < s\le 0$  using the fact that by the Sobolev embedding theorem there exists $-1 < t < -1/2$ so that $\FL_{0,\C}^{s,\infty} \opento H_{0,\C}^{t}$.~\map
\end{rem}

\end{document}